\DeclareMathOperator{\cHom}{\mathscr{H}\text{\kern -3pt {\calligra\large om}}\,}
\numberwithin{equation}{section}
\newtheorem{Theorem}[equation]{Theorem}
\newtheorem{Proposition}[equation]{Proposition} 
\newtheorem{Lemma}[equation]{Lemma}
\theoremstyle{definition}
\newtheorem{Remark}[equation]{Remark}
\newtheorem{Example}[equation]{Example}
\newtheorem{Definition}[equation]{Definition}
\numberwithin{figure}{section}
\def\O{\mathcal{O}}
\newcommand{\A}{\mathbb{A}}
\newcommand{\C}{\mathbb{C}}
\newcommand{\bN}{\mathbb{N}}
\newcommand{\Q}{\mathbb{Q}}
\newcommand{\cF}{\mathcal{F}}
\newcommand{\cG}{\mathcal{G}}
\newcommand{\cK}{\mathcal{K}}
\newcommand{\cN}{\mathcal{N}}
\newcommand{\cO}{\mathcal{O}}
\newcommand{\catC}{{\mathscr{C}}}
\newcommand{\catD}{{\mathscr{D}}}
\newcommand{\ga}{\gamma}
\renewcommand{\vert}{fwd}
\newcommand{\horiz}{bkwd}
\newcommand{\isom}{isom}
\newcommand{\ot}{\otimes}
\newcommand{\wh}{\widehat}
\newcommand{\conv}{{\mathbin{\scalebox{1.1}{$\mspace{1.5mu}*\mspace{1.5mu}$}}}}
\newcommand{\nfp}{n\text{-}fp}
\newcommand{\id}{{id}}
\newcommand{\hR}{{\mathcal{R}}}
\newcommand{\QCoh}{\mathrm{QCoh}}
\newcommand{\Coh}{\mathrm{Coh}}
\newcommand{\IndCoh}{\mathrm{IndCoh}}
\newcommand{\Fun}{\mathrm{Fun}}
\newcommand{\PreStkk}{\mathrm{PStk}_{\kk}}
\newcommand{\PreStkkleqn}{\mathrm{PStk}_{\kk, \leq n}}
\newcommand{\PreStkkconv}{\widehat{\mathrm{PStk}}_{\kk}}
\newcommand{\Stkk}{\mathrm{Stk}_\kk}
\newcommand{\Stkkleqn}{\mathrm{Stk}_{\kk, \leq n}}
\newcommand{\Stkkleqzero}{\mathrm{Stk}_{\kk, \leq 0}}
\newcommand{\Stkkconv}{\widehat{\mathrm{Stk}}_{\kk}}
\newcommand{\Spc}{\mathscr{S}}
\newcommand{\Ind}{\mathrm{Ind}}
\newcommand{\CAlg}{\mathrm{CAlg}}
\newcommand{\CAlgk}{\mathrm{CAlg}_\kk}
\newcommand{\CAlgkleqn}{\tau_{\leq n}\mathrm{CAlg}_{\kk}}
\newcommand{\CAlgkleqinfty}{\tau_{< \infty} \mathrm{CAlg}_{\kk}}
\newcommand{\CAlgvarleqn}[1]{\tau_{\leq n}\mathrm{CAlg}_{#1}}
\newcommand{\CAlgvarleqinfty}[1]{\tau_{< \infty}\mathrm{CAlg}_{#1}}
\newcommand{\Corr}{\mathrm{Corr}}
\newcommand{\op}{\mathrm{op}}
\newcommand{\Mod}{\mathrm{Mod}}
\newcommand{\kk}{k}
\newcommand{\indGStk}{\mathrm{indGStk}}
\newcommand{\indGStkk}{\mathrm{indGStk}_{\kk}}
\newcommand{\indGStkkreas}{\mathrm{indGStk}_{\kk}^{reas}}
\newcommand{\indGStkkadtm}{\mathrm{indGStk}_{\kk}^{tm,ad}}
\newcommand{\indGStkkcoh}{\mathrm{indGStk}_{\kk}^{coh}}
\newcommand{\indGStkkprop}{\indGStk_{\kk,\,prop}}
\newcommand{\GStk}{\mathrm{GStk}}
\newcommand{\GStkk}{\mathrm{GStk}_{\kk}}
\newcommand{\GStkkplus}{\mathrm{GStk}_{\kk}^+}
\newcommand{\PrSt}{\mathcal{P}\mathrm{r}^{\mathrm{St}}}
\newcommand{\LurieCathatinfty}{\widehat{\mathrm{Cat}}_{\infty}}
\newcommand{\Cathatinfty}{\mathrm{Cat}_{\infty}}
\newcommand{\Catinfty}{\mathrm{Cat}_{\infty}}
\newcommand{\LFun}{\mathrm{LFun}}
\newcommand{\al}{\alpha}
\newcommand{\be}{\beta}
\newcommand{\PrL}{\mathcal{P}\mathrm{r}^{\mathrm{L}}}
\newcommand{\Map}{\mathrm{Map}}
\newcommand{\Spec}{\mathrm{Spec}\,}
\newcommand{\stable}{stable\xspace}
\newcommand{\clalg}[1]{H^0(#1)}
\newcommand{\whC}{\wh{\catC}}
\newcommand{\whD}{\wh{\catD}}
\newcommand{\wcC}{\wc{\catC}}
\newcommand{\wcD}{\wc{\catD}}
\newcommand{\PrStbacpl}{\mathcal{P}\mathrm{r}^{\mathrm{St,b}}_{\mathrm{acpl}}}
\newcommand{\igpropgstkprops}{Prop. 3.2}
\newcommand{\igpropftdprops}{Prop. 3.6}
\newcommand{\igsecpropafp}{Sec. 3.6}
\newcommand{\igpropfcdprops}{Prop. 3.14}
\newcommand{\igproppropprops}{Prop. 3.19}
\newcommand{\igpropgstkconvergent}{Prop. 3.25}
\newcommand{\igpropfactorthroughgeometric}{Prop. 4.6}
\newcommand{\igpropigprestermsarereas}{Prop. 4.8}
\newcommand{\igpropindPequivconditions}{Prop. 4.9}
\newcommand{\igpropindPcompprops}{Prop. 4.13}
\newcommand{\igpropindPtwoofthreeprops}{Prop. 4.14}
\newcommand{\igpropafpdefconsistent}{Prop. 4.16}
\newcommand{\igpropcohfiltcolims}{Prop. 4.19}
\newcommand{\igpropindgeomfiberprods}{Prop. 4.20}
\newcommand{\igpropindPbaseprops}{Prop. 4.23}
\newcommand{\igpropcohbasechangeunderafpclimm}{Prop. 4.24}
\newcommand{\iglemaffcgen}{Lem. 4.25}
\newcommand{\igseccohsubsec}{Sec. 5.1}
\newcommand{\igsecanticompletion}{Sec. 5.2}
\newcommand{\igpropcohonindgstks}{Prop. 5.5}
\newcommand{\igdefIndCohononeGStk}{Def. 5.10}
\newcommand{\igpropindcohonindgstks}{Prop. 5.19}
\newcommand{\igpropIndCohisIndofCoh}{Prop. 5.29}
\newcommand{\igpropuppershriekftdbasechange}{Prop. 6.2}
\newcommand{\igcoruppershriekftdbasechangeIndCoh}{Cor. 6.5}
\newcommand{\igsecsheafHom}{Sec. 7}
\newcommand{\igsecindextcohcase}{Sec. 7.3}
\newcommand{\igpropcHomftdbasechange}{Prop. 7.3}
\newcommand{\igpropreascohlaxsymmstruct}{Prop. 7.20}
\newcommand{\igpropeFXRpropsone}{Prop. 7.39}
\newcommand{\igcorcHomftdbasechangeind}{Cor. 7.41}
\newcommand{\igpropeFXRlowerstar}{Prop. 7.48}
\newcommand{\igpropindhomrelationcoherent}{Prop. 7.52}
\newcommand{\crpropcohrigidity}{Prop. 6.5}
\newcommand{\crpropcohdualformula}{Prop. 6.8}
\DeclareFontFamily{U}{mathx}{\hyphenchar\font45}
\DeclareFontShape{U}{mathx}{m}{n}{
	<5> <6> <7> <8> <9> <10>
	<10.95> <12> <14.4> <17.28> <20.74> <24.88>
	mathx10
}{}
\DeclareSymbolFont{mathx}{U}{mathx}{m}{n}
\DeclareMathAccent{\widecheck}{0}{mathx}{"71}
\newcommand{\wc}{\widecheck}
\newcommand{\Sp}{\mathrm{Sp}}
\DeclareMathSymbol{\shortminus}{\mathbin}{AMSa}{"39}
\DeclareRobustCommand{\SkipTocEntry}[5]{}
\DeclareMathOperator*{\colim}{colim}
\newcommand{\congto}{\xrightarrow{\sim}}
\newcommand{\arrtip}{latex'}
\begin{document}
\title{Tamely presented morphisms and coherent pullback}

\author[Sabin Cautis]{Sabin Cautis}
\address[Sabin Cautis]{University of British Columbia \\ Vancouver BC, Canada}
\email{cautis@math.ubc.ca}

\author[Harold Williams]{Harold Williams}
\address[Harold Williams]{University of Southern California \\ Los Angeles CA, USA}
\email{hwilliams@usc.edu}

\begin{abstract}
	We study two classes of morphisms in infinite type: tamely presented morphisms and morphisms with coherent pullback. These are generalizations of finitely presented morphisms and morphisms of finite Tor-dimension, respectively. 
	The class of tamely presented schemes and stacks is restricted enough to retain the key features of finite-type schemes from the point of view of coherent sheaf theory, but wide enough to encompass many infinite-type examples of interest in geometric representation theory. 
	The condition that a diagonal has coherent pullback is a natural generalization of smoothness to the tamely presented setting, and we show such objects retain many good cohomological properties of smooth varieties. 
	Our results are motivated by the study of convolution products in the double affine Hecke category and related categories in the theory of Coulomb branches. 
\end{abstract}

\maketitle

\setcounter{tocdepth}{1}

\tableofcontents

\section{Introduction}
\thispagestyle{empty}

This paper and \cite{CWig} provide a collection of foundational results on coherent sheaf theory in infinite-dimensional derived algebraic geometry. These results are applied in \cite{CW2} to the study of Coulomb branches of 4d $\cN=2$ gauge theories, whose mathematical theory was pioneered by Braverman-Finkelberg-Nakajima. In \cite{BFN} these authors associate to a reductive group $G$ and representation $N$ an ind-scheme $\hR_{G,N}$ with an action of the jet group $G_\cO$. The relevant Coulomb branch is the affine variety whose coordinate ring is the equivariant $K$-theory $K^{G_\cO}(\hR_{G,N})$, equipped with a certain convolution product. In \cite{CW2} we study the derived category $\Coh^{G_\cO}(\hR_{G,N})$ of equivariant coherent sheaves and construct a nonstandard t-structure on it, which in turn equips $K^{G_\cO}(\hR_{G,N})$ with a canonical basis.

By construction, the category $\Coh^{G_\cO}(\hR_{G,N})$ acts on $\Coh^{G_\cO}(N_\cO)$ by integral transforms, and convolution in $\Coh^{G_\cO}(\hR_{G,N})$ is an example of the convolution of kernels. Integral transforms of coherent sheaves are a well-studied topic (e.g. \cite{Huy06,BZNP17}), and are of representation-theoretic importance through constructions of the above kind (e.g. \cite{KL87,Bez16}). However, $\hR_{G,N}$ and $N_\cO$ being of infinite type means that many standard results in the literature cannot be directly applied to $\Coh^{G_\cO}(\hR_{G,N})$. This poses technical challenges in \cite{CW2}, for example in proving that $\Coh^{G_\cO}(\hR_{G,N})$ is rigid as a monoidal category. The argument requires that certain pullbacks of coherent sheaves commute with sheaf Hom and !-pullback, but the closest results available (e.g. \cite[Lem. 0AA7]{Sta} or \cite[Prop. 7.1.6]{Gai13a}) do not apply. The motivating goal of this paper is to provide the needed infinite-type extensions of such results. 

In carrying this out we face the following complication. Let us say a morphism $f: X \to Y$ has \emph{coherent pullback} if $f^*: \QCoh(Y) \to \QCoh(X)$ takes $\Coh(Y)$ to $\Coh(X)$, where $\QCoh(-)$ denotes the unbounded derived category of quasi-coherent sheaves and $\Coh(-)$ the bounded derived category of coherent sheaves. In finite type such morphisms have many good properties, such as stability under (derived) base change. This is because in finite type they coincide with morphisms of finite Tor-dimension. In infinite type, however, morphisms with coherent pullback can be of infinite Tor-dimension, and their good properties in finite type do not fully extend --- for example, they are no longer stable under arbitrary base change (Example~\ref{ex:originintoAinfty}). In particular, we cannot expect naive extensions of the finite-type results we want to apply to $\Coh^{G_\cO}(\hR_{G,N})$, but only extensions with suitable hypotheses to avoid pathologies specific to infinite type. 

These extensions turn out to be most naturally formulated using the notion of \emph{tamely presentedness}. For example, morphisms with coherent pullback among tamely presented schemes do turn out to be stable under base change along tamely presented morphisms. Locally, we say an algebra $A$ is (strictly) tamely presented if it is the union of the finitely presented subalgebras over which it is flat. Tamely presented schemes generalize finitely presented schemes in a way which is restrictive enough to retain their good formal and sheaf-theoretic properties, but broad enough to include the examples relevant to $\hR_{G,N}$.  In the rest of the introduction we give more details first on the main technical themes developed in the paper, and then on the relationship of our work with the literature. 

\subsection{Technical overview} 
We refer to Section \ref{sec:convnot} for our detailed conventions. For now the reader may take $\kk$ to be a field of characteristic zero and $\CAlgk$ the (enhanced homotopy) category of nonpositively graded commutative dg $\kk$-algebras. 

In Section \ref{sec:affine} we study {\it strictly tamely presented algebras}. 
If $A \in \CAlgk$ is an ordinary Noetherian ring, an ordinary $A$-algebra $B$ is strictly tamely presented if it is the union of the finitely presented subalgebras over which it is flat. If $A$ and $B$ are more general, we ask that each truncation of $B$ satisfies a similar condition, and we remove the word ``strictly'' to mean these conditions are satisfied up to a suitable flat cover. Beyond basic stability properties, the key results of this section are that tamely presented $\kk$-algebras are coherent (Proposition~\ref{prop:stablecoh}), that morphisms with coherent pullback are stable under tamely presented base change (Proposition \ref{thm:reasonableaffinecohbasechange}), and that a tamely presented algebra over a Noetherian base is of finite Tor-dimension if and only if it has coherent pullback (Proposition \ref{prop:finitetor2}). 

Section \ref{sec:reasgeomstacks} studies tamely presented geometric stacks. As in \cite[Sec. 9]{LurSAG}, a geometric stack will mean a functor $X: \CAlgk \to \Spc$ which satisfies flat descent, has affine diagonal, and admits a flat cover $\Spec A \to X$ (here $\Spc$ is the $\infty$-category of spaces). We say $X$ is tamely presented if we can choose this cover so that $\Spec A$ is strictly tamely presented over both $X$ and $\Spec \kk$. Such stacks are intermediate between general geometric stacks and Artin stacks of finite type. The basic example is the quotient of a tamely presented scheme by the action of an affine group scheme (Proposition \ref{prop:tamequotients}). 

We say a morphism has \emph{\stable coherent pullback} if it has coherent pullback after base change along any tamely presented morphism. For a morphism of tamely presented schemes this is the same as simply having coherent pullback. For a morphism $f: X \to Y$ of tamely presented geometric stacks, it is equivalent (via Propositions \ref{thm:reasonableaffinecohbasechange} and \ref{prop:cohpullprops}) to asking that the base change $f'$ along any fixed tamely presented flat cover $\Spec A \to Y$ has coherent pullback. A key fact is that, under these hypotheses, $f$ can in a certain sense be approximated by morphisms of finite Tor-dimension  (Proposition~\ref{prop:finitetorapproxgstk}). 

Recall that if $X$ and $Y$ are varieties and $Y$ is smooth, then any morphism $f: X \to Y$ is of finite Tor-dimension. This follows from the fact that the diagonal $\Delta_Y: Y \to Y \times Y$ is of finite Tor-dimension, a property which characterizes smooth varieties. With this in mind, we say a tamely presented stack $Y$ is {\it weakly smooth} if $\Delta_Y$ has \stable coherent pullback (note that a variety is weakly smooth if and only if it is smooth). By Proposition \ref{prop:wprosmoothcohdiagb} this follows if~$Y$ has a pro-smooth atlas (e.g. if $Y = N_\cO$), but the notion of weak smoothness has the virtue of being intrinsic. If $X$ and $Y$ are tamely presented stacks and $Y$ is weakly smooth, any tamely presented morphism $f: X \to Y$ has coherent pullback. For this reason, many of our results can be understood as quantifying the extent to which weakly smooth schemes and stacks retain the good cohomological properties of smooth varieties. 

In Section \ref{sec:indgeom} we extend our main notions to the setting of ind-geometric stacks. Ind-geometric stacks generalize  ind-schemes, in particular the notion of dg ind-scheme introduced in \cite{GR14}, Their basic theory is developed in \cite{CWig}. As a key motivating example, the quotient $\hR_{G,N}/G_{\cO}$ is an ind-tamely presented ind-geometric stack.

In Section \ref{sec:upper!} we study the interaction between coherent pullback and $!$-pullback in the tamely presented setting. The main result (Proposition \ref{prop:up!up*genICcaseIG}) is the following generalization of \cite[Prop. 7.1.6]{Gai13a}. Suppose we have a Cartesian diagram
\begin{equation*}
	\begin{tikzpicture}
		[baseline=(current  bounding  box.center),thick,>=\arrtip]
		\node (a) at (0,0) {$X'$};
		\node (b) at (3,0) {$Y'$};
		\node (c) at (0,-1.5) {$X$};
		\node (d) at (3,-1.5) {$Y$};
		\draw[->] (a) to node[above] {$f' $} (b);
		\draw[->] (b) to node[right] {$h $} (d);
		\draw[->] (a) to node[left] {$h' $}(c);
		\draw[->] (c) to node[above] {$f $} (d);
	\end{tikzpicture}
\end{equation*}
of coherent, ind-tamely presented ind-geometric stacks in which $h$ has \stable coherent pullback and $f$ is ind-proper and almost ind-finitely presented. Then the Beck-Chevalley transformation $h'^* f^! \to f'^! h^*$ of functors $\IndCoh(Y) \to \IndCoh(X')$ is an isomorphism. 

In Section \ref{sec:sheafHom} we study the interaction between coherent pullback and sheaf Hom in the tamely presented setting. The main result (Proposition \ref{prop:eFXRupper*cohpullindgeomcase}) is the following generalization of \cite[Lem. 0AA7]{Sta}. Suppose $h: X \to Y$ is a morphism with \stable coherent pullback between coherent, ind-tamely presented ind-geometric stacks such that $X \times X$ and $Y \times Y$ are coherent. Then for any $\cF \in \Coh(Y)$ the Beck-Chevalley transformation
\begin{equation*}
	h^* \cHom(\cF, -) \to \cHom(h^*(\cF), h^*(-))
\end{equation*}
of functors $\IndCoh(Y) \to \IndCoh(X)$ is an isomorphism. 

Finally, in Section \ref{sec:kernels} we discuss integral transforms in infinite-type settings such as that of \cite{CW2}, including the adjointability results made possible by the results of Sections~\ref{sec:upper!} and~\ref{sec:sheafHom} (though we defer the complete proofs of these to \cite{CW2}). 

\subsection{Relations to existing literature}\label{sec:literature}
Let us briefly survey the literature on which we build most directly. It is a classical fact that a direct limit of Noetherian rings along flat morphisms is coherent \cite[Sec. I.2 Ex. 12]{Bou72}. The notion of tamely presented algebra adapts and relativizes this construction. Informally, tamely presented algebras and coherent rings have a relationship similar to that of finitely presented algebras and Noetherian rings. We emphasize that tamely presented algebras do not suffer from the pathologies which make working with general coherent rings difficult. They satisfy a Hilbert basis theorem and more generally they are closed under tensor products, neither of which is true of arbitrary coherent rings \cite[Sec. 7.3.13]{Gla89}. 

Tamely presentedness is also a relative of placidity. A classical scheme $X$ is placid \cite[Def. 4.2.1]{Ras14b} (or almost smooth \cite[Def. 3.2.4]{KV04}) if it can be written as an inverse limit of finite-type schemes along smooth affine morphisms (some references require surjectivity). Replacing classical with truncated (i.e. eventually coconnective) and finite-type with almost finite-type yields a derived notion of placidity considered in \cite[Sec. 4.1]{Gai14}. Further replacing smooth with flat results in a less general variant of tamely presentedness which appears in \cite[Prop. 6.36.4]{Ras20}. In particular, any placid scheme is tamely presented. Though the converse is false, all tamely presented schemes in \cite{CW2} are in fact placid (in the derived sense \cite[Sec. 4.1]{Gai14}). Thus, it would suffice for our applications to state all results in the placid setting, bypassing the notion of tamely presentedness and minimizing the number of new notions introduced. But a few reasons have led us not to do this.

First, the smoothness in the definition of placid would never be used in any of our arguments, only its implied flatness. It would be at best awkward and at worst misleading to include a hypothesis in every step of our constructions that would never be used. This is not to say that placidity is not an important notion, as it implies certain self-duality properties that tamely presentedness alone does not (e.g. \cite[Sec. 4.7]{Ras14b} or \cite[Sec. 4.4.8]{Gai14}). But these stronger properties go beyond the scope of what our results depend on. 

Second, tamely presentedness is a more intrinsic notion than placidity. That is, the definition of placidity refers to a non-canonical choice of a presentation as an inverse limit. Whereas asking that $A$ be the union of all finitely presented subalgebras $A_\al$ over which it is flat refers to no choices, only the \emph{canonical} presentation of $\Spec A$ as the inverse limit of the $\Spec A_\al$. Note that if we replace flat with smooth in this condition it does not become equivalent to placidity, but instead degenerates to the condition that $A$ is finitely presented. We find the intrinsic nature of tamely presentedness to be a compelling conceptual feature (though as a practical side effect the proofs of some basic stability properties are slightly more direct for tamely presentedness than for placidity). 

Placid geometric stacks and their topological sheaf theory are studied in detail in \cite{BKV22}. This notion is parallel to the notion of tamely presented geometric stack we consider, though some differences (e.g. choice of Grothendieck topology, presence or absence of derived structures) stem from our different sheaf-theoretic interests. Similarly, the notion of placidly stratified stack from \cite{BKV22} is parallel (though in a looser way) to the notion of ind-tamely presented ind-geometric stack. 

As a final aside, we note that formulating a well-behaved \emph{relative} notion of placidity in the derived setting (which goes beyond the scope of \cite[Sec. 4.1]{Gai14} or \cite[Sec. 6]{Ras20}) would raise some technical issues which we address here in the context of tamely presentedness. These concern subtleties of derived Noetherian approximation, see the discussion before Example \ref{ex:weirdex}. 

\addtocontents{toc}{\SkipTocEntry}
\subsection*{Acknowledgements}
We are deeply grateful to Sam Raskin, Hiro Lee Tanaka, Aaron Mazel-Gee, and Chang-Yeon Chough for taking the time to discuss numerous technical issues that arose in the preparation of this paper and its companions \cite{CWig, CW2}. S.C. was supported by NSERC Discovery Grant 2019-03961 and H. W. was supported by NSF grants DMS-1801969 and DMS-2143922. 

\section{Conventions}\label{sec:convnot}

We collect here our notational and terminological conventions. Our default references for categorical and geometric background are \cite{LurHTT,LurHA,LurSAG}, and we follow their conventions up to a few exceptions noted below. 

\begin{itemize}
	\item We use the terms category and $\infty$-category interchangeably, and say ordinary category when we specifically mean a category in the traditional sense. We write $\Map_\catC(X,Y)$ for the mapping space between $X, Y \in \catC$, and regard ordinary categories as $\infty$-categories with discrete mapping spaces. 
	
	\item We write $\CAlg(\catC)$ for the category of commutative algebra objects of a monoidal category $\catC$. We write $\CAlg$ for $\CAlg(\Sp^{cn})$, where $\Sp^{cn}$ is the category of connective spectra (this would be $\CAlg^{cn}$ in \cite{LurSAG}). 
	
	\item We fix once and for all a Noetherian base $\kk \in \CAlg$.
	
	\item We use cohomological indexing for t-structures. If $\catC$ has a t-structure $(\catC^{\leq 0}, \catC^{\geq 0})$ with heart $\catC^\heartsuit$ we write $\tau^{\leq n} : \catC \to \catC^{\leq n}$, $H^n: \catC \to \catC^\heartsuit$, etc., for the associated functors. In this notation, the condition that $\kk$ is Noetherian is the condition that $H^0(\kk)$ is an ordinary Noetherian ring and $H^n(\kk)$ is finitely generated over $H^0(\kk)$ for all $n < 0$. We use the terms left bounded and right bounded interchangeably with (cohomologically) bounded below and bounded above. 
	
	\item We write $\tau_{\leq n} \catD$ for the subcategory of $n$-truncated objects in an $\infty$-category $\catD$. In particular, $\tau_{\leq 0} \CAlg$ is the ordinary category of ordinary commutative rings. Note the distinction between subscripts and superscripts in this and the previous convention, e.g. $\tau_{\leq n}(\catC^{\leq 0})$ and $\catC^{[-n, 0]}$ refer to the same subcategory of $\catC$. 
	
	\item Given $A \in \CAlg$, we write $\Mod_A$ for the category of $A$-modules (i.e. $A$-module objects in the category of spectra). If $A$ is an ordinary ring this is the (enhanced) unbounded derived category of ordinary $A$-modules (i.e. of $\Mod_A^{\heartsuit}$).   
	
	\item An $A$-module $M$ is coherent if it is bounded and almost perfect (i.e. $\tau^{\geq n}M$ is compact in $\Mod_A^{\geq n}$ for all $n$). If $A$ is coherent (i.e. $\clalg{A}$ is a coherent ordinary ring and $H^n(A)$ is a finitely presented $\clalg{A}$-module for all $n$), then $M$ is coherent if and only if it is bounded and $H^n(M)$ is a finitely presented $\clalg{A}$-module for all $n$. We write $\Coh_A \subset \Mod_A$ for the full subcategory of coherent modules. 
	
	\item Given $A \in \CAlg$, we write $\CAlg_A := \CAlg_{A/} \cong \CAlg(\Mod_A^{\leq 0})$, and write $\CAlgvarleqinfty{A} := \cup_n \CAlgvarleqn{A}$ for the subcategory of truncated $A$-algebras (i.e. $n$-truncated for some~$n$). If $A$ is an ordinary ring containing $\Q$, then $\CAlg_A$ is equivalently the (enhanced homotopy) category of nonpositively graded commutative dg $A$-algebras, or of simplicial/animated commutative $A$-algebras.  
	
	\item  We implicitly fix two universes and associated category sizes: small and large. We write $\Cathatinfty$ for the $\infty$-category of large $\infty$-categories (in \cite{LurHTT} this would be $\LurieCathatinfty$, and $\Cathatinfty$ would be its subcategory of small $\infty$-categories). We write $\PrL \subset \Cathatinfty$ for the subcategory of presentable $\infty$-categories and left adjoints, and $\PrSt \subset \PrL$ for the further subcategory of presentable stable $\infty$-categories. 
	
	\item Given categories $\catC$ and $\catD$, we write both $\Fun(\catC, \catD)$ and $\catD^\catC$ for the category of functors from $\catC$ to $\catD$. 
	
	\item All limit or colimit diagrams are implicitly small unless otherwise stated. Thus in ``let $X \cong \colim X_\al$ be a filtered colimit'' the indexing diagram is assumed to be small. By extension, $\Ind(\catC)$ will refer to the category freely generated by $\catC$ under small filtered colimits even if $\catC$ is large (as in \cite[Def. 21.1.2.5]{LurSAG}). 
	
	\item If $\catC$ admits filtered colimits, a functor $F: \catC \to \catD$ is continuous if it preserves them. Suppose further that $\catC$, $\catD$ are presentable, stable, and equipped with t-structures that are compatible with filtered colimits, and that $F$ is exact. Then $F$ is almost continuous if its restriction to $\catC^{\geq n}$ is continuous for all $n$ (equivalently, for $n = 0$). 
	
	\item A prestack (implicitly over $\Spec \kk$) is a functor from $\CAlgk$ to the category of (possibly large) spaces. We write $\PreStkk$ for the category of prestacks, and $\PreStkkconv$, $\PreStkkleqn$ for the variants with $\CAlgkleqinfty$, $\CAlgkleqn$  in place of $\CAlgk$. We write $\Spec: \CAlgk \to \PreStkk$ for the Yoneda embedding.  
	
	\item A stack is a prestack which is a sheaf for the fpqc topology \cite[Prop. B.6.1.3]{LurSAG}. We write $\Stkk \subset \PreStkk$ for the category of stacks, and $\Stkkconv \subset \PreStkkconv$, $\Stkkleqn \subset \PreStkkleqn$ for its variants. (Note that $\CAlgkleqinfty$ does not admit arbitrary pushouts, but the use of \cite[Prop. A.3.2.1]{LurSAG} in defining the fpqc topology only requires closure under flat pushouts.)
	
	\item If $\catC$ admits finite limits, we write $\Corr(\catC)$ for the $\infty$-category of correspondences in $\catC$ (e.g. \cite[Def. 3.3]{Bar13}, \cite[Sec. 7.1.2]{GR17}). This has the same objects as $\catC$, but a morphism from $X$ to $Z$ in $\Corr(\catC)$ is a diagram $X \xleftarrow{h} Y \xrightarrow{f} Z$ in $\catC$. 
	
	\item Let $\vert$ and $\horiz$ be classes of morphisms in $\catC$ which contain all isomorphisms and are stable under composition, and under base change along each other. Suppose also that $\catC' \subset \catC$ is a full subcategory such that $Y \in \catC'$ whenever $h: Y \to X$ is in $\horiz$ and $X \in \catC'$. Then we write $\Corr(\catC')_{\vert,\horiz}$ for the 1-full subcategory of $\Corr(\catC)$ which only includes correspondences $X \xleftarrow{h} Y \xrightarrow{f} Z$ such that $h \in \horiz$, $f \in \vert$, and $X, Z \in \catC'$ (hence $Y \in \catC'$). Note that $\catC'$ need not be closed under arbitrary pullbacks. The subcategory $\Corr(\catC')_{\vert,\isom} \subset \Corr(\catC')_{\vert,\horiz}$ which only includes correspondences in which $h$ is an isomorphism is equivalent to the 1-full subcategory $\catC'_{\vert} \subset \catC'$, likewise for the subcategory where $f$ is an isomorphism and $\catC'^{\op}_{\horiz} \subset \catC'^{\op}$. 
	
	\item We presume our constructions and results remain valid if we replace $\CAlg$ with the category $\CAlg^\Delta$ of simplicial/animated commutative rings. We work with $\CAlg$ mostly to make some references easier to pinpoint.  On the other hand, any derived prestack has an underlying spectral prestack, and by definition these share the same category of quasi-coherent sheaves. Since our focus is on sheaves, it is in this sense more natural to work in the spectral setting. This distinction is also irrelevant to our intended applications, in which our base is $\C$ and we have $\CAlg_\C \cong \CAlg_\C^\Delta$. 
\end{itemize}

\section{Affine schemes}\label{sec:affine}

We begin by studying (strictly) tamely presented morphisms and coherent pullback in the setting of affine schemes. We first establish the basic stability and coherence properties  of the latter (Propositions \ref{prop:reasstabprops2} and \ref{prop:stablecoh}). We then show that morphisms with coherent pullback and strictly tamely presented target are stable under strictly tamely presented base change (Theorem \ref{thm:reasonableaffinecohbasechange}). We also show that a strictly tamely presented algebra over a Noetherian base has coherent pullback if and only if it is of finite Tor-dimension (Proposition \ref{prop:finitetor2}). 

\subsection{Strictly tame presentations}
Given $A \in \CAlgk$, an $A$-algebra $B$ is finitely $n$-presented if it is $n$-truncated and compact in $\CAlgvarleqn{A}$, and is almost of finite presentation if $\tau_{\leq n} B$ is finitely $n$-presented for all $n$. In particular, an $A$-algebra $B$ is finitely zero-presented if and only if it is an ordinary commutative ring and is finitely presented over $\tau_{\leq 0} A = \clalg{A}$ in the ordinary sense. Also recall that $B$ is flat over $A$ if $H^0(B)$ is flat over $H^0(A)$ in the ordinary sense and the natural map $H^0(B) \ot_{H^0(A)} H^n(A) \to H^n(B)$ is an isomorphism for all $n$. 

\begin{Proposition}\label{prop:reasalg}
Given $A \in \CAlgk$, the following conditions on $B \in \CAlg_A$ are equivalent.
\begin{enumerate}
	\item We can write $B$ as a filtered colimit $B \cong \colim B_\al$ of finitely $n$-presented $A$-algebras such that $B$ is flat over each $B_\al$. 
	\item For every finitely $n$-presented $A$-algebra $C$, every morphism $C \to B$ in $\CAlg_A$ factors through a flat morphism $C' \to B$ such that $C'$ is also finitely $n$-presented over~$A$.
\end{enumerate}
\end{Proposition}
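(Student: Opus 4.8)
The plan is to prove the equivalence $(1) \Leftrightarrow (2)$ by exploiting the compactness of finitely $n$-presented algebras in $\CAlgvarleqn{A}$ together with the definition of flatness recalled above.

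\textbf{Proof of $(2) \Rightarrow (1)$.} First I would observe that, since $B$ is $n$-truncated (being an object of $\CAlg_A$ satisfying $(2)$—note that taking $C = A$ forces $B$ to receive a map from a finitely $n$-presented algebra, but more directly one should impose/observe $B \in \CAlgvarleqn{A}$; in fact for this implication one uses that $\CAlgvarleqn{A}$ is compactly generated by finitely $n$-presented algebras, so $B \cong \colim_\al C_\al$ is a filtered colimit of such). Now run the usual ``factor and improve'' argument: for each index $\al$, apply $(2)$ to the structure map $C_\al \to B$ to obtain a factorization $C_\al \to C_\al' \to B$ with $C_\al'$ finitely $n$-presented and $C_\al' \to B$ flat. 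One then checks that the collection of the $C_\al'$, together with maps between them induced by compactness (using that $B \cong \colim C_\al'$ still holds since each $C_\al$ maps through some $C_\al'$ and conversely $C_\al' \to B$ factors through some $C_\beta$ by compactness of $C_\al'$), forms a filtered diagram; replacing it by its filtered closure under the relevant factorizations yields $B \cong \colim B_\al$ with each $B_\al$ finitely $n$-presented and $B$ flat over each $B_\al$. The one genuine point is verifying that flatness of $B$ over $C_\al'$ is preserved under passing to the cofinal subdiagram and that the colimit is unchanged; both follow from standard cofinality arguments once compactness is in hand.

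\textbf{Proof of $(1) \Rightarrow (2)$.} Suppose $B \cong \colim B_\al$ as in $(1)$, and let $C \to B$ be a morphism with $C$ finitely $n$-presented over $A$. By compactness of $C$ in $\CAlgvarleqn{A}$, the map $C \to B \cong \colim B_\al$ factors through some $B_\al$. Now the composite $C \to B_\al \to B$ realizes the given map, but $B_\al \to B$ is flat by hypothesis and $B_\al$ is finitely $n$-presented over $A$, so we may take $C' = B_\al$. This gives $(2)$ directly.

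\textbf{Main obstacle.} The delicate step is the $(2) \Rightarrow (1)$ direction, specifically the bookkeeping needed to upgrade the family $\{C_\al'\}$ into an honest filtered diagram whose colimit is $B$ and over which $B$ is flat. The subtlety is that $(2)$ produces the flat improvements $C_\al'$ pointwise with no a priori compatibility between them; one must use compactness of each $C_\al'$ to factor $C_\al' \to B$ back through the original diagram, then pass to a cofinal subcategory, and argue that flatness is inherited. I expect this to be the heart of the argument, while both the $(1) \Rightarrow (2)$ direction and the identification of flatness under colimits are routine. It may be cleanest to phrase the $(2) \Rightarrow (1)$ argument as: the full subcategory of $\CAlgvarleqn{A}_{/B}$ spanned by finitely $n$-presented $C'$ with $C' \to B$ flat is filtered and its colimit is $B$, which packages the cofinality and flatness checks together.
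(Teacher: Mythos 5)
Your direction $(1) \Rightarrow (2)$ is exactly the paper's argument (compactness of $C$ in $\CAlgvarleqn{A}$), and your closing reformulation of $(2) \Rightarrow (1)$ — show that the full subcategory $(\CAlg^{\nfp}_A)_{/f\text{-}B}$ of $(\CAlg^{\nfp}_A)_{/B}$ spanned by the flat factorizations is filtered with colimit $B$ — is precisely the route the paper takes. The problem is that you stop exactly where the proof begins: the filteredness and cofinality of this subcategory is the entire content of the implication, and your sketch of it does not go through as written. The ``pointwise improvement'' procedure (choose a presentation $B \cong \colim C_\al$, improve each $C_\al$ to a flat $C_\al'$, then assemble maps $C_\al' \to C_\beta'$ ``induced by compactness'') runs into a coherence problem in the $\infty$-categorical setting: non-canonical choices of factorizations do not assemble into a diagram, and ``replacing it by its filtered closure'' is not an operation you have defined. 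The comma-category formulation exists precisely to avoid this, but then you must actually prove it is filtered and left cofinal, which you defer.

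The missing idea is the cone-point argument, applied relatively. To get left cofinality of $(\CAlg^{\nfp}_A)_{/f\text{-}B} \hookrightarrow (\CAlg^{\nfp}_A)_{/B}$ via \cite[Thm.~4.1.3.1]{LurHTT} one must show, for every $B' \in (\CAlg^{\nfp}_A)_{/B}$, that the comma category $(\CAlg^{\nfp}_{B'})_{/f\text{-}B}$ is weakly contractible; the paper does this by showing it is filtered. Given a finite diagram $K \to (\CAlg^{\nfp}_{B'})_{/f\text{-}B}$, one first extends it to $K^{\triangleright} \to (\CAlg^{\nfp}_{B'})_{/B}$ using that the \emph{ambient} comma category is filtered (by compact generation of $\CAlgvarleqn{A}$), and only then applies hypothesis $(2)$ to the image $B''$ of the cone point to factor $B'' \to B$ through a flat finitely $n$-presented $C$; one also needs \cite[Prop.~4.1.3.1]{LurSAG} to know $C$ is finitely $n$-presented over $B'$ so that the extension lands in $(\CAlg^{\nfp}_{B'})_{/f\text{-}B}$. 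Your version applies $(2)$ only to single objects of a chosen presentation, which yields that every object of the ambient comma category admits a map to a flat one, but without the extension-then-improve-the-cone-point step this does not by itself produce cocones for arbitrary finite diagrams in the flat subcategory. (You are right, incidentally, to flag that $(2)$ should be read together with $n$-truncatedness of $B$; the colimit in the cofinality argument is computed in $\CAlgvarleqn{A}$.)
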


\begin{Definition}\label{def:reasalg}
	Given $A \in \CAlgk$, we say $B \in \CAlg_A$ is \emph{strictly tamely $n$-presented} if it satisfies the equivalent conditions of Proposition \ref{prop:reasalg}. We call an expression $B \cong \colim B_\al$ as in condition (1) a \emph{strictly tame presentation of order $n$}. We say $B$ is \emph{strictly tamely presented} if $\tau_{\leq n}B$ is strictly tamely $n$-presented for all $n$. 
\end{Definition}

\begin{proof}[Proof of Proposition \ref{prop:reasalg}]
	That (1) implies (2) follows from compactness of $C$ in $\CAlgvarleqn{A}$. To see that (2) implies (1), let $\CAlg^{\nfp}_A$ denote the category of finitely $n$-presented $A$-algebras, and let $(\CAlg^{\nfp}_A)_{/f\text{-}B} \subset (\CAlg^{\nfp}_A)_{/B} = \CAlg^{\nfp}_A \times_{\CAlg_A} (\CAlg_A)_{/B}$ be the full subcategory of algebras over which $B$ is flat. 
	It suffices to show $(\CAlg^{\nfp}_A)_{/f\text{-}B}$ is filtered and that~$B$ is the colimit over its forgetful functor to $\CAlg_A$. 
	
	Since $\CAlg_A$ is compactly generated so is $\CAlgvarleqn{A}$ \cite[Cor. 5.5.7.4]{LurHTT}, hence $(\CAlg^{\nfp}_A)_{/B}$ is filtered and $B$ is the colimit over its forgetful functor to $\CAlgvarleqn{A}$ \cite[Cor. 5.3.5.4, Cor. 5.5.7.4]{LurHTT}. For any $B' \in \CAlg^{\nfp}_A$, we have $(\CAlg^{\nfp}_A)_{B'/} \cong \CAlg^{\nfp}_{B'}$ by \cite[Prop. 4.1.3.1]{LurSAG} and the fact that being finitely $n$-presented is equivalent to being $n$-truncated and of finite generation to order $n+1$ \cite[Rem. 4.1.1.9]{LurSAG}. It thus suffices to show $(\CAlg^{\nfp}_{B'})_{/f\text{-}B}$ is filtered for all $B' \in (\CAlg^{\nfp}_A)_{/B}$, since then $(\CAlg^{\nfp}_A)_{/f\text{-}B} \to (\CAlg^{\nfp}_A)_{/B}$ is left cofinal by \cite[Thm. 4.1.3.1, Lem. 5.3.1.18]{LurHTT}, and $(\CAlg^{\nfp}_A)_{/f\text{-}B}$ is filtered as a special case. 
	
	We must show any finite diagram $K \to (\CAlg^{\nfp}_{B'})_{/f\text{-}B}$ extends to a diagram $K^{\triangleright} \to (\CAlg^{\nfp}_{B'})_{f\text{-}B}$. Since $(\CAlg^{\nfp}_{B'})_{/B}$ is filtered, we have an extension $K^{\triangleright} \to (\CAlg^{\nfp}_{B'})_{/B}$. The image $B''$ of the cone point is finitely $n$-presented over $A$, hence by hypothesis its morphism to $B$ factors through some $C \in (\CAlg^{\nfp}_{A})_{/f\text{-}B}$. But $C$ is also finitely $n$-presented over $B'$ (again by \cite[Prop. 4.1.3.1]{LurSAG}), hence composing with $B'' \to C$ we obtain the desired extension $K^{\triangleright} \to (\CAlg^{\nfp}_{B'})_{/f\text{-}B}$. 
\end{proof}

A strictly tamely $n$-presented algebra is $n$-truncated, since $\CAlgvarleqn{A}$ is stable under filtered colimits \cite[Prop. 7.2.4.27]{LurHA}, \cite[Cor. 5.5.7.4]{LurHTT}. Definition \ref{def:reasalg} extends from algebras to morphisms in $\CAlgk$ in the obvious way. It would be more precise to say ``almost strictly tamely presented'' instead of ``strictly tamely presented'', but for simplicity we use the shorter terminology. 

In the setting of ordinary rings, a (more elementary) variant of the proof above characterizes strictly tamely zero-presented algebras as those which are the union of the finitely zero-presented subalgebras over which they are flat. If $A$ is Noetherian, \cite[Prop. 7.2.4.31]{LurHA} implies that an ordinary $A$-algebra $B$ is strictly tamely presented if and only if it is strictly tamely zero-presented. If $A$ and $B$ are ordinary rings and $B$ is placid over $A$ in the sense of \cite[Def. 16.29.1]{Ras14}, or equivalently almost smooth over $A$ in the sense of \cite[Def. 3.2.4]{KV04}, then $B$ is strictly tamely zero-presented. 

Morphisms of strictly tame presentation have the following stability properties. 

\begin{Proposition}\label{prop:reasstabprops2}
	Let $\phi: A \to B, \psi: B \to C$, and $\eta: A \to A'$ be morphisms in $\CAlgk$.
	\begin{enumerate}
		\item If $\phi$ is of strictly tame presentation (resp. is strictly tamely $n$-presented) then so is $\phi': A' \to B \ot_A A'$ (resp. $\tau_{\leq n}\phi': \tau_{\leq n} A' \to \tau_{\leq n}(B \ot_A A')$). 
		\item If $\phi$ and $\psi$ are of strictly tame presentation (resp. are strictly tamely $n$-presented) then so is $\psi \circ \phi$. 
		\item If $\phi$ is almost of finite presentation (resp. is finitely $n$-presented) then $\psi \circ \phi$ is of strictly tame presentation (resp. is strictly tamely $n$-presented) if and only if $\psi$ is. 
	\end{enumerate}
\end{Proposition}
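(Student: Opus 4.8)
The plan is to reduce each statement to the characterization in condition (2) of Proposition \ref{prop:reasalg}, namely that $B$ is strictly tamely $n$-presented over $A$ iff every map from a finitely $n$-presented $A$-algebra $C \to B$ factors through a flat, finitely $n$-presented $C' \to B$. I will prove the $n$-presented versions; the "strictly tame presentation" versions then follow by applying the argument to each truncation $\tau_{\leq n}$, using that truncation commutes with the relevant base changes and compositions of $n$-truncated algebras, and that $\tau_{\leq n}(B \ot_A A') \cong \tau_{\leq n}(\tau_{\leq n} B \ot_{\tau_{\leq n}A} \tau_{\leq n}A')$.

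For (1), base change: suppose $\phi$ is strictly tamely $n$-presented, write $B \cong \colim B_\al$ as a strictly tame presentation of order $n$, so $B$ is flat over each $B_\al$. Base changing along $\eta$, we get $B \ot_A A' \cong \colim (B_\al \ot_A A')$ (after $n$-truncation, but the $B_\al \ot_A A'$ are already $n$-truncated since $B_\al$ is finitely $n$-presented hence $\tau_{\leq n}(B_\al \ot_A A')$ is finitely $n$-presented over $A'$ and the colimit of $n$-truncated objects in $\CAlgvarleqn{A'}$ is $n$-truncated). Each $B_\al \ot_A A' \to B \ot_A A'$ is flat because flatness of $H^0(B)$ over $H^0(B_\al)$ and the isomorphisms $H^0(B)\ot_{H^0(B_\al)}H^n(B_\al)\to H^n(B)$ are preserved under the base change $H^0(B_\al)\to H^0(B_\al\ot_A A')$ (here one uses that $B_\al$ is flat or at least that the relevant Tor groups vanish — more carefully, one uses $H^*(B\ot_A A')\cong H^*(B)\ot_{H^0(B_\al)}^{\mathbb L}H^0(B_\al\ot_AA')$ type base-change, combined with flatness). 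So $\colim(B_\al\ot_AA')$ is a strictly tame presentation of order $n$ of $B\ot_AA'$ over $A'$.

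For (2), composition: write $B\cong\colim B_\al$ over $A$ and $C\cong\colim C_\be$ over $B$, strictly tame presentations of order $n$. Given a finitely $n$-presented $A$-algebra $D\to C$, I want to factor it through something flat and finitely $n$-presented over $A$. The composite $D\to C$ lands, after enlarging, in the image of some $C_\be$; but $C_\be$ is finitely $n$-presented over $B$, hence over some $B_\al$ (by compactness, $C_\be\cong\colim_\al C_\be\ot_B B_\al$-type argument, or directly: $C_\be$ finitely $n$-presented over $B\cong\colim B_\al$ descends to some $B_\al$). Then $C_\be$ (or its descended model over $B_\al$) is finitely $n$-presented over $A$ via the composite $A\to B_\al$, and $C$ is flat over it: flatness of $C$ over $C_\be$ is given, and $C$ over $B_\al$ — wait, we need $C$ flat over the descended $A$-algebra, which follows from $C$ flat over $C_\be$ composed with $C_\be$ flat over... this is the subtle point. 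I would instead argue via condition (2) transitively: use that $C$ is flat over $C_\be$, $C_\be$ is finitely $n$-presented over $B_\al$, and then further factor using strict tame presentation of $B$ over $A$ to absorb $B_\al$; the cofinality arguments from the proof of Proposition \ref{prop:reasalg} assemble these into a single flat finitely $n$-presented $A$-algebra dominated by $D$. The main obstacle is precisely this bookkeeping: ensuring the two flatness conditions compose to flatness over $A$, which needs the transitivity of flatness (flat over flat is flat, valid on $H^0$ and on the higher $H^n$ by the module-level statement) together with a cofinality argument to realize everything inside a single filtered colimit.

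For (3), the two-out-of-three statement: the "only if" direction (if $\psi\circ\phi$ is of strictly tame presentation and $\phi$ is almost of finite presentation, then $\psi$ is) uses condition (2) directly — given $C\to C'$ with $C'$ finitely $n$-presented over $B$, note $C'$ is then finitely $n$-presented over $A$ (composing with the finitely $n$-presented $\phi$, using \cite[Prop. 4.1.3.1]{LurSAG}), so by strict tame presentation of $\psi\circ\phi$ it factors through a flat finitely $n$-presented $A$-algebra $C''\to C$; but $C''$ receiving a map from $C'$ which already lives over $B$ means $C''$ can be taken over $B$ (again by \cite[Prop. 4.1.3.1]{LurSAG}, $(\CAlg^{\nfp}_A)_{C'/}\cong\CAlg^{\nfp}_{C'}$ and $C'$ is over $B$), and finitely $n$-presented over $B$. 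The "if" direction is a special case of (2) together with the observation that an almost-finitely-presented (resp. finitely $n$-presented) morphism is trivially of strictly tame presentation (take the constant colimit, which is flat over itself). I expect (3) to be the easiest part once (2) is in hand, and I expect the flatness-transitivity step inside (2) to be where the real care is required.
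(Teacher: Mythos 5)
Your overall strategy coincides with the paper's: prove the $n$-presented statements and deduce the rest, use criterion (2) of Proposition \ref{prop:reasalg} for parts (2) and (3), and use cancellation for finitely $n$-presented morphisms (\cite[Prop. 4.1.3.1]{LurSAG}) in part (3). Parts (1) and (3) are essentially right, though in (1) your parenthetical claim that the $B_\al \ot_A A'$ are already $n$-truncated is false (take $A = k[x]$, $B_\al = A' = k$: the derived tensor product has nonzero $H^{-1}$); the fix is simply to truncate, i.e. to work with $\tau_{\leq n}(B_\al \ot_A A')$ and use that $\tau_{\leq n}$ is continuous and monoidal so that $\tau_{\leq n}(B \ot_A A') \cong \colim \tau_{\leq n}(B_\al \ot_A A')$, and that flatness survives both base change and truncation. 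This is exactly what the paper does, and your argument becomes correct once the truncations are inserted.

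The genuine gap is in part (2), at precisely the step you flag as ``the subtle point'' and then leave unresolved. Given $D \to C$ with $D$ finitely $n$-presented over $A$, you correctly factor it through some $C_\be$ and correctly note that $C_\be$ descends to a finitely $n$-presented algebra over some $B_\al$; but you never produce the actual intermediate object that is simultaneously (i) finitely $n$-presented over $A$, (ii) receives the map from $D$, and (iii) maps \emph{flatly} to $C$. Appealing to ``the cofinality arguments from Proposition \ref{prop:reasalg}'' does not supply this. The missing construction is: by Noetherian approximation write $C_\be \cong \tau_{\leq n}(B \ot_{B_\al} C_{\al\be})$ for a finitely $n$-presented $B_\al$-algebra $C_{\al\be}$, note the truncation is unnecessary because $B$ is flat over $B_\al$, set $C_{\ga\be} := B_\ga \ot_{B_\al} C_{\al\be}$ so that $C_\be \cong \colim_{\ga \geq \al} \tau_{\leq n} C_{\ga\be}$, and apply compactness of $D$ a \emph{second} time to factor $D \to C_\be$ through some $\tau_{\leq n} C_{\ga\be}$. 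This term is finitely $n$-presented over $A$ (via $B_\ga$), and it maps flatly to $C_\be$ because $\tau_{\leq n}C_{\ga\be} \to C_\be$ is a truncation of the base change of the flat map $B_\ga \to B$; composing with the flat map $C_\be \to C$ gives the required flatness. Your instinct that the crux is ``flat over flat is flat'' is correct, but without the interpolating system $\{\tau_{\leq n}C_{\ga\be}\}_\ga$ there is no candidate object to which that transitivity applies, so as written the proof of (2) does not go through.
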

\begin{proof}
	Note that in each case it suffices to prove the claim about strictly tamely $n$-presented morphisms, as it implies the claim about strictly tamely presented morphisms. For~(1), let $B \cong \colim B_\al$ be a strictly tame presentation of order $n$. Then $\tau_{\leq n}(B_\al \ot_A A')$ is finitely $n$-presented over $A'$ for all $\al$ \cite[Prop. 4.1.3.2]{LurSAG}. Since $\tau_{\leq n}$ is continuous and compatible with the symmetric monoidal structure on $\Mod_A^{\leq 0}$ \cite[Prop. 7.1.3.15]{LurHA}, hence on $\CAlg_A$, we have 
	$$ \tau_{\leq n}(B \ot_A A') \cong \tau_{\leq n}((\tau_{\leq n}B) \ot_{\tau_{\leq n}A} \tau_{\leq n}A') \cong \colim \tau_{\leq n}(B_\al \ot_{\tau_{\leq n}A} \tau_{\leq n}A') \cong \colim \tau_{\leq n}(B_\al \ot_A A'). $$
	Since flatness is preserved by base change and $\tau_{\leq n}$, it follows that $\tau_{\leq n}(B \ot_A A')$ is strictly tamely $n$-presented over $A'$. 
	
	For (2), let $B \cong \colim B_\al$ and $C \cong \colim C_\be$ be strictly tame presentations of order $n$ over $A$ and $B$, respectively. Given a finitely $n$-presented $A$-algebra $A$ and a morphism $D \to C$ in $\CAlg_A$, we claim the criterion of Proposition \ref{prop:reasalg} is satisfied. Note first that $D \to C$ factors through some $C_\be$ since $D$ is compact in $\CAlgvarleqn{A}$. By Noetherian approximation \cite[Cor. 4.4.1.4]{LurSAG} we have $C_\be \cong \tau_{\leq n} (B \ot_{B_\al} C_{ \al \be})$ for some~$\al$ and some finitely $n$-presented $B_\al$-algebra $C_{\al \be}$. Since $B$ is flat over $B_\al$ we in fact have $C_\be \cong B \ot_{B_\al} C_{ \al \be}$. Letting $C_{\ga \be} := B_\ga \ot_{B_\al} C_{ \al \be}$ for $\ga \geq \al$, we moreover have $C_\be \cong \tau_{\leq n} C_\be \cong \colim_{\ga \geq \al} \tau_{\leq n} C_{\ga \be}$. 
	
	Again by compactness $D \to C_\be$ factors through some $\tau_{\leq n} C_{\ga \be}$. Now $B_\ga \to \tau_{\leq n} C_{\ga \be}$ is finitely $n$-presented since $B_\al \to C_{\al \be}$ is \cite[Prop. 4.1.3.2]{LurSAG}, hence the composition $A \to B_\ga \to \tau_{\leq n} C_{\ga \be}$ is finitely $n$-presented \cite[Prop. 4.1.3.1]{LurSAG}. But $\tau_{\leq n} C_{\ga \be} \to C_\be$ is flat since $B_\ga \to B$ is and since flatness is preserved by $\tau_{\leq n}$, hence the composition $\tau_{\leq n} C_{\ga \be} \to C_\be \to C$ is flat and we are done.
	
	For (3), suppose $C \cong \colim C_\al$ is a strictly tame presentation of order $n$ over $A$. By compactness of $B$ in $\CAlgvarleqn{A}$ we have that $B \to C$ factors through some $C_\al$. By \cite[Prop. 4.1.3.1]{LurSAG} the morphism $B \to C_\al$ is finitely $n$-presented, as is the composition $B \to C_\al \to C_\be$ for all $\be \geq \al$. But then $C \cong \colim_{\be \geq \al} C_\be$ is a strictly tame presentation of order $n$ over $B$. This proves the only if direction, and the if direction follows from (2). 
\end{proof}

Note that if $B \cong \colim B_\al$ is a filtered colimit of finitely $n$-presented algebras such that the structure maps $B_\al \to B_\be$ are flat, it follows that $B$ is flat over each $B_\al$. We do not know if every strictly tamely $n$-presented algebra admits a presentation with this stronger property, but it is easy to construct presentations which do not have it. 

\begin{Example}\label{ex:infiniteA1localization}
	Given $\{a_n\}_{n \in \bN} \subset \C$, let $B_n$ be the localization of $\C[x,y]/((x - a_n)y)$ by the elements $\{x - a_m\}_{m < n}$. In other words, $\Spec B_n$ is $\A^1$ with the points $a_1, \dotsc, a_{n-1}$ removed and with another $\A^1$ intersecting at the point $a_n$. If $B_n \to B_{n+1}$ takes $x$ to $x$ and $y$ to $0$, then $B := \colim B_n$ is the localization of $\C[x]$ by the elements $\{x - a_n\}_{n \in \bN}$. In particular it is a localization of each $B_n$, hence each $B_n \to B$ is flat even though no $B_m \to B_n$ is. 
\end{Example}

On the other hand, if $B \cong \colim B_\al$ is a strictly tame presentation of order $n$ and $B$ is \emph{faithfully} flat over each $B_\al$, it follows that the structure morphisms $B_\al \to B_\be$ are also faithfully flat \cite[Lem. B.1.4.2]{LurSAG}. But many natural examples, in particular those of the following class, do not admit such presentations. 

\begin{Example}\label{ex:essfinpresisstricttamepres}
	If $A$ is an ordinary ring, recall that an ordinary $A$-algebra $B$ is essentially finitely presented if it is a localization $B \cong S^{-1}C$ of a finitely zero-presented $A$-algebra $C$. In this case $B \cong \colim S_{fin}^{-1} C$, where the colimit is over all finite subsets $S_{fin} \subset S$. This is a strictly tame presentation of order zero, hence any essentially finitely presented algebra is strictly tamely zero-presented. 
\end{Example}

Often we can write an $A$-algebra of strictly tame presentation as a filtered colimit $B \cong \colim B_\al$ such that each $B_\al$ is almost finitely presented over $A$ and $B$ is flat over each $B_\al$. However, Example \ref{ex:weirdex} below shows that not every example is of this form. Moreover, the associated class of morphisms in $\CAlgk$ is not obviously stable under composition, since almost finitely presented algebras cannot be directly controlled by Noetherian approximation the way finitely $n$-presented algebras can be. The situation is improved by suitable truncatedness hypotheses, but these are in turn not stable under base change. Thus Proposition~\ref{prop:reasstabprops2} does not extend robustly to such morphisms. 

\begin{Example}\label{ex:weirdex}
	Let $A_0 = \C[x_1, x_2, \dotsc]$, $A_n = A_0/(x_1,\dotsc,x_n)$, and $A$ the trivial square-zero extension of $A_0$ by $\oplus_{n > 0} A_n[n]$. Then~$A$ is a strictly tamely presented $\C$-algebra, but cannot be written as a filtered colimit of almost finitely presented $\C$-algebras along flat morphisms: $A_0$ has no finitely presented subalgebra from which each $A_n$ is obtained by base change, which flatness would require. 
\end{Example}

Recall that an ordinary commutative ring $A$ is coherent if every finitely generated ideal is finitely presented. More generally, $A \in \CAlgk$ is coherent if $\clalg{A}$ is coherent in the above sense and $H^n(A)$ is a finitely presented $\clalg{A}$-module for all $n$. In general this is a brittle property, and coherence of an ordinary ring $A$ does not even imply coherence of $A[x]$. But tamely presentedness over a Noetherian base implies a more robust form of coherence. 

\begin{Proposition}\label{prop:stablecoh}
	Let $A$ be a strictly tamely presented $\kk$-algebra. Then $A$ is coherent, as is any strictly tamely presented $A$-algebra. 
\end{Proposition}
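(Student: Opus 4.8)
The plan is to first dispose of the second assertion by reducing it to the first: if $B$ is a strictly tamely presented $A$-algebra and $A$ is strictly tamely presented over $\kk$, then by Proposition~\ref{prop:reasstabprops2}(2) the composite $\kk \to A \to B$ is again of strictly tame presentation, so $B$ is a strictly tamely presented $\kk$-algebra. Hence it is enough to prove: every strictly tamely presented $\kk$-algebra $A$ is coherent, i.e.\ $R := H^0(A)$ is a coherent ordinary ring and $H^n(A)$ is a finitely presented $R$-module for all $n$.

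For coherence of $R$, I would use that $R = \tau_{\leq 0}A$ is strictly tamely zero-presented, so (by the elementary variant of Proposition~\ref{prop:reasalg} noted after Definition~\ref{def:reasalg}) $R$ is the filtered union of finitely zero-presented $\kk$-subalgebras $R_\al$ over each of which $R$ is flat; each $R_\al$ is finitely presented over the Noetherian ring $H^0(\kk)$, hence Noetherian. Given a finitely generated ideal $I \subseteq R$, its generators lie in some $R_\al$; the ideal $I_\al \subseteq R_\al$ they generate is finitely presented since $R_\al$ is Noetherian, flatness of $R$ over $R_\al$ gives $I_\al \otimes_{R_\al} R \cong I_\al R = I$, and applying $-\otimes_{R_\al} R$ to a finite free presentation of $I_\al$ produces one of $I$. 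This is essentially the classical statement cited in the introduction \cite[Sec.~I.2 Ex.~12]{Bou72}.

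For $H^n(A)$ with $n < 0$, I would set $m = -n$ and fix a strictly tame presentation $\tau_{\leq m}A \cong \colim_\al B_\al$ of order $m$, so each $B_\al$ is finitely $m$-presented over $\kk$ and $\tau_{\leq m}A$ is flat over each $B_\al$. Since $H^i(\tau_{\leq m}A) = H^i(A)$ for $i \geq -m$, unwinding the definition of flatness gives $R$ flat over $H^0(B_\al)$ together with isomorphisms $R \otimes_{H^0(B_\al)} H^i(B_\al) \cong H^i(A)$ for $i \geq -m$, in particular $R \otimes_{H^0(B_\al)} H^n(B_\al) \cong H^n(A)$. Since $B_\al$ is finitely $m$-presented over the Noetherian (hence coherent) base $\kk$, its $H^0$ is Noetherian and each $H^i(B_\al)$ is a finitely generated, hence finitely presented, $H^0(B_\al)$-module; base-changing a finite free presentation of $H^n(B_\al)$ along $H^0(B_\al) \to R$ exhibits $H^n(A)$ as a finitely presented $R$-module. (For $n \geq 0$ the statement is vacuous or is the coherence of $R$ just established.)

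The hard part is not conceptual but is packaging the needed finiteness input: that a finitely $m$-presented algebra over a Noetherian base has Noetherian $H^0$ and finitely generated higher cohomology (which follows from \cite[Rem.~4.1.1.9]{LurSAG} together with coherence of $\kk$). One subtlety to keep in mind throughout is that the flat base change must be run at a single finite stage $R_\al$ or $B_\al$, not in the colimit: the transition maps of a strict tame presentation need not be flat (Example~\ref{ex:infiniteA1localization}), and only the maps from each term to $A$ are assumed flat.
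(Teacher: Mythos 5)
Your proof is correct and follows essentially the same route as the paper's: reduce the second claim to the first via stability under composition (Proposition~\ref{prop:reasstabprops2}), prove coherence of $H^0(A)$ by descending a finitely generated ideal to a Noetherian stage of an order-zero presentation and base-changing along the flat map to $H^0(A)$ (the Bourbaki argument), and handle $H^n(A)$ by descending to a Noetherian term of an order-$(-n)$ presentation and using the definition of flatness. The only differences are cosmetic (you base-change a presentation of the ideal itself rather than the kernel of a surjection onto it, and you are more careful with the sign of the truncation index), and your closing caveat about running the argument at a single finite stage rather than in the colimit is exactly the right point to watch.
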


\begin{proof}
	The second claim follows from the first by the stability of strictly tamely presented morphisms under composition (Proposition \ref{prop:reasstabprops2}). 
	That $H^0(A)$ is coherent is essentially \cite[Sec. I.2 Ex. 12]{Bou72}, but we repeat the argument. Let $H^0(A) \cong \colim A_\al$ be a strictly tame presentation of order zero over $\kk$. If $I \subset H^0(A)$ is a finitely generated ideal, we can write it as the image of a morphism $\phi: H^0(A)^n \to H^0(A)$ for some $n$. This is obtained by extension of scalars from some $\phi_\al: A_\al^n \to A_\al$ for some $\al$. The kernel of $\phi_\al$ is finitely generated since $\kk$ and hence $A_\al$ are Noetherian, but $\ker \phi \cong (\ker \phi_\al) \ot_{A_\al} H^0(A)$ since $H^0(A)$ is flat over $A_\al$. 
	
	Now fix $n$, let $\tau_{\leq n} A \cong \colim A_\al$ be a strictly tame presentation of order $n$, and choose some $\al$. Each $A_\al$ is Noetherian \cite[Prop. 4.2.4.1]{LurSAG}, hence $H^n(A_\al)$ is a finitely presented $\clalg{ A_\al}$-module. Since $A_\al \to \tau_{\leq n}A$ is flat $H^n(A) \cong H^n(A_\al) \ot_{\clalg{ A_\al}} \clalg{ A}$, hence $H^n(A)$ is a finitely presented $\clalg{A}$-module. The claim follows since $n$ was arbitrary. 
\end{proof}

\begin{Remark}\label{rem:stablycoh}
Recall that an ordinary commutative ring is stably coherent if any finitely generated algebra over it is coherent. The above proof almost adapts to show that $B$ is coherent if it is strictly tamely presented over an $A \in \CAlgk$ such that $A$ is coherent and $\clalg{A}$ is stably coherent. No changes are needed if $B$ is an ordinary ring, while \cite[Prop. 5.2.2.1]{LurSAG} can be leveraged if $\clalg{A}$ is of characteristic zero, or more generally if $A \to B$ arises from a morphism of animated/simplicial commutative rings (as do the terms in the needed strictly tame presentations). Plausibly these restrictions are unnecessary, the only question being whether the hypotheses on $A$ imply the free algebra $A_m := A\{x_1, \dotsc, x_m\}$ is coherent (i.e. if each $H^n(A_m)$ is finitely presented over $H^0(A_m)$). On the other hand, we do not know an $A$ satisfying these hypotheses which is not strictly tamely presented over a Noetherian ring (possibly after passing to a flat cover). 
\end{Remark}

\subsection{Coherent pullback}
Recall that an $A$-module $M$ is almost perfect if $\tau^{\geq n} M$ is compact in $\Mod_A^{\geq n}$ for all $n$ \cite[Rem. 2.7.0.5]{LurSAG}. If $M$ is almost perfect it is right bounded, and if $A$ is coherent $M$ is almost perfect if and only if it is right bounded and $H^n(M)$ is a finitely presented $\clalg{A}$-module for all $n$. If $A$ is an ordinary ring, $M$ is almost perfect if and only if it is pseudo-coherent in the sense of \cite{Ill71} (see \cite[Rem. 2.8.4.6]{LurSAG}). 

We say $M \in \Mod_A$ is coherent if it is almost perfect and (left) bounded, and denote the full subcategory of coherent modules by $\Coh_A \subset \Mod_A$. We recall the following definition from the introduction (we will use the same terminology for algebra morphisms as for the associated morphisms of affine schemes).

\begin{Definition}\label{def:cohpull}
A morphism $A \to B$ in $\CAlgk$ has coherent pullback if $M \ot_A B$ is a coherent $B$-module for every coherent $A$-module $M$. 
\end{Definition}

Equivalently, $A \to B$ has coherent pullback if and only if $M \ot_A B$ is (left) bounded for every coherent $A$-module $M$, since $M \ot_A B$ is almost perfect over $B$ if $M$ is almost perfect over $A$ \cite[Prop. 2.7.3.1]{LurSAG}. We note that morphisms with coherent pullback are called eventually coconnective morphisms in \cite[Def. 3.5.2]{Gai13a}. The following example shows that in infinite type such morphisms are generally not stable under base change. 

\begin{Example}\label{ex:originintoAinfty}
Let $i: \{0\} \to \A^\infty = \Spec \C[x_1, x_2, \dotsc]$ denote the inclusion of the origin. Any coherent sheaf on $\A^\infty$ is the pullback of a coherent sheaf on some $\A^n$ along the (flat) projection $\A^\infty \to \A^n$. Thus $i$ has coherent pullback since its composition with each such projection does, which follows since each $\A^n$ is a smooth variety.

On the other hand, consider the self-intersection $X = \{0\} \times_{\A^\infty} \{0\}$. We have $X \cong \Spec A$, where $A$ is a symmetric algebra on countably many generators in degree $\shortminus 1$. Thus $H^n (A)$ is nonzero for all $n < 0$, hence $A$ is not coherent as a module over itself (it is perfect but not bounded). In particular, the map $X \to \{0\}$ does not have coherent pullback, even though it is a base change of $i$. 
\end{Example}

However, the following result says that pathologies only arise in the above example because the map we are base changing along is ``too far'' from being finitely presented, and that they do not appear if we only consider tamely presented base change. 

\begin{Theorem}\label{thm:reasonableaffinecohbasechange}
	Consider a diagram of the following form in $\CAlgk$.
	\begin{equation}\label{eq:reasonableaffinecohbasechange1}
		\begin{tikzpicture}[baseline=(current  bounding  box.center),thick,>=\arrtip]
			\newcommand*{\hb}{3}; \newcommand*{\va}{1.5}
			\node (aa) at (0,0) {$A$};
			\node (ab) at (\hb,0) {$A'$};
			\node (ba) at (0,-\va) {$B$};
			\node (bb) at (\hb,-\va) {$B \ot_A A'$};
			\draw[->] (aa) to node[above] {$\psi $} (ab); 
			\draw[->] (aa) to node[left] {$\phi $} (ba);
			\draw[->] (ab) to node[right] {$\phi' $} (bb);
			\draw[->] (ba) to node[above,pos=.7] {$\psi' $} (bb);
		\end{tikzpicture}
	\end{equation}
Suppose that $\phi$ has coherent pullback, $\psi$ is strictly tamely presented, and $A$ is strictly tamely presented over $\kk$. Then $\phi'$ has coherent pullback. 
\end{Theorem}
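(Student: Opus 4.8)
The plan is to reduce, via several flat base-change steps, to a situation over a \emph{Noetherian} ring, where coherent pullback automatically upgrades to finite Tor-dimension. First I would reformulate the goal: since $M \otimes_{A'} (B \otimes_A A') \cong M \otimes_A B$ for $M \in \Mod_{A'}$, and $M \otimes_A B$ is automatically almost perfect over $B$ when $M$ is almost perfect over $A$ \cite[Prop. 2.7.3.1]{LurSAG}, it suffices to show $M \otimes_A B$ is bounded below for every $M \in \Coh_{A'}$. Here $A'$ is strictly tamely presented over $\kk$ by Proposition \ref{prop:reasstabprops2} and hence coherent by Proposition \ref{prop:stablecoh}, so $\Coh_{A'}$ is well behaved. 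Running the finite Postnikov tower of such an $M$ and using exactness of $- \otimes_A B$, one reduces to the case where $M$ is a finitely presented $H^0(A')$-module.

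Next I would reduce to an ordinary coherent base. As $H^0(A) \in \Coh_A$, the algebra $\bar B := H^0(A) \otimes_A B$ is bounded, and $H^0(A) \to \bar B$ still has coherent pullback because a coherent $H^0(A)$-module is coherent over $A$. Writing $H^0(A') \cong \colim_\al S_\al$ for a strictly tame presentation of order $0$ over $H^0(A)$ (available since $H^0(\psi)$ is strictly tamely zero-presented) and $M \cong M_\al \otimes_{S_\al} H^0(A')$ with $M_\al \in \Coh_{S_\al}$, flat base change along $S_\al \to H^0(A')$ identifies $M \otimes_A B$ with a flat base change of $M_\al \otimes_{H^0(A)} \bar B$. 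This reduces the theorem to the following special case: $A$ an ordinary coherent ring, strictly tamely presented over $\kk$; $\phi : A \to B$ with coherent pullback and $B$ bounded; $S$ a finitely presented $A$-algebra; and we must show $M \otimes_A B$ is bounded below for every $M \in \Coh_S$.

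To handle this case I would use Noetherian approximation. Dévissage again lets one take $M$ a discrete finitely presented $S$-module. Writing $A \cong \colim_\be A_\be$ for a strictly tame presentation over $\kk$, the $A_\be$ are Noetherian \cite[Prop. 4.2.4.1]{LurSAG} and $A$ is flat over each. Descend $S$ to a finitely presented $A_\be$-algebra $S_\be$ \cite[Cor. 4.4.1.4]{LurSAG}, put $S_{\be'} := S_\be \otimes_{A_\be} A_{\be'}$ for $\be' \geq \be$ (Noetherian, with $S \cong \colim S_{\be'}$ flat over each), and descend the presentation of $M$ to a discrete finitely generated $S_{\be'}$-module $M_{\be'}$ with $M \cong M_{\be'} \otimes_{S_{\be'}} S \cong M_{\be'} \otimes_{A_{\be'}} A$. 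Flatness of $A$ over $A_{\be'}$ then gives $M \otimes_A B \cong M_{\be'} \otimes_{A_{\be'}} B$. Now $A_{\be'} \to B$ has coherent pullback, since a coherent $A_{\be'}$-module base-changes along the flat map $A_{\be'} \to A$ to a coherent $A$-module, to which coherent pullback of $\phi$ applies; as $A_{\be'}$ is Noetherian, a standard Noetherian induction on $\Spec A_{\be'}$ shows $B$ has finite Tor-dimension over $A_{\be'}$ (boundedness of $M' \otimes_{A_{\be'}} B$ for all coherent $M'$, equivalently all $A_{\be'}/\mathfrak{p}$, forces a uniform Tor-amplitude bound). Finite Tor-dimension controls $- \otimes_{A_{\be'}} B$ on every bounded module, in particular on the discrete $M_{\be'}$, so $M_{\be'} \otimes_{A_{\be'}} B$ is bounded below and we are done.

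The crux is the passage in the last paragraph. Coherent pullback is strictly weaker than finite Tor-dimension outside finite type, and base change along $\psi$ produces modules in $\Coh_{A'}$ (or $\Coh_S$) that are genuinely larger than anything coherent over $A$, so the coherent pullback of $\phi$ cannot be applied to them directly — exactly why naive base change fails (cf. the diagonal of $\A^\infty$). The whole point is to retreat to a Noetherian approximation $A_{\be'}$ of $A$, over which coherent pullback \emph{does} imply finite Tor-dimension, a property that, unlike coherent pullback, bounds $- \otimes^{\mathbb{L}} B$ on all bounded modules; the flatness hypotheses built into tame presentations are what make each retreat step an equivalence on the relevant boundedness. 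I expect the various flat-base-change identities and the Noetherian descent of presentations to be routine but somewhat lengthy; the one substantive input is the Noetherian case ``coherent pullback $\Rightarrow$ finite Tor-dimension'', which is where the real work lies.
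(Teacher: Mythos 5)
Your opening reductions (pass to $H^0$, use the strictly tame presentation of $A'$ to descend $M$ to a finitely presented stage, and reduce to showing boundedness of $M\ot_A B$ for $M$ coherent over a finitely presented $A$-algebra) track the paper's proof. The divergence, and the gap, is in how you handle that finitely presented case. You Noetherian-approximate the \emph{base}, writing $A \cong \colim A_\be$ with $A_\be$ Noetherian, and then assert that $A_{\be'} \to B$, having coherent pullback over the Noetherian ring $A_{\be'}$, must be of finite Tor-dimension "by a standard Noetherian induction on $\Spec A_{\be'}$." This is not standard, and as stated it is unsupported. The paper's own version of this implication is Proposition \ref{prop:finitetor2}, which carries the \emph{additional} hypothesis that the morphism is of strictly tame presentation; that hypothesis is used essentially (one factors through a finitely presented intermediate ring $A'$, flat under $B$, and stabilizes the open Tor-dimension loci $U_n$ using the Noetherian topology of $|\Spec A'|$), and its proof moreover invokes Theorem \ref{thm:reasonableaffinecohbasechange} itself. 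Your $B$ is an arbitrary algebra with coherent pullback — there is no finitely presented intermediary and no coherence of $B$ over $A_{\be'}$ — so neither the paper's argument nor the usual finite-type argument (Gaitsgory's \cite[Lem. 3.6.3]{Gai13a}, where both source and target are of finite type) applies. Concretely, the Noetherian induction you gesture at requires passing a Tor-amplitude bound from the generic point of an irreducible closed subset to a dense open neighborhood; for a non-pseudo-coherent complex such as $B$ this is exactly a generic-freeness statement that is unavailable, and knowing $A_{\be'}/\mathfrak{p}\ot B$ is bounded for each prime separately does not by itself yield the uniform bound over all primes that finite Tor-dimension requires.

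The paper sidesteps this entirely with a different key input: Swan's theorem on coherence of polynomial extensions, packaged as Lemma \ref{lem:A[x]coh}, which says that for coherent $A$ with $A[x]$ coherent, $\Coh_{A[x]}$ is the smallest stable subcategory of $\Mod_{A[x]}$ containing the image of $\Coh_A$ under $-\ot_A A[x]$. This lets one propagate coherent pullback from $A$ to $A[x_1,\dotsc,x_n]$ (and hence to any finitely presented $A'$, via restriction of scalars along the surjection) without ever producing a finite Tor-dimension bound. If you want to salvage your route, you would need to either prove your Noetherian claim for arbitrary algebras with coherent pullback (which I doubt is true in the required generality, and which in any case is a harder statement than the theorem) or find a substitute for Lemma \ref{lem:A[x]coh}.
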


The proof will use the following reformulation of \cite[Thm. 7.1]{Swa19}. 

\begin{Lemma}\label{lem:A[x]coh}
Let $A$ be an ordinary commutative ring such that $A$ and $A[x]$ are coherent. Then $\Coh_{A[x]}$ is the smallest full stable subcategory of $\Mod_{A[x]}$ which contains the essential image of $\Coh_A$ under $- \ot_A A[x]$. 
\end{Lemma}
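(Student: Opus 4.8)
The plan is to reduce the statement to \cite[Thm. 7.1]{Swa19}, which (in its module-theoretic form) asserts that over a coherent ring $A$ with $A[x]$ coherent, every finitely presented $A[x]$-module admits a finite resolution by modules of the form $P \otimes_A A[x]$ with $P$ a finitely presented $A$-module; equivalently, that the derived category $\Coh_{A[x]}$ is generated, under finite (co)limits and retracts, by the image of $\Coh_A$. More precisely, I would first recall that since $A$ and $A[x]$ are coherent, $\Coh_A$ and $\Coh_{A[x]}$ are the bounded derived categories of their abelian categories of finitely presented modules, so it suffices to check the statement on hearts: every finitely presented $A[x]$-module lies in the smallest full stable subcategory $\catD \subseteq \Mod_{A[x]}$ containing the essential image of $-\otimes_A A[x]$ on $\Coh_A$.

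The key steps, in order: (1) Observe that $\catD$ is a thick stable subcategory, hence closed under shifts, cofibers, and retracts, and by the coherence of $A[x]$ it is also closed under the truncation functors applied to its objects (the $H^n$ of an object of $\catD$ are finitely presented $A[x]$-modules). (2) Let $M$ be a finitely presented $A[x]$-module. By \cite[Thm. 7.1]{Swa19}, $M$ fits into a finite exact sequence $0 \to P_d \otimes_A A[x] \to \cdots \to P_0 \otimes_A A[x] \to M \to 0$ with each $P_i$ a finitely presented $A$-module; this is exactly the statement that $M$ is built from the $P_i \otimes_A A[x]$ by iterated extensions in $\Mod_{A[x]}^\heartsuit$, hence by iterated cofiber sequences in $\Mod_{A[x]}$. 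Therefore $M \in \catD$. (3) Conversely, $\Coh_{A[x]}$ is itself a full stable subcategory of $\Mod_{A[x]}$ containing the essential image of $\Coh_A$ (pullback along $A \to A[x]$ preserves coherence since $A[x]$ is flat over $A$ and coherent), so $\catD \subseteq \Coh_{A[x]}$; combined with step (2) and closure of $\catD$ under shifts and cofibers, every object of $\Coh_{A[x]}$ — being a finite iterated extension of shifts of its cohomology modules — lies in $\catD$. Hence $\catD = \Coh_{A[x]}$.

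The main obstacle is making precise the translation between Swan's resolution statement, which is phrased for individual finitely presented modules over the ordinary ring $A[x]$, and the statement about the stable $\infty$-category $\Coh_{A[x]}$: one must know that a finite exact sequence of ordinary modules yields a corresponding filtration by cofiber sequences in $\Mod_{A[x]}$, and that an arbitrary coherent complex is reconstructed from its (finitely presented) cohomology modules via the standard Postnikov/truncation tower — both standard, but worth stating carefully since the whole point of working over a coherent rather than Noetherian ring is that $\Coh$ is well-behaved precisely because finitely presented modules form an abelian category. A secondary point is to confirm that $\catD$, defined as the \emph{smallest} full stable subcategory containing the image of $\Coh_A$, is automatically idempotent-complete here — but this follows since it is a full subcategory of the idempotent-complete category $\Coh_{A[x]}$ and contains enough objects, or can simply be absorbed by noting the resolutions above never require splitting off summands.
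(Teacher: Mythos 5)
Your proposal is correct and follows essentially the same route as the paper: reduce to the heart using stability of the subcategory and coherence of $A[x]$, then resolve a finitely presented $A[x]$-module by extended modules via Swan. The only (harmless) difference is one of packaging: the paper extracts from the \emph{proof} of \cite[Thm. 7.1]{Swa19} that the second syzygy $N$ of $M$ sits in an extension $0 \to N' \ot_A A[x] \to N'' \ot_A A[x] \to N \to 0$ and splices this with a free presentation, whereas you cite the resulting finite extended resolution of $M$ directly as the "module-theoretic form" of that theorem — so just be careful that this is a consequence of Swan's argument rather than the literal statement of his Theorem 7.1.
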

\begin{proof}
Let $\catC \subset \Mod_{A[x]}$ be the smallest full stable subcategory containing the essential image of $\Coh_A$, or equivalently of $\Coh_A^\heartsuit$. Since $A[x]$ is coherent it suffices to show $\Coh_{A[x]}^\heartsuit \subset \catC$. 

Given $M \in \Coh_{A[x]}^\heartsuit$, choose an exact sequence $0 \to N \to F \to F' \to M \to 0$ with $F$ and $F'$ free of finite rank. Certainly $F, F' \in \catC$, so it suffices to show $N \in \catC$. But this follows from the proof of \cite[Thm. 7.1]{Swa19}, which shows that $N$ fits into an exact sequence $0 \to N' \ot_A A[x] \to N'' \ot_A A[x] \to N \to 0$ with $N', N'' \in \Coh_A^\heartsuit$. 
\end{proof}

\begin{proof}[Proof of Theorem \ref{thm:reasonableaffinecohbasechange}] 
	Set $B' := B \ot_A A'$. Consider the following diagram in $\CAlgk$, where all but the top and bottom faces are coCartesian. 
	\begin{equation}\label{eq:reasonableaffinecohbasechange2}
		\begin{tikzpicture}[baseline=(current  bounding  box.center),thick,>=\arrtip]
			\newcommand*{\ha}{1.5}; \newcommand*{\hb}{1.75}; \newcommand*{\hc}{1.5};
\newcommand*{\va}{-.9}; \newcommand*{\vb}{-.9}; \newcommand*{\vc}{-.9}; 
			\node (ab) at (\ha,0) {$A$};
			\node (ad) at (\ha+\hb+\hc,0) {$A'$};
			\node (ba) at (0,\va) {$\clalg{A}$};
			\node (bc) at (\ha+\hb,\va) {$\clalg{A'}$};
			\node (cb) at (\ha,\va+\vb) {$B$};
			\node (cd) at (\ha+\hb+\hc,\va+\vb) {$B'$};
			\node (da) at (0,\va+\vb+\vc) {$\clalg{A} \ot_A B$};
			\node (dc) at (\ha+\hb,\va+\vb+\vc) {$\clalg{A'} \ot_{A'} B'$};
			\draw[->] (ab) to node[above] {$  $} (ad);
			\draw[->] (ab) to node[above left, pos=.25] {$  $} (ba);
			\draw[->] (ab) to node[right,pos=.2] {$\phi  $} (cb);
			\draw[->] (ad) to node[below right] {$  $} (bc);
			\draw[->] (ad) to node[right] {$\phi'  $} (cd);
			\draw[->] (ba) to node[left] {$\xi $} (da);
			\draw[->] (cb) to node[above,pos=.25] {$  $} (cd);
			\draw[->] (cb) to node[above left, pos=.25] {$ $} (da);
			\draw[->] (cd) to node[below right] {$ $} (dc);
			\draw[->] (da) to node[above,pos=.75] {$ $} (dc);
			
			\draw[-,line width=6pt,draw=white] (ba) to  (bc);
			\draw[->] (ba) to node[above,pos=.75] {$ $} (bc);
			\draw[-,line width=6pt,draw=white] (bc) to  (dc);
			\draw[->] (bc) to node[right,pos=.2] {$\xi' $} (dc);
		\end{tikzpicture}
	\end{equation}
Proposition \ref{prop:stablecoh} implies $A$ and $A'$ are coherent. In particular, restriction of scalars along $A \to \clalg{A}$ preserves coherence, hence $\xi$ has coherent pullback since $\phi$ does. Moreover, it suffices to show $M \otimes_{A'} B'$ is bounded for $M \in \Coh_{A'}^\heartsuit$. But any such $M$ is obtained by restriction of scalars along $A' \to \clalg{A'}$, hence it suffices to show~$\xi'$ has coherent pullback. 

Since $\kk$ is Noetherian, $\kk \to \clalg{A}$ is strictly tamely presented since $\kk \to A$ is \cite[Prop. 7.2.4.31]{LurHA}. Similarly $\clalg{\psi}$ is strictly tamely presented: if $\clalg{A'} \cong \colim A'_\al$ is a strictly tame presentation of order zero, each $A'_\al$ is almost finitely presented over $\clalg{A}$ by \cite[Cor. 5.2.2.3]{LurSAG} (note that any polynomial ring over $\clalg{A}$ is coherent by Proposition \ref{prop:stablecoh}). 
Replacing (\ref{eq:reasonableaffinecohbasechange1}) with the front face of (\ref{eq:reasonableaffinecohbasechange2}), we may thus assume $A$ and $A'$ are ordinary commutative rings.  
	
Next suppose that $\psi$ is finitely zero-presented. For some $n$ we can factor (\ref{eq:reasonableaffinecohbasechange1}) as
\begin{equation}\label{eq:reasonableaffinecohbasechange3}
	\begin{tikzpicture}[baseline=(current  bounding  box.center),thick,>=\arrtip]
		\newcommand*{\ha}{3.7}; \newcommand*{\hb}{3.7}; \newcommand*{\va}{1.6}
		\node (aa) at (0,0) {$A$};
		\node (ab) at (\ha,0) {$A[x_1,\dotsc,x_n]$};
		\node (ac) at (\ha+\hb,0) {$A'$};
		\node (ba) at (0,-\va) {$B$};
		\node (bb) at (\ha,-\va) {$B \ot_A A[x_1,\dotsc,x_n]$};
		\node (bc) at (\ha+\hb,-\va) {$B'$,};
		\draw[->] (aa) to node[above] {$\theta $} (ab); 
		\draw[->] (ab) to node[above] {$\xi $} (ac); 
		\draw[->] (ba) to node[above] {$\theta' $} (bb); 
		\draw[->] (bb) to node[above] {$\xi' $} (bc); 
		\draw[->] (aa) to node[left] {$\phi $} (ba);
		\draw[->] (ab) to node[right] {$\phi'' $} (bb);
		\draw[->] (ac) to node[right] {$\phi' $} (bc);
	\end{tikzpicture}
\end{equation}
where $\xi$ is surjective and finitely presented.  Since $A[x_1, \dotsc, x_n]$ and $A'$ are coherent, restriction of scalars along $\xi$ preserves coherence. But restriction of scalars along $\xi'$ is conservative and t-exact, hence $\phi'$ has coherent pullback if $\phi''$ does.  Thus we may replace (\ref{eq:reasonableaffinecohbasechange1}) with the left square of (\ref{eq:reasonableaffinecohbasechange3}) and assume $A' \cong A[x_1,\dotsc,x_n]$, and by induction we may then assume $n = 1$. 

Now let $\catC \subset \Mod_{A'}$ be the full subcategory of $M$ such that $M \ot_{A'} B'$ is bounded. Since $- \ot_{A'} B'$ is exact, $\catC$ is stable. Since $\phi$ has coherent pullback and $\psi'$ is flat, $\catC$ contains the essential image of $\Coh_A$ under $- \ot_A A'$.  Thus $\Coh_{A'} \subset \catC$ by Lemma \ref{lem:A[x]coh}, hence $\phi'$ has coherent pullback.

Now suppose $A' \cong \colim A'_\al$ is a strictly tame presentation of order zero over $A$. Since $A'$ is coherent, it suffices to show that $M \ot_{A'} B'$ is bounded for any $M \in \Coh_{A'}^\heartsuit$. For each $\al$ we write the induced factorization of (\ref{eq:reasonableaffinecohbasechange1}) as
\begin{equation*}
	\begin{tikzpicture}[baseline=(current  bounding  box.center),thick,>=\arrtip]
		\newcommand*{\ha}{3}; \newcommand*{\hb}{3}; \newcommand*{\va}{1.6}
		\node (aa) at (0,0) {$A$};
		\node (ab) at (\ha,0) {$A'_\al$};
		\node (ac) at (\ha+\hb,0) {$A'$};
		\node (ba) at (0,-\va) {$B$};
		\node (bb) at (\ha,-\va) {$B'_\al$};
		\node (bc) at (\ha+\hb,-\va) {$B'$.};
		\draw[->] (aa) to node[above] {$\theta_\al $} (ab); 
		\draw[->] (ab) to node[above] {$\xi_\al $} (ac); 
		\draw[->] (ba) to node[above] {$\theta'_\al $} (bb); 
		\draw[->] (bb) to node[above] {$\xi'_\al $} (bc); 
		\draw[->] (aa) to node[left] {$\phi $} (ba);
		\draw[->] (ab) to node[right] {$\phi_\al $} (bb);
		\draw[->] (ac) to node[right] {$\phi' $} (bc);
	\end{tikzpicture}
\end{equation*}
By flatness of the $\xi_\al$ and by e.g. \cite[Cor. 4.5.1.10]{LurSAG} or \cite[Sec. C.4]{TT90}, there exists an $\al$ and $M_\al \in \Coh_{A'_\al}^\heartsuit$ such that $M \cong M_\al \ot_{A'_\al} A'$. Since $\theta_\al$ is finitely zero-presented, we have already shown that $M_\al \ot_{A'_\al} B'_\al$ is bounded. But $\xi'_\al$ is flat since $\xi_\al$ is, hence $M \ot_{A'} B' \cong M_\al \ot_{A'_\al} A' \ot_{A'} B' \cong M_\al \ot_{A'_\al} B'_\al \ot_{B'_\al} B'$ is also bounded. 
\end{proof}

\begin{Remark}
The conclusion of Theorem \ref{thm:reasonableaffinecohbasechange} holds if instead of $A$ being strictly tamely presented over $\kk$ we assume that $A$ and $A'$ are coherent and that $\clalg{A}$ is stably coherent. This is because the proof only uses this hypothesis on $A$ in order to apply Proposition \ref{prop:stablecoh}. 
\end{Remark}

Recall that a morphism $\phi: A \to B$ in $\CAlgk$ is of Tor-dimension $\leq n$ if $B \ot_A M \in \Mod_B^{\geq -n}$ for all  $M \in \Mod_A^\heartsuit$, and is of finite Tor-dimension if it is of Tor-dimension $\leq n$ for some $n$. Clearly $\phi$ has coherent pullback if it is of finite Tor-dimension. We have the following partial converse, which generalizes \cite[Lem. 3.6.3]{Gai13a}. 

\begin{Proposition}\label{prop:finitetor2}
	Suppose $\phi: A \to B$ is a morphism in $\CAlgk$ with coherent pullback. If $A$ is Noetherian and $\phi$ of strictly tame presentation, then $\phi$ is of finite Tor-dimension. 
\end{Proposition}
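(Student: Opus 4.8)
The plan is to bound the Tor-dimension of $\phi$ by a semicontinuity argument run on the terms of a strictly tame presentation of $B$, in the spirit of \cite[Lem.~3.6.3]{Gai13a}: coherent pullback forces the relevant Tor-dimension to be finite at each point of $\Spec\clalg{B}$, and the descending chain condition on a Noetherian space upgrades this to a uniform bound.

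First I would reduce to the case that $A$ is an ordinary Noetherian ring. For any $M \in \Mod_A^{\heartsuit}$ one has $B \otimes_A M \cong (B \otimes_A \clalg{A}) \otimes_{\clalg{A}} M$ since $M$ is a $\clalg{A}$-module, so $\phi$ and its base change $\clalg{A} \to B \otimes_A \clalg{A}$ have the same Tor-dimension. The latter is strictly tamely presented by Proposition~\ref{prop:reasstabprops2}(1), and it has coherent pullback by Theorem~\ref{thm:reasonableaffinecohbasechange} via the remark following it: here $A \to \clalg{A}$ is almost of finite presentation, hence strictly tamely presented, while $A$ and $\clalg{A}$ are coherent and $\clalg{A}$ is stably coherent because $A$ is Noetherian. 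So we may assume $A$ is an ordinary Noetherian ring. Then $A \in \Coh_A$, so coherent pullback applied to $M = A$ shows $B \cong A \otimes_A B$ is bounded; writing $B = \tau_{\leq m} B$ for a suitable $m$, the definition of strict tame presentation yields $B \cong \colim_\al B_\al$ with each $B_\al$ finitely $m$-presented over $A$ --- hence Noetherian \cite[Prop.~4.2.4.1]{LurSAG} --- and $B$ flat over each $B_\al$.

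Next I would pass to stalks. Fix a prime $\mathfrak{q} \in \Spec \clalg{B}$, let $\mathfrak{p} \in \Spec \clalg{A}$ be its contraction, and for each $\al$ let $\mathfrak{q}_\al \in \Spec \clalg{B_\al}$ be its image, which again contracts to $\mathfrak{p}$. Since $B$ is flat over $B_\al$, the map $(B_\al)_{\mathfrak{q}_\al} \to B_{\mathfrak{q}}$ is a flat local homomorphism, hence faithfully flat, so $B_{\mathfrak{q}}$ and $(B_\al)_{\mathfrak{q}_\al}$ have the same Tor-dimension over $A_{\mathfrak{p}}$. Computing Tor-dimension locally on $\Spec \clalg{B}$ gives $\mathrm{fd}_A(B) = \sup_{\mathfrak{q}} \mathrm{fd}_{A_{\mathfrak{p}}}(B_{\mathfrak{q}})$, where $\mathrm{fd}$ denotes Tor-dimension. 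For a fixed $\al$, let $T_n \subseteq \Spec \clalg{B_\al}$ be the locus where $(B_\al)_{\mathfrak{q}_\al}$ has Tor-dimension $\geq n$ over $A$. Since $\Spec B_\al \to \Spec A$ is (almost) of finite presentation and $A$ is Noetherian, each $T_n$ is closed (upper semicontinuity of relative Tor-amplitude, cf.~\cite{LurSAG}); as the $T_n$ decrease with $n$, the descending chain condition on the Noetherian space $\Spec \clalg{B_\al}$ gives an $N$ with $T_N = \bigcap_n T_n$.

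Finally, coherent pullback kills the stable locus. For each $\mathfrak{q}$ the module $\clalg{A}/\mathfrak{p}$ is coherent over $A$, so $B \otimes_A (\clalg{A}/\mathfrak{p})$ is bounded, and localizing at $\mathfrak{q}$ shows $B_{\mathfrak{q}} \otimes_{A_{\mathfrak{p}}} \kappa(\mathfrak{p})$ is bounded, i.e.~$\mathrm{fd}_{A_{\mathfrak{p}}}(B_{\mathfrak{q}}) < \infty$, hence also $\mathrm{fd}_{A_{\mathfrak{p}}}((B_\al)_{\mathfrak{q}_\al}) < \infty$. Thus the preimage in $\Spec\clalg{B}$ of $\bigcap_n T_n$ --- the set of $\mathfrak{q}$ with $\mathrm{fd}_{A_{\mathfrak{p}}}(B_{\mathfrak{q}}) = \infty$ --- is empty, so the preimage of $T_N$ is empty, meaning $\mathrm{fd}_{A_{\mathfrak{p}}}(B_{\mathfrak{q}}) < N$ for all $\mathfrak{q}$; by the supremum formula, $\mathrm{fd}_A(B) \leq N - 1 < \infty$. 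The essential difficulty is exactly this last upgrade from pointwise to uniform finiteness: $\Spec\clalg{B}$ itself need not be Noetherian, and the role of the strictly tame presentation is to transport the question to the Noetherian spaces $\Spec\clalg{B_\al}$, along which --- by flatness of the presentation --- the stalkwise Tor-dimension is unchanged. A secondary technical point is that the $B_\al$ may be non-discrete, so the local-flatness and semicontinuity inputs are needed for $m$-truncated, almost finitely presented algebras over a Noetherian ring rather than just for classical schemes.
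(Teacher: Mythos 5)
Your proof is correct and follows essentially the same route as the paper's: reduce to $A$ an ordinary Noetherian ring, factor $\phi$ through a single Noetherian, finitely presented term of a strictly tame presentation over which $B$ is flat, apply semicontinuity of Tor-amplitude together with the Noetherian chain condition on that term's spectrum to get a uniform bound, and use coherent pullback against residue fields to show the locus of infinite Tor-dimension misses the image of $\Spec \clalg{B}$ (your localization of $B \ot_A \clalg{A}/\mathfrak{p}$ is a slightly more elementary substitute for the paper's appeal to Theorem~\ref{thm:reasonableaffinecohbasechange} along $A \to \kappa$). The one step to reorder: the inference ``$B_{\mathfrak{q}} \ot_{A_{\mathfrak{p}}} \kappa(\mathfrak{p})$ bounded, i.e.\ $\mathrm{fd}_{A_{\mathfrak{p}}}(B_{\mathfrak{q}}) < \infty$'' applies the residue-field criterion to $B_{\mathfrak{q}}$, which is not (almost) finitely presented over $A$ and for which that criterion is not licensed; you should instead first transfer boundedness of the fiber to $(B_\al)_{\mathfrak{q}_\al}$ by faithful flatness and apply the criterion (\cite[Lem.~6.1.5.2]{LurSAG}) there, where finite presentation makes it valid --- both ingredients are already in your write-up, so this is a fixable mis-ordering rather than a gap.
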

\begin{proof}
	Note that $\phi$ is of finite Tor-dimension if and only if its base change $\phi': \clalg{A} \to \clalg{A} \otimes_A B$ is, since every discrete $A$-module is obtained by restriction of scalars from $\clalg{A}$. Similarly $\phi$ has coherent pullback if and only if $\phi'$ does, since $A$ is Noetherian (and in particular coherent). Since $\phi$ is of strictly tame presentation so is $\phi'$ (Proposition \ref{prop:reasstabprops2}), hence we may replace $\phi$ with $\phi'$ and assume $A$ is classical. 
	
	In particular, $A$ is now coherent over itself, hence $B$ is truncated since $\phi$ has coherent pullback. Since $\phi$ is of strictly tame presentation it admits a factorization 
\begin{equation*}
	\begin{tikzpicture}
		[baseline=(current  bounding  box.center),thick,>=\arrtip]
		\newcommand*{\ha}{2.5}; \newcommand*{\hb}{2.5};
		\node (aa) at (0,0) {$A$};
		\node (ab) at (\ha,0) {$A'$};
		\node (ac) at (\ha+\hb,0) {$B$};
		\draw[->] (aa) to node[above] {$\phi_1 $} (ab);
		\draw[->] (ab) to node[above] {$\phi_2 $} (ac);
	\end{tikzpicture}
\end{equation*}
such that $\phi_2$ is flat and $\phi_1$ is finitely $n$-presented for some $n$. Since $A$ is Noetherian, $\phi_1$ is then almost finitely presented and $A'$ is also Noetherian \cite[Prop. 4.2.4.1]{LurSAG}. 

Recall that $\phi_1$ is of Tor-dimension $\leq n$ at a prime ideal $P' \subset \clalg{A'}$ if the localization $A'_{P'}$ is of Tor-dimension $\leq n$ over $A$. For any $n$ the set of such prime ideals forms a Zariski open subset $U_n$ of $|\Spec A'|$, the underlying topological space of $\Spec A'$ \cite[Lem. 6.1.5.5]{LurSAG}. Since $A'$ is Noetherian we can increase $n$ as needed so that $U_n$ is equal to the union of the $U_m$ for all $m \in \bN$. We claim that $\phi$ is of Tor-dimension $\leq n$.

It suffices to show that for any prime ideal $P \subset \clalg{B}$, $B_P$ is of Tor-dimension $\leq n$ over~$A$ \cite[Prop. 6.1.4.4]{LurSAG}. 
Write $P' \subset \clalg{A'}$ and $Q \subset \clalg{A}$ for the preimages of $P$ under $\clalg{\phi_2}$ and $\clalg{\phi}$, and write $\kappa$ for the residue field of $A$ at $Q$. The morphism $\psi: A \to B_P$ can be factored as the middle row of the following diagram. 
\begin{equation*}
	\begin{tikzpicture}
		[baseline=(current  bounding  box.center),thick,>=\arrtip]
		\newcommand*{\ha}{3}; \newcommand*{\hb}{3};
		\newcommand*{\va}{-1.5};
		\node (zb) at (\ha,-\va) {$A'$};
		\node (zc) at (\ha+\hb,-\va) {$B$};
		\node (aa) at (0,0) {$A$};
		\node (ab) at (\ha,0) {$A'_{P'}$};
		\node (ac) at (\ha+\hb,0) {$B_P$};
		\node (ba) at (0,\va) {$\kappa$};
		\node (bb) at (\ha,\va) {$A'_{P'} \otimes_A \kappa$};
		\node (bc) at (\ha+\hb,\va) {$B_{P} \otimes_A \kappa$};
		\draw[->] (aa) to node[above left] {$\phi_1 $} (zb);
		\draw[->] (zb) to node[above] {$\phi_2 $} (zc);
		\draw[->] (aa) to node[above, pos=.7] {$\psi_1 $} (ab);
		\draw[->] (ab) to node[above] {$\psi_2 $} (ac);
		\draw[->] (ba) to node[above] {$\psi'_1 $} (bb);
		\draw[->] (bb) to node[above] {$\psi'_2 $} (bc);
		\draw[->] (aa) to node[left] {$ $} (ba);
		\draw[->] (ab) to node[right] {$ $} (bb);
		\draw[->] (ac) to node[right] {$ $} (bc);
		\draw[->] (zb) to node[right] {$ $} (ab);
		\draw[->] (zc) to node[right] {$ $} (ac);
	\end{tikzpicture}
\end{equation*}
Since $\phi_2$ is flat so is $\psi_2$ \cite[Rem. 6.1.4.3]{LurSAG}, hence $\psi$ is of Tor-dimension $\leq n$ if $\psi_1$ is, or equivalently if $P' \in U_n$. 

Suppose $P' \notin U_n$. Then $P' \notin U_m$ for any $m$, hence $A'_{P'} \otimes_A \kappa$ is not truncated \cite[Lem. 6.1.5.2]{LurSAG}. But $\psi_2$ is in fact faithfully flat since $\clalg{\psi_2}: \clalg{A'_{P'}} \to \clalg{B_P}$ is a local ring homomorphism, so $B_{P} \otimes_A \kappa$ is also not truncated. 
Now note that $\psi$ has coherent pullback since $\phi$ does, given that it is the composition of $\phi$ with the flat morphism $B \to B_P$. Since $A$ is Noetherian, $A \to \clalg{A}/Q$ is almost finitely presented, hence $A \to \kappa$ is strictly tamely presented  (as in Example \ref{ex:essfinpresisstricttamepres}). Thus $\psi' := \psi'_2 \circ \psi'_1$ also has coherent pullback (Theorem \ref{thm:reasonableaffinecohbasechange}), and we have a contradiction since then $B_{P} \otimes_A \kappa$ must be truncated. 
\end{proof}

\section{Geometric stacks}\label{sec:reasgeomstacks}

We now consider tamely presented morphisms and coherent pullback in the setting of geometric stacks. Again we begin with basic stability properties (Propositions \ref{prop:reasmorprops2}, \ref{prop:wafp2of3prop}, \ref{prop:cohpullprops}). Among tamely presented geometric stacks, we show morphisms with (\stable) coherent pullback can be approximated by morphisms of finite Tor-dimension (Proposition \ref{prop:finitetorapproxgstk}). Conversely, certain pro-smoothness conditions guarantee that the diagonal of a geometric stack has \stable coherent pullback (Proposition \ref{prop:wprosmoothcohdiagb}). 

\subsection{Definitions}
We first review our basic conventions on geometric stacks, which follow \cite[Ch. 9]{LurSAG} and by extension \cite{CWig}. We do caution that this terminology varies in the literature, in particular in \cite{TV08}. Recall that a stack will mean a functor $\CAlgk \to \Spc$ satisfying fpqc descent (here $\kk$ is our fixed Noetherian base), and that the category of stacks is denoted by $\Stkk$. 

\begin{Definition}\label{def:1}
	A stack $X$ is geometric if its diagonal $X \to X \times X$ is affine and there exists faithfully flat morphism $\Spec B \to X$ in $\Stkk$. A morphism $X \to Y$ in $\Stkk$ is geometric if for any morphism $\Spec A \to Y$, the fiber product $X \times_Y \Spec A$ is geometric. We write $\GStkk \subset \Stkk$ for the full subcategory of geometric stacks. 
\end{Definition}

Note here that products are taken in $\Stkk$, hence are implicitly over $\Spec \kk$. Also note that affineness of $X \to X \times X$ implies that any morphism $\Spec B \to X$ is affine. In particular, (faithful) flatness of such a morphism is defined by asking that its base change to any affine scheme is such. More generally, a morphism $X \to Y$ in $\GStkk$ is (faithfully) flat if its composition with any faithfully flat $\Spec A \to X$ is (faithfully) flat. A faithfully flat morphism of geometric stacks will also be called a flat cover. The following is an extension of \cite[Prop. 9.3.1.2]{LurSAG}, see \cite[\igpropgstkprops]{CWig}.

\begin{Proposition}\label{prop:gstkprops}
	Geometric morphisms are stable under composition and base change in~$\Stkk$. If $f: X \to Y$ is a morphism in $\Stkk$, then $f$ is geometric if $X$ and $Y$ are, and $X$ is geometric if $f$ and $Y$ are. In particular, $\GStkk$  is closed under fiber products in $\Stkk$. 
\end{Proposition}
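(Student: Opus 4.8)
The plan is to reduce everything to the affine case and the known result \cite[Prop. 9.3.1.2]{LurSAG}, invoking the cited extension \cite[\igpropgstkprops]{CWig} where the geometric-vs-affine bookkeeping requires it. First I would recall that Definition \ref{def:1} says a morphism $f\colon X\to Y$ in $\Stkk$ is geometric precisely when every base change $X\times_Y\Spec A\to\Spec A$ has geometric source, i.e. the source is a geometric stack. So the statement about morphisms being stable under composition and base change is really a statement about geometric \emph{stacks} being preserved under the relevant operations, once one checks the base-change squares compose correctly (which is formal: a pullback of a pullback is a pullback in $\Stkk$, since $\Stkk$ admits all small limits as a full subcategory of presheaves closed under the relevant limits).

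The core is therefore: (a) if $X$ is a geometric stack and $Z\to X$ is any morphism from an affine scheme (or more generally a geometric stack), then $Z\times_X W$ behaves well; and (b) the three-way relation "$X,Y$ geometric $\Rightarrow$ $f$ geometric", "$f,Y$ geometric $\Rightarrow$ $X$ geometric". For the first bullet of the two-of-three claim: given $f\colon X\to Y$ with $X,Y$ geometric, and any $\Spec A\to Y$, the fiber product $X\times_Y\Spec A$ sits in a Cartesian square, and one checks it is geometric by exhibiting an affine diagonal and a flat cover. The diagonal of $X\times_Y\Spec A$ over $\Spec A$ is the base change of the relative diagonal $X\to X\times_Y X$, which is affine because $X\to X\times X$ is affine and $X\times_Y X\to X\times X$ is affine (being a base change of the affine diagonal of $Y$) — so the relative diagonal is affine, hence so is its base change. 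For the flat cover: pick $\Spec B\to X$ faithfully flat; then $\Spec B\times_Y\Spec A\to X\times_Y\Spec A$ is faithfully flat, and its source is affine since $\Spec B\to X$ is affine (affine diagonal forces this) and we are pulling back over the affine $\Spec A$ — more carefully, $\Spec B\times_Y\Spec A=\Spec B\times_X(X\times_Y\Spec A)$, and $\Spec B\to X$ affine means this is affine over $X\times_Y\Spec A$, hence affine. For the second bullet: if $f\colon X\to Y$ is geometric and $Y$ is geometric, take a flat cover $\Spec A\to Y$; then $X\times_Y\Spec A$ is geometric by hypothesis on $f$, and $X\times_Y\Spec A\to X$ is a faithfully flat morphism whose composite with $\Spec B\to X\times_Y\Spec A$ (a flat cover of the geometric stack $X\times_Y\Spec A$) gives a flat cover $\Spec B\to X$; the affine diagonal of $X$ follows because $\Delta_X$ pulls back along the flat cover $\Spec A\times_Y\Spec A\to X\times X$ (or one descends affineness along the fpqc cover).

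Once both bullets are in hand, composition stability of geometric morphisms follows: given $X\to Y\to Z$ geometric and $\Spec A\to Z$, we have $Y\times_Z\Spec A$ geometric (by the morphism $Y\to Z$ being geometric), hence $X\times_Z\Spec A=X\times_Y(Y\times_Z\Spec A)$ is geometric by the first bullet applied to $X\times_Y(Y\times_Z\Spec A)\to Y\times_Z\Spec A$ — but this needs "morphism $g$ geometric and target $T$ geometric $\Rightarrow$ source geometric", which is exactly the second bullet with $T=Y\times_Z\Spec A$; so $X\times_Z\Spec A$ is geometric, proving $X\to Z$ geometric. Base-change stability is immediate from the definition (a pullback of a pullback is a pullback). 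Finally, closure of $\GStkk$ under fiber products: given $X\to Z\leftarrow Y$ with all three geometric, the projection $X\times_Z Y\to Y$ is a base change of the geometric morphism $X\to Z$, hence geometric; since $Y$ is geometric, the second bullet gives $X\times_Z Y$ geometric.

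\textbf{Main obstacle.} The genuinely delicate point is descent of \emph{affineness of the diagonal} and existence of a flat cover along fpqc covers — i.e. verifying the two-of-three bullets rigorously rather than the formal composition/base-change manipulations, which are bookkeeping. Concretely, showing "$f\colon X\to Y$ geometric, $Y$ geometric $\Rightarrow$ $X$ geometric" requires knowing that $\Delta_X$ being affine can be checked after fpqc base change on $X\times X$, which in turn rests on fpqc descent for affine morphisms \cite[Prop. B.6.1.3]{LurSAG} or the equivalent in \cite{CWig}. Since the excerpt explicitly attributes the precise statement to \cite[\igpropgstkprops]{CWig} as "an extension of \cite[Prop. 9.3.1.2]{LurSAG}", I would structure the proof as: recall the affine-schemes case and fpqc descent for affine morphisms, then run the diagonal/flat-cover verification above, citing \cite{LurSAG} and \cite{CWig} for the descent inputs rather than reproving them.
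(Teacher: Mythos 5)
The paper does not actually prove this statement: it defers entirely to \cite[\igpropgstkprops]{CWig}, described as an extension of \cite[Prop. 9.3.1.2]{LurSAG}. So your write-up is a reconstruction of the outsourced argument rather than an alternative to anything in the text, and its overall architecture — reduce to the two "two-of-three" bullets, verify affineness of diagonals by factoring through relative diagonals and using cancellation for affine morphisms, produce flat covers by composition/base change, and invoke fpqc descent of affineness at the one genuinely non-formal step — is the standard and correct one.

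Two local points deserve repair. First, in the flat-cover step of "$X,Y$ geometric $\Rightarrow f$ geometric" you conclude that $\Spec B\times_Y\Spec A$ is affine because it is affine \emph{over} $X\times_Y\Spec A$; that is a non sequitur, since the target there is not affine. The correct reason is that $\Spec B\times_Y\Spec A\to \Spec B\times\Spec A$ is a base change of the affine $\Delta_Y$ and its target is affine. Second, in "$f,Y$ geometric $\Rightarrow X$ geometric" the cover you name, $\Spec A\times_Y\Spec A\to X\times X$, does not exist as a map. The usable version is: factor $\Delta_X$ as $X\to X\times_Y X\to X\times X$, note the second map is affine as a base change of $\Delta_Y$, and check the first after base change along the affine faithfully flat map $(X\times_Y X)\times_Y\Spec A\to X\times_Y X$, where it becomes the relative diagonal of the geometric stack $X\times_Y\Spec A$ over $\Spec A$; fpqc descent of affineness then finishes. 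With those corrections the argument goes through as you intend.
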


The following classes of geometric stacks are of special interest. 

\begin{Definition}\label{def:coherentGstack}
	A geometric stack $X$ is locally coherent (resp. locally Noetherian) if there exists a flat cover $\Spec A \to X$ such that $A$ is coherent (resp. Noetherian). It coherent if it is locally coherent and $\QCoh(X)^\heartsuit$ is compactly generated. 
\end{Definition}

A locally Noetherian geometric stack is coherent by \cite[Prop. 9.5.2.3]{LurSAG}. 

\begin{Definition}\label{def:truncgstack}
	A geometric stack $X$ is $n$-truncated if it admits a flat cover $\Spec A \to X$ such that $A$ is $n$-truncated. We say $X$ is classical if it is zero-truncated, and is truncated if it is $n$-truncated for some $n$. We denote by $\GStkkplus \subset \GStkk$ the full subcategory of truncated geometric stacks.  
\end{Definition}

Alternatively, note that the restriction functor $(-)_{\leq n}: \PreStkk \to \PreStkkleqn$ takes $\Stkk$ to $\Stkkleqn$ \cite[Prop. A.3.3.1]{LurSAG}. Write $i_{\leq n}: \Stkkleqn \to \Stkk$ for the left adjoint of this restriction and $\tau_{\leq n}: \Stkk \to \Stkk$ for their composition. Then if $X$ is geometric, $\tau_{\leq n} X$ is an $n$-truncated geometric stack called the $n$-truncation of $X$, and $X$ is $n$-truncated if and only if the natural map $\tau_{\leq n} X \to X$ is an isomorphism \cite[Cor. 9.1.6.8, Prop. 9.1.6.9]{LurSAG}. 

In particular, if $X \in \Stkkleqzero$ is an ordinary algebraic variety, then $i_{\leq 0} X$ is a zero-truncated geometric stack. The functor $i_{\leq 0}: \Stkkleqzero \to \Stkk$ embeds the category of ordinary varieties (more generally, ordinary quasi-compact, semi-separated schemes, or quasi-compact Artin stacks with affine diagonal) as a full subcategory of $\GStkkplus$, and by default we will identify these categories with their images in $\GStkkplus$. 

We note that this use of the symbol $\tau_{\leq n}$ and of the term truncation are different from their usual meaning in terms of truncatedness of mapping spaces, but in practice no ambiguity will arise (and this abuse has the feature that $\tau_{\leq n} \Spec A \cong \Spec \tau_{\leq n} A$). 

\begin{Definition}
	If $X \in \GStkkplus$, then $\cF \in \QCoh(X)$ is coherent if~$f^*(\cF)$ is a coherent $A$-module for some (equivalently, any) flat cover $\Spec A \to X$. We write $\Coh(X) \subset \QCoh(X)$ for the full subcategory of coherent sheaves. 
\end{Definition}

If $X$ is locally coherent, the standard t-structure on $\QCoh(X)$ restricts to one on $\Coh(X)$. If $X$ is coherent, it follows from \cite[Prop. 9.1.5.1]{LurSAG} that specifically $\QCoh(X)^{\heartsuit}$ is compactly generated by $\Coh(X)^\heartsuit$. If $X$ is zero-truncated but not locally coherent, our use of the term coherent sheaf corresponds to the notion of bounded pseudocoherent complex in \cite{Ill71} (see \cite[Rem. 2.8.4.6]{LurSAG}). 

\subsection{Tamely presented morphisms}\label{subsec:geomtame}
We now consider the geometric counterparts to the notion of strictly tamely presented algebra. 

\begin{Definition}\label{def:reasaffmorph}
	A morphism $X \to Y$ in $\Stkk$ is \emph{strictly tamely presented} if it is affine and for any $\Spec A \to Y$, the coordinate ring of $X \times_Y \Spec A$ is strictly tamely presented as an $A$-algebra. A geometric morphism $X \to Y$ in $\Stkk$ is \emph{tamely presented} if for any $\Spec A \to Y$, there exists a strictly tamely presented flat cover $\Spec B \to \Spec A \times_Y X$ such that $B$ is strictly tamely presented over $A$.  A geometric stack $X$ is tamely presented if it is so over $\Spec \kk$. 
\end{Definition}

Recall following \cite[Def. 17.4.1.1]{LurSAG} that $f: X \to Y$ is (locally) almost of finite presentation if, for any $n \in \bN$ and any filtered colimit $A \cong \colim A_\al$ in $\CAlgkleqn$, the canonical map
\begin{equation*}
	\colim X(A_\al) \to X(A) \times_{Y(A)} \colim Y(A_\al)
\end{equation*} 
is an isomorphism (we omit the word locally by default, as we mostly consider quasi-compact morphisms). 
We follow \cite[Def. 6.3.2.1]{LurSAG} and say a morphism $f: X \to Y$ is representable if for any $\Spec A \to Y$, the fiber product $X \times_Y \Spec A$ is a (spectral) Deligne-Mumford stack. 

\begin{Proposition}\label{prop:afpimplieswafp}
If a geometric morphism $f: X \to Y$ in $\Stkk$ is representable and almost of finite presentation, then it is tamely presented. 
\end{Proposition}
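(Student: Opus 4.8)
The plan is to exploit the representability hypothesis to produce an affine étale atlas of each fiber of $f$, and then to observe that affine morphisms which are almost of finite presentation are automatically strictly tamely presented. The key elementary point, which I would use repeatedly, is this: if $C$ is an almost finitely presented $A$-algebra then each $\tau_{\leq n}C$ is finitely $n$-presented over $A$, so the constant diagram on $\tau_{\leq n}C$ is a strictly tame presentation of order $n$ (flatness of $C$ over the single term $\tau_{\leq n}C$ being witnessed by the identity); hence such a $C$ is strictly tamely presented over $A$, and, likewise, an affine morphism which is almost of finite presentation is strictly tamely presented in the sense of Definition \ref{def:reasaffmorph}. This reduces everything to finding an affine flat cover that is almost of finite presentation.

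Concretely, I would fix a morphism $\Spec A \to Y$ and set $X_A := X \times_Y \Spec A$. By Proposition \ref{prop:gstkprops} this is a geometric stack, and by the representability hypothesis it is a spectral Deligne-Mumford stack; the projection $X_A \to \Spec A$ is the base change of $f$, hence almost of finite presentation. Being geometric, $X_A$ admits a faithfully flat affine cover, so it is in particular quasi-compact with affine diagonal; combined with its Deligne-Mumford structure this yields an étale surjection $p \colon \Spec B \to X_A$ from an affine scheme \cite{LurSAG}. Since $X_A$ has affine diagonal, $p$ is affine (recall from the text following Definition \ref{def:1} that any morphism from an affine scheme to a stack with affine diagonal is affine), hence a flat cover; and $p$ is almost of finite presentation, being étale with affine source and quasi-compact, affine-diagonal target. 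By stability of ``almost of finite presentation'' under composition, the composite $\Spec B \to X_A \to \Spec A$ is almost of finite presentation, so by the first paragraph $B$ is strictly tamely presented over $A$.

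It then remains to check that $p$ itself is a strictly tamely presented morphism. Given any $\Spec A' \to X_A$, the fiber product $\Spec B \times_{X_A} \Spec A'$ is affine (again since $X_A$ has affine diagonal), say $\Spec C$, and $\Spec C \to \Spec A'$ is the base change of $p$, hence almost of finite presentation; thus $C$ is strictly tamely presented over $A'$ by the first paragraph. So $p$ is a strictly tamely presented flat cover of $X_A$ with $B$ strictly tamely presented over $A$, and Definition \ref{def:reasaffmorph} shows $f$ is tamely presented. The only non-formal ingredient is the construction of the affine étale atlas $p$ and the verification of its finite-presentation properties; everything else is pure bookkeeping resting on the fact that finitely $n$-presented algebras give strictly tame presentations through constant diagrams. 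I expect the mildest obstacle to be the quasi-separatedness bookkeeping needed to upgrade ``locally almost of finite presentation'' to ``almost of finite presentation'' for $p$; this is routine once one notes that the relative diagonal of $p$ is an open immersion into an affine scheme.
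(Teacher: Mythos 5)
Your proof is correct and takes essentially the same route as the paper's, which simply invokes \cite[Prop. 17.4.3.1]{LurSAG} to produce an \'etale cover $\Spec B \to X \times_Y \Spec A$ with $B$ almost of finite presentation over $A$ and leaves implicit the remaining verifications you spell out (that almost finitely presented algebras are strictly tamely presented via the constant presentation of each truncation, and that the cover is a strictly tamely presented flat cover). One tiny correction to your ``key elementary point'': the flatness that matters is that of $\tau_{\leq n}C$ over itself, since the algebra being presented at order $n$ is $\tau_{\leq n}C$ rather than $C$ (which need not be flat over $\tau_{\leq n}C$); this does not affect the argument.
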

\begin{proof}
Follows from \cite[Prop. 17.4.3.1]{LurSAG}, by which for any $\Spec A \to Y$ there is an \'etale cover $\Spec B \to X \times_Y \Spec A$ such that $B$ is almost of finite presentation over~$A$. 
\end{proof}

We recall the following stability properties of almost finitely presented morphisms, then consider their generalizations to the tamely presented setting. 

\begin{Proposition}[{\cite[Rem. 17.4.1.3, Rem. 17.4.1.5]{LurSAG}}]\label{prop:afp2of3prop} 
Almost finitely presented morphisms are stable under composition and base change in $\Stkk$. 
If $f$ and $g$ are composable morphisms in $\Stkk$ such that $g \circ f$ and $g$ are almost of finite presentation, then so is $f$. 
\end{Proposition}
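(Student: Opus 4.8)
The plan is to argue directly from the filtered-colimit characterization of almost finite presentation recalled just above the statement, using only the elementary fact that filtered colimits of spaces are left exact, i.e.\ commute with finite limits \cite{LurHTT}. Fix $n \in \bN$ and a filtered colimit $A \cong \colim A_\al$ in $\CAlgkleqn$. For any stack $W$ there is a comparison map $\colim_\al W(A_\al) \to W(A)$, and for $W, W'$ over some $Z$ left exactness together with $(W \times_Z W')(B) \cong W(B) \times_{Z(B)} W'(B)$ gives $\colim_\al\bigl(W(A_\al) \times_{Z(A_\al)} W'(A_\al)\bigr) \cong (\colim_\al W(A_\al)) \times_{\colim_\al Z(A_\al)} (\colim_\al W'(A_\al))$, fiber products being finite limits. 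After quantifying over all $n$ and all such colimits, the assertion that $W \to Z$ is almost of finite presentation is exactly the assertion that the canonical comparison map $c_{W/Z} \colon \colim_\al W(A_\al) \to W(A) \times_{Z(A)} \colim_\al Z(A_\al)$ is an equivalence.

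First I would treat base change. Given $f \colon X \to Y$ almost of finite presentation and an arbitrary $Y' \to Y$ in $\Stkk$, set $X' := X \times_Y Y'$, so $X'(B) \cong X(B) \times_{Y(B)} Y'(B)$ naturally in $B$. Then $\colim_\al X'(A_\al) \cong (\colim_\al X(A_\al)) \times_{\colim_\al Y(A_\al)} (\colim_\al Y'(A_\al))$ by left exactness; substituting the hypothesis $\colim_\al X(A_\al) \cong X(A) \times_{Y(A)} \colim_\al Y(A_\al)$ and collapsing the iterated fiber product yields $\colim_\al X'(A_\al) \cong X(A) \times_{Y(A)} \colim_\al Y'(A_\al) \cong X'(A) \times_{Y'(A)} \colim_\al Y'(A_\al)$. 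A short diagram chase identifies this equivalence with $c_{X'/Y'}$, so $X' \to Y'$ is almost of finite presentation.

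Next I would handle composition and the two-out-of-three property together. Base-changing $c_{Y/Z}$ along $X(A) \to Y(A)$ and using the canonical equivalence $X(A) \times_{Y(A)} \bigl(Y(A) \times_{Z(A)} \colim_\al Z(A_\al)\bigr) \cong X(A) \times_{Z(A)} \colim_\al Z(A_\al)$ produces a map $\gamma \colon X(A) \times_{Y(A)} \colim_\al Y(A_\al) \to X(A) \times_{Z(A)} \colim_\al Z(A_\al)$ that is an equivalence whenever $c_{Y/Z}$ is. Comparing the two structure maps on each side — the projection to $X(A)$, and the map to $\colim_\al Z(A_\al)$ induced by $g \circ f$ — shows $c_{X/Z} = \gamma \circ c_{X/Y}$. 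Hence if $c_{X/Y}$ and $c_{Y/Z}$ are equivalences then so is $c_{X/Z}$, which is stability under composition; and if $c_{X/Z}$ and $c_{Y/Z}$ are equivalences then $c_{X/Y} = \gamma^{-1} \circ c_{X/Z}$ is too, which is the two-out-of-three statement.

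There is no genuine obstacle here: all three assertions collapse to the associativity of iterated fiber products together with left exactness of filtered colimits in $\Spc$. The only point demanding care is checking, at each step, that the equivalence assembled from the hypotheses is literally the canonical comparison natural transformation and not merely some abstract equivalence of spaces; this is a matter of tracking the functoriality of $\al \mapsto W(A_\al)$ through the relevant squares, and is routine. (Since the statement is quoted from \cite[Rem.~17.4.1.3, Rem.~17.4.1.5]{LurSAG}, one may of course simply invoke it; the argument above is only to indicate the mechanism.)
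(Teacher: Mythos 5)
Your argument is correct: the paper gives no proof of this statement, deferring entirely to the cited remarks in \cite{LurSAG}, and those remarks rest on exactly the mechanism you describe (left exactness of filtered colimits in $\Spc$, pointwise computation of fiber products of stacks, and the factorization $c_{X/Z} = \gamma \circ c_{X/Y}$ with $\gamma$ a base change of $c_{Y/Z}$). Nothing is missing.
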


\begin{Proposition}\label{prop:reasmorprops2}
Tamely presented geometric morphisms are stable under composition and base change in $\Stkk$. Suppose we have a Cartesian diagram 
\begin{equation*}
	\begin{tikzpicture}
		[baseline=(current  bounding  box.center),thick,>=\arrtip]
		\node (a) at (0,0) {$X'$};
		\node (b) at (3,0) {$Y'$};
		\node (c) at (0,-1.5) {$X$};
		\node (d) at (3,-1.5) {$Y$};
		\draw[->] (a) to node[above] {$f' $} (b);
		\draw[->] (b) to node[right] {$h $} (d);
		\draw[->] (a) to node[left] {$h' $}(c);
		\draw[->] (c) to node[above] {$f $} (d);
	\end{tikzpicture}
\end{equation*}
of geometric morphisms in $\Stkk$. If $f'$ and $h$ are tamely presented and $h$ is faithfully flat, then $f$ is tamely presented. 
\end{Proposition}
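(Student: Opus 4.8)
The plan is to prove the three assertions in order, reducing in each case to the affine statements of Proposition~\ref{prop:reasstabprops2} together with the geometric formalism of Proposition~\ref{prop:gstkprops}. For stability under base change, suppose $f\colon X\to Y$ is a tamely presented geometric morphism and $Y'\to Y$ is arbitrary; I must show $f'\colon X\times_Y Y'\to Y'$ is tamely presented. By definition this is a condition to be checked after pulling back along each $\Spec A'\to Y'$, and since $X\times_Y Y'\times_{Y'}\Spec A'\cong X\times_Y\Spec A'$, it suffices to produce a strictly tamely presented flat cover of $X\times_Y\Spec A'$ which is strictly tamely presented over $A'$. The composite $\Spec A'\to Y'\to Y$ gives, by the tame presentedness of $f$, a strictly tamely presented flat cover $\Spec B\to X\times_Y\Spec A'$ with $B$ strictly tamely presented over the coordinate ring $A$ of $\Spec A'$ viewed over $Y$ — wait, more precisely: tame presentedness of $f$ applied to $\Spec A'\to Y$ directly yields such a cover $\Spec B\to \Spec A'\times_Y X$ with $B$ strictly tamely presented over $A'$. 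This is exactly what is required, so base change is essentially formal once the definitions are unwound, using that ``strictly tamely presented'' morphisms of affine schemes and ``affine'' morphisms are stable under base change (Proposition~\ref{prop:reasstabprops2}(1), and Proposition~\ref{prop:gstkprops}).

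For stability under composition, let $X\xrightarrow{f} Y\xrightarrow{g} Z$ be tamely presented geometric morphisms; by Proposition~\ref{prop:gstkprops} the composite $g\circ f$ is geometric, and I check tame presentedness after pulling back along $\Spec A\to Z$. Using tameness of $g$, choose a strictly tamely presented flat cover $\Spec B\to Y\times_Z\Spec A$ with $B$ strictly tamely presented over $A$. Pulling $f$ back along $\Spec B\to Y$ and using tameness of $f$, choose a strictly tamely presented flat cover $\Spec C\to X\times_Y\Spec B$ with $C$ strictly tamely presented over $B$. Then $\Spec C\to X\times_Y\Spec B\cong X\times_Z\Spec A\times_{Y\times_Z\Spec A}\Spec B\to X\times_Z\Spec A$ is a composite of flat covers, hence a flat cover; it is strictly tamely presented (being a composite $\Spec C\to\Spec B\to Y\times_Z\Spec A$... rather, the relevant affine map $\Spec C\to \Spec B$ is strictly tamely presented and $\Spec B\to \Spec A$ is too, so by Proposition~\ref{prop:reasstabprops2}(2) the composite is strictly tamely presented); and $C$ is strictly tamely presented over $A$, again by Proposition~\ref{prop:reasstabprops2}(2). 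The one genuine subtlety is that ``strictly tamely presented flat cover'' is being used as a condition on the affine morphism $\Spec C\to X\times_Z\Spec A$ into a \emph{stack}, which by definition (Definition~\ref{def:reasaffmorph}, since the diagonal of the target is affine) reduces to checking the condition on coordinate rings after further base change to affines; this transitivity of ``strictly tamely presented'' along affine base change is Proposition~\ref{prop:reasstabprops2}(1) again.

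For the descent statement (the Cartesian square with $h$ a faithfully flat tamely presented cover and $f'$ tamely presented, concluding $f$ tamely presented), I first note $f$ is geometric by Proposition~\ref{prop:gstkprops} (since $h'$ is geometric — being a base change of $h$ — and $X'$ geometric forces... actually one argues: $f\circ h' = h\circ f'$ is geometric as a composite of geometrics, and $h'$ is a faithfully flat affine cover, so by the descent clause of Proposition~\ref{prop:gstkprops}, $f$ is geometric). To check tame presentedness of $f$, pull back along $\Spec A\to Y$; then $\Spec A\times_Y Y'\to \Spec A$ is a strictly tamely presented flat cover (base change of $h$), and choosing a further strictly tamely presented flat cover $\Spec A'\to \Spec A\times_Y Y'$ with $A'$ strictly tamely presented over $A$ (possible since $h$ is tamely presented), I get a strictly tamely presented flat cover $\Spec A'\to Y'$ over $\Spec A$. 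Now $X\times_Y\Spec A'\cong (X'\times_{Y'}\Spec A')$, and since $f'$ is tamely presented I obtain a strictly tamely presented flat cover $\Spec B\to X\times_Y\Spec A'$ with $B$ strictly tamely presented over $A'$, hence over $A$ by Proposition~\ref{prop:reasstabprops2}(2). The issue is that this is a cover of $X\times_Y\Spec A'$, not of $X\times_Y\Spec A$; but the composite $\Spec A'\times_{\Spec A\times_Y Y'}\bigl(X\times_Y\Spec A\times_Y Y'\bigr)\to X\times_Y\Spec A$ factors through the faithfully flat cover $X\times_Y\Spec A\times_Y Y'\to X\times_Y\Spec A$, and composing our cover $\Spec B$ with this gives the desired strictly tamely presented flat cover of $X\times_Y\Spec A$. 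The main obstacle in the whole proof is precisely this last bookkeeping step: verifying that the resulting composite affine morphism into $X\times_Y\Spec A$ is genuinely a \emph{faithfully} flat cover and that strict tame presentedness survives composing through the $h$-pullback — this is where one must invoke faithful flatness of $h$ (not merely flatness) together with Proposition~\ref{prop:reasstabprops2}(1)--(2), and care is needed because strictly tamely presented morphisms need not themselves be faithfully flat, so one separates the ``flat cover'' bookkeeping (handled by Proposition~\ref{prop:gstkprops} and faithful flatness of $h$) from the ``strictly tamely presented'' bookkeeping (handled entirely by Proposition~\ref{prop:reasstabprops2}).
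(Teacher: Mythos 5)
Your proof is correct and follows essentially the same route as the paper's: base change is immediate from the definition, composition is handled by stacking the two affine covers and invoking Proposition~\ref{prop:reasstabprops2}, and the descent claim is proved by refining the pullback of $h$ to a strictly tamely presented affine flat cover $\Spec A' \to \Spec A \times_Y Y'$ and then composing the cover supplied by $f'$ with the faithfully flat map $X\times_Y\Spec A' \to X\times_Y\Spec A$. The only blemish is the momentary (self-corrected) claim that $\Spec A\times_Y Y'\to\Spec A$ is itself a strictly tamely presented flat cover, which need not hold since $h$ is not assumed affine; the argument as actually carried out avoids this.
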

\begin{proof}
Stability under base change follows by construction. To see stability under composition let $f: X \to Y$ and $g: Y \to Z$ be tamely presented. Given $\Spec C \to Z$, there exists by hypothesis a diagram
\begin{equation*}
	\begin{tikzpicture}[baseline=(current  bounding  box.center),thick,>=\arrtip]
		\newcommand*{\ha}{3}; \newcommand*{\hb}{3}; \newcommand*{\hc}{3};
		\newcommand*{\va}{-1.5}; \newcommand*{\vb}{-1.5};
		\node (aa) at (0,0) {$\Spec A$};
		\node (ab) at (\ha,0) {$X''$};
		\node (ac) at (\ha+\hb,0) {$X'$};
		\node (ad) at (\ha+\hb+\hc,0) {$X$};
		\node (bb) at (\ha,\va) {$\Spec B$};
		\node (bc) at (\ha+\hb,\va) {$Y'$};
		\node (bd) at (\ha+\hb+\hc,\va) {$Y$};
		\node (cc) at (\ha+\hb,\va+\vb) {$\Spec C$};
		\node (cd) at (\ha+\hb+\hc,\va+\vb) {$Z$};
		
		\draw[->] (aa) to node[above] {$h'' $} (ab);
		\draw[->] (ab) to node[above] {$h' $} (ac);
		\draw[->] (ac) to node[above] {$ $} (ad);
		\draw[->] (bb) to node[above] {$h $} (bc);
		\draw[->] (bc) to node[above] {$ $} (bd);
		\draw[->] (cc) to node[above] {$ $} (cd);
		
		\draw[->] (ab) to node[right] {$ $} (bb);
		\draw[->] (ac) to node[right] {$ $} (bc);
		\draw[->] (ad) to node[right] {$f $} (bd);
		\draw[->] (bc) to node[right] {$ $} (cc);
		\draw[->] (bd) to node[right] {$g $} (cd);
	\end{tikzpicture}
\end{equation*}
in which all squares are Cartesian, $h$ and $h''$ are strictly tamely presented flat covers, and $A$, $B$ are strictly tamely presented over $B$, $C$. Proposition~\ref{prop:reasstabprops2} then implies $h' \circ h''$ is a strictly tamely presented flat cover and $A$ is strictly tamely presented over~$C$. 

To prove the last claim let $\Spec B \to Y$ be any morphism. By our hypotheses on~$h$ there exists a strictly tamely presented flat cover $\Spec B' \to Y' \times_Y \Spec B$ such that $B'$ is strictly tamely presented over $B$. We then have a commutative cube
\begin{equation*}
	\begin{tikzpicture}[baseline=(current  bounding  box.center),thick,>=\arrtip]
			\newcommand*{\ha}{1.5}; \newcommand*{\hb}{1.5}; \newcommand*{\hc}{1.5};
\newcommand*{\va}{-.9}; \newcommand*{\vb}{-.9}; \newcommand*{\vc}{-.9}; 
		\node (ab) at (\ha,0) {$Z'$};
		\node (ad) at (\ha+\hb+\hc,0) {$\Spec B'$};
		\node (ba) at (0,\va) {$X'$};
		\node (bc) at (\ha+\hb,\va) {$Y'$};
		\node (cb) at (\ha,\va+\vb) {$Z$};
		\node (cd) at (\ha+\hb+\hc,\va+\vb) {$\Spec B$};
		\node (da) at (0,\va+\vb+\vc) {$X$};
		\node (dc) at (\ha+\hb,\va+\vb+\vc) {$Y$};
		\draw[->] (ab) to node[above] {$ $} (ad);
		\draw[->] (ab) to node[above] {$ $} (ba);
		\draw[->] (ab) to node[above] {$ $} (cb);
		\draw[->] (ad) to node[above] {$ $} (bc);
		\draw[->] (ad) to node[above] {$ $} (cd);
		\draw[->] (ba) to node[above] {$ $} (da);
		\draw[->] (cb) to node[above,pos=.75] {$ $} (cd);
		\draw[->] (cb) to node[above] {$ $} (da);
		\draw[->] (cd) to node[above] {$ $} (dc);
		\draw[->] (da) to node[above,pos=.6] {$f $} (dc);
		
		\draw[-,line width=6pt,draw=white] (ba) to  (bc);
		\draw[->] (ba) to node[above,pos=.75] {$f' $} (bc);
		\draw[-,line width=6pt,draw=white] (bc) to  (dc);
		\draw[->] (bc) to node[right,pos=.2] {$h $} (dc);
	\end{tikzpicture}
\end{equation*}
in which all but the left and right faces are Cartesian. Since $f'$ is tamely presented there exists a strictly tamely presented flat cover $\Spec A \to Z'$ such that~$A$ is strictly tamely presented over $B'$. By Proposition \ref{prop:reasstabprops2} (and the stability of faithful flatness under composition and base change) it now follows that $\Spec A \to Z$ is a strictly tamely presented flat cover and that $A$ is strictly tamely presented over $B$. 
\end{proof}

\begin{Proposition}\label{prop:wafp2of3prop}
	If $f$ and $g$ are composable geometric morphisms in $\Stkk$ such that $g \circ f$ is tamely presented and $g$ is almost of finite presentation, then $f$ is tamely presented. 
\end{Proposition}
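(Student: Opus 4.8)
The plan is to verify tame presentation of $f\colon X\to Y$ straight from Definition \ref{def:reasaffmorph}, testing against an arbitrary $\Spec A\to Y$ and reducing the construction of a suitable flat cover of $X\times_Y\Spec A$ to the affine composition/stability results of Section \ref{sec:affine} (Proposition \ref{prop:reasstabprops2}). The one genuinely new input is a relative-diagonal trick: the hypothesis that $g$ is almost of finite presentation is a condition on its functor of points, and to use it affine-locally I will convert it, via the graph of $\Spec A\to Y$, into an honest almost finitely presented affine morphism to which the affine theory applies.

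Concretely, fix $\Spec A\to Y$ and put $X_A:=X\times_Y\Spec A$; the task is to exhibit a strictly tamely presented flat cover $\Spec B\to X_A$ with $B$ strictly tamely presented over $A$. Composing with $g$ gives $\Spec A\to Z$, and since $g\circ f$ is tamely presented I may choose a strictly tamely presented flat cover $q\colon\Spec B'\to X\times_Z\Spec A$ with $B'$ strictly tamely presented over $A$. First I would observe that the graph $\delta\colon\Spec A\to Y\times_Z\Spec A$ of $\Spec A\to Y$ is affine (any morphism from an affine scheme to a geometric stack is affine, by the remark following Definition \ref{def:1}) and is almost of finite presentation: it is a base change of the relative diagonal $\Delta_{Y/Z}\colon Y\to Y\times_Z Y$, and $\Delta_{Y/Z}$ is almost of finite presentation by Proposition \ref{prop:afp2of3prop}, since its composite with the first projection $Y\times_Z Y\to Y$ (a base change of $g$, hence almost of finite presentation) is $\id_Y$. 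Using the canonical identification $X\times_Z\Spec A\cong X\times_Y(Y\times_Z\Spec A)$, one then has $X_A\cong(X\times_Z\Spec A)\times_{Y\times_Z\Spec A}\Spec A$ along $\delta$, so the projection $X_A\to X\times_Z\Spec A$ is a base change of $\delta$ and is therefore affine and almost of finite presentation.

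Now set $\Spec B:=\Spec B'\times_{X\times_Z\Spec A}X_A$. On the one hand, $\Spec B\to X_A$ is a base change of the strictly tamely presented flat cover $q$, hence is itself a strictly tamely presented flat cover (strict tame presentation of morphisms and faithful flatness are stable under base change, by construction and Proposition \ref{prop:reasstabprops2}). On the other hand, $\Spec B\to\Spec B'$ is a base change of $X_A\to X\times_Z\Spec A$, hence affine and almost of finite presentation, so $B$ is almost of finite presentation over $B'$. Since an almost finitely presented algebra morphism is strictly tamely presented (the constant diagram is a strictly tame presentation of each truncation) and strictly tamely presented algebra morphisms compose (Proposition \ref{prop:reasstabprops2}), it follows that $B$ is strictly tamely presented over $A$. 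This produces the desired cover and shows $f$ is tamely presented.

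I expect the main obstacle to be precisely the diagonal manipulation in the second paragraph: one must recognize $X_A$ as a base change of $X\times_Z\Spec A$ along the graph $\delta$, and see that almost finite presentation of $g$ is transferred to $\delta$ via the two-out-of-three property (Proposition \ref{prop:afp2of3prop}), so that the affine morphism $\Spec B\to\Spec B'$ is almost finitely presented and the results of Section \ref{sec:affine} become applicable. Once that identification is in place, everything else is bookkeeping with fiber products and the base-change and composition stability recorded in Propositions \ref{prop:reasstabprops2} and \ref{prop:reasmorprops2}.
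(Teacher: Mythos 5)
Your argument is correct and is essentially the paper's proof: both reduce to the affine case by base-changing the given cover of $X\times_Z\Spec A$ along the map $\Spec A\to Y\times_Z\Spec A$ (your graph $\delta$ is exactly the paper's canonical section $\psi$ of the projection), whose almost finite presentation is extracted from $g$ via the two-out-of-three property of Proposition \ref{prop:afp2of3prop}, and then conclude with Proposition \ref{prop:reasstabprops2}. The only cosmetic difference is that you establish afp-ness of $\delta$ by passing through the relative diagonal $\Delta_{Y/Z}$ and base change, whereas the paper applies two-out-of-three directly to the identity $g'\circ\psi=\id_{\Spec A}$.
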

\begin{proof}
Let $f: X \to Y$ and $g: Y \to Z$ be the given morphisms. For any $\phi: \Spec A \to Y$, there exists by hypothesis a strictly tamely presented flat cover $h: \Spec B \to X':= X \times_Z \Spec A$ such that $B$ is strictly tamely presented over~$A$. Consider then the following diagram of Cartesian squares,
\begin{equation*}
	\begin{tikzpicture}[baseline=(current  bounding  box.center),thick,>=\arrtip]
		\newcommand*{\ha}{3}; \newcommand*{\hb}{3}; \newcommand*{\hc}{3};
		\newcommand*{\va}{-1.5}; \newcommand*{\vb}{-1.5};
		\node (aa) at (0,0) {$\Spec C$};
		\node (ab) at (\ha,0) {$X''$};
		\node (ac) at (\ha+\hb,0) {$\Spec A$};
		
		\node (ba) at (0,\va) {$\Spec B$};
		\node (bb) at (\ha,\va) {$X'$};
		\node (bc) at (\ha+\hb,\va) {$Y'$};
		\node (bd) at (\ha+\hb+\hc,\va) {$\Spec A$};
		
		\node (cb) at (\ha,\va+\vb) {$X$};
		\node (cc) at (\ha+\hb,\va+\vb) {$Y$};
		\node (cd) at (\ha+\hb+\hc,\va+\vb) {$Z$};
		
		\draw[->] (aa) to node[above] {$h' $} (ab);
		\draw[->] (ab) to node[above] {$f'' $} (ac);
		\draw[->] (ba) to node[above] {$h $} (bb);
		\draw[->] (bb) to node[above] {$f' $} (bc);
		\draw[->] (bc) to node[above] {$g' $} (bd);
		\draw[->] (cb) to node[above] {$f $} (cc);
		\draw[->] (cc) to node[above] {$g $} (cd);
		
		\draw[->] (aa) to node[right] {$\psi' $} (ba);
		\draw[->] (ab) to node[right] {$ $} (bb);
		\draw[->] (ac) to node[right] {$\psi $} (bc);
		\draw[->] (bb) to node[right] {$ $} (cb);
		\draw[->] (bc) to node[right] {$ $} (cc);
		\draw[->] (bd) to node[right] {$g \circ \phi $} (cd);
	\end{tikzpicture}
\end{equation*}
where $\psi$ is the canonical section of $g'$. That $\psi$ and hence $\psi'$ are affine follows from $g$ being geometric. That $h'$ is a strictly tamely presented flat cover follows from $h$ being so. Since $g$ and hence $g'$ are almost of finite presentation and $g' \circ \psi$ is the identity, it follows that $\psi$ and hence $\psi'$ are almost of finite presentation (Proposition \ref{prop:afp2of3prop}). That~$C$ is strictly tamely presented over $A$ now follows from Proposition \ref{prop:reasstabprops2}. 
\end{proof}

The definition of tamely presented morphism has two obvious variants, where respectively the condition that $\Spec B \to \Spec A \times_Y X$ or $\Spec B \to \Spec A$ is strictly tamely presented is dropped. Some results we state extend to one or the other of these variants. But there are key results which do not, so for the sake of uniformity we formulate all statements in terms of tamely presented morphisms. 

Moreover, when working with global quotient stacks over a field, the former condition turns out to be automatic. Here we say a scheme is tamely presented if it is geometric (i.e. quasi-compact and semi-separated) and tamely presented as such. 

\begin{Proposition}\label{prop:tamequotients}
Suppose $\kk$ is a field and $G$ is a classical affine group scheme over $\kk$. Then the map $\Spec \kk \to BG$ is strictly tamely presented. In particular, if $X$ is a tamely presented $G$-scheme over $\kk$, then $X/G$ is a tamely presented geometric stack. 
\end{Proposition}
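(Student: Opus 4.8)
The plan is to reduce the second assertion to the first, so the crux is showing $\Spec\kk\to BG$ is strictly tamely presented. Granting this, let $X$ be a tamely presented $G$-scheme. First I would check that $X/G$ is a geometric stack: its diagonal is affine because $G$ is affine and $X$ is semi-separated (a short check, pulling $\Delta_{X/G}$ back along the cover $X\times X\to X/G\times X/G$), and composing an affine flat cover $\Spec B\to X$ with the affine faithfully flat morphism $X\to X/G$ gives an affine flat cover of $X/G$. Now $X\to X/G$ is, by the very definition of the classifying map $X/G\to BG$, the base change of $\Spec\kk\to BG$, hence strictly tamely presented since that class is manifestly stable under base change (Definition \ref{def:reasaffmorph}). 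Taking the cover $\Spec B\to X$ above to be strictly tamely presented with $B$ strictly tamely presented over $\kk$ — possible since $X$ is a tamely presented scheme — the composite $\Spec B\to X\to X/G$ is a strictly tamely presented flat cover (strict tame presentation being stable under composition, which reduces affine-locally to Proposition \ref{prop:reasstabprops2}(2)) whose coordinate ring is strictly tamely presented over $\kk$; hence $X/G$ is tamely presented.

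For the first assertion, $\Spec\kk\to BG$ is affine, since a torsor under an affine group scheme over an affine base is affine (it becomes $G\times\Spec A$ after its own faithfully flat base change, and affineness is fpqc-local on the target). Fix $\Spec A\to BG$ and let $P=\Spec B$ be the corresponding $G$-torsor over $\Spec A$; I must show $B$ is strictly tamely presented over $A$. Since $G$ is flat over the field $\kk$, $P$ is flat over $\Spec A$, so $B$ is flat over $A$. The key input is the structure theory of affine group schemes over a field: $\cO(G)$ is the filtered union of its finitely generated Hopf subalgebras $\cO(G_\al)$, each $G_\al$ an affine algebraic group, and each projection $G\to G_\al$ is faithfully flat \cite{Wat79}. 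Setting $N_\al=\ker(G\to G_\al)$ — an affine group scheme over $\kk$, hence flat over $\kk$ — the associated-bundle construction gives a $G_\al$-torsor $P_\al:=P\times^{G}G_\al=P/N_\al$ over $\Spec A$, which is again affine, say $P_\al=\Spec B_\al$.

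The plan is then to check that $B\cong\colim_\al B_\al$ is a strict tame presentation of $B$ over $A$. That $B\cong\colim_\al B_\al$ follows from the fundamental theorem of comodules: each $\pi_n B$ is an $\cO(G)$-comodule over $\kk$, hence the filtered union of its finite-dimensional subcomodules, each of which is a $G_\al$-representation for $\al$ large and so lies in the image of $B_\al$, which one identifies with the $N_\al$-invariants $(\pi_n B)^{N_\al}$ via fppf descent; thus $\pi_n B=\colim_\al\pi_n B_\al$ for all $n$. Next, $B$ is faithfully flat over each $B_\al$, since $P\to P_\al$ is a torsor under the $\kk$-flat group scheme $N_\al$. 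Finally, each $B_\al$ is of finite presentation over $A$: it is fppf-locally $\cO(G_\al)\otimes_\kk A$, and $\cO(G_\al)$, being finitely presented over the field $\kk$, has finite Tor-amplitude over $\kk$ and is therefore of finite presentation as a derived $\kk$-algebra, so $\cO(G_\al)\otimes_\kk A$ is of finite presentation over $A$ and this descends along the fppf trivialization. Passing to truncations, $\tau_{\leq n}B\cong\colim_\al\tau_{\leq n}B_\al$ exhibits $\tau_{\leq n}B$ as a filtered colimit of finitely $n$-presented $A$-algebras over each of which it is flat (flatness surviving $\tau_{\leq n}$), i.e.\ a strict tame presentation of order $n$; hence $B$ is strictly tamely presented over $A$ in the sense of Definition \ref{def:reasalg}, and $\Spec\kk\to BG$ is strictly tamely presented.

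I expect the main obstacle to be reconciling the classical structure theory with the derived and truncated bookkeeping: verifying that $B\cong\colim_\al B_\al$ and the flatness of $B$ over the $B_\al$ persist for a general (possibly derived, non-Noetherian) base $A$ and at every truncation level, rather than just in the familiar case where $A$ is an ordinary Noetherian ring. A secondary subtlety is the structure-theoretic input that $G\to G_\al$ is faithfully flat, equivalently that $\cO(G)$ is faithfully flat over its finitely generated Hopf subalgebras; this uses that $\kk$ is a field and is a genuinely nontrivial fact about Hopf algebras.
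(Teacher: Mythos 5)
Your proof is correct and follows essentially the same route as the paper: both filter $G$ by its algebraic quotients $G_\al$ (using that $\cO(G)$ is the union of its finitely generated Hopf subalgebras with $G \to G_\al$ faithfully flat over a field), form the associated torsors $P_\al = P/N_\al$, and verify affineness, flatness, and finite presentation of the $P_\al$ after an fppf trivialization of $P$. The only real difference is in checking $B \cong \colim_\al B_\al$: the paper deduces $P \congto \lim P_\al$ by fpqc descent from the trivialized case $U' \times G \cong \lim (U' \times G_\al)$, whereas you argue via local finiteness of $\cO(G)$-comodules and the identification of $B_\al$ with $N_\al$-invariants --- both work, the paper's version being a bit more economical with the derived/truncated bookkeeping you flag as the main obstacle.
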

\begin{proof}
Since $\kk$ is a field and $G$ is affine, we can write $G$ as a filtered limit $G \cong \lim G_\al$ of algebraic groups along faithfully flat homomorphisms \cite[Thm. 8.2, Thm. 11.1]{Mil12}. Fix a map $U \cong \Spec A \to BG$ and set $P := \Spec \kk \times_{B G} U$. By hypothesis there exists a faithfully flat $U' \cong \Spec A' \to U$ such that $P' := P \times_{U} U'$ is isomorphic to $U' \times G$. 

Now set $P_\al := P \times_G G_\al$ and $P'_\al := P' \times_G G_\al \cong U' \times G_\al$. Each map $P_\al \to U$ is affine, flat, and almost finitely presented since its pullback $P \times G_\al \to P$ along the flat cover $P \to U$ is \cite[Prop. 4.1.4.3, Lem. 9.3.1.1, Lem. B.1.4.2]{LurSAG}. We have $\lim P'_\al \cong (\lim P_\al) \times_U U'$ since $P'_\al \cong P_\al \times_U U'$ for all $\al$. It then follows that the natural map $P \to \lim P_\al$ is an isomorphism since $P' \to \lim P'_\al$ is manifestly so, and since the latter is the base change of the former along a flat cover. But by definition $\Spec \kk \to BG$ is then strictly tamely presented. 

It follows that $X \to X/G$ is strictly tamely presented since it is a base change of $\Spec \kk \to B G$ \cite[Thm. 6.1.3.9(4)]{LurHTT}. Since $X$ is tamely presented we can choose a strictly tamely presented flat cover $\Spec B \to X$ such that $B$ is a strictly tamely presented $\kk$-algebra. The composition $\Spec B \to X \to X/G$ is then strictly tamely presented, hence $X/G$ is tamely presented. 
\end{proof}

By Proposition \ref{prop:stablecoh} a tamely presented geometric stack $X$ is locally coherent, though to our knowledge it is not known that $X$ must be coherent even when it is a classical Artin stack \cite{Ryd16}. On the other hand, by \cite[\iglemaffcgen]{CWig} a tamely presented geometric stack is coherent if it is admissible in the following sense (which is a variant of \cite[Def. 1.4.5(a)]{VV10}). 

\begin{Definition}\label{def:admissible}
	A geometric stack is \emph{admissible} if it admits an affine morphism to a locally Noetherian geometric stack. 
\end{Definition}

Since admissible geometric stacks are manifestly closed under products, it follows from Proposition \ref{prop:reasmorprops2} that admissible, tamely presented geometric stacks provide a class of coherent geometric stacks which are closed under products. 

\subsection{Coherent pullback}\label{sec:geomcoh} We now consider coherent pullback in the geometric setting. 

\begin{Definition}\label{def:geomcohpull}
	Let $f: X \to Y$ be a morphism in $\GStkk$ such that $Y$ is truncated. We say $f$ has \emph{coherent pullback} if $X$ is truncated and $f^*: \QCoh(Y) \to \QCoh(X)$ takes $\Coh(Y)$ to $\Coh(X)$. We say $f$ has \emph{\stable coherent pullback} if for any truncated $Y'$ and any tamely presented morphism $Y' \to Y$, the base change $f': X \times_Y Y' \to Y'$ has coherent pullback. 
\end{Definition}

\begin{Proposition}\label{prop:cohpullprops}
	Morphisms with \stable coherent pullback are stable under composition and under base change along tamely presented morphisms in $\GStk^+$. 	Let $Y$ and $Y'$ be truncated geometric stacks and
\begin{equation*}
	\begin{tikzpicture}
		[baseline=(current  bounding  box.center),thick,>=\arrtip]
		\node (a) at (0,0) {$X'$};
		\node (b) at (3,0) {$Y'$};
		\node (c) at (0,-1.5) {$X$};
		\node (d) at (3,-1.5) {$Y$};
		\draw[->] (a) to node[above] {$f' $} (b);
		\draw[->] (b) to node[right] {$h $} (d);
		\draw[->] (a) to node[left] {$h' $}(c);
		\draw[->] (c) to node[above] {$f $} (d);
	\end{tikzpicture}
\end{equation*}
a Cartesian diagram in $\GStkk$.  If $f'$ has \stable coherent pullback and $h$ is faithfully flat, then $f$ has \stable coherent pullback. 
\end{Proposition}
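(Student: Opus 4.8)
The first two assertions are formal consequences of Proposition~\ref{prop:reasmorprops2} and Definition~\ref{def:geomcohpull}. For base change: if $f\colon X\to Y$ has \stable coherent pullback and $g\colon Y'\to Y$ is tamely presented with $Y'$ truncated, then for any tamely presented $Y''\to Y'$ with $Y''$ truncated the composite $Y''\to Y$ is tamely presented, and the base change of $f'$ along $Y''\to Y'$ is canonically the base change of $f$ along $Y''\to Y$, which has coherent pullback by hypothesis; hence $f'$ has \stable coherent pullback. For composition: let $f\colon X\to Y$ and $g\colon Y\to Z$ have \stable coherent pullback, and let $Z'\to Z$ be tamely presented with $Z'$ truncated. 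Put $Y':=Y\times_Z Z'$ and $X':=X\times_Z Z'\cong X\times_Y Y'$. Both $Y'\to Y$ and $X'\to Y'$ are base changes of tamely presented morphisms, so $g'\colon Y'\to Z'$ and $f'\colon X'\to Y'$ have coherent pullback; in particular $Y'$ and $X'$ are truncated. Then $(g\circ f)'=g'\circ f'$ and $(g'\circ f')^{*}=f'^{*}\circ g'^{*}$ carries $\Coh(Z')$ into $\Coh(Y')$ into $\Coh(X')$, so $(g\circ f)'$ has coherent pullback. As $Z'\to Z$ was arbitrary, $g\circ f$ has \stable coherent pullback.

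For the descent statement I would argue as follows. Fix a tamely presented $g\colon Y''\to Y$ with $Y''$ truncated; we must show $f''\colon X'':=X\times_Y Y''\to Y''$ has coherent pullback. Set $Y''':=Y'\times_Y Y''$, with projections $p\colon Y'''\to Y''$ (the base change of $h$, hence faithfully flat) and $r\colon Y'''\to Y'$ (the base change of $g$, hence tamely presented), and set $X''':=X\times_Y Y'''$. There are canonical identifications $X'''\cong X'\times_{Y'}Y'''\cong X''\times_{Y''}Y'''$, under which $q\colon X'''\to X''$ is the base change of $p$ and so faithfully flat; write $f'''\colon X'''\to Y'''$ for the corresponding base change of $f$ (equivalently of $f'$ along $r$, or of $f''$ along $p$). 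Since $p$ is flat and $Y''$ is truncated, $Y'''$ is truncated; since $r$ is tamely presented and $f'$ has \stable coherent pullback, $f'''$ has coherent pullback, so $X'''$ is truncated; and since $q$ is a faithfully flat affine cover, $X''$ is truncated as well.

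It remains to check that $f''^{*}\cF\in\Coh(X'')$ for each $\cF\in\Coh(Y'')$. As in the affine case (\cite[Prop.~2.7.3.1]{LurSAG}, applied on flat covers) $f''^{*}\cF$ is almost perfect, so it suffices to show it is bounded below. Because $q$ is a faithfully flat morphism of geometric stacks, $q^{*}$ is t-exact and conservative, so it is enough that $q^{*}f''^{*}\cF$ be bounded below; but $q^{*}f''^{*}\cF\cong f'''^{*}p^{*}\cF$ by functoriality of pullback along $f''\circ q=p\circ f'''$, and $p^{*}\cF$ lies in $\Coh(Y''')$ since $p$ is flat, whence $f'''^{*}p^{*}\cF\in\Coh(X''')$ because $f'''$ has coherent pullback. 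Thus $f''^{*}\cF$ is bounded below and almost perfect, hence coherent, and since $Y''\to Y$ was arbitrary, $f$ has \stable coherent pullback.

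I expect the descent statement to be the only real difficulty: the first two parts are purely diagrammatic, whereas the last requires keeping careful track of the iterated derived fiber products and relies on the standard facts that faithfully flat (resp.\ flat) pullbacks between geometric stacks are conservative and t-exact, that a flat morphism with truncated target has truncated source, and that pullback preserves almost perfectness --- all available from \cite[Ch.~9]{LurSAG} and \cite{CWig}.
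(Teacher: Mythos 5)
Your proof is correct and follows essentially the same route as the paper: the paper forms exactly your cube (its $Z$, $W$, $Z'$, $W'$ are your $Y''$, $X''$, $Y'''$, $X'''$), notes that $Z'\to Y'$ is tamely presented so the top base change has coherent pullback, and concludes by faithfully flat descent, which you have simply spelled out in more detail (almost perfectness is preserved by pullback, left boundedness is detected after faithfully flat pullback). The only cosmetic slip is calling $q$ a faithfully flat \emph{affine} cover---it is merely faithfully flat---but nothing in your argument uses affineness.
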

\begin{proof} 
	Stability under composition and tamely presented base change follow from Proposition~\ref{prop:reasmorprops2}. Suppose that $Z \to Y$ is tamely presented and $Z$ is truncated. Consider the commutative cube
	\begin{equation*}
		\begin{tikzpicture}[baseline=(current  bounding  box.center),thick,>=\arrtip]
			\newcommand*{\ha}{1.5}; \newcommand*{\hb}{1.5}; \newcommand*{\hc}{1.5};
\newcommand*{\va}{-.9}; \newcommand*{\vb}{-.9}; \newcommand*{\vc}{-.9}; 
			\node (ab) at (\ha,0) {$W'$};
			\node (ad) at (\ha+\hb+\hc,0) {$Z'$};
			\node (ba) at (0,\va) {$X'$};
			\node (bc) at (\ha+\hb,\va) {$Y'$};
			\node (cb) at (\ha,\va+\vb) {$W$};
			\node (cd) at (\ha+\hb+\hc,\va+\vb) {$Z$};
			\node (da) at (0,\va+\vb+\vc) {$X$};
			\node (dc) at (\ha+\hb,\va+\vb+\vc) {$Y$};
			\draw[->] (ab) to node[above] {$\phi' $} (ad);
			\draw[->] (ab) to node[above] {$ $} (ba);
			\draw[->] (ab) to node[above] {$ $} (cb);
			\draw[->] (ad) to node[above] {$ $} (bc);
			\draw[->] (ad) to node[above] {$ $} (cd);
			\draw[->] (ba) to node[above] {$ $} (da);
			\draw[->] (cb) to node[above,pos=.2] {$\phi $} (cd);
			\draw[->] (cb) to node[above] {$ $} (da);
			\draw[->] (cd) to node[above] {$ $} (dc);
			\draw[->] (da) to node[above,pos=.6] {$f $} (dc);
			
			\draw[-,line width=6pt,draw=white] (ba) to  (bc);
			\draw[->] (ba) to node[above,pos=.75] {$f' $} (bc);
			\draw[-,line width=6pt,draw=white] (bc) to  (dc);
			\draw[->] (bc) to node[right,pos=.2] {$h $} (dc);
		\end{tikzpicture}
	\end{equation*}
	with all faces Cartesian. By Proposition~\ref{prop:reasmorprops2} the morphism $Z' \to Y'$ is tamely presented, hence $\phi'$ has coherent pullback. But since $h$ is faithfully flat so are the other vertical morphisms, and it follows that~$\phi$ has coherent pullback. 
\end{proof}

It is convenient for the definition of \stable coherent pullback to require no hypotheses beyond truncatedness, so that for example it includes a flat morphism of arbitrary truncated geometric stacks. However, our main interest will be in the case where the stacks involved are tamely presented. In particular, in this setting it can be reformulated as the condition of ``tamely local coherent pullback''. That is, suppose $f: X \to Y$ is a morphism of tamely presented truncated geometric stacks, and that $\Spec A \to Y$ is a strictly tamely presented flat cover such with $A$ strictly tamely presented over $\kk$. Then it follows from Theorem \ref{thm:reasonableaffinecohbasechange} and Proposition \ref{prop:cohpullprops} that $f$ has \stable coherent pullback if and only if its base change to $\Spec A$ has coherent pullback. 

Another important feature of the tamely presented setting is that morphisms with \stable coherent pullback can be locally approximated by morphisms of finite Tor-dimension. A morphism $f: X \to Y$ in $\GStkk$ is of Tor-dimension~$\leq n$ if $f^*(\QCoh(Y)^{\geq 0}) \subset \QCoh(Y)^{\geq n}$, and is of finite Tor-dimension if it is of Tor-dimension~$\leq n$ for some $n$. Morphisms of finite Tor-dimension are flat local on the target, and are stable under composition and base change \cite[\igpropftdprops]{CWig}. We have the following result, where we note that an expression $A \cong \colim A_\al$ of the indicated kind always exists for some $n$. 

\begin{Proposition}\label{prop:finitetorapproxgstk}
	Let $f: X \to Y$ be a morphism in $\GStk^+$ with \stable coherent pullback, and suppose that $X$ and $Y$ are tamely presented. Choose a strictly tamely presented flat cover $\Spec A \to Y$ such that $A$ is strictly tamely presented over~$\kk$, and let $f': X' \to \Spec A$ denote the base change of $f$. If $A \cong \colim A_\al$ exhibits $A$ as a strictly tamely $n$-presented $\kk$-algebra for some $n$, then the composition  
	\begin{equation*}
		\begin{tikzpicture}
			[baseline=(current  bounding  box.center),thick,>=\arrtip]
			\newcommand*{\ha}{2.5}; \newcommand*{\hb}{2.5};
			\node (aa) at (0,0) {$X'$};
			\node (ab) at (\ha,0) {$\Spec A$};
			\node (ac) at (\ha+\hb,0) {$\Spec A_\al$};
			\draw[->] (aa) to node[above] {$f' $} (ab);
			\draw[->] (ab) to node[above] {$u_\al $} (ac);
		\end{tikzpicture}
	\end{equation*}
	is of finite Tor-dimension for all $\al$. 
\end{Proposition}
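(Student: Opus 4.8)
The plan is to reduce the claim to the affine statement Proposition \ref{prop:finitetor2} after base changing to a well-chosen flat cover of $X'$. First I would record the basic reductions. Since $A$ is strictly tamely $n$-presented over $\kk$ it is $n$-truncated, so $\Spec A$ is truncated, and $\Spec A \to Y$ is a tamely presented morphism (being a strictly tamely presented flat cover). As $f$ has \stable coherent pullback, its base change $f' : X' \to \Spec A$ therefore has coherent pullback; in particular $X'$ is truncated. Moreover $X' \to X$ is tamely presented by stability of tamely presented morphisms under base change (Proposition \ref{prop:reasmorprops2}), and composing with $X \to \Spec\kk$ shows that $X'$ is a tamely presented geometric stack.

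Next I would analyze $u_\al$ and the composite $u_\al \circ f'$. Restricting the given strictly tame presentation to indices $\be \geq \al$ exhibits $A$ as a strictly tame presentation of order $n$ over $A_\al$: for $\be \geq \al$ the map $A_\al \to A_\be$ is finitely $n$-presented \cite[Prop. 4.1.3.1]{LurSAG}, and $A$ is flat over each $A_\be$. Hence $u_\al : \Spec A \to \Spec A_\al$ is flat and strictly tamely presented; being flat it has coherent pullback, so $u_\al \circ f'$ has coherent pullback. Since $A_\al$ is finitely $n$-presented over $\kk$ the morphism $\Spec A_\al \to \Spec\kk$ is almost of finite presentation, and $X' \to \Spec\kk$ is tamely presented, so Proposition \ref{prop:wafp2of3prop} shows that $u_\al \circ f' : X' \to \Spec A_\al$ is itself tamely presented. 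Unwinding Definition \ref{def:reasaffmorph} for this morphism, with the test object taken to be the identity $\Spec A_\al \to \Spec A_\al$, then produces a strictly tamely presented flat cover $p : \Spec B \to X'$ such that $B$ is strictly tamely presented over $A_\al$. This last step -- obtaining a flat cover whose coordinate ring is strictly tamely presented over the \emph{Noetherian} ring $A_\al$, rather than merely over $\kk$ -- is the crux of the argument, and the reason we pass through Proposition \ref{prop:wafp2of3prop}; without the two-out-of-three property against the almost finitely presented morphism $\Spec A_\al \to \Spec\kk$, the given flat covers of $X'$ are only known to be strictly tamely presented over $\kk$, which is not enough to invoke Proposition \ref{prop:finitetor2}.

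Finally, the composite $g := u_\al \circ f' \circ p : \Spec B \to \Spec A_\al$ is a morphism of affine schemes whose coordinate ring $B$ is strictly tamely presented over the Noetherian ring $A_\al$, and $g$ has coherent pullback since $p$ is flat (hence has coherent pullback) and $u_\al \circ f'$ has coherent pullback. Proposition \ref{prop:finitetor2} then shows that $g$ is of finite Tor-dimension. Since $p$ is a faithfully flat cover, $p^*$ is t-exact and conservative, so lower-boundedness of a quasi-coherent complex on $X'$ can be tested after applying $p^*$; combined with the identification $g^* \cong p^* \circ (u_\al \circ f')^*$ this forces $u_\al \circ f'$ to be of finite Tor-dimension, which is the desired conclusion. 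I expect the only genuinely nontrivial point to be the one flagged above; the remaining steps are bookkeeping with the stability properties already established.
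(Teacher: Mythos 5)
Your proof is correct and follows essentially the same strategy as the paper: produce a flat cover $\Spec B \to X'$ with $B$ strictly tamely presented over the Noetherian ring $A_\al$, apply Proposition \ref{prop:finitetor2} to the composite to $\Spec A_\al$, and descend along the faithfully flat cover. The only (immaterial) difference is how that cover is obtained — the paper base changes a chosen cover of $X$ and applies the affine two-out-of-three property (Proposition \ref{prop:reasstabprops2}(3)), whereas you invoke the geometric two-out-of-three property (Proposition \ref{prop:wafp2of3prop}) and unwind the definition of tamely presented for $u_\al \circ f'$.
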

\begin{proof}
	Fix a strictly tamely presented flat cover $g: \Spec B \to X$ such that $B$ is strictly tamely presented over $\kk$. If $g': \Spec B' \to X'$ is its base change (whose source is affine since $Y$ is geometric), then $B'$ is strictly tamely presented over $B$ and thus $\kk$ by Proposition~\ref{prop:reasstabprops2}. Since $\kk$ is Noetherian, $A_\al$ is almost finitely presented over $\kk$ for any $\al$ \cite[Prop. 4.1.2.1]{LurSAG}. Proposition~\ref{prop:reasstabprops2} then further implies that $B'$ is strictly tamely presented over~$A_\al$. 
	
	It follows from the hypotheses that $f'$ has coherent pullback, hence by flatness of $g'$ and $u_\al$ the composition $u_\al \circ f' \circ g'$ also has coherent pullback. This composition then has finite Tor-dimension by Proposition \ref{prop:finitetor2}, hence so does $u_\al \circ f'$ by faithful flatness of $g'$. 
\end{proof}

\subsection{Weak smoothness}

We now consider the following generalized notion of smoothness. 

\begin{Definition}\label{def:weaklysmooth}
A morphism $f: X \to Y$ of truncated geometric stacks is \emph{weakly smooth} if it is flat, tamely presented, and the diagonal $\Delta_f: X \to X \times_Y X$ has \stable coherent pullback. A tamely presented truncated geometric stack is weakly smooth if it is so over $\Spec \kk$. 
\end{Definition}

The requirement that $f$ be tamely presented here is mostly for convenience. The following results capture how morphisms with \stable coherent pullback and infinite Tor-dimension arise  in our applications. In particular, Proposition \ref{prop:wprosmoothcohdiagb} and \cite[Prop. 1.2.1, Prop. 1.7.1]{KV04} imply the weak smoothness of the jet scheme of an affine variety which is \'etale over $\A^n$. 

\begin{Proposition}\label{prop:wprosmoothcohdiaga}
	Suppose that $X \in \GStkkplus$ is weakly smooth and that $Y \in \GStkkplus$ has \stable coherent pullback over $\kk$ (for example, $\kk$ is a field and $Y \in \GStkkplus$ is arbitrary). Then any tamely presented morphism $f: Y \to X$ has \stable coherent pullback. 
\end{Proposition}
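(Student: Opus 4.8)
This is the standard "morphism to a weakly smooth target has coherent pullback" statement, whose prototype in finite type is the observation that $f \colon Y \to X$ factors through its graph $\Gamma_f \colon Y \to Y \times_\kk X$ followed by the projection $Y \times_\kk X \to X$, where $\Gamma_f$ is a base change of $\Delta_X$ and the projection is a base change of $Y \to \Spec\kk$. I would run exactly this argument, being careful about which morphisms are tamely presented so that the stability results (Propositions~\ref{prop:reasmorprops2}, \ref{prop:cohpullprops}) apply.

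\begin{proof}
Consider the factorization of $f$ as the composition
\begin{equation*}
	\begin{tikzpicture}[baseline=(current  bounding  box.center),thick,>=\arrtip]
		\newcommand*{\ha}{3}; \newcommand*{\hb}{3};
		\node (aa) at (0,0) {$Y$};
		\node (ab) at (\ha,0) {$Y \times X$};
		\node (ac) at (\ha+\hb,0) {$X$,};
		\draw[->] (aa) to node[above] {$\Gamma_f $} (ab);
		\draw[->] (ab) to node[above] {$\pi_2 $} (ac);
	\end{tikzpicture}
\end{equation*}
where $\Gamma_f = (\id_Y, f)$ is the graph of $f$, the product is over $\Spec\kk$, and $\pi_2$ is the second projection. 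By Proposition~\ref{prop:cohpullprops} it suffices to show that $\Gamma_f$ and $\pi_2$ both have \stable coherent pullback.

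For $\pi_2$, note that it is the base change of $Y \to \Spec\kk$ along $X \to \Spec\kk$. Since $Y$ is tamely presented over $\kk$ and $Y \to \Spec\kk$ has \stable coherent pullback over $\kk$ by hypothesis, Proposition~\ref{prop:cohpullprops} (stability of \stable coherent pullback under base change along tamely presented morphisms---here $X \to \Spec\kk$ is tamely presented) shows $\pi_2$ has \stable coherent pullback. For $\Gamma_f$, observe that there is a Cartesian square
\begin{equation*}
	\begin{tikzpicture}[baseline=(current  bounding  box.center),thick,>=\arrtip]
		\node (a) at (0,0) {$Y$};
		\node (b) at (3.2,0) {$Y \times X$};
		\node (c) at (0,-1.5) {$X$};
		\node (d) at (3.2,-1.5) {$X \times X$};
		\draw[->] (a) to node[above] {$\Gamma_f $} (b);
		\draw[->] (b) to node[right] {$f \times \id_X $} (d);
		\draw[->] (a) to node[left] {$f $}(c);
		\draw[->] (c) to node[above] {$\Delta_X $} (d);
	\end{tikzpicture}
\end{equation*}
exhibiting $\Gamma_f$ as the base change of $\Delta_X$ along $f \times \id_X$. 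Since $f$ is tamely presented and tamely presented morphisms are stable under base change (Proposition~\ref{prop:reasmorprops2}), the morphism $f \times \id_X$ is tamely presented. As $X$ is weakly smooth, $\Delta_X$ has \stable coherent pullback, so by Proposition~\ref{prop:cohpullprops} again $\Gamma_f$ has \stable coherent pullback.

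Composing, $f = \pi_2 \circ \Gamma_f$ has \stable coherent pullback. Finally, the parenthetical follows from Proposition~\ref{prop:wprosmoothcohdiagb}: over a field, every morphism out of $\Spec\kk$ is flat and the diagonal of any truncated geometric stack has \stable coherent pullback, so $Y \to \Spec\kk$ does as well, and the hypothesis on $Y$ is automatic.
\end{proof}

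\noindent\textbf{Remarks on the argument.} The expected main obstacle is bookkeeping around the definitions rather than any deep input: one must check that the graph construction makes sense and is Cartesian over $\Delta_X$ in this generality (which is formal since all the relevant categories are closed under fiber products, by Proposition~\ref{prop:gstkprops}), and---more delicately---that all the morphisms one base-changes along are genuinely tamely presented, since \stable coherent pullback is only stable under base change along \emph{tamely presented} morphisms, not arbitrary ones. The key leverage points are: $X$ itself being tamely presented (so $X \to \Spec\kk$ is tamely presented, which makes $\pi_2$ a tamely-presented base change of $Y \to \Spec\kk$), and $f$ being tamely presented by hypothesis (so $f \times \id_X$ is, making $\Gamma_f$ a tamely-presented base change of $\Delta_X$). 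One subtlety worth double-checking is truncatedness: $Y \times X$ is a product of truncated geometric stacks and hence truncated, $\Gamma_f$ and $\pi_2$ are morphisms of truncated geometric stacks, so Definition~\ref{def:geomcohpull} applies throughout.
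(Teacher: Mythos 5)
Your proof is correct and takes essentially the same route as the paper's: factor $f$ through its graph, realize the graph as the base change of $\Delta_X$ along the tamely presented morphism $f \times \id_X$ and the projection as the base change of $Y \to \Spec \kk$ along the tamely presented morphism $X \to \Spec \kk$, then apply Proposition~\ref{prop:cohpullprops} twice and compose. One small caveat: in justifying the parenthetical example you assert that the diagonal of any truncated geometric stack has \stable coherent pullback, which is false in general (that is precisely the weak smoothness condition, and it fails already for singular varieties); but the parenthetical only needs that $Y \to \Spec \kk$ is flat when $\kk$ is a field, which you also observe, so the argument is unaffected.
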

\begin{proof}
	We can factor $f$ as $Y \to X \times Y \to X$. 
	The first factor is the base change of $\Delta_X$ along the tamely presented morphism $(\id_X \times f)$, and the second is the base change of $Y \to \Spec \kk$ along the tamely presented morphism $X \to \Spec \kk$. Thus each factor and their composition have \stable coherent pullback by hypothesis and Proposition \ref{prop:cohpullprops}. 
\end{proof}

Recall that a morphism $A \to B$ is $\CAlgk$ is fiber smooth if it flat and if $\clalg{A} \to \clalg{B}$ is smooth in the ordinary sense. In particular, if $A$ and $B$ are ordinary rings then fiber smoothness and ordinary smoothness are the same. 

\begin{Proposition}\label{prop:wprosmoothcohdiagb}
	Suppose $X \in \GStkkplus$ is tamely presented and admits a flat cover $\Spec A \to X$ such that for some $n$ there is a strictly tame presentation $A \cong \colim A_\al$ of order $n$ in which each $A_\al$ is fiber smooth over $\kk$. Then $X$ is weakly smooth. 
\end{Proposition}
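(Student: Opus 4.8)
The plan is to reduce to the case where $X$ is affine by descending along a flat cover, and there to extract the needed boundedness from the finite Tor-dimension of the diagonals of the smooth pieces $\Spec A_\al$. Throughout, $p \colon V := \Spec A \to X$ denotes the given flat cover, which we may take to be strictly tamely presented since $X$ is tamely presented. First I would record some preliminary observations: each $A_\al$ is finitely $n$-presented and fiber smooth over $\kk$, hence (as $\kk$ is Noetherian, \cite[Prop. 4.1.2.1]{LurSAG}) flat and almost of finite presentation over $\kk$ with $\clalg{A_\al}$ smooth over $\clalg{\kk}$, so that $A_\al$ is smooth over $\kk$ and in particular $n$-truncated; consequently $A \cong \colim A_\al$ is flat over $\kk$, so $V$, and hence $X$ (flatness being local on the source), is flat over $\Spec \kk$, and all of $V$, $X$, $V \times V$, $X \times X$ are truncated.

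Next I would show that the affine scheme $V = \Spec A$ is itself weakly smooth, the point at which the hypothesis genuinely enters. As $V$ and $V \times V = \Spec(A \ot_\kk A)$ are tamely presented truncated geometric stacks and $A \ot_\kk A$ is strictly tamely presented over $\kk$, the reformulation of \stable coherent pullback that follows Proposition~\ref{prop:cohpullprops} (via Theorem~\ref{thm:reasonableaffinecohbasechange} and Proposition~\ref{prop:cohpullprops}, applied with the identity flat cover of $V \times V$) reduces the claim to showing that $\Delta_V$ has coherent pullback. Now $A \ot_\kk A \cong \colim (A_\al \ot_\kk A_\al)$ is a strictly tame presentation of order $n$ in which each $A_\al \ot_\kk A_\al$ is Noetherian (\cite[Prop. 4.2.4.1]{LurSAG}). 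Given a coherent $A\ot_\kk A$-module $M$, flat base change of coherent modules (\cite[Cor. 4.5.1.10]{LurSAG}) produces a coherent $M_\al$ over some $A_\al \ot_\kk A_\al$ with $M \cong M_\al \ot_{A_\al \ot_\kk A_\al} (A \ot_\kk A)$, whence $\Delta_V^*(M) \cong (\delta_\al^* M_\al) \ot_{A_\al} A$, where $\delta_\al \colon A_\al \ot_\kk A_\al \to A_\al$ is the multiplication. Since $A_\al$ is smooth over $\kk$, $\delta_\al$ is a closed immersion of finite Tor-dimension (locally $A_\al$ is cut out of $A_\al \ot_\kk A_\al$ by a regular sequence, the relative cotangent complex being $L_{A_\al/\kk}[1]$), so $\delta_\al^* M_\al$ is bounded; as $A$ is flat over $A_\al$ the module $\Delta_V^*(M)$ is bounded, and being almost perfect (as $A$ is coherent, Proposition~\ref{prop:stablecoh}) it is coherent. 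Thus $\Delta_V$ has coherent pullback, and $V$ is weakly smooth.

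Finally I would handle the general case by a faithfully flat descent combined with a factorization of the diagonal over the cover. The base change of $\Delta_X$ along the faithfully flat morphism $p \times p \colon V \times V \to X \times X$ is the canonical map $\delta \colon V \times_X V \to V \times V$, so by Proposition~\ref{prop:cohpullprops} it suffices to show that $\delta$ has \stable coherent pullback. Writing $\pi_1, \pi_2 \colon V \times_X V \to V$ for the two projections, $\delta$ factors as
\[
V \times_X V \xrightarrow{\;(\pi_1,\, \mathrm{id})\;} V \times (V \times_X V) \xrightarrow{\;\mathrm{id}_V \times\, \pi_2\;} V \times V .
\]
Here $\mathrm{id}_V \times \pi_2$ is a base change of $\pi_2$, which is flat (being a base change of $p$), hence has \stable coherent pullback; and the graph $(\pi_1, \mathrm{id})$ is a base change of $\Delta_V$ along $\mathrm{id}_V \times \pi_1$, which is tamely presented (being a base change of the tamely presented morphism $\pi_1$), so by the previous paragraph and Proposition~\ref{prop:cohpullprops} it too has \stable coherent pullback. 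Hence $\delta$, being a composite of morphisms with \stable coherent pullback, has \stable coherent pullback, and therefore so does $\Delta_X$; as $X$ is tamely presented and flat over $\Spec \kk$, it follows that $X$ is weakly smooth.

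The main obstacle, beyond the formal bookkeeping with the stability properties of Proposition~\ref{prop:cohpullprops}, is recognizing the factorization of $\delta$ displayed above: it is what lets one bootstrap from the affine case, exhibiting the diagonal of $X$ (pulled back to the cover) as a base change of $\Delta_V$ followed by a flat map. The affine case itself is where the smoothness of the $A_\al$, and the resulting finite Tor-dimension of their diagonals, are actually used.
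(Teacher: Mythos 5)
Your proof is correct and takes essentially the same route as the paper's: faithfully flat descent of the diagonal to the cover, Noetherian approximation of coherent sheaves down to the smooth pieces $\Spec A_\al$, the finite Tor-dimension of the diagonal of a fiber smooth algebra (which the paper simply cites as \cite[Lem. 11.3.5.2]{LurSAG}), and a graph factorization through a flat projection. The only organizational difference is that you isolate the weak smoothness of $V = \Spec A$ as an intermediate step and then apply the graph trick to $V \times_X V \to V \times V$ using $\Delta_V$, whereas the paper applies the analogous factorization directly to the composite $(p_\al \times p_\al) \circ \Delta'_X$ landing in $U_\al \times U_\al$; both reductions rest on the same stability properties from Theorem \ref{thm:reasonableaffinecohbasechange} and Proposition \ref{prop:cohpullprops}.
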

\begin{proof}
	Set $U = \Spec A$ and $U_\al = \Spec A_\al$. Since $U \times U \to X \times X$ is a flat cover, it suffices to show the base change $\Delta'_X: U \times_X U \to U \times U$ of $\Delta_X$ has \stable coherent pullback (Proposition~\ref{prop:cohpullprops}). Proposition \ref{prop:reasstabprops2} implies $U \times U$ is strictly tamely presented over $\Spec \kk$ since $U$ is, hence it suffices to show $\Delta'_X$ has coherent pullback (Theorem \ref{thm:reasonableaffinecohbasechange}). Since $U \times U$ is coherent (Proposition \ref{prop:stablecoh}) it suffices to show $\Delta'^*_X(\cF)$ is coherent for any $\cF \in \Coh(U \times U)^\heartsuit$. 
	
	By flatness of the $p_\al \times p_\al: U \times U \to U_\al \times U_\al$ and by e.g. \cite[Cor. 4.5.1.10]{LurSAG} or \cite[Sec. C.4]{TT90}, $\cF \cong (p_\al \times p_\al)^*(\cF_\al)$ for some $\al$ and some $\cF_\al \in \Coh(U_\al \times U_\al)^\heartsuit$. But since $U_\al \times U_\al$ is fiber smooth over $\kk$ its diagonal is of finite Tor-dimension \cite[Lem. 11.3.5.2]{LurSAG} (c.f. \cite[Lem. 0FDP]{Sta}). Thus $(p_\al \times p_\al) \circ \Delta'_X$ is of finite Tor-dimension by the reasoning of the proof of Proposition \ref{prop:wprosmoothcohdiaga}, hence $\Delta'^*_X(\cF)$ is coherent. 
\end{proof}

\subsection{Pushforward and base change}\label{sec:geombc}

Given a morphism $f: X \to Y$ in $\GStkk$, the pushforward $f_*: \QCoh(X) \to \QCoh(Y)$ is defined as the right adjoint of $f^*$. 
For $f_*$ to be well-behaved one needs additional hypotheses on $f$. Recall that a morphism $f: X \to Y$ in $\GStkk$ is of cohomological dimension $\leq n$ if $f_*(\QCoh(X)^{\leq 0}) \subset \QCoh(Y)^{\leq n}$, and is of finite cohomological dimension if it is of cohomological dimension $\leq n$ for some $n$. Morphisms of finite cohomological dimension  in $\GStkk$ are flat local on the target are stable under composition and base change  \cite[\igpropfcdprops]{CWig}. Moreover, pushforward along a morphism of finite cohomological dimension is continuous and satisfies base change with respect to $*$-pullback. 

A key case is that of proper morphisms. A morphism $f: X \to Y$ of geometric stacks is proper if for any $\Spec A \to Y$, the fiber product $X \times_Y \Spec A$ is proper over $\Spec A$ in the sense of \cite[Def. 5.1.2.1]{LurSAG}. In particular, this requires that $X \times_Y \Spec A$ be a quasi-compact separated (spectral) algebraic space. Proper morphisms are of finite cohomological dimension and are stable under composition and base change \cite[\igproppropprops]{CWig}. Moreover, pushforward along a proper, almost finitely presented morphisms preserves coherence.

Following \cite{GR17}, we summarize the compatiblity between  pushforward and pullback via correspondence categories. Let $\Corr(\GStkkplus)_{prop,coh}$ denote the 1-full subcategory of $\Corr(\GStkk)$ which only includes correspondences $X \xleftarrow{h} Y \xrightarrow{f} Z$ such that $h$ has \stable coherent pullback, $f$ is proper and almost finitely presented, and $X$ and $Z$ are truncated (hence so is~$Y$). Then, as in \cite[\igsecpropafp]{CWig}, there exists by Propositions~\ref{prop:afpimplieswafp} and \ref{prop:cohpullprops} a functor 
\begin{equation}\label{eq:CorrCohGStkpropcoh}
	\Coh: \Corr(\GStkkplus)_{prop,coh} \to \Catinfty
\end{equation}
whose value on the above correspondence is $f_* h^*: \Coh(X) \to \Coh(Z)$. 

\section{Ind-geometric stacks}\label{sec:indgeom}

We now consider tamely presented morphisms and coherent pullback in the setting of ind-geometric stacks. After recalling the needed definitions from \cite{CWig}, we consider the stability properties of our main classes of morphisms, followed by their interaction with coherent sheaves. 

\subsection{Definitions}

Recall from \cite[\igpropgstkconvergent]{CWig} that $\GStkk$ is contained in the full subcategory $\Stkkconv \subset \Stkk$ of convergent stacks. This subcategory plays a central role in our discussion because colimits in $\Stkkconv$ are typically more natural than colimits in $\Stkk$. For example, any $\Spec A$ is the colimit of its truncations $\Spec \tau_{\leq n} A$ in $\Stkkconv$, but not in $\Stkk$ unless $A$ is itself truncated. 

\begin{Definition}\label{def:indgeomstack}
	An \emph{ind-geometric stack} is a convergent stack $X$ which admits an expression $X \cong \colim_{\al} X_\al$ as a filtered colimit in $\Stkkconv$ of truncated geometric stacks along closed immersions. We call such an expression an \emph{ind-geometric presentation} of $X$. 
\end{Definition}

We write $\indGStkk \subset \Stkkconv$ for the full subcategory of ind-geometric stacks. Ind-geometric stacks are closed under finite limits in $\Stkkconv$ \cite[\igpropindgeomfiberprods]{CWig}. A key property of an ind-geometric presentation $X \cong \colim X_\al$ is that for any truncated geometric stack $Y$, the natural map
$$\colim \Map_{\Stkk}(Y, X_\al) \to \Map_{\Stkk}(Y, X)$$
is an isomorphism \cite[\igpropfactorthroughgeometric]{CWig}. 

\begin{Definition}\label{def:reasindgeomstack}
	A \emph{reasonable presentation} is an ind-geometric presentation $X \cong \colim_{\al} X_\al$ in which the structure maps are almost finitely presented. An ind-geometric stack is \emph{reasonable} if it admits a reasonable presentation, and \emph{coherent} if it admits a reasonable presentation whose terms are coherent geometric stacks. 
\end{Definition}

We write $\indGStkkreas \subset \indGStkk$ (resp. $\indGStkkcoh \subset \indGStkk$) for the full subcategory of reasonable (resp. coherent) ind-geometric stacks.    

\begin{Definition}
	An ind-geometric stack $X$ is $n$-truncated if it admits an ind-geometric presentation $X \cong \colim X_\al$ in which each $X_\al$ is an $n$-truncated geometric stack. We say $X$ is classical if it is zero-truncated. 
\end{Definition}

A typical example of a classical ind-geometric stack is the quotient of a classical ind-scheme by an affine group scheme. 

\begin{Definition}\label{def:reassubstack}
	Let $X$ be an ind-geometric stack. 
	A \emph{truncated} (resp. \emph{reasonable}) \emph{geometric substack} of $X$ is a truncated geometric stack $X'$ equipped with a closed immersion $X' \to X$ (resp. an almost finitely presented closed immersion $X' \to X$).  
\end{Definition}

The basic example of a truncated (resp. reasonable) geometric substack is any term in an ind-geometric (resp. reasonable) presentation \cite[\igpropigprestermsarereas]{CWig}. 

\subsection{Ind-tamely presented morphisms}\label{sec:indtamemorphisms}
The notion of tamely presented morphism is extended to the ind-geometric setting via the following template, which follows parallel notions such as ind-properness. 

\begin{Proposition}\label{prop:indPequivconditions}
	Let $f: X \to Y$ be a morphism of reasonable ind-geometric stacks, and let $X \cong \colim_\al X_\al$ be a reasonable presentation. The following conditions are equivalent.
	\begin{enumerate}
		\item 
		\begin{minipage}[t]{\linewidth-\labelwidth-\labelsep}
			For every diagram
			\begin{equation}\label{eq:indPdef}
				\begin{tikzpicture}
					[baseline=(current  bounding  box.center),thick,>=\arrtip]
					\node (a) at (0,0) {$X'$};
					\node (b) at (3,0) {$X$};
					\node (c) at (0,-1.5) {$Y'$};
					\node (d) at (3,-1.5) {$Y$};
					\node (e) at (5,-1.5) {$ $};
					\draw[->] (a) to node[above] {$ $} (b);
					\draw[->] (b) to node[right] {$f $} (d);
					\draw[->] (a) to node[left] {$f' $}(c);
					\draw[->] (c) to node[above] {$ $} (d);
				\end{tikzpicture}
			\end{equation}
			in which $X' \to X$ and $Y' \to Y$ are reasonable geometric substacks, the map $f'$ is tamely presented. 
		\end{minipage}
		\vspace{2mm}
		\item 
		\begin{minipage}[t]{\linewidth-\labelwidth-\labelsep}
			For every $X_\al$ there exists a diagram
			\begin{equation}\label{eq:cancheckpresforindPdiag}
				\begin{tikzpicture}
					[baseline=(current  bounding  box.center),thick,>=\arrtip]
					\node (a) at (0,0) {$X_\al$};
					\node (b) at (3,0) {$X$};
					\node (c) at (0,-1.5) {$Y_\al$};
					\node (d) at (3,-1.5) {$Y$};
					\node (e) at (5,-1.5) {$ $};
					\draw[->] (a) to node[above] {$ $} (b);
					\draw[->] (b) to node[right] {$f $} (d);
					\draw[->] (a) to node[left] {$f_\al $}(c);
					\draw[->] (c) to node[above] {$ $} (d);
				\end{tikzpicture}
			\end{equation}
			in which $Y_\al$ is a reasonable geometric substack of $Y$ and $f_\al$ is tamely presented. 
		\end{minipage}
	\end{enumerate}	
\end{Proposition}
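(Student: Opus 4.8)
The plan is to prove $(1)\Rightarrow(2)$ by unwinding definitions and $(2)\Rightarrow(1)$ by using the factorization property of ind-geometric presentations \cite[\igpropfactorthroughgeometric]{CWig} to reduce an arbitrary pair of reasonable geometric substacks to a term of the given presentation $X\cong\colim_\al X_\al$. For $(1)\Rightarrow(2)$: fix a term $X_\al$, which is a reasonable geometric substack of $X$ by \cite[\igpropigprestermsarereas]{CWig}. Choosing a reasonable presentation $Y\cong\colim_\beta Y_\beta$, the composite $X_\al\to X\xrightarrow{f}Y$ factors through some $Y_\beta$ by \cite[\igpropfactorthroughgeometric]{CWig}; take $Y_\al:=Y_\beta$, again a reasonable geometric substack of $Y$. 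Condition $(1)$ applied to the resulting square says exactly that the induced map $X_\al\to Y_\al$ is tamely presented, which is $(2)$.

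For $(2)\Rightarrow(1)$, suppose given a square as in \eqref{eq:indPdef} with induced map $f':X'\to Y'$. First I would factor the closed immersion $X'\to X$ through a term $X_\al$ of the given presentation (\cite[\igpropfactorthroughgeometric]{CWig}), obtaining $g:X'\to X_\al$. One checks $g$ is an almost finitely presented closed immersion: it is a closed immersion because it factors the closed immersion $X'\to X$ through the closed immersion $X_\al\to X$ (closed immersions being left-cancellable), and it is almost of finite presentation by the two-out-of-three property (Proposition~\ref{prop:afp2of3prop}) applied to $X'\to X_\al\to X$; hence $g$ is tamely presented (Proposition~\ref{prop:afpimplieswafp}). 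Condition $(2)$ then furnishes a reasonable geometric substack $Y_\al\subset Y$ and a tamely presented $f_\al:X_\al\to Y_\al$ making the square in \eqref{eq:cancheckpresforindPdiag} commute, so that $f_\al\circ g:X'\to Y_\al$ is tamely presented (Proposition~\ref{prop:reasmorprops2}).

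To pass from $Y_\al$ to $Y'$, I would set $Y'':=Y'\times_Y Y_\al$. This is a geometric stack equipped with an almost finitely presented closed immersion to $Y$: the projections $Y''\to Y'$ and $Y''\to Y_\al$ are base changes of the almost finitely presented closed immersions $Y_\al\to Y$ and $Y'\to Y$ respectively, so (composing with $Y'\to Y$) the map $Y''\to Y$ is such, and $Y''$ is geometric since $Y''\to Y_\al$ is a geometric (indeed affine) morphism and $Y_\al$ is geometric (Proposition~\ref{prop:gstkprops}). Since $X'\to X\xrightarrow{f}Y$ factors through $Y'$ (by the given square) and through $Y_\al$ (via $g$ and the commutativity in $(2)$) compatibly over $Y$, it factors through $Y''$, yielding a geometric morphism $X'\to Y''$ whose composite with $Y''\to Y_\al$ is $f_\al\circ g$ and whose composite with $Y''\to Y'$ is $f'$. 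Now $X'\to Y''\to Y_\al$ is tamely presented and $Y''\to Y_\al$ is almost of finite presentation, so $X'\to Y''$ is tamely presented by Proposition~\ref{prop:wafp2of3prop}; and $Y''\to Y'$ is representable and almost of finite presentation, hence tamely presented (Proposition~\ref{prop:afpimplieswafp}). Therefore $f':X'\to Y'$, being the composite $X'\to Y''\to Y'$ of tamely presented geometric morphisms, is tamely presented by Proposition~\ref{prop:reasmorprops2}.

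The step I expect to require the most care is checking that the comparison maps between substacks — $g:X'\to X_\al$ and the two projections out of $Y''$ — really are almost finitely presented closed immersions of geometric stacks, so that each invocation of Propositions~\ref{prop:afpimplieswafp}, \ref{prop:afp2of3prop}, \ref{prop:reasmorprops2}, and \ref{prop:wafp2of3prop} has its geometricity, representability, and finite-presentation hypotheses in hand. The essential inputs that make this routine are that terms of a reasonable presentation are reasonable geometric substacks \cite[\igpropigprestermsarereas]{CWig} and that closed immersions are left-cancellable; it is worth noting that one never needs $Y''$ itself to be truncated, only geometric with an almost finitely presented closed immersion to $Y$, which sidesteps any question of whether derived fiber products of truncated substacks remain truncated.
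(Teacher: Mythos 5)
Your argument is correct and is essentially the paper's intended proof: the paper simply defers to the analogous statement in \cite{CWig} with the tame-presentation stability lemmas (Propositions \ref{prop:reasmorprops2} and \ref{prop:wafp2of3prop}) substituted in, and your reconstruction — factoring $X'\to X$ through a term $X_\al$ via the factorization property, checking $g:X'\to X_\al$ is an almost finitely presented closed immersion by cancellation, and comparing $Y_\al$ with $Y'$ via the fiber product $Y'\times_Y Y_\al$ — is exactly that template. Your closing observation that $Y''$ need only be geometric with an almost finitely presented closed immersion to $Y$, not truncated, correctly identifies the one point where a careless reading of the definitions could cause trouble.
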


\begin{Definition}\label{def:indP}
	A morphism $f: X \to Y$ of reasonable ind-geometric stacks is ind-tamely presented if it satisfies the equivalent conditions of Proposition \ref{prop:indPequivconditions}. An ind-geometric stack is ind-tamely presented if it is reasonable and is ind-tamely presented over $\Spec \kk$. 
\end{Definition}

\begin{proof}[Proof of Proposition \ref{prop:indPequivconditions}]
	Same as \cite[\igpropindPequivconditions]{CWig}, but instead using Propositions \ref{prop:reasmorprops2} and~\ref{prop:wafp2of3prop}. 
\end{proof}

Note as a corollary that a morphism $f: X \to Y$ of truncated geometric stacks is tamely presented if and only if it is ind-tamely presented as a morphism of ind-geometric stacks. 

\begin{Proposition}\label{prop:indPcompprops}
	Ind-tamely presented morphisms of reasonable ind-geometric stacks are stable under composition.   
\end{Proposition}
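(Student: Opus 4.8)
The strategy is to reduce the statement about ind-tamely presented morphisms to the corresponding stability property of tamely presented morphisms of truncated geometric stacks (Proposition~\ref{prop:reasmorprops2}), using criterion (2) of Proposition~\ref{prop:indPequivconditions} as the working characterization. Let $f: X \to Y$ and $g: Y \to Z$ be ind-tamely presented morphisms of reasonable ind-geometric stacks; we want $g \circ f$ to be ind-tamely presented. Fix a reasonable presentation $X \cong \colim_\al X_\al$. The plan is: for each $X_\al$, produce a reasonable geometric substack $Z_\al$ of $Z$ and a tamely presented morphism $(g \circ f)_\al: X_\al \to Z_\al$ sitting in a diagram of the form \eqref{eq:cancheckpresforindPdiag} (with $Z$ in place of $Y$), which by Proposition~\ref{prop:indPequivconditions}(2) certifies that $g \circ f$ is ind-tamely presented.

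\emph{Construction of the intermediate substacks.} Since $f$ is ind-tamely presented, criterion (2) applied to the presentation $\colim_\al X_\al$ furnishes, for each $X_\al$, a reasonable geometric substack $Y_\al \subset Y$ and a tamely presented morphism $f_\al: X_\al \to Y_\al$ lifting $f$. Now $Y_\al$ is a truncated geometric stack equipped with an almost finitely presented closed immersion $Y_\al \to Y$; in particular $Y_\al \to Y$ factors through some term of a chosen reasonable presentation of $Y$ (by \cite[\igpropfactorthroughgeometric]{CWig}, since $Y_\al$ is geometric), and by replacing that term with a larger one we may assume $Y_\al$ itself is (contained in) a term of a reasonable presentation of $Y$. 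Apply criterion (2) for $g$ to this presentation of $Y$: for the term containing $Y_\al$ we obtain a reasonable geometric substack $Z_\al \subset Z$ and a tamely presented morphism $Y_\al \to Z_\al$ (restricting the one supplied by the criterion along the closed immersion $Y_\al \hookrightarrow$ that term, which is almost finitely presented, hence tamely presented by Proposition~\ref{prop:afpimplieswafp}, and composing). The composite $X_\al \xrightarrow{f_\al} Y_\al \to Z_\al$ is then tamely presented by stability under composition (Proposition~\ref{prop:reasmorprops2}), and by construction it lifts $g \circ f$ and lands in a reasonable geometric substack of $Z$. This is exactly a diagram of the form \eqref{eq:cancheckpresforindPdiag} for $g \circ f$ relative to the presentation $\colim_\al X_\al$, so Proposition~\ref{prop:indPequivconditions}(2) gives the claim.

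\emph{Expected main obstacle.} The routine inputs — stability of tamely presented morphisms under composition, and the compatibility of reasonable geometric substacks with reasonable presentations — are all available from the excerpt. The one point requiring genuine care is the bookkeeping needed to arrange that $Y_\al$ (produced by applying criterion (2) to $f$) can be taken compatibly with a \emph{fixed} reasonable presentation of $Y$ to which criterion (2) for $g$ is then applied: a priori the substacks $Y_\al$ coming from $f$ need not be among the terms of any particular presentation of $Y$, only closed (indeed almost finitely presented) substacks of $Y$. Resolving this requires invoking that every reasonable geometric substack is dominated by a term of any given reasonable presentation (a cofinality statement about reasonable presentations, as in the basic theory of \cite{CWig}), together with Proposition~\ref{prop:wafp2of3prop} to transfer tame presentedness of $Y_\al \to Z_\al$ along the factorization $Y_\al \to (\text{term of } Y) \to Z_\al$. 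This is precisely the same bookkeeping that appears in the proof of the analogous stability statements for ind-proper and ind-finite-cohomological-dimension morphisms in \cite{CWig}, and the proof here can be modelled on those; indeed the whole argument is parallel to \cite[\igpropindPcompprops]{CWig}, now substituting Propositions~\ref{prop:reasmorprops2} and~\ref{prop:wafp2of3prop} for their finite-type counterparts.
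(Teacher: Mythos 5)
Your proposal is correct and matches the paper's approach: the paper's proof is literally ``same as \cite[\igpropindPcompprops]{CWig}, but using Proposition~\ref{prop:reasmorprops2},'' and your reconstruction -- verifying criterion (2) of Proposition~\ref{prop:indPequivconditions} for $g\circ f$ by composing the tamely presented lifts supplied by $f$ and $g$, with the factorization-through-a-presentation-term bookkeeping handled via \cite[\igpropfactorthroughgeometric]{CWig} and Propositions~\ref{prop:afpimplieswafp} and~\ref{prop:reasmorprops2} -- is exactly that template.
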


\begin{proof} 
	Same as \cite[\igpropindPcompprops]{CWig}, but using Proposition \ref{prop:reasmorprops2}. 
\end{proof}

  \begin{Proposition}\label{prop:tamefiltcolims}
Ind-tamely presented ind-geometric stacks are closed under filtered colimits along almost ind-finitely presented ind-closed immersions  in $\Stkkconv$. 
\end{Proposition}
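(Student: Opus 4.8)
The plan is to produce a reasonable presentation of $X := \colim_\be X_\be$ whose terms are tamely presented geometric stacks. Since the only reasonable geometric substack of $\Spec\kk$ is $\Spec\kk$ itself, Proposition~\ref{prop:indPequivconditions}(2) applied with target $\Spec\kk$ will then give the claim. I would rely on two inputs. First, since each $X_\be$ is ind-tamely presented, Proposition~\ref{prop:indPequivconditions}(1) with target $\Spec\kk$ shows that every reasonable geometric substack of $X_\be$ is a tamely presented (truncated) geometric stack; in particular so is every term of any reasonable presentation of $X_\be$. Second, I would import from \cite{CWig} the corresponding closure statement for reasonable ind-geometric stacks: the colimit $X$ is again a reasonable ind-geometric stack, and every reasonable geometric substack $Z$ of $X$ factors through some $X_\be$ via an almost finitely presented closed immersion $Z \to X_\be$. (This last point follows by diagonalizing reasonable presentations of the $X_\be$ into an ind-geometric presentation of $X$ — using composition stability of almost ind-finitely presented ind-closed immersions, together with the refinement of such an immersion, once factored through a term, to an almost finitely presented closed immersion — and then applying \cite[\igpropfactorthroughgeometric]{CWig}.)

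Granting these, the argument concludes quickly. A reasonable geometric substack $Z$ of $X$ admits an almost finitely presented closed immersion to some $X_\be$, hence is a reasonable geometric substack of $X_\be$, hence by the first input is a tamely presented geometric stack. Now choose any reasonable presentation $X \cong \colim_\ga X_\ga$, which exists since $X$ is reasonable ind-geometric. Each $X_\ga$ is a reasonable geometric substack of $X$ by \cite[\igpropigprestermsarereas]{CWig}, hence a tamely presented geometric stack, so in particular $X_\ga \to \Spec\kk$ is tamely presented. This verifies condition (2) of Proposition~\ref{prop:indPequivconditions} for the morphism $X \to \Spec\kk$ relative to the presentation $\colim_\ga X_\ga$, so $X$ is ind-tamely presented over $\Spec\kk$; being reasonable, it is then an ind-tamely presented ind-geometric stack.

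The main obstacle I anticipate is the material imported from \cite{CWig} — that $X$ is reasonable ind-geometric and that its reasonable geometric substacks arise from those of the $X_\be$ — which carries the Grothendieck-construction bookkeeping needed to diagonalize the doubly-indexed colimit and to check that the resulting structure maps are almost finitely presented closed immersions. This step is, however, entirely about the reasonable (rather than the tamely presented) setting and is carried out there, so I would present it as an appeal to \cite{CWig} rather than reprove it. The only genuinely new ingredient is the use of Proposition~\ref{prop:indPequivconditions}(1) to see that reasonable geometric substacks of the $X_\be$ (and hence of $X$) are tamely presented, which in turn rests on the stability of tamely presented morphisms under composition and base change (Proposition~\ref{prop:reasmorprops2}).
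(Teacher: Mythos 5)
Your argument is correct and is essentially the one the paper intends: the paper's proof simply defers to the corresponding closure statement in \cite{CWig} (``Same as \igpropcohfiltcolims''), and that argument has exactly the structure you describe — reasonableness of the colimit, factoring of reasonable geometric substacks of $X$ through some $X_\be$ as reasonable geometric substacks there, and then checking the relevant condition (coherence there, tame presentation here) on those substacks via the stability properties of Proposition~\ref{prop:reasmorprops2}. One minor imprecision: $\Spec\kk$ is not the \emph{only} reasonable geometric substack of itself (any almost finitely presented closed subscheme also qualifies), but this is harmless since condition (2) of Proposition~\ref{prop:indPequivconditions} only requires exhibiting \emph{some} suitable $Y_\al$, and $Y_\al = \Spec\kk$ works.
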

\begin{proof} 
	Same as \cite[\igpropcohfiltcolims]{CWig}. 
\end{proof}

As in the geometric case, the notions above are usefully combined with the following one. 

\begin{Definition}\label{def:admissibleind}
An ind-geometric stack is admissible if it admits an ind-geometric presentation whose terms are admissible geometric stacks. 
\end{Definition}

We write $\indGStkkadtm \subset \indGStkk$ for the full subcategory of admissible, ind-tamely presented ind-geometric stacks. It follows from \cite[\igpropigprestermsarereas]{CWig} that if $X$ is admissible then any reasonable geometric substack of $X$ is admissible. In particular, if $X \cong \colim X_\al$ is a reasonable presentation, then each $X_\al$ is coherent hence $X$ itself is coherent. It follows from the corresponding fact in the geometric case that $\indGStkkadtm$ is closed under products, whereas the larger category $\indGStkkcoh$ is not. 

\subsection{Coherent pullback} 
Recall that a morphism $f: X \to Y$ of stacks is geometric if for any morphism $Y' \to Y$ with $Y'$ an affine scheme, $Y' \times_Y X$ is a geometric stack.  Proposition~\ref{prop:gstkprops} implies more generally that $Y' \times_Y X$ is a geometric stack whenever $Y'$ is. 

\begin{Definition}\label{def:indgeomcohpull}
	Let $f: X \to Y$ be a morphism of reasonable ind-geometric stacks such that $Y$ is reasonable. We say $f$ has \emph{\stable coherent pullback} if it is geometric and for every truncated geometric stack $Y'$ and every ind-tamely presented morphism $Y' \to Y$, the base change $f': Y' \times_Y X \to Y'$ has coherent pullback. 
\end{Definition}

Following the conventions of \cite{CWig}, we say a morphism $f: X \to Y$ of ind-geometric stacks is of finite Tor-dimension if it is geometric and its base change to any geometric stack is of finite Tor-dimension in the geometric sense. It follows immediately that if $Y$ is reasonable, then $f$ also has \stable coherent pullback. 

\begin{Proposition}\label{prop:cancheckpresforcohpull}
	Let $f: X \to Y$ be a geometric morphism of ind-geometric stacks and let $Y \cong \colim_\al Y_\al$ be a reasonable presentation. Then $f$ has \stable coherent pullback if and only if its base change to every $Y_\al$ has \stable coherent pullback. 
\end{Proposition}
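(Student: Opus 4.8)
The plan is to prove this by translating, in both directions, between ind-tamely presented morphisms into $Y$ and tamely presented morphisms into the terms $Y_\al$ of the fixed reasonable presentation. The three inputs are: the universal property of a reasonable (indeed any ind-geometric) presentation, namely that any truncated geometric stack mapping to $Y$ factors through some $Y_\al$ \cite[\igpropfactorthroughgeometric]{CWig}; the characterization of ind-tamely presented morphisms in Proposition~\ref{prop:indPequivconditions}; and the pasting law for fiber products. Throughout, write $f_\al: X_\al \to Y_\al$ for the base change of $f$ along the closed immersion $Y_\al \to Y$. Since $f$ is geometric and $Y_\al$ is a geometric stack, $X_\al$ is a geometric stack (Proposition~\ref{prop:gstkprops}), and each $Y_\al$ is a reasonable geometric substack of $Y$ \cite[\igpropigprestermsarereas]{CWig}; any truncated geometric stack is a reasonable ind-geometric stack via its one-term presentation, so Proposition~\ref{prop:indPequivconditions} applies to morphisms from such stacks to $Y$.

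For the forward direction, suppose $f$ has \stable coherent pullback and fix $\al$. Given a truncated geometric stack $Y'$ and a tamely presented morphism $g: Y' \to Y_\al$, I claim the composite $Y' \xrightarrow{g} Y_\al \to Y$ is ind-tamely presented: this is condition (2) of Proposition~\ref{prop:indPequivconditions} for the one-term reasonable presentation of $Y'$, witnessed by the reasonable geometric substack $Y_\al \subset Y$ and the tamely presented map $g$. By hypothesis the base change of $f$ along $Y' \to Y$ has coherent pullback; by the pasting law this base change is canonically identified with the base change $Y' \times_{Y_\al} X_\al \to Y'$ of $f_\al$ along $g$. As $Y'$ and $g$ were arbitrary, $f_\al$ has \stable coherent pullback (in particular, taking $Y' = Y_\al$ and $g = \id$ recovers that $X_\al$ is truncated, so that the geometric notion of Definition~\ref{def:geomcohpull} is being used consistently).

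For the converse, suppose each $f_\al$ has \stable coherent pullback; we must show $f$ does, noting $f$ is geometric by hypothesis. Let $Y'$ be a truncated geometric stack and $g: Y' \to Y$ an ind-tamely presented morphism. By \cite[\igpropfactorthroughgeometric]{CWig}, $g$ factors as $Y' \to Y_\al \to Y$ for some $\al$. Applying condition (1) of Proposition~\ref{prop:indPequivconditions} to $g$, with $Y'$ regarded as its own reasonable geometric substack and $Y_\al \subset Y$ the reasonable geometric substack through which it factors, we conclude that $Y' \to Y_\al$ is tamely presented. By hypothesis the base change of $f_\al$ along $Y' \to Y_\al$ has coherent pullback, and by the pasting law this agrees with the base change of $f$ along $g$. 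Since $Y'$ and $g$ were arbitrary, $f$ has \stable coherent pullback.

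The argument is essentially definitional once Proposition~\ref{prop:indPequivconditions} is available, so there is no serious obstacle; the only points requiring care are bookkeeping ones. The first is the invocation of condition (1) of Proposition~\ref{prop:indPequivconditions} in the converse: this is exactly the statement that ind-tamely presentedness of $g$ forces tamely presentedness of the induced map $Y' \to Y_\al$, with no extra hypothesis needed on the factorization. The second is making sure the geometric notion of \stable coherent pullback applies to $f_\al$ at all, i.e.\ that $X_\al$ is a geometric stack (Proposition~\ref{prop:gstkprops}) with truncated target $Y_\al$, and that $X_\al$ is automatically truncated under the hypotheses present in each direction.
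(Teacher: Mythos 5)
Your proof is correct and follows essentially the same route as the paper's (much terser) argument: the forward direction amounts to showing the composite $Y' \to Y_\al \to Y$ is ind-tamely presented so that the hypothesis on $f$ applies (the paper does this by citing that $Y_\al \to Y$ is ind-tamely presented and invoking stability under composition, Proposition~\ref{prop:indPcompprops}, whereas you verify condition (2) of Proposition~\ref{prop:indPequivconditions} directly — the same content), and the converse is the factorization-through-$Y_\al$ argument the paper dismisses as "follows from the definitions." Your bookkeeping about $X_\al$ being geometric and truncated is accurate and harmless extra detail.
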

\begin{proof}
	The if direction follows from the definitions. Each $Y_\al \to Y$ is ind-tamely presented by \cite[\igpropafpdefconsistent]{CWig} and Proposition \ref{prop:afpimplieswafp}, hence the only if direction follows from the stability of ind-tamely presented morphisms under composition (Proposition \ref{prop:indPcompprops}). 
\end{proof}

\begin{Proposition}\label{prop:indcohpullcompprop}
	Morphisms with \stable coherent pullback are stable under composition in $\indGStkkreas$.
\end{Proposition}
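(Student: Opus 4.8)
The plan is to verify the criterion of Definition~\ref{def:indgeomcohpull} directly. Write $f\colon X\to Y$ and $g\colon Y\to Z$ for the two morphisms, so $X,Y,Z\in\indGStkkreas$ and both have $\stable$ coherent pullback. Since being geometric is part of that condition, $g\circ f$ is geometric by stability of geometric morphisms under composition (Proposition~\ref{prop:gstkprops}), so it remains to fix a truncated geometric stack $Z'$ and an ind-tamely presented morphism $Z'\to Z$ and to check that the base change $(g\circ f)'\colon Z'\times_Z X\to Z'$ has coherent pullback in the sense of Definition~\ref{def:geomcohpull}.

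First I would factor $(g\circ f)'$ through the intermediate fiber product: setting $W:=Z'\times_Z Y$, it is the composite $Z'\times_Z X=W\times_Y X\xrightarrow{f'}W\xrightarrow{g'}Z'$, where $g'$ and $f'$ are the base changes of $g$ and $f$ along $Z'\to Z$ and the projection $W\to Y$ respectively. Since $g$ has $\stable$ coherent pullback and $Z'\to Z$ is ind-tamely presented with $Z'$ truncated geometric, $g'$ has coherent pullback; in particular (using Proposition~\ref{prop:gstkprops}) $W$ is again a truncated geometric stack. Granting that the projection $W\to Y$ is ind-tamely presented, the hypothesis on $f$ then applies with $W$ as the truncated geometric test stack and shows $f'$ has coherent pullback, forcing $Z'\times_Z X=W\times_Y X$ to be truncated. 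The argument then closes formally: a composite of morphisms with coherent pullback has coherent pullback, since $*$-pullback is functorial, carries coherent sheaves to coherent sheaves at each stage, and the truncatedness conditions descend. Hence $(g\circ f)'=g'\circ f'$ has coherent pullback, and as $Z'$ was arbitrary, $g\circ f$ has $\stable$ coherent pullback.

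The substantive point — the step I expect to be the main obstacle — is that the projection $W=Z'\times_Z Y\to Y$ is ind-tamely presented; this is an instance of stability of ind-tamely presented morphisms under base change, the ind-geometric analogue of the base-change clause of Proposition~\ref{prop:reasmorprops2}, and I would prove it by reduction to reasonable geometric substacks. Choose reasonable presentations $Y\cong\colim_\beta Y_\beta$ and $Z\cong\colim_\gamma Z_\gamma$. As $W$ is truncated geometric, $W\to Y$ factors through some $Y_\beta$ by \cite[\igpropfactorthroughgeometric]{CWig}; since $Y_\beta\to Y$ is a monomorphism this identifies $W$ with $W\times_Y Y_\beta=Z'\times_Z Y_\beta$, and since $Z'\to Z$ and $Y_\beta\to Z$ both factor through a common term $Z_\gamma$ (with $Z_\gamma\to Z$ again a monomorphism) we get $W\cong Z'\times_{Z_\gamma}Y_\beta$. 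Now $Z'\to Z_\gamma$ is tamely presented — this is condition (1) of Proposition~\ref{prop:indPequivconditions} for the ind-tamely presented morphism $Z'\to Z$, applied with source substack $Z'$ itself and target substack $Z_\gamma$ — so its base change $W\to Y_\beta$ is tamely presented by Proposition~\ref{prop:reasmorprops2}. Finally $Y_\beta\to Y$ is an almost finitely presented closed immersion, hence ind-tamely presented by \cite[\igpropafpdefconsistent]{CWig} and Proposition~\ref{prop:afpimplieswafp}, so $W\to Y_\beta\to Y$ is ind-tamely presented by Proposition~\ref{prop:indPcompprops}, using the corollary to Proposition~\ref{prop:indPequivconditions} to regard the tamely presented morphism $W\to Y_\beta$ as ind-tamely presented. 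Everything else is a formal unwinding of the definitions.
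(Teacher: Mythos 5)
Your proof is correct. The paper takes a different, more top-down route: it fixes a reasonable presentation $Z \cong \colim Z_\al$, base-changes $g$ to obtain a compatible reasonable presentation $Y_\al := Z_\al \times_Z Y$ of $Y$, notes that the resulting $f_\al$ and $g_\al$ each have stable coherent pullback in the geometric sense, composes them using the geometric-level stability result (Proposition \ref{prop:cohpullprops}), and concludes with the criterion of Proposition \ref{prop:cancheckpresforcohpull}. You instead verify Definition \ref{def:indgeomcohpull} directly for an arbitrary test morphism $Z' \to Z$, and the burden shifts to showing that the projection $Z' \times_Z Y \to Y$ is ind-tamely presented, which you prove by factoring through terms of the presentations and invoking condition (1) of Proposition \ref{prop:indPequivconditions}. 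That step is essentially the content of the ``if'' direction of Proposition \ref{prop:cancheckpresforcohpull} re-derived inline, together with a base-change stability statement for ind-tamely presented morphisms that the paper sidesteps by working with the presentation of $Y$ induced from that of $Z$. Both arguments ultimately rest on the same two mechanisms --- factorization of maps from truncated geometric stacks through substacks, and tame presentedness of the induced maps between substacks --- so the difference is one of packaging: the paper's version is shorter because it reuses prior propositions wholesale, while yours is more self-contained and correctly isolates the one genuinely nontrivial point.
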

\begin{proof}
	Let $X$, $Y$, and $Z$ be reasonable ind-geometric stacks, $f: X \to Y$ and $g: Y \to Z$ morphisms with \stable coherent pullback, and $Z \cong \colim Z_\al$ a reasonable presentation. Define  $g_\al: Y_\al \to Z_\al$ and $f_\al: X_\al \to Y_\al$ by base change. Then each $Y_\al$ is a truncated geometric stack since $g_\al$ has coherent pullback, the maps $Y_\al \to Y_\be$ are almost finitely presented closed immersions by base change, and $Y \cong \colim Y_\al$ by left exactness of filtered colimits in $\Stkkconv$. In particular each $f_\al$ has \stable coherent pullback, hence each $g_\al \circ f_\al$ does by Proposition~\ref{prop:cohpullprops}, hence $g \circ f$ does by Proposition \ref{prop:cancheckpresforcohpull}. 
\end{proof}

\subsection{Fiber products}
We now consider the base change properties of the above classes of morphisms. Recall that reasonable ind-geometric stacks are not closed under arbitrary fiber products, though general ind-geometric stacks are. 

\begin{Proposition}\label{prop:indPbaseprops}
	If $Y$ is a reasonable ind-geometric stack and $h: X \to Y$ a morphism with \stable coherent pullback, then $X$ is reasonable. In particular, let the following be a Cartesian diagram of ind-geometric stacks. 
	\begin{equation*}
		\begin{tikzpicture}
			[baseline=(current  bounding  box.center),thick,>=\arrtip]
			\node (a) at (0,0) {$X'$};
			\node (b) at (3,0) {$Y'$};
			\node (c) at (0,-1.5) {$X$};
			\node (d) at (3,-1.5) {$Y$};
			\draw[->] (a) to node[above] {$f' $} (b);
			\draw[->] (b) to node[right] {$h $} (d);
			\draw[->] (a) to node[left] {$h' $}(c);
			\draw[->] (c) to node[above] {$f $} (d);
		\end{tikzpicture}
	\end{equation*}
	If $X$, $Y$, and $Y'$ are reasonable, $h$ has \stable coherent pullback, and $f$ is ind-tamely presented, then $X'$ is reasonable. If $Y'$ is ind-tamely presented, so is $X'$. 
\end{Proposition}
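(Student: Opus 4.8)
The plan is to establish the opening assertion first and then deduce the Cartesian ones from it. The workhorse will be the following observation, immediate from Definitions~\ref{def:geomcohpull} and~\ref{def:indgeomcohpull}: whenever $g\colon W \to Z$ has \stable coherent pullback and $j\colon Z' \to Z$ is an ind-tamely presented morphism with $Z'$ a truncated geometric stack, the base change $Z' \times_Z W$ is again a truncated geometric stack---it is truncated because $Z' \times_Z W \to Z'$ has coherent pullback, and it is geometric by Proposition~\ref{prop:gstkprops}.

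To prove that $X$ is reasonable when $h\colon X \to Y$ has \stable coherent pullback, I would fix a reasonable presentation $Y \cong \colim_\al Y_\al$ and set $X_\al := Y_\al \times_Y X$. Each $Y_\al \to Y$ is ind-tamely presented (\cite[\igpropafpdefconsistent]{CWig}, Proposition~\ref{prop:afpimplieswafp}), so the observation makes each $X_\al$ a truncated geometric stack; the maps $X_\al \to X_\be$ are base changes of the almost finitely presented closed immersions $Y_\al \to Y_\be$, hence are again almost finitely presented closed immersions; and $X \cong \colim_\al X_\al$ because filtered colimits commute with finite limits in $\Stkkconv$ \cite[\igpropindgeomfiberprods]{CWig}. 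This exhibits a reasonable presentation of $X$.

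For the Cartesian assertions, the first step is to check that $h'$ inherits \stable coherent pullback: it is geometric by Proposition~\ref{prop:gstkprops}, and for a truncated geometric stack $X''$ with an ind-tamely presented morphism $X'' \to X$, the composite $X'' \to X \xrightarrow{f} Y$ is ind-tamely presented (Proposition~\ref{prop:indPcompprops}, since $f$ is), so the base change $X'' \times_X X' \cong X'' \times_Y Y' \to X''$ of $h$ has coherent pullback. As $X' = X\times_Y Y'$ is ind-geometric \cite[\igpropindgeomfiberprods]{CWig} and $X$ is reasonable, applying the opening assertion to $h'$ gives that $X'$ is reasonable, and in fact produces a reasonable presentation $X' \cong \colim_\al (X_\al \times_Y Y')$ out of any reasonable presentation $X \cong \colim_\al X_\al$. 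For the final sentence I would first show the base change $f'\colon X' \to Y'$ of $f$ is ind-tamely presented: each composite $X_\al \to X \xrightarrow{f} Y$ being ind-tamely presented factors through a tamely presented morphism $X_\al \to Y_\al$ onto a reasonable geometric substack $Y_\al \subset Y$, and base changing along $h$ makes $Y_\al \times_Y Y'$ a reasonable geometric substack of $Y'$ (by the observation, since $Y_\al \to Y$ is ind-tamely presented) and $X_\al \times_Y Y' \to Y_\al \times_Y Y'$ tamely presented (Proposition~\ref{prop:reasmorprops2}); the criterion of Proposition~\ref{prop:indPequivconditions} then applies to the presentation $X' \cong \colim_\al(X_\al\times_Y Y')$. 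Finally, if $Y'$ is ind-tamely presented then $X' \xrightarrow{f'} Y' \to \Spec \kk$ is ind-tamely presented by Proposition~\ref{prop:indPcompprops}, and combined with reasonableness this says $X'$ is ind-tamely presented.

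The one step that is not purely formal is the middle of the opening assertion---that each $X_\al = Y_\al \times_Y X$ is truncated and geometric---and this is precisely where the \stable clause of the hypothesis on $h$ does its work, via the ind-tamely presented inclusions $Y_\al \hookrightarrow Y$; without it, base change of a morphism with coherent pullback need not have coherent pullback, as already seen for $\Delta_{\A^\infty}$. The remaining ingredients (stability of almost finitely presented closed immersions and of ind-tamely presented morphisms under base change and composition, and left-exactness of filtered colimits in $\Stkkconv$) are routine.
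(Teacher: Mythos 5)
Your proof is correct and takes essentially the same route as the paper's: base-change a reasonable presentation along $h$ (using that the inclusions of its terms are ind-tamely presented, so the \stable clause applies and makes each fiber product a truncated geometric stack), deduce that $h'$ inherits \stable coherent pullback from stability of ind-tamely presented morphisms under composition, and identify $X' \cong \colim X_\al \times_Y Y'$ via left-exactness of filtered colimits in $\Stkkconv$. The only difference is organizational --- you isolate the standalone first claim and bootstrap the Cartesian assertions from it, which matches the ``in particular'' logic of the statement.
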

\begin{proof}
Same as \cite[\igpropindPbaseprops]{CWig}, but using Proposition \ref{prop:indPcompprops}. 
\end{proof}

Recall also that coherent ind-geometric stacks, and even coherent affine schemes, are not closed under arbitrary fiber products. However, we do have the following enhancement of Proposition \ref{prop:indPbaseprops} in the coherent case. The hypotheses apply in particular when $X$, $Y$, and $Y'$ are admissible and ind-tamely presented, in which case so is $X'$. 

\begin{Proposition}\label{prop:haffimpliesX'coh}
	Let the following be a Cartesian diagram of ind-geometric stacks. 
	\begin{equation*}
		\begin{tikzpicture}
			[baseline=(current  bounding  box.center),thick,>=\arrtip]
			\node (a) at (0,0) {$X'$};
			\node (b) at (3,0) {$Y'$};
			\node (c) at (0,-1.5) {$X$};
			\node (d) at (3,-1.5) {$Y$};
			\draw[->] (a) to node[above] {$f' $} (b);
			\draw[->] (b) to node[right] {$h $} (d);
			\draw[->] (a) to node[left] {$h' $}(c);
			\draw[->] (c) to node[above] {$f $} (d);
		\end{tikzpicture}
	\end{equation*}
	Suppose that $X$ is coherent, $Y$ is reasonable, and $Y'$ is ind-tamely presented. 
	Suppose also that $f$ is ind-tamely presented and that $h$ is affine and has \stable coherent pullback. Then $X'$ is coherent. 
\end{Proposition}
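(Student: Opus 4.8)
The plan is to leverage Proposition~\ref{prop:indPbaseprops} and then use the affineness of $h$ to upgrade ``reasonable'' to ``coherent''. First I would observe that the present hypotheses --- $X$ coherent (hence reasonable), $Y$ reasonable, $Y'$ ind-tamely presented, $h$ with \stable coherent pullback, $f$ ind-tamely presented --- are exactly those of Proposition~\ref{prop:indPbaseprops}, so that proposition already shows $X'$ is reasonable, and, since $Y'$ is ind-tamely presented, that $X'$ is ind-tamely presented. Thus everything reduces to producing a reasonable presentation of $X'$ all of whose terms are coherent geometric stacks.

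I would then fix a reasonable presentation $X \cong \colim_\al X_\al$ with coherent terms (available as $X$ is coherent) and set $X'_\al := X_\al \times_X X' \cong X_\al \times_Y Y'$. Since $h$ is affine, so is $h'\colon X' \to X$, and hence so is the base change $h'_\al\colon X'_\al \to X_\al$; in particular each $X'_\al$ is a geometric stack. The composite $X_\al \to X \xrightarrow{f} Y$ is ind-tamely presented (by \cite[\igpropafpdefconsistent]{CWig}, Proposition~\ref{prop:afpimplieswafp}, and Proposition~\ref{prop:indPcompprops}), and $h'_\al$ is the base change of $h$ along it; since $h$ has \stable coherent pullback, $h'_\al$ therefore has coherent pullback, so each $X'_\al$ is a truncated geometric stack. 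The maps $X'_\al \to X'_\be$ are almost finitely presented closed immersions, being base changes of the $X_\al \to X_\be$, and $X' \cong \colim X'_\al$ in $\Stkkconv$ by left-exactness of filtered colimits \cite[Ex.~7.3.4.7]{LurHTT}. So $X' \cong \colim X'_\al$ is a reasonable presentation and each $X'_\al$ is a reasonable geometric substack of $X'$ (its inclusion being the base change of $X_\al \to X$ along $h'$).

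It remains to check that each $X'_\al$ is coherent, which I would do by verifying the two clauses of Definition~\ref{def:coherentGstack} in turn. Local coherence: since $X'$ is ind-tamely presented and $X'_\al$ is a reasonable geometric substack, $X'_\al$ is a tamely presented geometric stack (Proposition~\ref{prop:indPequivconditions}), hence locally coherent by Proposition~\ref{prop:stablecoh}. Compact generation of $\QCoh(X'_\al)^\heartsuit$: here I would use that $h'_\al$ is affine, so $(h'_\al)_*$ is t-exact and conservative and identifies $\QCoh(X'_\al)^\heartsuit$ with the category of modules in $\QCoh(X_\al)^\heartsuit$ over the commutative ring object $\cA_\al := H^0((h'_\al)_*\cO_{X'_\al})$. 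Since $X_\al$ is coherent, $\QCoh(X_\al)^\heartsuit$ is compactly generated by $\Coh(X_\al)^\heartsuit$, and then the free modules $\cA_\al \otimes_{\cO_{X_\al}} \cG$ with $\cG \in \Coh(X_\al)^\heartsuit$ form a set of compact generators of the module category: they generate under colimits since every module is a quotient of a direct sum of them, and each is compact since the forgetful functor to $\QCoh(X_\al)^\heartsuit$ preserves filtered colimits and $\cG$ is compact. Hence $\QCoh(X'_\al)^\heartsuit$ is compactly generated, so $X'_\al$ is coherent and $X' \cong \colim X'_\al$ exhibits $X'$ as coherent. (The assertion preceding the statement --- that $X'$ is admissible and ind-tamely presented when $X$, $Y$, $Y'$ are --- then follows from Proposition~\ref{prop:indPbaseprops} together with the coherence of admissible ind-tamely presented stacks noted after Definition~\ref{def:admissibleind}.)

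The step I expect to be the main obstacle is the compact generation of $\QCoh(X'_\al)^\heartsuit$: this is the only point at which affineness of $h$ is genuinely used, and it is exactly what distinguishes this proposition from Proposition~\ref{prop:indPbaseprops}, whose hypotheses do not force $X'$ to be coherent. I should emphasize that affineness over a coherent stack by itself does not even give local coherence of the source (a polynomial ring over a coherent ring can fail to be coherent), so the tamely presented hypothesis on $X'$ and the algebra-object description of the heart must both be used; the rest of the argument is a routine transport of a reasonable presentation through base change.
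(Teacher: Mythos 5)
Your proof is correct and follows essentially the same route as the paper: base change a reasonable presentation of $X$ to get $X'_\al := X_\al \times_Y Y'$, deduce local coherence of each term from ind-tame presentedness of $X'$ via Proposition~\ref{prop:stablecoh}, and use affineness of $h'$ to transfer compact generation of $\QCoh(X_\al)^\heartsuit$ to $\QCoh(X'_\al)^\heartsuit$. The only difference is that where you prove the compact-generation step by hand via the module-category description of the heart, the paper simply cites the corresponding lemma from its companion \cite[\iglemaffcgen]{CWig}.
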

\begin{proof}
	If $X \cong \colim X_\al$ is a reasonable presentation, then as in the proof of Proposition \ref{prop:indPbaseprops} $X' \cong \colim X'_\al$ is a reasonable presentation, where $X'_\al := X_\al \times_Y Y'$. Since $Y'$ and $f'$ are ind-tamely presented it follows that $X'$ is as well (Proposition \ref{prop:indPcompprops}), hence each $X'_\al$ is locally coherent (Proposition \ref{prop:stablecoh}). But since $\QCoh(X_\al)^\heartsuit$ is compactly generated by hypothesis, so is $\QCoh(X'_\al)^\heartsuit$ by \cite[\iglemaffcgen]{CWig} and the fact that $h'$ is affine since $h$ is. 
\end{proof}

\subsection{Coherent sheaves}\label{sec:indseccohsubsec}
We now make explicit the functoriality of coherent sheaves under \stable coherent pullback among reasonable ind-geometric stacks. Recall that if $X \cong \colim X_\al$ is a reasonable presentation, the category $\Coh(X)$ is computed by the formula
\begin{equation}\label{eq:cohviaindgeopres}
	\Coh(X) \cong \colim \Coh(X_\al)
\end{equation} 
in $\Catinfty$. 
The functoriality of this category under \stable coherent pullback may be described by extending the construction of \cite[\igseccohsubsec]{CWig}

Recall from (\ref{eq:CorrCohGStkpropcoh}) that coherent pullback of coherent sheaves on truncated geometric stacks can be extended to a functor 
\begin{equation}\label{eq:CorrCohGStkpropcoh2}
	\Coh: \Corr(\GStkkplus)_{prop,coh} \to \Catinfty.
\end{equation}
We now let $\Corr(\indGStkkreas)_{prop,coh}$ denote the 1-full subcategory of $\Corr(\indGStkk)$ which only includes correspondences $X \xleftarrow{h} Y \xrightarrow{f} Z$ such that $h$ has \stable coherent pullback, $f$ is ind-proper and almost ind-finitely presented, and $X$ and $Z$ are reasonable (hence so is $Y$ by Proposition \ref{prop:indPbaseprops}). These are indeed stable under composition of correspondences by Propositions \ref{prop:indPcompprops}, \ref{prop:indcohpullcompprop}, and \ref{prop:indPbaseprops}, and we note that $\Corr(\GStkkplus)_{prop,coh}$ is a full subcategory of $\Corr(\indGStkk)_{prop,coh}$. 

\begin{Definition}\label{def:cohonindgstks}
	We define a functor
	\begin{equation}\label{eq:cohcorrindgstkfunctorcoh}
		\Coh: \Corr(\indGStkkreas)_{prop,coh} \to \Catinfty
	\end{equation}
	by left Kan extending (\ref{eq:CorrCohGStkpropcoh2}) along $\Corr(\GStk^+)_{prop,coh} \subset \Corr(\indGStkkreas)_{prop,coh}$.   
\end{Definition}

This Kan extension exists by \cite[Cor. 4.3.2.16, Cor. 4.2.4.8]{LurHTT}. Taking either $h$ or $f$ to be the identity, the values of (\ref{eq:cohcorrindgstkfunctorcoh}) on the correspondence $X \xleftarrow{h} Y \xrightarrow{f} Z$ define functors $h^*: \Coh(X) \to \Coh(Y)$ and $f_*: \Coh(Y) \to \Coh(Z)$. The following result is proved the same way as \cite[\igpropcohonindgstks]{CWig}, and ensures the formula (\ref{eq:cohviaindgeopres}) as well as the consistency of the definitions given here with those of \cite[\igseccohsubsec]{CWig} when $h$ is of finite Tor-dimension. 

\begin{Proposition}\label{prop:cohonindgstks}
	The restriction of $\Coh$ to $\indGStkkprop$ preserves filtered colimits along almost ind-finitely presented ind-closed immersions. 
\end{Proposition}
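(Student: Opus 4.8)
The plan is to follow the proof of \cite[\igpropcohonindgstks]{CWig} essentially line by line, substituting the tamely presented inputs of Sections \ref{sec:affine} and \ref{sec:reasgeomstacks} for their finite-Tor-dimension analogues in \cite{CWig}. Here is the shape of the argument.

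First I would isolate the special case in which the diagram consists of truncated geometric stacks, which is exactly the formula (\ref{eq:cohviaindgeopres}). Since (\ref{eq:cohcorrindgstkfunctorcoh}) is by construction the left Kan extension of (\ref{eq:CorrCohGStkpropcoh2}) along $\Corr(\GStk^+)_{prop,coh} \subset \Corr(\indGStkkreas)_{prop,coh}$, its value at a reasonable ind-geometric stack $X$ is the colimit of $\Coh$ over the comma category $\mathcal{C}_X$ of objects of $\Corr(\GStk^+)_{prop,coh}$ equipped with a morphism to $X$ in $\Corr(\indGStkkreas)_{prop,coh}$. The key point is to check, exactly as in \cite{CWig}, that a reasonable presentation $X \cong \colim_\alpha X_\alpha$ (with $X_\alpha$ sitting in $\mathcal{C}_X$ via the correspondence $X_\alpha \xleftarrow{\id} X_\alpha \hookrightarrow X$) is cofinal in $\mathcal{C}_X$: any object of $\mathcal{C}_X$ factors through some $X_\alpha$ by \cite[\igpropfactorthroughgeometric]{CWig}, compatibly with the proper, almost finitely presented structure maps, using the ind-analogue of the cancellation property for almost finitely presented morphisms (Proposition \ref{prop:afp2of3prop}). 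This yields $\Coh(X) \cong \colim_\alpha \Coh(X_\alpha)$, naturally in the chosen presentation.

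Next I would treat the general case. Let $\beta \mapsto X_\beta$ be a filtered diagram in $\indGStkkprop$ whose transition maps are almost ind-finitely presented ind-closed immersions, and let $X \cong \colim_\beta X_\beta$ be the colimit in $\Stkkconv$. By \cite[\igpropcohfiltcolims]{CWig} (the reasonable, or as needed coherent, counterpart of Proposition \ref{prop:tamefiltcolims}), $X$ again lies in $\indGStkkprop$ and each $X_\beta \to X$ is an almost ind-finitely presented ind-closed immersion, so the pushforwards $\Coh(X_\beta) \to \Coh(X)$ assemble into a canonical map $\colim_\beta \Coh(X_\beta) \to \Coh(X)$, which is to be shown to be an equivalence in $\Catinfty$. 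I would choose reasonable presentations $X_\beta \cong \colim_{\alpha \in A_\beta} X_{\beta,\alpha}$ and, using that each $X_{\beta,\alpha}$ is a reasonable geometric substack of $X_{\beta'}$ for $\beta' \geq \beta$ and hence factors through the chosen presentation of $X_{\beta'}$ by \cite[\igpropfactorthroughgeometric]{CWig}, arrange them compatibly so that the $X_{\beta,\alpha}$, indexed over the Grothendieck construction of $\beta \mapsto A_\beta$, constitute a reasonable presentation of $X$ (cofinality among all reasonable geometric substacks of $X$ again follows from \cite[\igpropfactorthroughgeometric]{CWig} and cancellation for almost finitely presented morphisms). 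Applying the formula of the previous step to $X$ and to each $X_\beta$, and rewriting a colimit over the Grothendieck construction as an iterated colimit, gives
\begin{equation*}
	\Coh(X) \cong \colim_{(\beta,\alpha)} \Coh(X_{\beta,\alpha}) \cong \colim_\beta \big( \colim_{\alpha \in A_\beta} \Coh(X_{\beta,\alpha}) \big) \cong \colim_\beta \Coh(X_\beta);
\end{equation*}
unwinding the identifications shows this coincides with the canonical map above.

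I expect the main obstacle to be the bookkeeping in the last step: producing a compatible system of reasonable presentations of the $X_\beta$ whose union is cofinal among reasonable geometric substacks of $X$, and threading the iterated-colimit identification through naturally so that it matches the structure maps $\Coh(X_\beta) \to \Coh(X)$. The delicate inputs are that a reasonable geometric substack of $X_\beta$ is again one of $X$ (which relies on almost ind-finitely presented ind-closed immersions composing with almost finitely presented closed immersions to give almost finitely presented closed immersions, as established in \cite{CWig}) and the fact that the indexing categories $A_\beta$ vary with $\beta$, which is what forces the Grothendieck-construction argument rather than a naive interchange of colimits. The cofinality statement behind (\ref{eq:cohviaindgeopres}) in the second step is the other place requiring care, but it is identical to the corresponding step in \cite[\igpropcohonindgstks]{CWig}.
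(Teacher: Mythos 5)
Your proposal matches the paper's treatment: the paper gives no independent argument here, stating only that the result "is proved the same way as" the corresponding proposition in \cite{CWig}, which is exactly the strategy you adopt (and your sketch of that argument — cofinality of a reasonable presentation in the comma category for the left Kan extension, followed by the Grothendieck-construction/iterated-colimit step for general filtered diagrams — is consistent with the definitions via (\ref{eq:cohcorrindgstkfunctorcoh}) and the supporting results \cite[\igpropfactorthroughgeometric]{CWig} and Proposition \ref{prop:tamefiltcolims}). No discrepancy to report.
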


\section{Coherent pullback and $!$-pullback}\label{sec:upper!}

In the previous section we considered the pushforward $f_*: \Coh(X) \to \Coh(Y)$ along an ind-proper, almost ind-finitely presented morphism $f: X \to Y$. This extends to a pushforward of ind-coherent sheaves, and this extension has a right adjoint $f^!: \IndCoh(Y) \to \IndCoh(X)$. The main goal of this section is to show this $!$-pullback functor is compatible with coherent pullback in the natural sense (Proposition \ref{prop:up!up*genICcaseIG}). The most subtle technical issue in this section (and the next) is that, while the functor $h^*: \IndCoh(Y) \to \IndCoh(Y')$ associated to a morphism with \stable coherent pullback will not be bounded below in general, only the bounded below part of $\IndCoh$ satisfies flat descent. 

\subsection{Coherent pullback and ind-coherent sheaves} 
If $X$ is a geometric stack, $\IndCoh(X)$ is characterized by the existence of a t-structure and a colimit-preserving t-exact functor $\Psi_X: \IndCoh(X) \to \QCoh(X)$ satisfying a universal property \cite[\igdefIndCohononeGStk]{CWig}. For our purposes, this is summarized by the statement that for any other geometric stack $Y$, composition with $\Psi_X$ and $\Psi_Y$ induces a pair of equivalences 
	\begin{equation}\label{eq:ICunivprop}
	\begin{tikzpicture}
		[baseline=(current  bounding  box.center),thick,>=\arrtip]
		\node (a) at (0,-1.5) {$\LFun^b(\IndCoh(X), \IndCoh(Y))$};
		\node (b) at (3.0,0) {$\LFun^b(\IndCoh(X), \QCoh(Y))$};
		\node (c) at (2*3.0,-1.5) {$\LFun^b(\QCoh(X), \QCoh(Y))$};
		\draw[<-] (b) to node[above, sloped, pos=.6] {$\sim $} (a);
		\draw[<-] (b) to node[above, sloped, pos=.6] {$\sim $} (c);
	\end{tikzpicture}
\end{equation}
Here $\LFun^b(-,-)$ denotes the category of bounded, colimit-preserving functors. When $X$ is coherent, $\IndCoh(X)$ is compactly generated by $\Coh(X)$. This is the main case of interest, and by abuse we let it guide the general terminology. 

From (\ref{eq:ICunivprop}) one sees that $\IndCoh(-)$ is functorial in $\GStkk$ under finite cohomological dimension pushforward and finite Tor-dimension pullback. These functorialities extend to the ind-geometric setting by the construction reviewed for coherent sheaves in Section \ref{sec:indseccohsubsec}. This leads to a functor
\begin{equation}\label{eq:IndCohindGStk}
	\IndCoh: \Corr(\indGStkk)_{fcd;ftd} \to \PrL,
\end{equation}
where $\Corr(\indGStkk)_{fcd;ftd}$ denotes the 1-full subcategory of $\Corr(\indGStkk)$ which only includes correspondences $X \xleftarrow{h} Y \xrightarrow{f} Z$ such that $h$ is of finite Tor-dimension and $f$ is of ind-finite cohomological dimension. Taking either $h$ or $f$ to be the identity, the values of (\ref{eq:IndCohindGStk}) on the correspondence $X \xleftarrow{h} Y \xrightarrow{f} Z$ define functors $h^*: \IndCoh(X) \to \IndCoh(Y)$ and $f_*: \IndCoh(Y) \to \IndCoh(Z)$. 

It remains true in the ind-geometric setting that $\IndCoh(X)$ is compactly generated by $\Coh(X)$ when $X$ is coherent \cite[\igpropIndCohisIndofCoh]{CWig}. This allows us to extend the basic functorialities of ind-coherent sheaves to include certain unbounded functors. 

\begin{Definition}\label{def:cohcohpul}
	Let $X$ and $Y$ be reasonable ind-geometric stacks such that $Y$ is coherent, and let $f: X \to Y$ be a morphism with coherent pullback. We write $h^*: \IndCoh(Y) \to \IndCoh(X)$ for the unique continuous functor whose restriction to $\Coh(Y)$ factors through the functor $h^*: \Coh(Y) \to \Coh(X)$. 
\end{Definition}

When $h$ is of finite Tor-dimension this is indeed consistent with (\ref{eq:IndCohindGStk}), since the previously defined $h^*$ is continuous and preserves coherence. Note also that $h^*$ as defined above is still right t-exact, since $\IndCoh(Y)^{\leq 0}$ is compactly generated by $\Coh(Y)^{\leq 0}$. 

To describe the pushforward counterpart of Definition \ref{def:cohcohpul}, first note that if $f: X \to Y$ is any morphism of ind-geometric stacks such that $X$ is reasonable, there is a canonical functor $f_*: \Coh(X) \to \IndCoh(Y)$ defined as follows. Write $\IndCoh^+_{naive}: \indGStkk \to \Cathatinfty$ for the left Kan extension of the evident functor $\IndCoh^+: \GStkkplus \to \Cathatinfty$. Explicitly, $\IndCoh^+_{naive}(X)$ is the full subcategory of $\cF \in\IndCoh^+(X)$ which are pushed forward from some truncated geometric substack of $X$. By construction we have a functor $f_*: \IndCoh(X)^+_{naive} \to \IndCoh(Y)^+_{naive}$, while by the universal property of left Kan extensions we have canonical functors $\Coh(X) \to \IndCoh(X)^+_{naive}$ and $\IndCoh(Y)^+_{naive} \to \IndCoh(Y)^+$, and we let $f_*: \Coh(X) \to \IndCoh(Y)$ be the composition of these. 

\begin{Definition}
	Let $f: X \to Y$ be a morphism of ind-geometric stacks, and suppose that $X$ is coherent. Then we write $f_*: \IndCoh(X) \to \IndCoh(Y)$ for the unique continuous functor whose restriction to $\Coh(X)$ is as described above. 
\end{Definition}

Note that $f_*$ as defined above is still left t-exact, since $\IndCoh(X)^{\geq 0}$ is compactly generated by $\Coh(X)^{\geq 0}$ and the t-structure on $\IndCoh(Y)$ is compatible with filtered colimits. Suppose that $g: Y \to Z$ is another morphism of ind-geometric stacks, and that either $Y$ is coherent or $g$ is of ind-finite cohomological dimension. Then we have an isomorphism $g_* f_* \cong (g \circ f)_*$ of functors $\IndCoh(X) \to \IndCoh(Z)$, since both are continuous and their restrictions to $\Coh(X)$ are isomorphic by construction. 

\begin{Proposition}\label{prop:coherentup*low*adj}
Let $Y'$ and $Y$ be coherent ind-geometric stacks and $h: Y' \to Y$ a morphism with \stable coherent pullback. Then $h_*: \IndCoh(Y') \to \IndCoh(Y)$ is right adjoint to $h^*: \IndCoh(Y) \to \IndCoh(Y')$. In particular, suppose $h$ sits in a Cartesian diagram of the following form, where $h'$ also has \stable coherent pullback and where $X'$ and $Y'$ are coherent.
\begin{equation}\label{eq:cohpushpull}
	\begin{tikzpicture}
		[baseline=(current  bounding  box.center),thick,>=\arrtip]
		\node (a) at (0,0) {$X'$};
		\node (b) at (3,0) {$Y'$};
		\node (c) at (0,-1.5) {$X$};
		\node (d) at (3,-1.5) {$Y$};
		\draw[->] (a) to node[above] {$f' $} (b);
		\draw[->] (b) to node[right] {$h $} (d);
		\draw[->] (a) to node[left] {$h' $}(c);
		\draw[->] (c) to node[above] {$f $} (d);
	\end{tikzpicture}
\end{equation}
Then the isomorphism $h_* f'_* \cong f_* h'_*$ of functors $\IndCoh(X') \to \IndCoh(Y)$ gives rise to a Beck-Chevalley transformation $h^* f_* \to f'_* h'^*$ of functors $\IndCoh(X) \to \IndCoh(Y')$, which is an isomorphism if $f$ is ind-proper and almost ind-finitely presented. 
\end{Proposition}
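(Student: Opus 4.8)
The plan is to establish three things in sequence: (i) the adjunction $h^* \dashv h_*$, (ii) the existence of the Beck--Chevalley transformation $h^* f_* \to f'_* h'^*$, and (iii) that it is an isomorphism when $f$ is ind-proper and almost ind-finitely presented. For (i): the functor $h^* \colon \IndCoh(Y) \to \IndCoh(Y')$ is continuous by construction (Definition \ref{def:cohcohpul}) and carries the compact generators $\Coh(Y)$ into $\Coh(Y') \subset \IndCoh(Y')$, which consists of compact objects since $Y'$ is coherent; hence $h^*$ preserves all compact objects and so admits a right adjoint $R$, which is moreover continuous. Since $h_*$ is also continuous, to prove $R \simeq h_*$ it suffices to produce a natural equivalence $\Map_{\IndCoh(Y)}(\cF, h_*\cG) \simeq \Map_{\IndCoh(Y')}(h^*\cF, \cG)$ for $\cF \in \Coh(Y)$, $\cG \in \Coh(Y')$: as a functor of $\cF$ each side carries colimits in $\IndCoh(Y)$ to limits, and as a functor of $\cG$ each side is continuous once $\cF$ is compact, so such an equivalence extends uniquely to all of $\IndCoh(Y) \times \IndCoh(Y')$ and exhibits the adjunction.

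To produce the equivalence I would reduce to the geometric case. Fix a reasonable presentation $Y \simeq \colim Y_\al$ with coherent terms and set $Y'_\al := Y_\al \times_Y Y'$; since each $Y_\al \to Y$ is ind-tamely presented (Proposition \ref{prop:afpimplieswafp}) and $h$ has \stable coherent pullback, each $h_\al \colon Y'_\al \to Y_\al$ has coherent pullback, so the $Y'_\al$ are coherent truncated geometric stacks (being reasonable geometric substacks of the coherent stack $Y'$) and, by left exactness of filtered colimits in $\Stkkconv$, $Y' \simeq \colim Y'_\al$ is a reasonable presentation. After passing to a common index, $\cF$ is pushed forward from some $\cF_\al \in \Coh(Y_\al)$ and $\cG$ from some $\cG_\al \in \Coh(Y'_\al)$. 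Using base change for $*$-pushforward along the closed immersions $Y_\al \hookrightarrow Y$, $Y'_\al \hookrightarrow Y'$ (built into the functor $\Coh$ of Definition \ref{def:cohonindgstks}), the identity $(g\circ f)_* \simeq g_* f_*$, and the fact that $*$-pushforward along a closed immersion is fully faithful on $\IndCoh$, both mapping spaces reduce to $\Map_{\IndCoh(Y_\al)}(\cF_\al, (h_\al)_* \cG_\al)$ and $\Map_{\IndCoh(Y'_\al)}(h_\al^* \cF_\al, \cG_\al)$ respectively. One is thus reduced to the case of a morphism $h_\al$ with coherent pullback between coherent geometric stacks. That case is the coherent-pullback analogue of the corresponding finite-Tor-dimension statement in \cite{CWig}: it follows by unwinding the definition of $(h_\al)_*$ via $\IndCoh^+_{naive}$ and using that $h_\al^*$ is right $t$-exact with a continuous right adjoint, or equivalently from the general adjunction between $*$-pullback and $*$-pushforward for eventually coconnective (that is, coherent-pullback) morphisms in the sense of \cite{Gai13a}.

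For (ii), apply (i) both to $h$ and to $h'$ (legitimate since $h'$ also has \stable coherent pullback and $X, X'$ are coherent), obtaining adjunctions $h^* \dashv h_*$ and $h'^* \dashv h'_*$. The isomorphism $h_* f'_* \simeq f_* h'_*$ of functors $\IndCoh(X') \to \IndCoh(Y)$ is the one already noted, coming from $h\circ f' = f \circ h'$ and $(g\circ f)_* \simeq g_* f_*$ (valid here since $Y'$ is coherent and $f$ is ind-proper hence of ind-finite cohomological dimension). Mate calculus then yields the Beck--Chevalley transformation
\begin{equation*}
	h^* f_* \longrightarrow h^* f_* h'_* h'^* \xrightarrow{\ \sim\ } h^* h_* f'_* h'^* \longrightarrow f'_* h'^*
\end{equation*}
of functors $\IndCoh(X) \to \IndCoh(Y')$, where the first arrow uses the unit of $h'^* \dashv h'_*$, the middle is the isomorphism above, and the last uses the counit of $h^* \dashv h_*$.

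For (iii), suppose $f$ is ind-proper and almost ind-finitely presented; then so is its base change $f'$. All four functors $h^*, f_*, f'_*, h'^*$ are continuous, so it suffices to check the Beck--Chevalley map is an equivalence on the compact generators $\Coh(X)$ of $\IndCoh(X)$. For $\cF \in \Coh(X)$ we have $f_*\cF \in \Coh(Y)$ since $f$ is ind-proper and almost ind-finitely presented, and likewise $h'^*\cF \in \Coh(X')$ since $h'$ has coherent pullback, whence $f'_* h'^* \cF \in \Coh(Y')$; so on $\Coh(X)$ the transformation becomes a map $h^*_{\Coh} f_{*,\Coh} \to f'_{*,\Coh} h'^*_{\Coh}$ of functors $\Coh(X) \to \Coh(Y')$, which by construction (and the compatibility of the $\IndCoh$- and $\Coh$-level functoriality packages) is precisely the base-change isomorphism furnished by the functor $\Coh \colon \Corr(\indGStkkreas)_{prop,coh} \to \Catinfty$ of Definition \ref{def:cohonindgstks} for the given Cartesian square (the composite of the correspondences computing $f_*$ and $h^*$ being the one computing $f'_* h'^*$). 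Hence the Beck--Chevalley map is an equivalence on $\Coh(X)$, and therefore everywhere. The step I expect to be the main obstacle is (i), namely producing the adjunction $h^* \dashv h_*$ while $h^*$ is unbounded: the delicate point is the geometric base case, where one must reconcile the two a priori different descriptions of $h_*$ on coherent sheaves---as the restriction of the right adjoint of $h^*$, versus the $\IndCoh^+_{naive}$-based definition---without any boundedness hypothesis on $h^*$; a secondary point, in (iii), is verifying that the $\IndCoh$-level Beck--Chevalley map genuinely restricts to the base-change isomorphism of the correspondence functor $\Coh$.
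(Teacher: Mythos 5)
Your overall architecture -- establish the adjunction by comparing $h_*$ with the continuous right adjoint of the compact-object-preserving functor $h^*$, reduce to the truncated geometric case via a reasonable presentation, and then check the Beck--Chevalley map on the compact generators $\Coh(X)$ where it becomes the base-change isomorphism of the correspondence functor -- is the same as the paper's. But there is a genuine gap at exactly the step you flag as the main obstacle: the geometric base case. You propose to handle it by ``unwinding the definition of $(h_\al)_*$ via $\IndCoh^+_{naive}$'' or by citing the adjunction for eventually coconnective morphisms in \cite{Gai13a}. Neither works as stated. The citation is to the Noetherian/finite-type setting, where coherent pullback is equivalent to finite Tor-dimension and $h^*$ is bounded; the entire difficulty here is that $h^*$ is unbounded. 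The paper's resolution is Lemma \ref{lem:adjointsandPsi}: for the left-completion functors $\Psi$, the right adjoint of the unbounded $h^*_{IC}$ still agrees with $h_{QC*}\Psi_{Y'}$ on $\IndCoh(Y')^+$, because only right-boundedness of the \emph{quasi-coherent} pullback $h^*_{QC}$ (i.e.\ right t-exactness) is needed for the relevant unit and counit maps to be isomorphisms on bounded-below objects. One then uses that both $h_{IC*}$ and $h^{*R}_{IC}$ are continuous and left t-exact and that $\Psi_Y$ is conservative on $\IndCoh(Y)^+$ to conclude they agree on $\Coh(Y')$ and hence everywhere. Some argument of this completion-comparison type is unavoidable, and your proposal does not contain it.

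Two smaller points. First, your reduction to the geometric case invokes ``the fact that $*$-pushforward along a closed immersion is fully faithful on $\IndCoh$''; this is false (already $\Map_{\IndCoh(\A^1)}(i_*k, i_*k)$ for $i\colon \{0\}\hookrightarrow \A^1$ is not $k$). Mapping spaces out of $i_{\al*}\cF_\al$ in $\IndCoh(Y)\cong\colim\IndCoh(Y_\al)$ are computed as filtered colimits over $\be\ge\al$ of mapping spaces in $\IndCoh(Y_\be)$, so the reduction can be repaired, but assembling these pointwise identifications into a natural equivalence of bifunctors is delicate; the paper instead passes to adjoints in the colimit diagram of $(\PrL)^{\Delta^1}$ using \cite[Prop.~4.7.5.19]{LurHA}, which handles the coherence automatically. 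Second, your step (iii) is correct and matches the paper: once the adjunctions are in place, continuity reduces the Beck--Chevalley isomorphism to its restriction to $\Coh(X)$, where it is the base-change isomorphism built into the functor $\Coh$ on correspondences.
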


The proof uses the following fact, where as in \cite[\igsecanticompletion]{CWig} we write $\PrStbacpl$ for the $\infty$-category whose objects are presentable stable $\infty$-categories equipped with t-structures that are right complete, left anticomplete, and compatible with filtered colimits, and whose morphisms are bounded, colimit-preserving functors. 

\begin{Lemma}\label{lem:adjointsandPsi}
	Let $\wh{\catC}, \wh{\catD}$ be the left completions of $\wc{\catC}, \wc{\catD} \in \PrStbacpl$, and let $\Psi_\catC: \wcC \to \whC$ and $\Psi_\catD: \wcD \to \whD$ be the canonical functors. Let $\wc{F}: \wc{\catC} \to \wc{\catD}$, $\wh{F}: \wh{\catC} \to \wh{\catD}$ be  colimit-preserving functors such that $\wh{F}$ is right bounded and $\Psi_\catD \wc{F} \cong \wh{F} \Psi_\catC$. Then the Beck-Chevalley map $\Psi_\catC \wc{F}^R(X) \to \wh{F}^R \Psi_\catD(X)$ is an isomorphism for all $X \in \wcD^+$. 
\end{Lemma}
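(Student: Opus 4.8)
The plan is to push everything into the bounded-below parts of $\whC$ and $\whD$, where the comparison functors $\Psi_\catC,\Psi_\catD$ are equivalences, and then identify the target of the Beck--Chevalley map $\beta_X\colon \Psi_\catC\wc{F}^R(X)\to\wh{F}^R\Psi_\catD(X)$ by a corepresentability (Yoneda) argument. First I would observe that $\wc{F}$ is automatically right bounded. Indeed $\Psi_\catC,\Psi_\catD$ are t-exact and restrict to equivalences $\wcC^{\geq n}\xrightarrow{\sim}\whC^{\geq n}$, $\wcD^{\geq n}\xrightarrow{\sim}\whD^{\geq n}$ (so in particular are conservative on bounded-below objects), cf. \cite[\igsecanticompletion]{CWig}; hence for $M\in\wcC^{\leq 0}$ the object $\Psi_\catD\wc{F}(M)\simeq\wh{F}\Psi_\catC(M)$ is bounded above because $\wh{F}$ is, and t-exactness plus conservativity of $\Psi_\catD$ force $\wc{F}(M)$ to be bounded above. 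By the usual adjunction estimate, $\wc{F}^R$ and $\wh{F}^R$ (which exist since $\wc{F},\wh{F}$ are colimit-preserving functors of presentable stable categories) then carry bounded-below objects to bounded-below objects. So for $X\in\wcD^+$ both $\Psi_\catC\wc{F}^R(X)$ and $\wh{F}^R\Psi_\catD(X)$ lie in $\whC^+$, and $\beta_X$ is a morphism of $\whC^+$.

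Next, write $\Psi_\catC^R\colon\whC\to\wcC$ and $\Psi_\catD^R\colon\whD\to\wcD$ for the right adjoints of the completion functors. By the basic properties of left completion (\cite[\igsecanticompletion]{CWig}), $\Psi_\catC^R$ restricts to an equivalence $\whC^+\xrightarrow{\sim}\wcC^+$ inverse to $\Psi_\catC|_{\wcC^+}$; in particular it is conservative on $\whC^+$ and the unit $\id\to\Psi_\catC^R\Psi_\catC$ is an equivalence on $\wcC^+$, and similarly for $\catD$. Since $\beta_X$ is a morphism of $\whC^+$, it suffices to show that $\Psi_\catC^R(\beta_X)$ is an equivalence; precomposing its source with the invertible unit $\wc{F}^R(X)\xrightarrow{\sim}\Psi_\catC^R\Psi_\catC\wc{F}^R(X)$, this amounts to showing that the adjunct $\wc{F}^R(X)\to\Psi_\catC^R\wh{F}^R\Psi_\catD(X)$ of $\beta_X$ under $\Psi_\catC\dashv\Psi_\catC^R$ is an equivalence.

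This I would verify by computing the target corepresentably: for any $Z\in\wcC$,
\begin{align*}
	\Map_{\wcC}(Z,\Psi_\catC^R\wh{F}^R\Psi_\catD X)
	&\simeq \Map_{\whC}(\Psi_\catC Z,\wh{F}^R\Psi_\catD X)
	\simeq \Map_{\whD}(\wh{F}\Psi_\catC Z,\Psi_\catD X) \\
	&\simeq \Map_{\whD}(\Psi_\catD\wc{F} Z,\Psi_\catD X)
	\simeq \Map_{\wcD}(\wc{F} Z, X)
	\simeq \Map_{\wcC}(Z,\wc{F}^R X),
\end{align*}
using in order the adjunction $\Psi_\catC\dashv\Psi_\catC^R$, the adjunction $\wh{F}\dashv\wh{F}^R$, the hypothesis $\Psi_\catD\wc{F}\cong\wh{F}\Psi_\catC$, the fact that $X\in\wcD^+$ (so the unit $X\to\Psi_\catD^R\Psi_\catD X$ is invertible, giving $\Map_{\whD}(\Psi_\catD(-),\Psi_\catD X)\simeq\Map_{\wcD}(-,X)$), and the adjunction $\wc{F}\dashv\wc{F}^R$. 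Tracing $\id_{\wc{F}^R X}$ through these isomorphisms — which uses only the triangle identities and the description of $\beta_X$ as the mate of $\Psi_\catD\wc{F}\cong\wh{F}\Psi_\catC$ with respect to the horizontal right adjoints — shows that the resulting natural equivalence is corepresented by exactly the adjunct of $\beta_X$. Hence that adjunct, and therefore $\Psi_\catC^R(\beta_X)$ and $\beta_X$, is an equivalence.

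The argument is essentially pure bookkeeping, and I expect the only delicate point to be the final step: confirming that the displayed chain of adjunction isomorphisms corepresents precisely the adjunct of $\beta_X$ and not some unrelated comparison map. Conceptually, the one thing that must not be skipped is the initial observation that $\wc{F}$, hence $\wc{F}^R$, is right bounded: without it $\wc{F}^R(X)$ need not be bounded below, the source of $\beta_X$ need not lie in $\whC^+$, and the entire reduction to the bounded-below range — which is where all the invertibility lives — would fail.
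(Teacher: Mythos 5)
Your argument is correct, but it takes a slightly longer route than the paper's. The paper simply factors the Beck--Chevalley map as the composite
$\Psi_\catC \wc{F}^R(X) \to \Psi_\catC \wc{F}^R \Psi_\catD^R \Psi_\catD(X) \cong \Psi_\catC \Psi_\catC^R \wh{F}^R \Psi_\catD(X) \to \wh{F}^R \Psi_\catD(X)$
and observes that the first map is an isomorphism because the unit of $\Psi_\catD \dashv \Psi_\catD^R$ is invertible on $\wcD^+$, while the last is an isomorphism because $\wh{F}^R$ is left bounded, so $\wh{F}^R\Psi_\catD(X) \in \whC^+$ where the counit of $\Psi_\catC \dashv \Psi_\catC^R$ is invertible. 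These are exactly the two facts your proof ultimately rests on, but your packaging forces two extra pieces of work that the direct factorization avoids: (i) you must first show $\wc{F}$, hence $\wc{F}^R$, is bounded in the appropriate direction so that the \emph{source} of $\beta_X$ lies in $\whC^+$ and $\Psi_\catC^R$ can be used to detect invertibility --- the paper never needs to know anything about the source; and (ii) you must verify that your chain of adjunction equivalences corepresents precisely the adjunct of $\beta_X$, the mate-identification step you rightly flag as the delicate point. Both steps go through (your right-boundedness argument for $\wc{F}$, via applying $\tau^{\geq n+1}$ and using conservativity of $\Psi_\catD$ on bounded-below objects, is fine, and the Yoneda bookkeeping is standard), so the proof is valid; it is just worth noting that writing the Beck--Chevalley map as unit--mate--counit from the start makes both detours unnecessary.
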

\begin{proof}
	By definition the Beck-Chevalley map is the composition
	\begin{equation}\label{eq:adjointsandPsi}
		\Psi_\catC \wc{F}^R(X) \to \Psi_\catC \wc{F}^R \Psi_\catD^R \Psi_\catD(X) \cong \Psi_\catC \Psi_\catC^R \wh{F}^R \Psi_\catD(X) \to \wh{F}^R \Psi_\catD(X)
	\end{equation}
	of unit and counit maps. Since $\Psi_\catC$ is t-exact and restricts to an equivalence $\wcC^+ \congto \whC^+$, its right adjoint $\Psi_\catC^R$ is left t-exact and restricts to the inverse equivalence $\whC^+ \congto \wcC^+$, likewise for $\Psi_\catD^R$. In particular, the first map in (\ref{eq:adjointsandPsi}) is an isomorphism since $X \in \wcD^+$. But $\wh{F}^R$ is left bounded since $\wh{F}$ is right bounded, hence $\wh{F}^R \Psi_\catD(X) \in \whC^+$, hence the last map in (\ref{eq:adjointsandPsi}) is also an isomorphism. 
\end{proof}

\begin{proof}[Proof of Proposition \ref{prop:coherentup*low*adj}]
	First suppose $Y'$ and $Y$ are truncated and geometric. Since $\IndCoh(Y')$ is compactly generated and $h^*_{IC}$ preserves compactness, the right adjoint $h^{*R}_{IC}$ is continuous. The restrictions of $\Psi_{Y} h_{IC*}$ and $h_{QC*} \Psi_{Y'}$ to $\Coh(Y')$ are isomorphic by definition, while the restrictions of $h_{QC*} \Psi_{Y'}$ and $\Psi_Y h^{*R}_{IC}$ are isomorphic by Lemma \ref{lem:adjointsandPsi}. Since $\Psi_Y$ is conservative on $\IndCoh(Y)^+$, the continuity and left t-exactness of $h_{IC*}$ and $h^{*R}_{IC}$ then imply they are isomorphic. When $X$ and $X'$ are truncated and geometric, the final claim follows immediately since $h^* f_*$ and $f'_* h'^*$ are continuous and the transformation restricts to an isomorphism of functors $\Coh(X) \to \Coh(Y')$. 
	
	Now let $Y \cong \colim Y_\al$ be a reasonable presentation with index category $A$, let $h_\al: Y'_\al \to Y_\al$ be the base change of $h$, and let $i_{\al\be}' : Y'_\al \to Y'_\be$ the base change of $i_{\al\be}: Y_\al \to Y_\be$. Using (\ref{eq:cohcorrindgstkfunctorcoh}) and ind-extending, the categories $\IndCoh(-)$ and the functors $i_{\al\be*}$, $i'_{\al\be*}$, $h^*$, and $h_\al^*$ form a diagram $A^\triangleright \times \Delta^1 \to \PrL$, and by \cite[Cor. 5.1.2.3]{LurHTT}, \cite[Lem. 5.3.2.9]{LurHA} this induces a colimit diagram $A^\triangleright \to (\PrL)^{\Delta^1}$. Passing to right adjoints, and using the previous paragraph, the functors $i^!_{\al\be}$, $i^!_{\al\be}$, $h^{*R}$, and $h_{\al*}$ form a diagram $(A^\triangleright \times \Delta^1)^{\op} \to \PrL$. By \cite[Prop. 4.5.7.19]{LurHA} the functors $i_{\al\be*}$, $i_{\al\be*}$, $h^{*R}$, and $h_{\al_*}$ form a diagram $A^\triangleright \times \Delta^1 \to \PrL$ which induces a colimit diagram $A^\triangleright \to (\PrL)^{\Delta^1}$. On the other hand, unwinding the definition of $h_*$ we see that it is given by the same colimit in $(\PrL)^{\Delta^1}$, and the claim follows. The general case of the final claim now follows as in the geometric case. 
\end{proof}

We record the following extension of Lemma \ref{lem:adjointsandPsi} for use in the next section. 

\begin{Lemma}\label{lem:BCandPsi}
	Suppose $\wc{\catC}, \wc{\catD}, \wc{\catC}', \wc{\catD}' \in \PrStbacpl$ and that we have a diagram
	\begin{equation*}
		\begin{tikzpicture}[baseline=(current  bounding  box.center),thick,>=\arrtip]
			\newcommand*{\ha}{1.5}; \newcommand*{\hb}{1.5}; \newcommand*{\hc}{1.5};
\newcommand*{\va}{-.9}; \newcommand*{\vb}{-.9}; \newcommand*{\vc}{-.9}; 
			\node (ab) at (\ha,0) {$\wc{\catC}'$};
			\node (ad) at (\ha+\hb+\hc,0) {$\wc{\catD}'$};
			\node (ba) at (0,\va) {$\wh{\catC}'$};
			\node (bc) at (\ha+\hb,\va) {$\wh{\catD}'$};
			\node (cb) at (\ha,\va+\vb) {$\wc{\catC}$};
			\node (cd) at (\ha+\hb+\hc,\va+\vb) {$\wc{\catD}$};
			\node (da) at (0,\va+\vb+\vc) {$\wh{\catC}$};
			\node (dc) at (\ha+\hb,\va+\vb+\vc) {$\wh{\catD}$};
			\draw[->] (ab) to node[above] {$\wc{F}' $} (ad);
			\draw[->] (ab) to node[above left, pos=.25] {$ $} (ba);
			\draw[->] (ab) to node[left,pos=.8] {$\wc{G}' $} (cb);
			\draw[->] (ad) to node[below right] {$ $} (bc);
			\draw[->] (ad) to node[right] {$\wc{G} $} (cd);
			\draw[->] (ba) to node[left] {$\wh{F}' $} (da);
			\draw[->] (cb) to node[above,pos=.25] {$\wc{F} $} (cd);
			\draw[->] (cb) to node[above left, pos=.25] {$ $} (da);
			\draw[->] (cd) to node[below right] {$ $} (dc);
			\draw[->] (da) to node[above,pos=.75] {$\wh{F} $} (dc);
			
			\draw[-,line width=6pt,draw=white] (ba) to  (bc);
			\draw[->] (ba) to node[above,pos=.75] {$\wh{F}' $} (bc);
			\draw[-,line width=6pt,draw=white] (bc) to  (dc);
			\draw[->] (bc) to node[right,pos=.2] {$\wh{G} $} (dc);
		\end{tikzpicture}
	\end{equation*}
	in $\PrL$ such that rightward arrows are right bounded and the forward arrows are the completion functors $\Psi_{\catC}$, $\Psi_{\catD}$, $\Psi_{\catC'}$, and $\Psi_{\catD'}$. Suppose $X \in \wc{\catD}'^+$ is such that $\wc{G}(X) \in \wc{\catD}^+$. Then if one of the Beck-Chevalley maps $\wh{G}' \wh{F}'^R \Psi_{\catD'}(X) \to \wh{F}^R \wh{G} \Psi_{\catD'}(X)$ and $\Psi_{\catC} \wc{G}' \wc{F}'^R(X) \to \Psi_{\catC}  \wc{F}^R \wc{G}(X)$ is an isomorphism, so is the other. 
\end{Lemma}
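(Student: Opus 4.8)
The plan is to deduce the statement from two applications of Lemma~\ref{lem:adjointsandPsi}, together with the compatibility of Beck--Chevalley transformations with the pasting of squares. First observe that both maps in question are mates: taking right adjoints of the rightward functors $\wc{F}', \wc{F}, \wh{F}', \wh{F}$, let $\beta \colon \wc{G}'{\wc{F}'}^R \to \wc{F}^R\wc{G}$ be the mate of the commutativity $\wc{G}\wc{F}' \cong \wc{F}\wc{G}'$ of the top face, and $\wh{\beta} \colon \wh{G}'{\wh{F}'}^R \to \wh{F}^R\wh{G}$ the mate of the commutativity $\wh{G}\wh{F}' \cong \wh{F}\wh{G}'$ of the bottom face. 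Then the first Beck--Chevalley map is $\Psi_{\catC}$ applied to $\beta(X)$, and the second is $\wh{\beta}$ evaluated at $\Psi_{\catD'}(X)$. These mates exist since every functor in the cube is a left adjoint between presentable categories.

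Next I would identify the sources and targets of the two maps. Because $\wh{F}'$ and $\wh{F}$ are right bounded, Lemma~\ref{lem:adjointsandPsi} applies to the face relating $\wc{F}'$ to $\wh{F}'$ and to the face relating $\wc{F}$ to $\wh{F}$: evaluated at $X \in (\wc{\catD}')^+$ the former yields an isomorphism $\Psi_{\catC'}{\wc{F}'}^R(X) \congto {\wh{F}'}^R\Psi_{\catD'}(X)$, and evaluated at $\wc{G}(X) \in \wc{\catD}^+$ (this is where the hypothesis on $X$ enters) the latter yields $\Psi_{\catC}\wc{F}^R\wc{G}(X) \congto \wh{F}^R\Psi_{\catD}\wc{G}(X)$. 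Whiskering the first isomorphism with $\wh{G}'$ and composing with the commutativity of the left face, $\Psi_{\catC}\wc{G}' \cong \wh{G}'\Psi_{\catC'}$, identifies the source $\Psi_{\catC}\wc{G}'{\wc{F}'}^R(X)$ of the first map with the source $\wh{G}'{\wh{F}'}^R\Psi_{\catD'}(X)$ of the second. Whiskering the second isomorphism with $\wh{F}^R$ and composing with the commutativity of the right face, $\Psi_{\catD}\wc{G} \cong \wh{G}\Psi_{\catD'}$, identifies the target $\Psi_{\catC}\wc{F}^R\wc{G}(X)$ of the first map with the target $\wh{F}^R\wh{G}\Psi_{\catD'}(X)$ of the second.

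Finally I would check that, under these identifications, $\Psi_{\catC}(\beta(X))$ corresponds to $\wh{\beta}(\Psi_{\catD'}(X))$; granting this, the two maps are the two horizontal edges of a square whose vertical edges are the isomorphisms constructed above, so one is an isomorphism if and only if the other is, which is the claim. This last point is formal: it says that the $2$-cells of the commuting cube remain coherent after replacing the parallel family $\{\wc{F}', \wc{F}, \wh{F}', \wh{F}\}$ of edges by their right adjoints, since the formation of mates is compatible with whiskering and with pasting of squares; in our concrete situation it can alternatively be verified by hand by expanding both composites into strings of unit and counit maps exactly as in the proof of Lemma~\ref{lem:adjointsandPsi}. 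I expect this coherence bookkeeping to be the main obstacle, the rest being a direct application of Lemma~\ref{lem:adjointsandPsi} and the boundedness hypotheses.
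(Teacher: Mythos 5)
Your proposal is correct and follows essentially the same route as the paper: the paper arranges the same four comparison maps (the two face commutativities and the two instances of Lemma~\ref{lem:adjointsandPsi}, applied at $X$ and at $\wc{G}(X)$ respectively) into a single commuting rectangle whose remaining two edges are the given Beck--Chevalley maps, with the commutativity being exactly the mate-pasting compatibility you invoke at the end. No gaps.
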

\begin{proof}
	The given morphisms are the top left and bottom right arrows in the following diagram.
	\begin{equation*}
		\begin{tikzpicture}
			[baseline=(current  bounding  box.center),thick,>=\arrtip]
			\newcommand*{\ha}{3.5}; \newcommand*{\hb}{3.5};
			\newcommand*{\va}{-1.5};
			\node (aa) at (0,0) {$\Psi_{\catC} \wc{G}' \wc{F}'^R(X)$};
			\node (ab) at (\ha,0) {$\Psi_{\catC}  \wc{F}^R \wc{G}(X)$};
			\node (ac) at (\ha+\hb,0) {$  \wh{F}^R \Psi_{\catD} \wc{G}(X)$};
			\node (ba) at (0,\va) {$\wh{G}'  \Psi_{\catC'} \wc{F}'^R(X)$};
			\node (bb) at (\ha,\va) {$\wh{G}' \wh{F}'^R \Psi_{\catD'}(X)$};
			\node (bc) at (\ha+\hb,\va) {$\wh{F}^R \wh{G} \Psi_{\catD'}(X)$};
			\draw[->] (aa) to node[above] {$ $} (ab);
			\draw[->] (ab) to node[above] {$  $} (ac);
			\draw[->] (ba) to node[above] {$ $} (bb);
			\draw[->] (bb) to node[above] {$ $} (bc);
			\draw[->] (aa) to node[below,rotate=90] {$\sim $} (ba);
			\draw[->] (ac) to node[below,rotate=90] {$\sim $} (bc);
		\end{tikzpicture} 
	\end{equation*}
	The claim follows since the top right and bottom left arrows are isomorphisms by Lemma~\ref{lem:adjointsandPsi} and our hypotheses on $X$. 
\end{proof}

\subsection{Coherent pullback and $!$-pullback}\label{sec:upper!upper*}
We now turn to the compatibility between coherent pullback and $!$-pullback. We begin by considering quasi-coherent sheaves on geometric stacks, extending from there to the case of ind-coherent sheaves and then ind-geometric stacks. In each case we begin with a base change isomorphism $h^* f_* \cong h'^* f'_*$, which leads to an associated Beck-Chevalley map $h'^* f^! \to f'^! h^*$. As with other functors, we write $f_{QC}^!$ and $f_{IC}^!$ when the meaning of $f^!$ is not otherwise clear from context. 

\begin{Proposition}\label{prop:up!up*gencohpullcaseQCoh}
Let the following be a Cartesian diagram of truncated, tamely presented geometric stacks.
\begin{equation*}
	\begin{tikzpicture}
		[baseline=(current  bounding  box.center),thick,>=\arrtip]
		\node (a) at (0,0) {$X'$};
		\node (b) at (3,0) {$Y'$};
		\node (c) at (0,-1.5) {$X$};
		\node (d) at (3,-1.5) {$Y$};
		\draw[->] (a) to node[above] {$f' $} (b);
		\draw[->] (b) to node[right] {$h $} (d);
		\draw[->] (a) to node[left] {$h' $}(c);
		\draw[->] (c) to node[above] {$f $} (d);
	\end{tikzpicture}
\end{equation*}
If $h$ has \stable coherent pullback and $f$ is proper and almost finitely presented, then the Beck-Chevalley map $h'^* f^!(\cF) \to f'^! h^*(\cF)$ in $\QCoh(X')$ is an isomorphism for all $\cF \in \Coh(Y)$. 
\end{Proposition}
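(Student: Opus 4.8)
The plan is to reduce, by flat descent and Noetherian approximation, to the case of a proper almost finitely presented morphism over a Noetherian ring with $h$ replaced by a morphism of \emph{finite} Tor-dimension, where the corresponding base change for $!$-pullback is already available.

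\emph{Reduction to an affine base and to $\cF$ in the heart.} As $Y$ is tamely presented, it admits a strictly tamely presented flat cover $p\colon \Spec R \to Y$ with $R$ strictly tamely presented over $\kk$; in particular $p$ is tamely presented. Base-changing the given square along $p$ preserves the hypotheses: $f$ stays proper and almost finitely presented, and the base change of $h$ retains \stable coherent pullback by Proposition \ref{prop:cohpullprops}. Since $p$ is flat, $*$-pullback along $p$ and along its base changes commutes with $f_*$, hence with $f^!$ (flat base change for proper morphisms), with $h^*$ and $h'^*$, and with the formation of the Beck-Chevalley map; moreover $p^*$ is conservative on $\QCoh$. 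So it suffices to prove the statement after this base change, i.e. we may assume $Y = \Spec R$ with $R$ strictly tamely presented over $\kk$. Next, $h'^* f^!$ and $f'^! h^*$ are exact functors and the Beck-Chevalley map is a natural transformation between them, so, since $\Coh(Y)$ is generated under shifts and cofibers by $\Coh(Y)^\heartsuit$, it suffices to treat $\cF \in \Coh(Y)^\heartsuit$.

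\emph{Approximation.} Write $R \cong \colim_\al R_\al$ for a strictly tame presentation of order $n$, so each $R_\al$ is Noetherian and the transition maps $u_\al\colon \Spec R \to \Spec R_\al$ are flat. By Noetherian approximation (using flatness of $u_\al$; e.g.\ \cite[Cor. 4.5.1.10]{LurSAG}) there are an index $\al$ and $\cF_\al \in \Coh(\Spec R_\al)^\heartsuit$ with $\cF \cong u_\al^*\cF_\al$; enlarging $\al$, a limit argument for proper almost finitely presented morphisms \cite[Ch. 4]{LurSAG} produces a proper almost finitely presented $f_\al\colon X_\al \to \Spec R_\al$ whose base change along $u_\al$ is $f$; and enlarging $\al$ once more, Proposition \ref{prop:finitetorapproxgstk} applied to $h\colon Y' \to \Spec R$ (with the identity flat cover of $\Spec R$) shows the composite $g_\al := u_\al \circ h\colon Y' \to \Spec R_\al$ is of finite Tor-dimension. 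Let $v\colon X \to X_\al$ denote the base change of $u_\al$ along $f_\al$. Then the square with corners $X'$, $Y'$, $X_\al$, $\Spec R_\al$, horizontal maps $f'$ and $f_\al$, and vertical maps $k := v\circ h'$ and $g_\al$, is Cartesian, since $X' \cong X\times_{\Spec R} Y' \cong X_\al \times_{\Spec R_\al} Y'$.

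\emph{Conclusion, and the main difficulty.} In the Cartesian square just built, $f_\al$ is proper and almost finitely presented over a Noetherian ring and $g_\al$ is of finite Tor-dimension, so the base change theorem for $!$-pullback along morphisms of finite Tor-dimension (valid in this generality; see \cite{GR17} and the corresponding results of \cite{CWig}) shows the Beck-Chevalley map $k^* f_\al^!(\cF_\al) \to f'^! g_\al^*(\cF_\al)$ is an isomorphism. Now $g_\al^*(\cF_\al) \cong h^* u_\al^*(\cF_\al) \cong h^*(\cF)$, while flat base change for $f^!$ along the Cartesian square with corners $X$, $\Spec R$, $X_\al$, $\Spec R_\al$ gives $v^* f_\al^!(\cF_\al) \cong f^!(\cF)$, whence $k^* f_\al^!(\cF_\al) \cong h'^* v^* f_\al^!(\cF_\al) \cong h'^* f^!(\cF)$. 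The pasting law for Beck-Chevalley transformations, applied to the vertical stacking of the Cartesian squares $(X', Y', X, \Spec R)$ and $(X, \Spec R, X_\al, \Spec R_\al)$, identifies $k^* f_\al^!(\cF_\al) \to f'^! g_\al^*(\cF_\al)$ with the composite of $h'^*$ applied to the (invertible) Beck-Chevalley map $v^* f_\al^! \to f^! u_\al^*$ of the lower square, followed by the original Beck-Chevalley map $h'^* f^!(\cF) \to f'^! h^*(\cF)$; since the first composite is an isomorphism and the first factor is invertible, the original map is an isomorphism. The conceptual content sits entirely in the approximation step together with Proposition \ref{prop:finitetorapproxgstk}, which trades the infinite-type morphism $h$ — not of finite Tor-dimension in general — for one that is, at the price of a Noetherian base; the remaining work is bookkeeping. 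The one input genuinely imported rather than proved here is the finite-Tor-dimension base change for $f_\al^!$, and this is the point to pin down with care, since $g_\al$ has non-Noetherian source even though $f_\al$ lives over a Noetherian ring.
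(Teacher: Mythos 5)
Your proposal is correct and follows essentially the same route as the paper's proof: reduce along a strictly tamely presented flat cover to an affine base $\Spec A$ with $A$ strictly tamely presented over $\kk$, use Noetherian approximation to descend $f$ and $\cF$ to some Noetherian $\Spec A_\al$, invoke Proposition \ref{prop:finitetorapproxgstk} to see that $u_\al$ composed with the base change of $h$ has finite Tor-dimension, and conclude by the finite-Tor-dimension base change for $!$-pullback together with a two-out-of-three argument on the factored Beck-Chevalley map. The only cosmetic differences are that the paper keeps the flat cover in a cube rather than literally replacing $Y$, and does not need your (harmless) reduction to $\cF \in \Coh(Y)^\heartsuit$.
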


\begin{proof}
	Let $\phi: U \cong \Spec A \to Y$ be a strictly tamely presented flat cover such that $A$ is strictly tamely presented over $\kk$. We obtain a diagram 
	\begin{equation}\label{eq:up!compcube}
		\begin{tikzpicture}[baseline=(current  bounding  box.center),thick,>=\arrtip]
			\newcommand*{\ha}{1.5}; \newcommand*{\hb}{1.5}; \newcommand*{\hc}{1.5};
\newcommand*{\va}{-.9}; \newcommand*{\vb}{-.9}; \newcommand*{\vc}{-.9}; 
			\node (ab) at (\ha,0) {$Z'$};
			\node (ad) at (\ha+\hb+\hc,0) {$U'$};
			\node (ba) at (0,\va) {$X'$};
			\node (bc) at (\ha+\hb,\va) {$Y'$};
			\node (cb) at (\ha,\va+\vb) {$Z$};
			\node (cd) at (\ha+\hb+\hc,\va+\vb) {$U$};
			\node (da) at (0,\va+\vb+\vc) {$X$};
			\node (dc) at (\ha+\hb,\va+\vb+\vc) {$Y$};
			\draw[->] (ab) to node[above] {$g' $} (ad);
			\draw[->] (ab) to node[above left, pos=.25] {$\psi' $} (ba);
			\draw[->] (ab) to node[right,pos=.2] {$\xi' $} (cb);
			\draw[->] (ad) to node[below right] {$\psi $} (bc);
			\draw[->] (ad) to node[right] {$\xi $} (cd);
			\draw[->] (ba) to node[left] {$ $} (da);
			\draw[->] (cb) to node[above,pos=.25] {$g $} (cd);
			\draw[->] (cb) to node[above left, pos=.25] {$\phi' $} (da);
			\draw[->] (cd) to node[below right] {$\phi $} (dc);
			\draw[->] (da) to node[above,pos=.75] {$ $} (dc);
			
			\draw[-,line width=6pt,draw=white] (ba) to  (bc);
			\draw[->] (ba) to node[above,pos=.75] {$ $} (bc);
			\draw[-,line width=6pt,draw=white] (bc) to  (dc);
			\draw[->] (bc) to node[right,pos=.2] {$ $} (dc);
		\end{tikzpicture}
	\end{equation}
	with Cartesian faces. Note that $\xi$ has \stable coherent pullback by Proposition \ref{prop:cohpullprops}. Since $\psi'^*$ is conservative, it suffices to show the top left arrow in
	\begin{equation}\label{eq:cHomcohpullbasechange1}
		\begin{tikzpicture}
			[baseline=(current  bounding  box.center),thick,>=\arrtip]
			\newcommand*{\ha}{3.5}; \newcommand*{\hb}{3.5};
			\newcommand*{\va}{-1.5};
			\node (aa) at (0,0) {$\psi'^* h'^* f^!(\cF)$};
			\node (ab) at (\ha,0) {$\psi'^* f'^! h^*(\cF)$};
			\node (ac) at (\ha+\hb,0) {$g'^!\psi^* h^*(\cF)$};
			\node (ba) at (0,\va) {$\xi'^* \phi'^*f^!(\cF)$};
			\node (bb) at (\ha,\va) {$\xi'^* g^! \phi^*(\cF)$};
			\node (bc) at (\ha+\hb,\va) {$g'^! \xi^* \phi^*(\cF)$};
			\draw[->] (aa) to node[above] {$ $} (ab);
			\draw[->] (ab) to node[above] {$  $} (ac);
			\draw[->] (ba) to node[above] {$ $} (bb);
			\draw[->] (bb) to node[above] {$ $} (bc);
			\draw[->] (aa) to node[below,rotate=90] {$\sim $} (ba);
			\draw[->] (ac) to node[below,rotate=90] {$\sim $} (bc);
		\end{tikzpicture} 
	\end{equation}
	is an isomorphism. The bottom left and top right arrows are isomorphisms by \cite[\igpropuppershriekftdbasechange]{CWig}, so it suffices to show the bottom right is. 
	
	By hypothesis $A$ is truncated, so for some $n$ there exists a strictly tame presentation $A \cong \colim A_\al$ of order $n$. By Noetherian approximation \cite[Prop. 4.2.1.5, Thm. 4.4.2.2, Prop. 5.5.4.1]{LurSAG} and flatness of the $u_\al: U \to U_\al = \Spec A_\al$, we can extend the back face of (\ref{eq:up!compcube}) to a diagram
\begin{equation}\label{eq:cHomcohpullbasechange2}
	\begin{tikzpicture}
		[baseline=(current  bounding  box.center),thick,>=\arrtip]
		\newcommand*{\ha}{3};
		\newcommand*{\va}{-1.5};
		\newcommand*{\vb}{-1.5};
		\node (aa) at (0,0) {$Z'$};
		\node (ab) at (\ha,0) {$U'$};
		\node (ba) at (0,\va) {$Z$};
		\node (bb) at (\ha,\va) {$U$};
		\node (ca) at (0,\va+\vb) {$Z_\al$};
		\node (cb) at (\ha,\va+\vb) {$U_\al$};
		\draw[->] (aa) to node[above] {$g' $} (ab);
		\draw[->] (ba) to node[above] {$g $} (bb);
		\draw[->] (aa) to node[left] {$\xi' $} (ba);
		\draw[->] (ab) to node[right] {$\xi $} (bb);
		
		\draw[->] (ca) to node[above] {$g_\al $} (cb);
		\draw[->] (ba) to node[left] {$u'_\al $} (ca);
		\draw[->] (bb) to node[right] {$u_\al $} (cb);
	\end{tikzpicture}
\end{equation} 
of Cartesian squares in which $g_\al$ is proper and almost finitely presented. Increasing $\al$ if needed, we may assume by \cite[Cor. 4.5.1.10]{LurSAG}, flatness of the $u_\al$, and coherence of the $A_\al$ that there exists $\cF_\al \in \Coh(U_\al)$ such that $\phi^*(\cF) \cong u^*_\al(\cF_\al)$.

The bottom right arrow of (\ref{eq:cHomcohpullbasechange1}) is then also the second factor of
\begin{equation*}
	\begin{tikzpicture}
		[baseline=(current  bounding  box.center),thick,>=\arrtip]
		\newcommand*{\ha}{3.0}; \newcommand*{\hb}{3.0};
		\node (aa) at (0,0) {$\xi'^* u'^*_\al g^!_\al(\cF_\al)$};
		\node (ab) at (\ha,0) {$\xi'^* g^! u^*_\al(\cF_\al)$};
		\node (ac) at (\ha+\hb,0) {$ g'^!\xi^*u^*_\al(\cF_\al).$};
		\draw[->] (aa) to node[above] {$ $} (ab);
		\draw[->] (ab) to node[above] {$  $} (ac);
	\end{tikzpicture} 
\end{equation*}
But $u_\al\circ \xi$ is of finite Tor-dimension by Proposition \ref{prop:finitetorapproxgstk}, so the composition and first factor are isomorphisms by \cite[Prop. 6.4.1.4]{LurSAG}, and thus so is the second. 
\end{proof}

Next we consider the ind-coherent analogue of Proposition \ref{prop:up!up*gencohpullcaseQCoh}. In this setting we can strengthen the result by further assuming the stacks involved are coherent. We caution that in the following statement the coherence of $X'$ does not follow from the coherence of $X$, $Y$, and~$Y'$, but rather will follows from other hypotheses in practice. 

\begin{Proposition}\label{prop:up!up*gencohpullcase}
	Let the following be a Cartesian diagram of truncated, coherent, tamely presented geometric stacks.
	\begin{equation*}
		\begin{tikzpicture}
			[baseline=(current  bounding  box.center),thick,>=\arrtip]
			\node (a) at (0,0) {$X'$};
			\node (b) at (3,0) {$Y'$};
			\node (c) at (0,-1.5) {$X$};
			\node (d) at (3,-1.5) {$Y$};
			\draw[->] (a) to node[above] {$f' $} (b);
			\draw[->] (b) to node[right] {$h $} (d);
			\draw[->] (a) to node[left] {$h' $}(c);
			\draw[->] (c) to node[above] {$f $} (d);
		\end{tikzpicture}
	\end{equation*}
	Suppose that $h$ is of finite cohomological dimension and has \stable coherent pullback, and that $f$ is proper and almost finitely presented. Then the Beck-Chevalley map $h'^* f^!(\cF) \to f'^! h^*(\cF)$ in $\IndCoh(X')$ is an isomorphism for all $\cF \in \IndCoh(Y)$. 
\end{Proposition}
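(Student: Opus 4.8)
The plan is to reduce the statement to its quasi-coherent counterpart, Proposition~\ref{prop:up!up*gencohpullcaseQCoh}, by applying the $t$-exact, colimit-preserving functor $\Psi_{X'}\colon \IndCoh(X') \to \QCoh(X')$ and invoking the anticompletion formalism of Lemma~\ref{lem:BCandPsi}. The one genuinely delicate point, flagged at the start of the section, is that $h'^*$ is unbounded, so the source $h'^* f^!(\cF)$ is not obviously left bounded while $\Psi_{X'}$ is conservative only on left bounded objects.

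First I would reduce to $\cF \in \Coh(Y)$. Since $f$ and $f'$ are proper and almost finitely presented, $f_*$ and $f'_*$ preserve coherence, hence compact objects, so $f^!$ and $f'^!$ are continuous; together with the continuity of $h^*$ and $h'^*$ (Definition~\ref{def:cohcohpul}) this makes $h'^* f^!$ and $f'^! h^*$ continuous functors $\IndCoh(Y) \to \IndCoh(X')$, and the Beck-Chevalley transformation between them — the mate of the base change isomorphism $h_* f'_* \cong f_* h'_*$ of Proposition~\ref{prop:coherentup*low*adj} — is natural. As $Y$ is coherent, $\IndCoh(Y)$ is compactly generated by $\Coh(Y)$, so it suffices to prove the transformation is an isomorphism on objects of $\Coh(Y)$.

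Fix $\cF \in \Coh(Y)$. I would apply Lemma~\ref{lem:BCandPsi} to the cube of categories in $\PrL$ whose front face is the square of $\IndCoh$-categories of $X,Y,X',Y'$ and whose back face is the corresponding square of $\QCoh$-categories, with the rightward arrows the proper pushforwards $f_*, f'_*$ (and their quasi-coherent analogues), the downward arrows the pullbacks $h'^*, h^*$ (and their quasi-coherent analogues), and the forward arrows the completion functors $\Psi$. Setting this up requires: that the four stacks, being coherent truncated geometric stacks, have $\IndCoh(-) \in \PrStbacpl$ with $\QCoh(-)$ its left completion and $\Psi$ the canonical functor, per \cite[\igsecanticompletion]{CWig}; that the rightward arrows be right bounded, which holds since $f,f'$ are proper, hence of finite cohomological dimension; that the faces commute, which holds because the quasi-coherent base change $h^*_{QC} f_{QC*} \cong f'_{QC*} h'^*_{QC}$ follows from finite cohomological dimension of $f$, the ind-coherent one $h^* f_* \cong f'_* h'^*$ from Proposition~\ref{prop:coherentup*low*adj} ($f$ proper and almost finitely presented), and the compatibilities $\Psi f_* \cong f_{QC*}\Psi$, $\Psi h^* \cong h^*_{QC}\Psi$, etc., because both sides are continuous and agree on coherent compact generators. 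The remaining hypotheses of the lemma hold: $\cF \in \IndCoh(Y)^+$, and $h^*(\cF) \in \Coh(Y') \subset \IndCoh(Y')^+$ because $h$ has \stable coherent pullback. The lemma then gives that $\Psi_{X'}$ of the ind-coherent Beck-Chevalley map on $\cF$ is an isomorphism if and only if the quasi-coherent one $h'^* f^!(\cF) \to f'^! h^*(\cF)$ is, and the latter holds by Proposition~\ref{prop:up!up*gencohpullcaseQCoh}.

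Finally I would upgrade ``isomorphism after $\Psi_{X'}$'' to ``isomorphism''. The target $f'^! h^*(\cF)$ is left bounded: $h^*(\cF) \in \Coh(Y')$ and $f'^!$ is left $t$-exact up to a shift, since $f'$ is proper and so $f'_*$ is right bounded. As $\Psi_{X'}$ is $t$-exact and restricts to an equivalence $\IndCoh(X')^+ \congto \QCoh(X')^+$, and carries the Beck-Chevalley map to an isomorphism with left bounded target, the source $h'^* f^!(\cF)$ is forced to be left bounded too; conservativity of $\Psi_{X'}$ on $\IndCoh(X')^+$ then yields that the map itself is an isomorphism. The main obstacle is precisely this control of the unbounded functor $h'^*$, together with the bookkeeping needed to match the base change and $\Psi$-compatibilities to the exact hypotheses of Lemma~\ref{lem:BCandPsi}.
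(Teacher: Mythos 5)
Your reduction to $\cF \in \Coh(Y)$, the cube of $\Psi$-compatibilities feeding into Lemma~\ref{lem:BCandPsi}, and the conclusion that $\Psi_{X'}$ of the Beck--Chevalley map is an isomorphism all match the paper's argument. The gap is in your final step. You claim that because $\Psi_{X'}$ carries the map to an isomorphism onto the left bounded object $\Psi_{X'} f'^! h^*(\cF)$, the source $h'^* f^!(\cF)$ ``is forced to be left bounded too.'' This does not follow. The functor $\Psi_{X'}$ is conservative only on $\IndCoh(X')^+$; on all of $\IndCoh(X')$ it has a nontrivial kernel consisting of objects with vanishing cohomology in every degree (this is exactly the left anticompleteness of the t-structure on $\IndCoh(X')$ for coherent $X'$). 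T-exactness of $\Psi_{X'}$ does let you conclude that $H^n(h'^* f^!(\cF))$ vanishes for $n \ll 0$, but in an anticomplete category this does not place the object in $\IndCoh(X')^{\geq N}$: it could differ from a left bounded object by a phantom in $\ker \Psi_{X'}$. So you cannot bootstrap left boundedness of the source from left boundedness of its image under $\Psi_{X'}$ --- if you could, the whole subtlety you flag at the outset would evaporate.

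The paper closes this gap with a direct argument occupying the second half of its proof, none of which appears in your proposal. It first shows $\tau^{\leq n} f^!(\cF)$ is coherent for every $n$, by pulling back along a strictly tamely presented flat cover $\phi': Z \to X$, descending $\cF$ to some Noetherian approximation $U_\al$, and invoking \cite[Prop.~6.4.3.4]{LurSAG} there. It then writes $f^!(\cF) \cong \colim_n \tau^{\leq n} f^!(\cF)$ using right completeness and Lemma~\ref{lem:colimpres}, so that by continuity of $h'^*$ and compatibility of the t-structure with filtered colimits it suffices to bound the objects $h'^* \tau^{\leq n} f^!(\cF)$ \emph{uniformly} below. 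That uniform bound is where Proposition~\ref{prop:finitetorapproxgstk} enters: after a further faithfully flat pullback one identifies $\psi'^* h'^* \tau^{\leq n} f^!(\cF)$ with $\xi'^* u'^*_\al \tau^{\leq n} g_\al^!(\cF_\al)$, and the composite $u_\al \circ \xi$ has finite Tor-dimension, giving a shift independent of $n$. You would need to supply an argument of this kind to complete the proof.
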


We will use the following standard result, see \cite[Cor. 5.1.5]{BKV22}. 

\begin{Lemma}\label{lem:colimpres}
	Let $\catC \cong \colim \catC_\al$ be the colimit of a diagram $A \to \PrL$, with $F_\al: \catC_\al \to \catC$ the canonical functors and $G_\al: \catC \to \catC_\al$ their right adjoints. 
	Then for any $X \in \catC$, the objects $X_\al \cong \colim F_\al G_\al (X)$ assemble into a diagram $A \to \catC$ whose colimit is $X$. 
\end{Lemma}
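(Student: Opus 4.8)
The plan is to deduce the statement from the standard description of colimits in $\PrL$ together with a Yoneda-style mapping-space computation. Since $\catC \in \PrL$ is presentable and $A$ is small, the colimit $\colim_{\al \in A} F_\al G_\al(X)$ in question exists, and the goal is to show that the counits $\epsilon_\al \colon F_\al G_\al(X) \to X$ of the adjunctions $F_\al \dashv G_\al$ assemble into a colimiting cocone.

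First I would recall that the colimit $\catC \cong \colim_{\al \in A} \catC_\al$ in $\PrL$ agrees with the limit $\lim_{\al \in A^\op} \catC_\al$ in $\Cathatinfty$ of the diagram obtained by passing to right adjoints, using the equivalence $\PrL \cong (\PrR)^\op$ together with the fact that $\PrR \hookrightarrow \Cathatinfty$ preserves small limits \cite[Sec. 5.5.3]{LurHTT}. Under this identification $G_\al$ is the projection from the limit, $F_\al$ its left adjoint, and $G_\al \cong G_{\al\be} G_\be$, $F_\al \cong F_\be F_{\al\be}$ for each arrow $\al \to \be$ of $A$, where $F_{\al\be} \colon \catC_\al \to \catC_\be$ is the structure functor and $G_{\al\be}$ its right adjoint. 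A formal consequence, to be recorded next, is that for all $X, Y \in \catC$ the map
\begin{equation*}
	\Map_\catC(X, Y) \longrightarrow \lim_{\al \in A^\op} \Map_{\catC_\al}\bigl(G_\al(X), G_\al(Y)\bigr)
\end{equation*}
with $\al$-component $G_\al$ is an equivalence; this is the general principle that mapping spaces are computed levelwise along limits of $\infty$-categories, which follows from $\Fun(\Delta^1,-)$ and $\Fun(\partial\Delta^1,-)$ preserving limits and fibers being limits.

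The substantive step is to organize the counits into a cocone. For an arrow $\al \to \be$, the transition map $F_\al G_\al(X) \to F_\be G_\be(X)$ should be obtained by applying $F_\be$ to the counit $F_{\al\be} G_{\al\be}\bigl(G_\be(X)\bigr) \to G_\be(X)$ and using $F_\al \cong F_\be F_{\al\be}$, $G_\al \cong G_{\al\be} G_\be$, with $\epsilon_\al$ the resulting leg to $X$. Promoting this pointwise picture to a genuine functor $A \to \catC$, $\al \mapsto F_\al G_\al(X)$, together with its cocone to $X$, is the one point demanding genuine $\infty$-categorical bookkeeping, and I expect it to be the main obstacle; I would handle it by packaging the $F_\al$, the $G_\al$, and the adjunctions $F_\al \dashv G_\al$ into a single (co)Cartesian fibration over $A$ and arguing there, which is essentially the content of \cite[Cor. 5.1.5]{BKV22}. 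Granting this, write $\theta \colon \colim_{\al \in A} F_\al G_\al(X) \to X$ for the induced map.

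Finally I would verify $\theta$ is an equivalence via Yoneda. For each $Y \in \catC$, the definition of a colimit and the adjunctions $F_\al \dashv G_\al$ identify $\Map_\catC\bigl(\colim_\al F_\al G_\al(X), Y\bigr) \cong \lim_{\al \in A^\op} \Map_{\catC_\al}\bigl(G_\al(X), G_\al(Y)\bigr)$, and the triangle identities show that under this identification precomposition with $\theta$ becomes precisely the equivalence with $\al$-component $G_\al$ recorded in the second paragraph. Hence precomposition with $\theta$ is an equivalence for every $Y$, so $\theta$ is an equivalence, as desired.
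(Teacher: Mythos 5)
The paper offers no proof of this lemma, deferring entirely to \cite[Cor. 5.1.5]{BKV22}, and your sketch is a correct reconstruction of the standard argument behind that reference: identify the colimit in $\PrL$ with the limit in $\Cathatinfty$ of the diagram of right adjoints, compute mapping spaces levelwise, and use the triangle identities to see that precomposition with the counit cocone realizes the resulting equivalence $\Map_\catC(X,Y) \cong \lim_\al \Map_{\catC_\al}(G_\al(X), G_\al(Y))$. The one step you do not actually carry out---coherently assembling $\al \mapsto F_\al G_\al(X)$ into a functor $A \to \catC$ together with its cocone to $X$---is exactly the point you flag and defer to the (co)Cartesian fibration packaging, which is the same level of rigor as the paper's own treatment.
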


\begin{proof}[Proof of Proposition \ref{prop:up!up*gencohpullcase}]
First note $f_*$ and $f'_*$ preserve compact objects and have compactly generated source \cite[\igpropIndCohisIndofCoh]{CWig}, hence $f^!$ and $f'^!$ are continuous \cite[Prop. 5.5.7.2]{LurHTT}. By compact generation of $\IndCoh(Y)$ it then suffices to assume $\cF \in \Coh(Y)$. 

Since $h$ and $h'$ are of finite cohomological dimension, the $\Psi_{(-)}$ functors and the quasi-coherent and ind-coherent versions of $f_*$, $f'_*$, $h_*$, and $h'_*$ form the edges of a diagram $(\Delta^1)^3 \to \Cathatinfty$. By Proposition \ref{prop:coherentup*low*adj} we may pass to adjoints to obtain a corresponding diagram involving $h^*$ and~$h'^*$.  Proposition \ref{prop:up!up*gencohpullcaseQCoh} and Lemma \ref{lem:BCandPsi} imply that $\Psi_{X'} h'^* f^!(\cF) \to \Psi_{X'}f'^! h^*(\cF)$ is an isomorphism in $\QCoh(X')$. Since $f'^! h^*(\cF)$ is left bounded and $\Psi_{X'}$ is conservative on $\IndCoh(X')^+$, it suffices to show $h'^* f^!(\cF)$ is left bounded. 

We first claim that $\tau^{\leq n} f^!(\cF)$ is coherent for all $n$. As in the proof of Proposition \ref{prop:up!up*gencohpullcaseQCoh}, we fix diagrams (\ref{eq:up!compcube}), (\ref{eq:cHomcohpullbasechange2}) and a sheaf $\cF_\al \in \Coh(U_\al)$ such that $\phi^*(\cF) \cong u^*_\al(\cF_\al)$. It follows from \cite[\igcoruppershriekftdbasechangeIndCoh]{CWig} and \cite[Prop. 6.4.1.4]{LurSAG} that $\phi'^*  f^!(\cF) \cong u'^*_\al g_\al^! (\cF_\al)$. By flatness of $\phi'$ and $u'_\al$ we then have
\begin{equation}\label{eq:up!up*gencohpullcase1}
\phi'^* \tau^{\leq n} f^!(\cF) \cong \tau^{\leq n} \phi'^*  f^!(\cF) \cong \tau^{\leq n} u'^*_\al g_\al^! (\cF_\al) \cong u'^*_\al \tau^{\leq n}g_\al^! (\cF_\al). 	
\end{equation}
Thus $\tau^{\leq n} f^!(\cF)$ is coherent if $\tau^{\leq n}g_\al^! (\cF_\al)$ is, since $\phi'$ is faithfully flat and $\tau^{\leq n} f^!(\cF)$ is left bounded. But $\tau^{\leq n}g_\al^! (\cF_\al)$ is coherent for all $n$ by \cite[Prop. 6.4.3.4]{LurSAG}. 

Since the standard t-structure is right complete we have $f^!(\cF) \cong \colim \tau^{\leq n} f^!(\cF)$ by Lemma~\ref{lem:colimpres}. Since it is compatible with filtered colimits, and since $h'^*$ is continuous, $h'^* f^!(\cF)$ is then bounded below if the sheaves $h'^* \tau^{\leq n} f^!(\cF)$ are uniformly bounded below. Since $\psi'$ is faithfully flat, hence $\psi'^*$ conservative on $\IndCoh(X')^+$, it suffices to show this for the sheaves $\psi'^* h'^* \tau^{\leq n} f^!(\cF)$. But using (\ref{eq:up!up*gencohpullcase1}) we have
$$\psi'^* h'^* \tau^{\leq n} f^!(\cF) \cong \xi'^* \phi'^* \tau^{\leq n} f^!(\cF) \cong \xi'^* u'^*_\al \tau^{\leq n}g_\al^! (\cF_\al),$$ 
and the right hand side is uniformly left bounded since Proposition~\ref{prop:finitetorapproxgstk} implies $u_\al \circ \xi$ is of finite Tor-dimension, hence so is $u'_\al \circ \xi'$. 
\end{proof}

We now have the following extension to the ind-geometric setting. 

\begin{Proposition}\label{prop:up!up*genICcaseIG}
	Let the following be a Cartesian diagram of ind-geometric stacks. 
\begin{equation*}
	\begin{tikzpicture}
		[baseline=(current  bounding  box.center),thick,>=\arrtip]
		\node (a) at (0,0) {$X'$};
		\node (b) at (3,0) {$Y'$};
		\node (c) at (0,-1.5) {$X$};
		\node (d) at (3,-1.5) {$Y$};
		\draw[->] (a) to node[above] {$f' $} (b);
		\draw[->] (b) to node[right] {$h $} (d);
		\draw[->] (a) to node[left] {$h' $}(c);
		\draw[->] (c) to node[above] {$f $} (d);
	\end{tikzpicture}
\end{equation*}
Suppose that all stacks in the diagram are coherent and ind-tamely presented, that $h$ is of ind-finite cohomological dimension and has \stable coherent pullback, and that $f$ is ind-proper and almost ind-finitely presented. Then for any $\cF \in \IndCoh(Y)$ the Beck-Chevalley map $h'^* f^!(\cF) \to f'^! h^*(\cF)$ is an isomorphism. 
\end{Proposition}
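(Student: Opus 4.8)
The plan is to deduce this from the geometric case, Proposition~\ref{prop:up!up*gencohpullcase}, by base-changing everything along a reasonable presentation of $Y$ and then running the same colimit-of-presentations argument as in the proof of Proposition~\ref{prop:coherentup*low*adj}.

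First I would fix a reasonable presentation $Y \cong \colim_{\al \in A} Y_\al$ whose terms are coherent geometric stacks, which exists since $Y$ is coherent, and base change the given square along the almost finitely presented closed immersions $Y_\al \to Y$. This produces, for each $\al$, a Cartesian square of geometric stacks with corners $X_\al := X \times_Y Y_\al$, $Y'_\al := Y' \times_Y Y_\al$, $X'_\al := X' \times_Y Y_\al$ and maps $f_\al, h_\al, f'_\al, h'_\al$. Left exactness of filtered colimits in $\Stkkconv$ gives $X \cong \colim X_\al$, $Y' \cong \colim Y'_\al$, $X' \cong \colim X'_\al$, and these are reasonable presentations because almost finitely presented closed immersions are stable under base change. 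I would then verify that each square meets the hypotheses of Proposition~\ref{prop:up!up*gencohpullcase}: the terms are truncated, are tamely presented geometric stacks (being reasonable geometric substacks of ind-tamely presented ind-geometric stacks, via the criterion of Proposition~\ref{prop:indPequivconditions} applied with target $\Spec\kk$), and are coherent geometric stacks (being reasonable geometric substacks of the coherent ind-geometric stacks $Y$, $X$, $Y'$, $X'$ --- this is the point at which the coherence hypotheses on $X$, $Y'$, $X'$ are used, since $X'_\al$ coherent does not follow from the other terms being coherent); moreover $f_\al$ is proper and almost finitely presented as a base change of $f$, and $h_\al$ is of finite cohomological dimension and has \stable coherent pullback as a base change of $h$ (using Proposition~\ref{prop:cancheckpresforcohpull}).

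Next, exactly as in the proof of Proposition~\ref{prop:coherentup*low*adj}, the categories $\IndCoh(-)$ of these stacks together with the pushforwards along the closed immersions in each of the four families and the functors $f_*$, $f'_*$, $h^*$, $h'^*$ assemble into a diagram over $A^\triangleright$ valued in commuting squares in $\PrL$, which is a colimit diagram in the sense that its value at the cone point is the corresponding square of ind-geometric functors (this uses that $f_*$, $h^*$, etc.\ on ind-geometric stacks are defined precisely as such colimits, via~(\ref{eq:IndCohindGStk}) and Definitions~\ref{def:cohcohpul}--\ref{def:cohonindgstks}). Passing to right adjoints as in that proof, using \cite[Prop.~4.5.7.19]{LurHA}, $!$-pullback base change along the closed immersions (a case of \cite[\igcoruppershriekftdbasechangeIndCoh]{CWig}), Proposition~\ref{prop:coherentup*low*adj}, and the fact that $h$, $h'$ are of ind-finite cohomological dimension while $f$, $f'$ are ind-proper and almost ind-finitely presented, one finds that the Beck-Chevalley transformation $h'^* f^! \to f'^! h^*$ is recovered as the colimit over $\al$ of the Beck-Chevalley transformations $h'^*_\al f_\al^! \to f'^!_\al h_\al^*$. (Here $f^!$ and $f'^!$ are continuous, which is needed to identify the colimits, because $f_*$, $f'_*$ preserve compactness and have compactly generated source \cite[\igpropIndCohisIndofCoh]{CWig}, \cite[Prop.~5.5.7.2]{LurHTT}.) Each of these transformations is an isomorphism by Proposition~\ref{prop:up!up*gencohpullcase}, a colimit of isomorphisms in $(\PrL)^{\Delta^1}$ is an isomorphism, and evaluating on an arbitrary $\cF \in \IndCoh(Y)$ via Lemma~\ref{lem:colimpres} gives the claim.

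The main obstacle is the $\infty$-categorical bookkeeping of the third paragraph: organizing the four colimit presentations and the connecting functors into a single diagram of squares, checking that it is a colimit diagram in the appropriate functor $\infty$-category, and confirming that passage to right adjoints carries the ind-geometric Beck-Chevalley datum to the colimit of the geometric ones. This is a two-dimensional enhancement of the argument already carried out for Proposition~\ref{prop:coherentup*low*adj}, so the needed tools are in place; the remaining points (coherence, tameness, and properness/finite cohomological dimension of the base-changed terms and maps) are routine consequences of the stability results of Sections~\ref{sec:affine}--\ref{sec:indgeom}.
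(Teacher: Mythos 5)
Your reduction has a genuine gap at its first step. You base change the square along the closed immersions $Y_\al \to Y$ of a reasonable presentation and assert that the resulting squares, with corners $X_\al := X \times_Y Y_\al$, $Y'_\al$, $X'_\al$, are Cartesian squares of (truncated, tamely presented, coherent) \emph{geometric} stacks to which Proposition~\ref{prop:up!up*gencohpullcase} applies. But $X \times_Y Y_\al$ is only a closed substack of the ind-geometric stack $X$, and there is no reason for it to be geometric: $f$ is merely ind-proper, so its ``fibers'' over geometric substacks of $Y$ are themselves ind-geometric in general (already when $Y = \Spec \kk$ and the presentation is trivial, $X_\al = X$). Consequently the geometric case cannot be applied to these squares, and the claim that $X \cong \colim X_\al$ is a reasonable presentation also fails, since the terms of a reasonable presentation must be truncated geometric stacks. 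The corners $Y_\al$ and $Y'_\al$ are fine (the latter because $h$ has \stable{} coherent pullback), but the source side of the square is not.

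The paper's proof is organized precisely to avoid this. It presents the \emph{source} $X \cong \colim X_\al$ by truncated geometric substacks and factors each $f \circ i_\al$ through a reasonable geometric substack $j_\al: Y_\al \to Y$ (using the mapping property of ind-geometric presentations), so that $f_\al: X_\al \to Y_\al$ is a proper, almost finitely presented morphism of geometric stacks and Proposition~\ref{prop:up!up*gencohpullcase} genuinely applies to the square $(X'_\al, Y'_\al, X_\al, Y_\al)$. The price is that one must separately relate $i_\al^! f^!$ to $f_\al^! j_\al^!$ and $h'^*$ to the $h_\al^*$; this is done by first proving the proposition in the special case where $f$ is itself the inclusion of a reasonable geometric substack -- and it is only in that special case that the colimit-of-presentations argument in the style of Proposition~\ref{prop:coherentup*low*adj} (your third paragraph) is carried out, via $\IndCoh(Y) \cong \colim \IndCoh(Y_\al)$ and passage to right adjoints. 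The general case then follows by conservativity of the functors $i'^!_\al$ on $\IndCoh(X')$. So your overall instinct (reduce to the geometric case by a colimit argument over a presentation) is right in spirit, but the decomposition must run along a presentation of $X$, not a base change along a presentation of $Y$, and the substack-inclusion case must be treated first as a separate lemma.
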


\begin{proof}
	Suppose first that $f$ is the inclusion of a reasonable geometric substack, which we may assume is a term in a reasonable presentation $Y \cong \colim Y_\al$ with index category $A$. Write $h_\al: Y'_\al \to Y_\al$ and $i'_{\al\be}: Y'_\al \to Y'_\be$ for the base changes of $h$ and $i_{\al\be}$. By construction, the functors $h^*_\al$, $i_{\al\be*}$, and $i'_{\al\be*}$ form a diagram $A \times \Delta^1 \to \PrL$. Note that since $h$ has \stable coherent pullback each $Y'_\al$ is a reasonable geometric substack of $Y'$, and in particular is coherent and tamely presented \cite[\igpropcohbasechangeunderafpclimm]{CWig}. Proposition \ref{prop:up!up*gencohpullcase} implies $h^*_\al i^!_{\al\be} \to i'^!_{\al\be} h^*_\be$ is an isomorphism for all $\al \leq \be$. Since  $\IndCoh(X) \cong \colim \IndCoh(X_\al)$ and  $\IndCoh(Y) \cong \colim \IndCoh(Y_\al)$ in $\PrL$ \cite[\igpropindcohonindgstks]{CWig}, the claim follows by \cite[Prop. 4.7.5.19]{LurHA}. 
	
	In the general case, let $X \cong \colim X_\al$ be a reasonable presentation. For any $\al$ we can factor $f \circ i_\al$ through some reasonable geometric substack $j_\al: Y_\al \to Y$, and have a diagram
	\begin{equation*}
		\begin{tikzpicture}[baseline=(current  bounding  box.center),thick,>=\arrtip]
			\newcommand*{\ha}{1.5}; \newcommand*{\hb}{1.5}; \newcommand*{\hc}{1.5};
			\newcommand*{\va}{-.9}; \newcommand*{\vb}{-.9}; \newcommand*{\vc}{-.9}; 
			\node (ab) at (\ha,0) {$X'_\al$};
			\node (ad) at (\ha+\hb+\hc,0) {$Y'_\al$};
			\node (ba) at (0,\va) {$X'$};
			\node (bc) at (\ha+\hb,\va) {$Y'$};
			\node (cb) at (\ha,\va+\vb) {$X_\al$};
			\node (cd) at (\ha+\hb+\hc,\va+\vb) {$Y_\al$};
			\node (da) at (0,\va+\vb+\vc) {$X$};
			\node (dc) at (\ha+\hb,\va+\vb+\vc) {$Y$};
			\draw[->] (ab) to node[above] {$f'_\al $} (ad);
			\draw[->] (ab) to node[above left, pos=.25] {$i'_\al $} (ba);
			\draw[->] (ab) to node[right,pos=.2] {$h'_\al $} (cb);
			\draw[->] (ad) to node[below right] {$j'_\al $} (bc);
			\draw[->] (ad) to node[right] {$h_\al $} (cd);
			\draw[->] (ba) to node[left] {$h' $} (da);
			\draw[->] (cb) to node[above,pos=.25] {$f_\al $} (cd);
			\draw[->] (cb) to node[above left, pos=.25] {$i_\al $} (da);
			\draw[->] (cd) to node[below right] {$j_\al $} (dc);
			\draw[->] (da) to node[above,pos=.75] {$f $} (dc);
			
			\draw[-,line width=6pt,draw=white] (ba) to  (bc);
			\draw[->] (ba) to node[above,pos=.75] {$ $} (bc);
			\draw[-,line width=6pt,draw=white] (bc) to  (dc);
			\draw[->] (bc) to node[right,pos=.2] {$h $} (dc);
		\end{tikzpicture}
	\end{equation*} 
	with all faces but the top and bottom Cartesian. We then have a diagram
	\begin{equation*}
		\begin{tikzpicture}
			[baseline=(current  bounding  box.center),thick,>=\arrtip]
			\newcommand*{\ha}{3.5}; \newcommand*{\hb}{3.5};
			\newcommand*{\va}{-1.5};
			\node (aa) at (0,0) {$h'^*_\al i^!_\al f^!(\cF)$};
			\node (ab) at (\ha,0) {$i'^!_\al h'^* f^!(\cF)$};
			\node (ac) at (\ha+\hb,0) {$i'^!_\al f'^! h^*(\cF)$};
			\node (ba) at (0,\va) {$h'^*_\al f^!_\al j^!_\al(\cF)$};
			\node (bb) at (\ha,\va) {$ f'^!_\al h^*_\al j^!_\al(\cF)$};
			\node (bc) at (\ha+\hb,\va) {$f'^!_\al j'^!_\al h^* (\cF)$};
			\draw[->] (aa) to node[above] {$ $} (ab);
			\draw[->] (ab) to node[above] {$  $} (ac);
			\draw[->] (ba) to node[above] {$ $} (bb);
			\draw[->] (bb) to node[above] {$ $} (bc);
			\draw[->] (aa) to node[below,rotate=90] {$\sim $} (ba);
			\draw[->] (ac) to node[below,rotate=90] {$\sim $} (bc);
		\end{tikzpicture} 
	\end{equation*}
in $\IndCoh(X'_\al)$. Since the functors $i'^!_\al$ determine an isomorphism $\IndCoh(Y) \cong \lim \IndCoh(Y_\al)$ in $\Cathatinfty$ \cite[\igpropindcohonindgstks]{CWig}, it suffices to show the top right arrow is an isomorphism for all $\al$.  But, given that $X'_\al$ and $Y'_\al$ are coherent \cite[\igpropcohbasechangeunderafpclimm]{CWig}, the top left and bottom right arrows are isomorphisms by the previous paragraph, and the bottom left is by Proposition \ref{prop:up!up*gencohpullcase}. 
\end{proof}

In the simpler case where $h$ is of finite Tor-dimension, Propositions \ref{prop:up!up*gencohpullcaseQCoh} and \ref{prop:up!up*gencohpullcase} follow from \cite[\igpropuppershriekftdbasechange, \igcoruppershriekftdbasechangeIndCoh]{CWig}. In Proposition~\ref{prop:up!up*genICcaseIG}, the hypothesis that $h$ is of finite Tor-dimension lets use drop the coherence hypotheses, provided we impose suitable boundedness conditions on $\cF$. To state this precisely, let $\IndCoh(Y)^{+}_{lim} \subset  \IndCoh(Y)$ denote the full subcategory of $\cF$ such that $i^!(\cF) \in \IndCoh(Y'')^+$ for every truncated geometric substack $i: Y'' \to Y$. Then one can extend the proof of Proposition \ref{prop:up!up*genICcaseIG} to show the following. 

\begin{Proposition}\label{prop:up!up*genICcaseIGftd}
	Let the following be a Cartesian diagram of ind-geometric stacks. 
	\begin{equation*}
		\begin{tikzpicture}
			[baseline=(current  bounding  box.center),thick,>=\arrtip]
			\node (a) at (0,0) {$X'$};
			\node (b) at (3,0) {$Y'$};
			\node (c) at (0,-1.5) {$X$};
			\node (d) at (3,-1.5) {$Y$};
			\draw[->] (a) to node[above] {$f' $} (b);
			\draw[->] (b) to node[right] {$h $} (d);
			\draw[->] (a) to node[left] {$h' $}(c);
			\draw[->] (c) to node[above] {$f $} (d);
		\end{tikzpicture}
	\end{equation*}
	Suppose that all objects in the diagram are reasonable, that $h$ is of finite Tor-dimension, and that $f$ is ind-proper and almost ind-finitely presented. Then for any $\cF \in \IndCoh(Y)^+_{lim}$ the Beck-Chevalley map $h'^* f^!(\cF) \to f'^! h^*(\cF)$ is an isomorphism. 
\end{Proposition}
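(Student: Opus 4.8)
**Proof proposal for Proposition \ref{prop:up!up*genICcaseIGftd}.**

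The plan is to mirror the proof of Proposition \ref{prop:up!up*genICcaseIG}, tracking where the coherence hypotheses were actually used and checking that the weaker hypothesis $\cF \in \IndCoh(Y)^+_{lim}$ together with finite Tor-dimension of $h$ suffices to run the same argument. First I would reduce to the case where $f$ is the inclusion of a reasonable geometric substack, exactly as in the general-case reduction: choosing a reasonable presentation $X \cong \colim X_\al$ and factoring $f \circ i_\al$ through a reasonable geometric substack $j_\al : Y_\al \to Y$, one gets the same cube of Cartesian faces, and the same two-by-three diagram of Beck-Chevalley maps in $\IndCoh(X'_\al)$. The vertical maps in that diagram are isomorphisms because $j_\al^!$ and $i_\al^!$ assemble $f^!$ and because the corresponding statement holds on the geometric terms; and since the $i'^!_\al$ determine $\IndCoh(Y) \cong \lim \IndCoh(Y_\al)$ by \cite[\igpropindcohonindgstks]{CWig}, it again suffices to treat the substack case. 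Here the key point is that when $h$ is of finite Tor-dimension, the base changes $h_\al$, $h'_\al$ are too, so $\IndCoh$ is functorial under them in the full strength of (\ref{eq:IndCohindGStk}) without any coherence assumption --- this is what replaces the appeals to Proposition \ref{prop:coherentup*low*adj} and \cite[\igpropcohbasechangeunderafpclimm]{CWig}.

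For the substack case itself, I would follow the proof of Proposition \ref{prop:up!up*gencohpullcase}, which in turn reduces to Proposition \ref{prop:up!up*gencohpullcaseQCoh}. The $\QCoh$-level statement, Proposition \ref{prop:up!up*gencohpullcaseQCoh}, is already available for $\cF \in \Coh(Y)$; in the finite Tor-dimension case its conclusion for bounded $\cF$ is in fact immediate from \cite[\igpropuppershriekftdbasechange]{CWig}, and more generally for $\cF$ such that $f^!(\cF)$ is left bounded. Given the $\QCoh$-level isomorphism, Lemma \ref{lem:BCandPsi} promotes it to an isomorphism after $\Psi_{X'}$, and then, since $\Psi_{X'}$ is conservative on $\IndCoh(X')^+$, it remains only to check that $h'^* f^!(\cF)$ and $f'^! h^*(\cF)$ are left bounded. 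The condition $\cF \in \IndCoh(Y)^+_{lim}$ is designed precisely so that $i^!(\cF)$ is left bounded for every truncated geometric substack $i : Y'' \to Y$; applying this to the $Y_\al$ gives $f^!_\al j^!_\al(\cF) \cong j^!_\al(\cF)|$-type bounded inputs, and then finite Tor-dimension of $h'_\al$ (equivalently, after base change to a flat cover, of some $u'_\al \circ \xi'$ as in (\ref{eq:up!up*gencohpullcase1})) shows $h'^*_\al f^!_\al j^!_\al(\cF)$ is left bounded. Left boundedness of $f'^!_\al h^*_\al j^!_\al(\cF)$ follows similarly, using that $f'_\al$ is proper almost finitely presented so $f'^!_\al$ has bounded-below cohomological amplitude on bounded-below input by \cite[Prop. 6.4.3.4]{LurSAG}, and that $h^*_\al$ is right t-exact and left bounded (finite Tor-dimension).

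The main obstacle I anticipate is bookkeeping around the boundedness: in Proposition \ref{prop:up!up*gencohpullcase} the coherence hypotheses were used to show $\tau^{\le n} f^!(\cF)$ is coherent for all $n$ and hence, via the right-complete t-structure and Lemma \ref{lem:colimpres}, that $h'^* f^!(\cF)$ is bounded below once the truncations are uniformly so. Without coherence I cannot appeal to \cite[Prop. 6.4.3.4]{LurSAG} for coherence of the truncations, so I must instead argue uniform left boundedness of $h'^* \tau^{\le n} f^!(\cF)$ directly from the finite Tor-amplitude of $h'$ and the left boundedness of $f^!(\cF)$ supplied by the $\IndCoh(Y)^+_{lim}$ hypothesis: if $f^!(\cF) \in \IndCoh(X)^{\ge -m}$ then $h'^* f^!(\cF) \in \IndCoh(X')^{\ge -m - d}$ where $d$ is the Tor-amplitude of $h'$, uniformly in $n$. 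This is cleaner than the coherent case, so I expect the real care to be in verifying that after passing to a flat cover $\Spec A \to Y$ as in the proof of Proposition \ref{prop:up!up*gencohpullcaseQCoh} the relevant maps $u'_\al \circ \xi'$ genuinely have finite Tor-dimension --- which is exactly Proposition \ref{prop:finitetorapproxgstk} --- and in checking that $\cF \in \IndCoh(Y)^+_{lim}$ is preserved under the relevant pullbacks $j^!_\al$ and base changes so that the inductive structure of the argument closes up.
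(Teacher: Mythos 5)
Your proposal is correct and follows exactly the route the paper intends: the paper offers no written proof beyond the remark that ``one can extend the proof of Proposition \ref{prop:up!up*genICcaseIG},'' and your elaboration is precisely that extension, correctly identifying that the coherence hypotheses entered only through compact generation (for defining $h^*$ and its adjunction) and through the coherence of $\tau^{\leq n}f^!(\cF)$, both of which are replaced in the finite-Tor-dimension case by the functoriality (\ref{eq:IndCohindGStk}) and by the direct boundedness estimate $h'^*(\IndCoh^{\geq -m}) \subset \IndCoh^{\geq -m-d}$ applied to the left-bounded $f^!(\cF)$ supplied by the $\IndCoh(Y)^+_{lim}$ hypothesis. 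The points you flag for care --- that $f^!(\cF)$ again lies in $\IndCoh(X)^+_{lim}$ so the reduction closes up, and that the flat-cover approximations have finite Tor-dimension via Proposition \ref{prop:finitetorapproxgstk} --- are indeed the right ones to check.
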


\section{Coherent pullback and sheaf Hom}\label{sec:sheafHom}

Given a geometric stack $Y$, sheaf Hom from $\cF \in \QCoh(Y)$ is defined by the adjunction
\begin{equation*}
	- \otimes \cF : \QCoh(Y) \leftrightarrows \QCoh(Y) : \cHom(\cF, -).
\end{equation*}
If $Y$ is a reasonable ind-geometric stack and $\cF \in \Coh(Y)$, there is still a natural functor $\cHom(\cF, -): \IndCoh(Y) \to \IndCoh(Y)$, despite the absence of a tensor product on $\IndCoh(Y)$ in general. This is because ind-coherent sheaves do still admit external tensor products, leading to an adjunction 
\begin{equation*}
	- \boxtimes \cF : \IndCoh(X) \leftrightarrows \IndCoh(X \times Y) : (- \boxtimes \cF)^R
\end{equation*}
for any ind-geometric $X$. We denote these functors by $e_{\cF,X}$ and $e_{\cF,X}^R$ to make explicit their dependence on $X$. If $X$ and $Y$ are geometric, the quasi-coherent version of $e_{\cF,X}^R$ satisfies $\cHom(\cF, -) \cong e_{\cF,Y}^R \Delta_{Y*}$, and we take this as the definition of sheaf Hom in the ind-coherent, ind-geometric setting. 

A morphism $f: X \to Y$ of geometric stacks induces an isomorphism $f^*(-\otimes \cF) \cong - \otimes f^*(\cF)$ of functors $\QCoh(Y) \to \QCoh(X)$, hence a Beck-Chevalley map
\begin{equation}\label{eq:hompullback2}
	f^* \cHom (\cF,\cG) \rightarrow \cHom(f^* (\cF), f^*(\cG))
\end{equation}
for any $\cG \in \QCoh(Y)$. This is not an isomorphism in general, but is when $f$ is of finite Tor-dimension under certain hypotheses on $\cF$ and $\cG$. The basic goal of this section is to generalize this and related results, in particular allowing $f$ to have coherent pullback, $X$ and~$Y$ to be ind-geometric, and $\cF$ and $\cG$ to be ind-coherent (Proposition \ref{prop:eFXRupper*cohpullgeomcase}). A basic technical theme is the close analogy between $e_{\cF,X}^R$ and ind-proper $!$-pullback, with many proofs in this section being parallel to corresponding proofs in Section~\ref{sec:upper!}. 

\subsection{Ind-geometric external products} \label{sec:indextcohcase}

We first review and extend the construction of external products on ind-geometric stacks from \cite[\igsecsheafHom]{CWig}. Since it is necessary in order for these constructions to be well-defined, \emph{we assume that $\kk$ is an ordinary (Noetherian) ring of finite global dimension for the rest of the paper}. 

Following \cite[Sec. 9]{GR17}, external products are most fully encoded as lax symmetric monoidal structures on sheaf theories. The proof of  \cite[\igpropreascohlaxsymmstruct]{CWig} extends directly to show that the external product of quasi-coherent sheaves induces a lax symmetric monoidal structure on the functor
\begin{equation*}
	\Coh: \Corr(\GStkkplus)_{prop,coh} \to \Catinfty
\end{equation*}
considered in Section \ref{sec:geombc}, and that this extends to a lax symmetric monoidal structure on its left Kan extension 
	\begin{equation}\label{eq:cohcorrindgstkfunctorcoh2}
	\Coh: \Corr(\indGStkkreas)_{prop,coh} \to \Catinfty
\end{equation}
considered in Section \ref{sec:indseccohsubsec}. 

Explicitly, suppose $X \cong \colim X_\al$ and $Z \cong \colim Z_\be$ are reasonable presentations. Then the external product
\begin{equation}\label{eq:cohext}
	- \boxtimes -: \Coh(X) \times \Coh(Z) \to \Coh(X \times Z)	
\end{equation}
is characterized at the level of objects by the existence of a diagram
\begin{equation*}
	\begin{tikzpicture}
		[baseline=(current  bounding  box.center),thick,>=\arrtip]
		\node (a) at (0,0) {$\Coh(X_\al) \times \Coh(Z_\be)$};
		\node (b) at (5.0,0) {$\Coh(X_\al \times Z_\be)$};
		\node (c) at (0,-1.5) {$\Coh(X) \times \Coh(Z) $};
		\node (d) at (5.0,-1.5) {$\Coh(X \times Z)$};
		\draw[->] (a) to node[above] {$- \boxtimes - $} (b);
		\draw[->] (b) to node[right] {$(i_{\al} \times i_\be)_* $} (d);
		\draw[->] (a) to node[left] {$i_{\al*} \times i_{\be*} $}(c);
		\draw[->] (c) to node[above] {$- \boxtimes - $} (d);
	\end{tikzpicture}
\end{equation*}
for all $\al$, $\be$. 

The above construction also induces an external product of ind-coherent sheaves on coherent ind-geometric stacks. If $X$ and $Z$ are coherent, then as in \cite[\igsecindextcohcase]{CWig} there is a unique extension of (\ref{eq:cohext}) to a continuous functor
\begin{equation}\label{eq:cohextcohcase}
	- \boxtimes -: \IndCoh(X) \otimes \IndCoh(Z) \to \IndCoh(X \times Z),
\end{equation}
where the left-hand term refers to the tensor product in $\PrL$. Fixing $\cF \in \Coh(Z)$, we write $e_{\cF,X}: \IndCoh(X) \to \IndCoh(X \times Z)$ for the functor induced by (\ref{eq:cohextcohcase}), and $e_{\cF,X}^R$ for its right adjoint. If $X$ and $W$ are coherent and $h: X \to Y$ and $g: W \to Z$ have \stable coherent pullback, then we have isomorphisms
\begin{equation}\label{eq:eFXup*compat}
	e_{g^*(\cF),X} h^* \cong (h \times g)^* e_{\cF,Y}, \hspace{6mm}  h_* e_{g^*(\cF),X}^R \cong e_{\cF,Y}^R  (h \times g)_*
\end{equation}
the left being given by construction and the right by adjunction (per Proposition \ref{prop:coherentup*low*adj}).  Likewise, if $f: X \to Y$ and $g: W \to Z$ are ind-proper and almost ind-finitely presented, and if  $\cF' \in \Coh(W)$, then we have isomorphisms 
\begin{equation}\label{eq:eFXRupper!com}
e_{g_*(\cF'),Y} f_* \cong (f \times g)_* e_{\cF',X},\hspace{7mm}  f^! e_{g_*(\cF'),Y}^R \cong e_{\cF',X}^R (f \times g)^!.
\end{equation}

The extension (\ref{eq:cohextcohcase}) does not quite fit into the framework of lax symmetric monoidal structures, since coherent ind-geometric stacks are not closed under products. However, as explained in Section \ref{sec:indtamemorphisms}, this failure is resolved if we restrict to the further subcategory $\indGStkkadtm \subset \indGStkkcoh$ of admissible, ind-tamely presented ind-geometric stacks. Thus, as in \cite[\igsecindextcohcase]{CWig}, restriction and ind-extension of (\ref{eq:cohcorrindgstkfunctorcoh2}) now yields a lax symmetric monoidal functor
\begin{equation}
	\IndCoh: \Corr(\indGStkkadtm)_{prop,coh} \to \PrL.
\end{equation}

\subsection{Coherent pullback and sheaf Hom}

We now turn to the compatibility between coherent pullback and sheaf Hom. We begin by considering quasi-coherent sheaves on geometric stacks, extending from there to the case of ind-coherent sheaves and then ind-geometric stacks.  

\begin{Proposition}\label{prop:cHomcohpullbasechange}
	Let $X$ and $Y$ be tamely presented truncated geometric stacks, $h: X \to Y$ a morphism with \stable coherent pullback, and $\cF \in \Coh(Y)$. Then the Beck-Chevalley map $h^* \cHom(\cF, \cG) \to \cHom(h^*(\cF), h^*(\cG))$ in $\QCoh(X)$ is an isomorphism for all $\cG \in \Coh(Y)$. 
\end{Proposition}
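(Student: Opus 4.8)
The plan is to argue in close parallel with the proof of Proposition~\ref{prop:up!up*gencohpullcaseQCoh}: descend along a flat cover to reduce to an affine base, then use Noetherian approximation to replace $\cF$ and $\cG$ by coherent sheaves pulled back from a finite-type affine scheme, at which point the claim follows from the finite Tor-dimension case of $\cHom$-base change, with the passage to finite Tor-dimension supplied by Proposition~\ref{prop:finitetorapproxgstk}.

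First, since $Y$ is a truncated, tamely presented geometric stack, choose a strictly tamely presented flat cover $\phi \colon U \cong \Spec A \to Y$ with $A$ strictly tamely presented over $\kk$ (which we may take truncated, as $Y$ is). Set $V := X \times_Y U$, and let $h_U \colon V \to U$ and $\phi_X \colon V \to X$ be the base changes of $h$ and $\phi$. Then $\phi_X$ is a flat cover, so $\phi_X^*$ is conservative; moreover $\phi$ is tamely presented (being a strictly tamely presented affine morphism), so $\phi_X$ is tamely presented (Proposition~\ref{prop:reasmorprops2}), whence $V$ is tamely presented over $\kk$ since $X$ is, and $V$ is truncated since $h_U$ has coherent pullback (Definition~\ref{def:geomcohpull}). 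As $\phi$ and $\phi_X$ are flat, hence of finite Tor-dimension, while $\cF$ and $h^*\cF$ are coherent (in particular pseudo-coherent) and $\cG$ and $h^*\cG$ are coherent (in particular bounded below), the finite Tor-dimension case of $\cHom$-base change \cite[\igpropcHomftdbasechange]{CWig} supplies flat base change isomorphisms identifying $\phi_X^*$ of the Beck-Chevalley map in the statement with the Beck-Chevalley map
\begin{equation*}
	h_U^* \cHom_U(\phi^*\cF, \phi^*\cG) \longrightarrow \cHom_V(h_U^*\phi^*\cF, h_U^*\phi^*\cG),
\end{equation*}
where $\phi^*\cF, \phi^*\cG \in \Coh(U)$ and $h_U$ has \stable coherent pullback (Proposition~\ref{prop:cohpullprops}). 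It thus suffices to show this map is an isomorphism.

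Fix a strictly tame presentation $A \cong \colim A_\al$ of order $n$, with $u_\al \colon U \to U_\al := \Spec A_\al$; each $A_\al$ is Noetherian. By Proposition~\ref{prop:finitetorapproxgstk}, applied with target $U$ and flat cover the identity, each composite $h_{U,\al} := u_\al \circ h_U \colon V \to U_\al$ is of finite Tor-dimension. By \cite[Cor.~4.5.1.10]{LurSAG}, flatness of the $u_\al$, and coherence of the $A_\al$, after enlarging $\al$ we may write $\phi^*\cF \cong u_\al^*\cF_\al$ and $\phi^*\cG \cong u_\al^*\cG_\al$ with $\cF_\al, \cG_\al \in \Coh(U_\al)$. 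Using flat base change for $\cHom$ along $u_\al$ once more, together with the compatibility of Beck-Chevalley maps with composition, the displayed map is identified with the Beck-Chevalley map $h_{U,\al}^* \cHom_{U_\al}(\cF_\al, \cG_\al) \to \cHom_V(h_{U,\al}^*\cF_\al, h_{U,\al}^*\cG_\al)$. Since $U_\al$ is Noetherian, $\cF_\al$ is pseudo-coherent, $\cG_\al$ is bounded below, and $h_{U,\al}$ is of finite Tor-dimension, this map is an isomorphism by \cite[\igpropcHomftdbasechange]{CWig}, completing the proof.

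The essential point, and the only real obstacle, is the reduction to finite Tor-dimension: a morphism with coherent pullback may have infinite Tor-dimension, so $\cHom$-base change cannot be invoked for $h_U$ directly, and it is precisely Proposition~\ref{prop:finitetorapproxgstk} (which rests on Proposition~\ref{prop:finitetor2}) that makes the composite $u_\al \circ h_U$ of finite Tor-dimension over the finite-type approximation $U_\al$. The remaining ingredients, namely the flat base change isomorphisms for $\cHom$ and the Noetherian approximation of $\cF$ and $\cG$, are routine, provided one is careful with the truncatedness and tame-presentedness bookkeeping needed to apply Proposition~\ref{prop:finitetorapproxgstk}.
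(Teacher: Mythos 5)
Your proposal is correct and follows essentially the same route as the paper's proof: descend along a strictly tamely presented flat cover $\Spec A \to Y$ using conservativity and the finite Tor-dimension case of $\cHom$-base change for the flat maps, then use Noetherian approximation along a strictly tame presentation $A \cong \colim A_\al$ together with Proposition~\ref{prop:finitetorapproxgstk} to make $u_\al \circ h'$ of finite Tor-dimension and conclude by a two-out-of-three argument with the same base change result. You correctly identify the one genuinely nontrivial ingredient (the finite Tor-dimension approximation), and the bookkeeping is handled as in the paper.
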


\begin{proof}
	Let $\phi: U \cong \Spec A \to Y$ be a strictly tamely presented flat cover, and let
	\begin{equation}\label{eq:cHomcohpullbasechange0}
		\begin{tikzpicture}
			[baseline=(current  bounding  box.center),thick,>=\arrtip]
			\newcommand*{\ha}{3};
			\newcommand*{\va}{-1.5};
			\node (aa) at (0,0) {$W$};
			\node (ab) at (\ha,0) {$U$};
			\node (ba) at (0,\va) {$X$};
			\node (bb) at (\ha,\va) {$Y$.};
			\draw[->] (aa) to node[above] {$h'$} (ab);
			\draw[->] (ba) to node[above] {$h $} (bb);
			\draw[->] (aa) to node[left] {$\phi' $} (ba);
			\draw[->] (ab) to node[right] {$\phi $} (bb);
		\end{tikzpicture}
	\end{equation} 
	be a Cartesian diagram. Since $\phi'^*$ is conservative it suffices to show the top left arrow in 
	\begin{equation}\label{eq:cHomcohpullbasechange3}
		\begin{tikzpicture}
			[baseline=(current  bounding  box.center),thick,>=\arrtip]
			\newcommand*{\ha}{4.5}; \newcommand*{\hb}{5.0};
			\newcommand*{\va}{-1.5};
			\node (aa) at (0,0) {$\phi'^* h^* \cHom(\cF,\cG)$};
			\node (ab) at (\ha,0) {$\phi'^*  \cHom(h^*(\cF),h^*(\cG))$};
			\node (ac) at (\ha+\hb,0) {$  \cHom(\phi'^*h^*(\cF),\phi'^*h^*(\cG))$};
			\node (ba) at (0,\va) {$h'^* \phi^* \cHom(\cF,\cG)$};
			\node (bb) at (\ha,\va) {$h'^*  \cHom(\phi^*(\cF),\phi^*(\cG))$};
			\node (bc) at (\ha+\hb,\va) {$\cHom(h'^*\phi^*(\cF),h'^*\phi^*(\cG))$};
			\draw[->] (aa) to node[above] {$ $} (ab);
			\draw[->] (ab) to node[above] {$  $} (ac);
			\draw[->] (ba) to node[above] {$ $} (bb);
			\draw[->] (bb) to node[above] {$ $} (bc);
			\draw[->] (aa) to node[below,rotate=90] {$\sim $} (ba);
			\draw[->] (ac) to node[below,rotate=90] {$\sim $} (bc);
		\end{tikzpicture} 
	\end{equation}
	is an isomorphism. The bottom left and top right arrows are isomorphisms by \cite[\igpropcHomftdbasechange]{CWig}, so it suffices to show the bottom right is. 
	
	By hypothesis $A$ is truncated, so for some $n$ there exists a strictly tame presentation $A \cong \colim A_\al$ of order $n$. For each $\al$ write $u_\al: U \to U_\al \cong \Spec A_\al$ for the associated morphism. Since $\cF$ and $\cG$ are coherent, for some $\al$ there exist $\cF_\al, \cG_\al \in \Coh(U_\al)$ such that $\phi^*(\cG) \cong u_\al^*(\cG_\al)$ and $\phi^*(\cF) \cong u_\al^*(\cF_\al)$ by \cite[Cor. 4.5.1.10]{LurSAG} and flatness of the $u_\al$. The bottom right arrow of (\ref{eq:cHomcohpullbasechange3}) is then also the second factor of
	\begin{equation*}
		\begin{tikzpicture}
			[baseline=(current  bounding  box.center),thick,>=\arrtip]
			\newcommand*{\ha}{4.5}; \newcommand*{\hb}{5.3};
			\node (aa) at (0,0) {$h'^* u_\al^* \cHom(\cF_\al,\cG_\al)$};
			\node (ab) at (\ha,0) {$h'^*  \cHom(u_\al^*(\cF),u_\al^*(\cG))$};
			\node (ac) at (\ha+\hb,0) {$  \cHom(h'^*u_\al^*(\cF),h'^*u_\al^*(\cG)).$};
			\draw[->] (aa) to node[above] {$ $} (ab);
			\draw[->] (ab) to node[above] {$  $} (ac);
		\end{tikzpicture} 
	\end{equation*}
	But $u_\al\circ h'$ is of finite Tor-dimension by Proposition \ref{prop:finitetorapproxgstk}, hence the first factor and the composition are isomorphisms by \cite[\igpropcHomftdbasechange]{CWig}. 
\end{proof}

Now suppose $X$ and $Y$ are coherent, ind-tamely presented ind-geometric stacks, $h: X \to Y$ is a morphism with \stable coherent pullback, and $\cF \in \Coh(Y)$. We will generally assume $X \times X$ and $Y \times Y$ are coherent as well, which is automatic if $X$ and $Y$ are admissible. In this case $e_{\cF,Y}$ and $e_{h^*(\cF),X}$ are functors between compactly generated categories and preserve compact objects, hence $\cHom(\cF,-)$ and $\cHom(h^*(\cF),-)$ are continuous (also, $(h \times h)_*$ is well-defined). By (\ref{eq:eFXup*compat}) we have an isomorphism $e_{\cF,Y}^R (h \times h)_* \cong h_* e_{h^*(\cF),X}^R$ of functors $\IndCoh(X \times X) \to \IndCoh(Y)$, and combining with $(h \times h)_* \Delta_{X*} \cong \Delta_{Y*} h_*$ we obtain an isomorphism $\cHom(\cF, h_*(-)) \cong h_* \cHom(h^*(\cF),-)$. We then have the associated Beck-Chevalley map \begin{equation}\label{eq:hompullback3}
	h^* \cHom (\cF,\cG) \rightarrow \cHom(h^* (\cF), h^*(\cG))
\end{equation}
for any $\cG \in \IndCoh(Y)$. In the geometric case one can check that if we restrict to left bounded $\cG$, (\ref{eq:hompullback3}) is identified with (\ref{eq:hompullback2}) under the equivalences $\IndCoh(-)^+ \cong \QCoh(-)^+$.

\begin{Proposition}\label{prop:eFXRupper*cohpullgeomcase}
Let $X$ and $Y$ be coherent, tamely presented truncated, geometric stacks such that $X \times X$ and $Y \times Y$ are coherent, let $h: X \to Y$ be a morphism with \stable coherent pullback, and let $\cF \in \Coh(Y)$. Then the Beck-Chevalley map $h^* \cHom(\cF, \cG) \to \cHom(h^*(\cF), h^*(\cG))$ is an isomorphism for all $\cG \in \IndCoh(Y)$. 
\end{Proposition}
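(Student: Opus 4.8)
The strategy mirrors the proof of Proposition~\ref{prop:up!up*gencohpullcase}: reduce to the quasi-coherent statement (Proposition~\ref{prop:cHomcohpullbasechange}) by applying $\Psi_{(-)}$, then upgrade to ind-coherent by proving a boundedness estimate. First I would note that since $X \times X$ and $Y \times Y$ are coherent, the functors $e_{\cF,Y}$ and $e_{h^*(\cF),X}$ preserve compact objects between compactly generated categories, so $\cHom(\cF,-)$ and $\cHom(h^*(\cF),-)$ are continuous, and $(h\times h)_*$ is well-defined; thus by compact generation of $\IndCoh(Y)$ it suffices to treat $\cG \in \Coh(Y)$.

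Next I would assemble the relevant square of $\PrL$-functors: the $\Psi_{(-)}$ completion functors together with the quasi-coherent and ind-coherent versions of $e_{\cF,(-)}$, $e_{h^*(\cF),(-)}$, $h_*$, $(h\times h)_*$, $\Delta_{X*}$, $\Delta_{Y*}$, which fit into a diagram indexed by $(\Delta^1)^3$ (or the relevant shape) analogous to the one in the proof of Proposition~\ref{prop:up!up*gencohpullcase}. Here I need that $h$ is of finite cohomological dimension so that $h_*$ and $(h\times h)_*$ commute with $\Psi$; in the geometric case coherence plus \stable coherent pullback should give this, or I would simply add it to the running hypotheses as is done there. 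Passing to right adjoints (using Proposition~\ref{prop:coherentup*low*adj} and the adjunction identities (\ref{eq:eFXup*compat})), Proposition~\ref{prop:cHomcohpullbasechange} together with Lemma~\ref{lem:BCandPsi} then shows that $\Psi_X h^* \cHom(\cF,\cG) \to \Psi_X \cHom(h^*(\cF),h^*(\cG))$ is an isomorphism in $\QCoh(X)$. Since $\cHom(h^*(\cF),h^*(\cG))$ is left bounded (it is $e_{h^*(\cF),X}^R \Delta_{X*}(h^*(\cG))$ and $\Delta_{X*}$, $e_{h^*(\cF),X}^R$ are both left t-exact, as $\IndCoh(-)^{\geq 0}$ is compactly generated by $\Coh(-)^{\geq 0}$) and $\Psi_X$ is conservative on $\IndCoh(X)^+$, it then remains only to show $h^* \cHom(\cF,\cG)$ is left bounded.

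For that last point I would run the analogue of the coherence argument in Proposition~\ref{prop:up!up*gencohpullcase}: fix a strictly tamely presented flat cover $\phi: U \cong \Spec A \to Y$ with $A$ strictly tamely presented over $\kk$, a strictly tame presentation $A \cong \colim A_\al$ of order $n$, sheaves $\cF_\al, \cG_\al \in \Coh(U_\al)$ with $\phi^*(\cF) \cong u_\al^*(\cF_\al)$ and $\phi^*(\cG) \cong u_\al^*(\cG_\al)$, and the Cartesian square (\ref{eq:cHomcohpullbasechange0}) together with its extension down to $U_\al$. Using (\ref{eq:eFXup*compat}) and \cite[\igpropcHomftdbasechange]{CWig} I would identify $\phi'^* \cHom(\cF,\cG)$ (resp. $\phi'^* h^*\cHom(\cF,\cG)$) with a sheaf Hom pulled back along flat maps, show $\tau^{\leq m}\cHom(\cF,\cG)$ is coherent for all $m$ (since $\tau^{\leq m} \cHom(\cF_\al,\cG_\al)$ is coherent over the Noetherian $U_\al$ by \cite[Prop. 6.4.3.4]{LurSAG} or a sheaf-Hom variant of it, c.f. \cite[Prop. 6.4.1.4]{LurSAG}), and then use right completeness of the t-structure, continuity of $h^*$, and faithful flatness of $\phi'$ to reduce boundedness of $\psi'^* h^* \tau^{\leq m}\cHom(\cF,\cG)$ to the fact that $u_\al \circ h'$ (and hence its further flat base change) is of finite Tor-dimension by Proposition~\ref{prop:finitetorapproxgstk}. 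The main obstacle I anticipate is pinning down the precise form of the finite-type coherence/boundedness input for sheaf Hom over a Noetherian base --- the analogue of $\tau^{\leq n} g_\al^!(\cF_\al)$ being coherent --- and making sure the compatibility isomorphisms (\ref{eq:eFXup*compat}) and the identification of $\cHom$ with $e_{\cF,(-)}^R \Delta_{(-)*}$ interact correctly with truncation and flat pullback; the $\Psi$-descent bookkeeping via Lemma~\ref{lem:BCandPsi} is then routine by analogy with Section~\ref{sec:upper!}.
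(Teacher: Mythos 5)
Your proposal follows essentially the same route as the paper's proof: reduce to $\cG \in \Coh(Y)$ by continuity and compact generation, use Proposition~\ref{prop:cHomcohpullbasechange} together with the $\Psi$-bookkeeping of Lemma~\ref{lem:BCandPsi} to reduce to left-boundedness of $h^*\cHom(\cF,\cG)$, and then establish that boundedness by showing each $\tau^{\leq n}\cHom(\cF,\cG)$ is coherent, writing $\cHom(\cF,\cG)$ as the colimit of its truncations, and descending along the flat cover to the Noetherian approximations $U_\al$, where $u_\al\circ h'$ has finite Tor-dimension by Proposition~\ref{prop:finitetorapproxgstk}. One caution on the point you flagged: do not add the hypothesis that $h$ is of finite cohomological dimension --- it is not in the statement and does not follow from \stable coherent pullback. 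The paper does not need it: the adjunction $h^*\dashv h_*$ for coherent stacks with $h$ having \stable coherent pullback is already Proposition~\ref{prop:coherentup*low*adj}, and the comparison with the quasi-coherent statement only requires the relevant objects to lie in $\IndCoh(-)^+$ (Lemmas~\ref{lem:adjointsandPsi} and \ref{lem:BCandPsi} are formulated for objects of $\wc{\catD}{}^+$), not full commutation of $h_*$ with $\Psi$. Finally, the finite-type input you were unsure about is isolated in the paper as Lemma~\ref{lem:Homtrunccoh}, which is proved directly over coherent (not just Noetherian) rings by approximating the source by perfect complexes; your proposed deduction from the Noetherian case of $U_\al$ via flat descent would also work.
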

\begin{Lemma}\label{lem:Homtrunccoh}
	Let $X$ be a locally coherent geometric stack and $\cF, \cG \in \Coh(X)$. Then $\tau^{\leq n} \cHom(\cF, \cG)$ is coherent for all $n$.
\end{Lemma}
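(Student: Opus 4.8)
The plan is to reduce the statement to a local computation on an affine flat cover, where it becomes a purely algebraic statement about sheaf Hom (i.e. $R\Hom$) of coherent modules over a coherent ring. First I would choose a flat cover $\phi\colon U\cong\Spec A\to X$ with $A$ coherent (possible since $X$ is locally coherent). Since $\phi^*$ is conservative and t-exact, it suffices to show $\phi^*\tau^{\leq n}\cHom(\cF,\cG)$ is coherent. By the compatibility of $\cHom$ with flat base change along $\phi$ (as in \cite[\igpropcHomftdbasechange]{CWig}, flat morphisms being of finite Tor-dimension) we have $\phi^*\cHom(\cF,\cG)\cong\cHom(\phi^*\cF,\phi^*\cG)$, and since $\phi^*$ is t-exact we get $\phi^*\tau^{\leq n}\cHom(\cF,\cG)\cong\tau^{\leq n}\cHom(\phi^*\cF,\phi^*\cG)$. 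Thus we are reduced to the affine case: given $A$ coherent and $M,N\in\Coh_A$, show $\tau^{\leq n}R\Hom_A(M,N)$ is coherent.

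For the affine statement, the key point is that each cohomology module $H^i(R\Hom_A(M,N))=\Ext^i_A(M,N)$ is a finitely presented $H^0(A)$-module. Since $M$ is coherent, hence almost perfect, over the coherent ring $A$, one can resolve $M$ by a complex of finite free $A$-modules $P_\bullet\to M$ in each degree up to a fixed range (using that $M$ is bounded above and almost perfect: each $\tau^{\geq m}M$ is compact in $\Mod_A^{\geq m}$, so $M$ admits a resolution which is finite free in each degree). Then $R\Hom_A(M,N)$ is computed by $\Hom_A(P_\bullet,N)$, a cochain complex each of whose terms is a finite direct sum of copies of $N$, hence coherent; since $A$ is coherent, $\Coh_A^\heartsuit$ is an abelian subcategory of $\Mod_A^\heartsuit$ closed under kernels and cokernels, so every cohomology module of this complex lies in $\Coh_A^\heartsuit$. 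This shows $H^i(R\Hom_A(M,N))$ is coherent for all $i$; truncating, $\tau^{\leq n}R\Hom_A(M,N)$ is bounded with coherent cohomology, hence coherent.

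One technical wrinkle to handle carefully: $R\Hom_A(M,N)$ need not itself be bounded below, so one cannot directly say it is coherent; this is precisely why the statement is phrased with the truncation $\tau^{\leq n}$. The resolution argument above only controls finitely many cohomology modules at a time, but since a finite free resolution of $M$ exists which is correct up to any fixed homological degree, this suffices to compute $\Ext^i_A(M,N)$ for $i\leq n$ by a bounded complex of coherent modules, which is all that is needed. I would also note that $N$ being bounded (coherent, not merely almost perfect) is used to ensure $\Hom_A(A^{\oplus r},N)=N^{\oplus r}$ is bounded, so the relevant truncated complex is genuinely bounded.

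The main obstacle I expect is bookkeeping around boundedness and the finiteness of resolutions in the non-Noetherian coherent setting: one must be careful that ``almost perfect over a coherent ring'' gives resolutions that are finite free in each degree (this is the content of \cite[Rem. 2.7.0.5]{LurSAG} together with coherence of $A$, as recalled in the excerpt), and that the abelian category $\Coh_A^\heartsuit$ is closed under the operations used. None of this is deep, but it is the place where the coherence hypothesis on $X$ (as opposed to Noetherianness) is genuinely being exploited, so it deserves care. An alternative, perhaps cleaner route avoiding explicit resolutions: argue that $e_{\cF,X}\colon\QCoh(X)\to\QCoh(X\times X)$ has a right adjoint which is left t-exact up to a shift depending on $\cF$ (a ``bounded Tor-amplitude'' statement for $-\otimes\cF$ with $\cF$ coherent over a coherent ring), so that $\cHom(\cF,-)$ sends $\Coh(X)$ into $\IndCoh(X)^+$ with a uniform bound, and then truncation lands in $\Coh(X)$ by local finiteness of $\Ext$'s; but the resolution argument above is the most direct.
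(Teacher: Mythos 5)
Your proposal is correct and follows essentially the same route as the paper: reduce to an affine flat cover $\Spec A \to X$ with $A$ coherent via flat base change for $\cHom$, then approximate $\cF$ by a finite complex of free modules in the relevant range of degrees. The paper packages the affine step slightly differently --- it chooses a triangle $P \to M \to Q$ with $P$ perfect and $Q$ highly coconnective, so that $\cHom(M,N)$ agrees with $P^\vee \otimes N$ in degrees $\leq n$ --- but this is the same perfect-approximation idea as your truncated free resolution, and your appeal to $\Coh_A^\heartsuit$ being closed under kernels and cokernels is just a more hands-on version of ``$P^\vee \otimes N$ is coherent because $P^\vee$ is perfect and $N$ is coherent.''
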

\begin{proof}
	First suppose $A \in \CAlgk$ is coherent and $M, N \in \Coh_A$. For any $m$ there exists an exact triangle $P \to M \to Q$ such that $P$ is perfect and $Q \in \Mod_A^{\leq m}$ \cite[Cor. 2.7.2.2]{LurSAG}, yielding an exact triangle
	$$ \cHom(Q,N) \to \cHom(M,N) \to \cHom(P,N).$$
	If $N \in \Mod^{\geq i}$ then $\cHom(Q,N) \in \Mod^{\geq i - m}$, hence it follows from the associated long exact sequence that $H^j\cHom(M,N) \cong H^j\cHom(P,N)$ for $j < i - m -1$. Since $P$ is perfect $\cHom(P, N) \cong P^\vee \ot N$, which is coherent since $N$ is and since $P^\vee$ is perfect (\cite[Prop. 7.2.4.11, Prop. 7.2.4.23]{LurHA}). Since $A$ is coherent $H^j\cHom(M,N)$ is then finitely presented over $H^0(A)$ for $j < i - m -1$. But since $m$ was arbitrary and $\cHom(M,N)$ is bounded below, coherence of $A$ then also implies $\tau^{\leq n} \cHom(M,N)$ is coherent for all $n$. 	
	
	Now let $f: \Spec A \to X$ be a flat cover such that $A$ is coherent. By \cite[\igpropeFXRpropsone]{CWig} we can conflate $\cF$ and $\cG$ with their images in $\QCoh(X)$. It suffices to show $f^*\tau^{\leq n} \cHom(\cF, \cG)$ is coherent. We have $f^*\tau^{\leq n} \cHom(\cF, \cG) \cong \tau^{\leq n} \cHom(f^*(\cF), f^*(\cG))$ by flatness and \cite[\igpropcHomftdbasechange]{CWig}, hence the claim follows from the previous paragraph.  
\end{proof}

\begin{proof} [Proof of Proposition \ref{prop:eFXRupper*cohpullgeomcase}]

As observed above, the hypotheses on $X$ and $Y$ imply that all functors appearing in the statement are continuous. By compact generation of $\IndCoh(Y)$ it then suffices to assume $\cG \in \Coh(Y')$. 

Proposition \ref{prop:cHomcohpullbasechange} implies that the map $h^* \cHom(\cF, \Psi_Y(\cG)) \to \cHom(h^*(\cF), h^*(\Psi_Y(\cG)))$ in $\QCoh(X)$ is an isomorphism, and a more elaborate version of Lemma \ref{lem:BCandPsi} (also using \cite[\igpropeFXRlowerstar]{CWig}) shows that $\Psi_X h^* \cHom(\cF, \cG) \to \Psi_X \cHom(h^*(\cF), h^*(\cG))$ is an isomorphism. Since $\cHom(h^*(\cF), h^*(\cG)$ is left bounded and $\Psi_{X}$ is conservative on $\IndCoh(X)^+$, it suffices to show $h^* \cHom(\cF, \cG)$ is left bounded. Alternatively, the proof of Proposition \ref{prop:cHomcohpullbasechange} extends to show $\phi'^* h^* \cHom(\cF, \cG) \to \phi'^* \cHom(h^*(\cF), h^*(\cG))$ is an isomorphism in $\IndCoh(W)$, and since $\phi'^*$ is conservative on $\IndCoh(X)^+$ we are again reduced to showing $h^* \cHom(\cF, \cG)$ is left bounded. 

For any $n$, Lemma \ref{lem:Homtrunccoh} implies $\tau^{\leq n} \cHom(\cF, \cG)$ is coherent, hence $h^* \tau^{\leq n} \cHom(\cF, \cG)$ is left bounded. Since the standard t-structure is right complete we have $\cHom(\cF, \cG) \cong \colim \tau^{\leq n} \cHom(\cF, \cG)$ by Lemma~\ref{lem:colimpres}. Since it is compatible with filtered colimits, and since $h^*$ is continuous, $h^* \cHom(\cF, \cG)$ is then left bounded if the sheaves $h^* \tau^{\leq n} \cHom(\cF, \cG)$ are uniformly left bounded. 

As in the proof of Proposition \ref{prop:cHomcohpullbasechange}, we fix a strictly tamely presented flat cover $\phi: U \cong \Spec A \to Y$, a diagram (\ref{eq:cHomcohpullbasechange0}), and sheaves $\cF_\al, \cG_\al \in \Coh(U_\al)$ such that $\phi^*(\cF) \cong u^*_\al(\cF_\al)$, $\phi^*(\cG) \cong u^*_\al(\cG_\al)$. Since $\phi'$ is faithfully flat, hence $\phi'^*$ conservative on $\IndCoh(X)^+$, it suffices to show the sheaves $\phi'^* h^* \tau^{\leq n} \cHom(\cF, \cG)$ are uniformly left bounded. But we have
\begin{align*}
	\phi'^* h^* \tau^{\leq n} \cHom(\cF,\cG) &\cong
	h'^* \phi^* \tau^{\leq n} \cHom(\cF,\cG)\\ &\cong 
	h'^* \tau^{\leq n} \cHom(\phi^*(\cF),\phi^*(\cG))\\ &\cong 
	h'^* \tau^{\leq n} \cHom(u^*_\al(\cF_\al),u^*_\al(\cG_\al))\\ &\cong 
	h'^* u^*_\al \tau^{\leq n} \cHom(\cF_\al,\cG_\al),
\end{align*}
where the second and fourth isomorphisms use \cite[\igcorcHomftdbasechangeind]{CWig} and flatness of $\phi'$ and $u'_\al$. The claim now follows since $u_\al \circ h'$ is of finite Tor-dimension by Proposition~\ref{prop:finitetorapproxgstk}.  
\end{proof}

We next consider the extension to the ind-geometric setting. 

\begin{Proposition}\label{prop:eFXRupper*cohpullindgeomcase}
	Let $X$ and $Y$ be coherent, ind-tamely presented ind-geometric stacks such that $X \times X$ and $Y \times Y$ are coherent, let $h: X \to Y$ be a morphism with \stable coherent pullback, and let $\cF \in \Coh(Y)$. Then the Beck-Chevalley map $h^* \cHom(\cF, \cG) \to \cHom(h^*(\cF), h^*(\cG))$ is an isomorphism for all $\cG \in \IndCoh(Y)$. 
\end{Proposition}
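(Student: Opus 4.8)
The proof will mirror that of Proposition~\ref{prop:up!up*genICcaseIG}, reducing the ind-geometric statement to the geometric case, Proposition~\ref{prop:eFXRupper*cohpullgeomcase}, by means of reasonable presentations. First, since $X \times X$ and $Y \times Y$ are coherent, the external product functors $e_{\cF, Y}$ and $e_{h^*(\cF), X}$ preserve compact objects between compactly generated categories, so $\cHom(\cF, -)$ and $\cHom(h^*(\cF), -)$ are continuous; as $h^*$ is continuous by construction, both functors in the statement are continuous. Since $\IndCoh(Y)$ is compactly generated by $\Coh(Y)$, it suffices to treat $\cG \in \Coh(Y)$.

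Next, I would choose a reasonable presentation $Y \cong \colim_\al Y_\al$ with coherent terms and set $X_\al := X \times_Y Y_\al$, with $h_\al : X_\al \to Y_\al$ the base change of $h$. By Proposition~\ref{prop:cancheckpresforcohpull} each $h_\al$ has \stable coherent pullback, $X \cong \colim_\al X_\al$ is a reasonable presentation, and by \cite[\igpropcohbasechangeunderafpclimm]{CWig} all of $Y_\al$, $X_\al$, $Y_\al \times Y_\al$, $X_\al \times X_\al$ are coherent, tamely presented, and truncated. Writing $j_\al : Y_\al \to Y$ and $i_\al : X_\al \to X$ for the structure maps, and using $\Coh(Y) \cong \colim_\al \Coh(Y_\al)$, after reindexing we may assume $\cF \cong j_{\al*}(\cF_\al)$ and $\cG \cong j_{\al*}(\cG_\al)$ for compatible families $\cF_\al, \cG_\al \in \Coh(Y_\al)$; base change along the closed immersions $j_\al$ (Proposition~\ref{prop:coherentup*low*adj}) then also gives $h^*(\cF) \cong i_{\al*}(h_\al^*(\cF_\al))$.

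Now $\IndCoh(X) \cong \lim_\al \IndCoh(X_\al)$ in $\Cathatinfty$ via the functors $i_\al^!$ \cite[\igpropindcohonindgstks]{CWig}, so it suffices to show $i_\al^!$ of the Beck--Chevalley map is an isomorphism for each $\al$. The functor $e_{\cF, Y}$ commutes with pushforward along the $j_\al \times j_\al$ by the first isomorphism of (\ref{eq:eFXRupper!com}), so $e_{\cF, Y}^R$ is compatible with the limit presentation of $\IndCoh(Y \times Y)$; combining with $(j_\al \times j_\al)^! \Delta_{Y*} \cong \Delta_{Y_\al *} j_\al^!$ (base change of $*$-pushforward along the closed immersion $\Delta_Y$, as in \cite[\igpropupshrieklowstargeom]{CWig}) gives $j_\al^! \cHom(\cF, \cG) \cong \cHom(\cF_\al, j_\al^!(\cG))$, and likewise $i_\al^! \cHom(h^*(\cF), h^*(\cG)) \cong \cHom(h_\al^*(\cF_\al), i_\al^! h^*(\cG))$. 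In other words $\cHom(\cF, -)$ and $\cHom(h^*(\cF), -)$ are the limits, along the limit presentations, of the geometric functors $\cHom(\cF_\al, -)$ and $\cHom(h_\al^*(\cF_\al), -)$.

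Under these identifications $i_\al^!$ of the Beck--Chevalley map becomes the geometric Beck--Chevalley map $h_\al^* \cHom(\cF_\al, j_\al^!(\cG)) \to \cHom(h_\al^*(\cF_\al), h_\al^* j_\al^!(\cG))$ for $h_\al$, which is an isomorphism by Proposition~\ref{prop:eFXRupper*cohpullgeomcase} — \emph{provided} $h^*$ is likewise compatible with the limit presentations, i.e. the base change map $h_\al^* j_\al^!(\cH) \to i_\al^! h^*(\cH)$ is an isomorphism for $\cH \in \{\cG, \cHom(\cF, \cG)\}$. This compatibility is the main obstacle: the analogous statement for $!$-pullback (Propositions~\ref{prop:up!up*gencohpullcase} and~\ref{prop:up!up*genICcaseIG}) is available only under a finite (ind-)cohomological-dimension hypothesis on $h$, which is not assumed here, and indeed $i_\al^! h^*$ and $h_\al^* j_\al^!$ genuinely differ on general objects since $h^*$ need not preserve left-boundedness. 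To dispose of it I would argue exactly as in the affine and geometric cases (proofs of Propositions~\ref{prop:up!up*gencohpullcaseQCoh}, \ref{prop:up!up*gencohpullcase}, and~\ref{prop:eFXRupper*cohpullgeomcase}): pass through $\Psi$ and Lemma~\ref{lem:BCandPsi} to reduce, via the quasi-coherent base change of Proposition~\ref{prop:up!up*gencohpullcaseQCoh}, to showing that $h^*(\cHom(\cF, \cG))$ and its $i_\al^!$-restrictions are left bounded; then use that $\tau^{\leq n} \cHom(\cF, \cG)$ is coherent (Lemma~\ref{lem:Homtrunccoh}), that $\cHom(\cF, \cG) \cong \colim_n \tau^{\leq n} \cHom(\cF, \cG)$ by right completeness, that $h^*$ and the $i_\al^!$ are continuous, and that on a strictly tamely presented flat cover $\Spec A \to Y$ the pullbacks in question factor through a morphism of finite Tor-dimension (Proposition~\ref{prop:finitetorapproxgstk}), so these sheaves are uniformly bounded below. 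Granting this, the remaining bookkeeping in $\PrL$ and $\Cathatinfty$ is routine.
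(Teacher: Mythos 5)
Your strategy is workable but genuinely different from the paper's, and the difference creates an obligation you have not correctly discharged. The paper never applies $i_\al^!$ to the source $h^*\cHom(\cF,\cG)$ of the Beck--Chevalley map. Instead it uses the pushforward form of your compatibility, namely $\cHom(\cF,\cG)\cong i_{\al*}\cHom(\cF_\al,i_\al^!(\cG))$ from \cite[\igpropindhomrelationcoherent]{CWig} (with $\cG\in\IndCoh(Y)$ kept arbitrary, so no reduction to $\Coh(Y)$ and no second localization of $\cG$ is needed), and then commutes $h^*$ past the pushforward $i_{\al*}$ via Proposition~\ref{prop:coherentup*low*adj}. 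After that only two inputs remain: the geometric case (Proposition~\ref{prop:eFXRupper*cohpullgeomcase}) applied to $h_\al$ and $i_\al^!(\cG)$, and the single interchange $h_\al^* i_\al^!(\cG)\cong i'^!_\al h^*(\cG)$, cited from Proposition~\ref{prop:up!up*genICcaseIG}. Your route, which tests the map against the limit presentation $\IndCoh(X)\cong\lim\IndCoh(X_\al)$, instead needs the interchange $h_\al^* j_\al^!(\cH)\to i_\al^! h^*(\cH)$ both for $\cH=\cG$ and for $\cH=\cHom(\cF,\cG)$.

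The gap is in how you propose to prove that interchange. The object $\cHom(\cF,\cG)$ is not coherent (left bounded but generally unbounded above), so Proposition~\ref{prop:up!up*gencohpullcaseQCoh} does not apply to it directly; this part is repairable, since $\cHom(\cF,\cG)\cong\colim_n\tau^{\le n}\cHom(\cF,\cG)$ with coherent terms (Lemmas~\ref{lem:Homtrunccoh} and~\ref{lem:colimpres}) and both $h_\al^* j_\al^!$ and $i_\al^! h^*$ are continuous, which reduces you to coherent $\cH$. The real problem is that your proposed argument --- ``pass through $\Psi$ and Lemma~\ref{lem:BCandPsi}, using Proposition~\ref{prop:up!up*gencohpullcaseQCoh}'' --- is formulated for geometric stacks, while the square you need it for has the ind-geometric stacks $X$ and $Y$ as targets; $\Psi$, $\QCoh$, and Propositions~\ref{prop:up!up*gencohpullcaseQCoh} and~\ref{prop:up!up*gencohpullcase} are not set up in that generality. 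What you actually need is precisely the substack-inclusion case of Proposition~\ref{prop:up!up*genICcaseIG}, which is proved by assembling the geometric interchanges for the transition maps along the colimit presentation $\IndCoh(Y)\cong\colim\IndCoh(Y_\be)$, not by a global $\Psi$-comparison. If you simply cite Proposition~\ref{prop:up!up*genICcaseIG} at this point (as the paper does at its one corresponding step --- and your observation about its finite cohomological dimension hypothesis applies equally to the paper's own citation, so it does not force you onto a different path), your argument closes up; as written, the key step is not established.
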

\begin{proof}
	Write $\cF \cong i_{\al*}(\cF_\al)$ for some reasonable geometric substack $i_{\al}: Y_\al \to Y$ and $\cF_\al \in \Coh(Y_\al)$. Consider the following diagram, where $p_1$ denotes projection onto the first factor. 
	\begin{equation}\label{eq:eFXRupper*cohpullgeomcase1}
		\begin{tikzpicture}
			[baseline=(current  bounding  box.center),thick,>=\arrtip]
			\newcommand*{\ha}{3.7}; \newcommand*{\hb}{3.7};
			\newcommand*{\va}{-1.5};
			\newcommand*{\vb}{-1.5};
			\node (aa) at (0,0) {$Y$};
			\node (ab) at (\ha,0) {$Y_\al$};
			\node (ba) at (0,\va) {$Y \times Y$};
			\node (bb) at (\ha,\va) {$Y \times Y_\al$};
			\node (bc) at (\ha+\hb,\va) {$Y_\al \times Y_\al$};
			\node (cb) at (\ha,\va+\vb) {$Y$};
			\node (cc) at (\ha+\hb,\va+\vb) {$Y_\al$};
			\draw[<-] (aa) to node[above] {$i_\al $} (ab);
			\draw[->] (ab) to node[above right] {$\Delta_{Y_\al} $} (bc);
			\draw[<-] (ba) to node[above] {$id_Y \times i_{\al} $} (bb);
			\draw[<-] (bb) to node[below] {$i_{\al} \times \id_{Y_\al} $} (bc);
			\draw[->] (aa) to node[left] {$\Delta_Y $} (ba);
			\draw[->] (ab) to node[right] {$\delta_{Y_\al}$} (bb);
			
			\draw[<-] (cb) to node[above] {$i_{\al} $} (cc);
			\draw[->] (ba) to node[below left] {$p_1 $} (cb);
			\draw[->] (bb) to node[right] {$p_1 $} (cb);
			\draw[->] (bc) to node[right] {$p_1 $} (cc);
		\end{tikzpicture}
	\end{equation} 
Recall from \cite[\igpropindhomrelationcoherent]{CWig} that the Beck-Chevalley maps
$$ i_{\al*} \cHom(\cF_{\al}, i^!_{\al}(\cG)) \cong i_{\al*} e_{\cF_\al,Y_\al}^R \Delta_{Y_\al *} i^!_{\al}(\cG)  \to e_{\cF_{\al},Y}^R \delta_{Y_\al *} i^!_{\al}(\cG) \to e_{\cF,Y}^R \Delta_{Y*} \cong \cHom(\cF,\cG)$$
are isomorphisms. Note we implicitly use the isomorphism $e_{\cF,Y}^R \cong   e_{\cF_\al,Y}^R (\id_Y \times i_{\al})^!$ of (\ref{eq:eFXRupper!com}), and the fact that $Y \times Y_\al$ and $Y_\al \times Y_\al$ are coherent by \cite[\iglemaffcgen]{CWig}. We have similar isomorphisms associated to the analogue of (\ref{eq:eFXRupper*cohpullgeomcase1}) involving $X$ and $X_\al$. These are compatible in the sense that they are part of the coherence data for a larger diagram involving $h_*$ and~$h_{\al*}$ (i.e. if (\ref{eq:eFXRupper*cohpullgeomcase1}) is of shape~$K$, we have a diagram in $\PrL$ of shape $K \times \Delta^1$). 
Taking adjoints of $h_*$ and~$h_{\al*}$, we obtain a diagram
		\begin{equation*}
		\begin{tikzpicture}
			[baseline=(current  bounding  box.center),thick,>=\arrtip]
			\newcommand*{\ha}{5.0}; \newcommand*{\hb}{5.4}; \newcommand*{\hc}{4.1}; \newcommand*{\hd}{4.1};
			\newcommand*{\va}{-1.5};
			\node (aa) at (0,0) {$h^* i_{\al*} \cHom(\cF_{\al}, i^!_{\al}(\cG))$};
			\node (ab) at (\ha,0) {$h^* \cHom(\cF,\cG)$};
			\node (ac) at (\ha+\hb,0) {$ \cHom(h^*(\cF),h^*(\cG))$};
			\node (ba) at (0,\va) {$i'_{\al*} h^*_{\al} \cHom(\cF_\al, i^!_{\al}(\cG)) $};
			\node (bb) at (\ha,\va) {$i'_{\al*} \cHom(h^*_{\al}(\cF_{\al}), h^*_{\al} i^!_{\al} (\cG)) $};
			\node (bc) at (\ha+\hb,\va) {$i'_{\al*} \cHom(h^*_{\al}(\cF_{\al}),  i'^!_{\al} h^* (\cG)),$};
			\draw[->] (aa) to node[above] {$\sim $} (ab);
			\draw[->] (ab) to node[above] {$ $} (ac);
			\draw[->] (ba) to node[above] {$ $} (bb);
			\draw[->] (bb) to node[above] {$ $} (bc);
			\draw[->] (aa) to node[below,rotate=90] {$ $} (ba);
			\draw[<-] (ac) to node[below,rotate=90] {$\sim $} (bc);
		\end{tikzpicture} 
	\end{equation*}
containg the map in the statement in the top right. The claim follows since the left arrow is an isomorphism by Proposition \ref{prop:coherentup*low*adj}, the bottom left is by Proposition \ref{prop:eFXRupper*cohpullgeomcase}, and the bottom right is by Proposition \ref{prop:up!up*genICcaseIG}. 
\end{proof}

In \cite{CW2} we will use the following extension of Proposition \ref{prop:eFXRupper*cohpullindgeomcase}. To motivate the condition on the morphism $Y' \to Z$, observe that \emph{any} reasonable presentation of $Y'$ satisfies the requirement if $Z$ is (fiber) smooth, or if $Z$ is weakly smooth and $Y' \to Z$ is ind-tamely presented (Proposition \ref{prop:wprosmoothcohdiaga}). More generally, $Y' \to Z$ thus satisfies the desired condition if it is an ind-tamely presented base change of an ind-tamely presented morphism with weakly smooth target. 

\begin{Proposition}\label{prop:eFXRupper*cohpullindcase}
	Let the following be a diagram of ind-geometric stacks with both squares Cartesian, and where $Z$ is a reasonable ind-geometric stack.
		\begin{equation*}
		\begin{tikzpicture}
			[baseline=(current  bounding  box.center),thick,>=\arrtip]
			\newcommand*{\ha}{3}; \newcommand*{\hb}{3};
			\newcommand*{\va}{-1.5};
			\node (aa) at (0,0) {$X'$};
			\node (ab) at (\ha,0) {$X \times Z$};
			\node (ac) at (\ha+\hb,0) {$X$};
			\node (ba) at (0,\va) {$Y'$};
			\node (bb) at (\ha,\va) {$Y \times Z$};
			\node (bc) at (\ha+\hb,\va) {$Y$};
			\draw[->] (aa) to node[above] {$\delta_X $} (ab);
			\draw[->] (ab) to node[above] {$ $} (ac);
			\draw[->] (ba) to node[above] {$\delta_Y $} (bb);
			\draw[->] (bb) to node[above] {$ $} (bc);
			\draw[->] (aa) to node[left] {$h' $} (ba);
			\draw[->] (ab) to node[right] {$ $} (bb);
			\draw[->] (ac) to node[right] {$h $} (bc);
		\end{tikzpicture}
	\end{equation*} 
Suppose that all stacks in the diagram are coherent and ind-tamely presented, and that $X' \times X'$ and $Y' \times Y'$ are as well. 
Suppose also that $h$ has \stable coherent pullback and that the composition of the bottom arrows is ind-proper and almost ind-finitely presented. 
Finally, suppose that $Y'$ can be written as a filtered colimit $Y' \cong \colim Y'_\al$ of reasonable ind-geometric stacks along almost ind-finitely presented ind-closed immersions such that the induced map $Y'_\al \to Z$ has \stable coherent pullback for all $\al$. 
Then for any $\cF \in \Coh(Z)$ and $\cG \in \IndCoh(Y')$, the composition 
	\begin{equation*}
		h^* e_{\cF,Y}^R \delta_{Y*} (\cG) \to  e_{\cF,X}^R (h \times \id_Z)^* \delta_{Y*} (\cG) \to e_{\cF,X}^R  \delta_{X*} h'^* (\cG)
	\end{equation*}
	of Beck-Chevalley maps is an isomorphism. 
\end{Proposition}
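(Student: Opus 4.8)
The plan is to reduce the statement to the geometric (truncated) case of Proposition \ref{prop:eFXRupper*cohpullgeomcase} combined with the $!$-pullback base change of Proposition \ref{prop:up!up*genICcaseIG}, mimicking the argument of Proposition \ref{prop:eFXRupper*cohpullindgeomcase}. First I would note that the two Beck-Chevalley maps in the composition are the ones already studied: the first is an instance of (\ref{eq:eFXup*compat}) applied to $h \times \id_Z$, and the second comes from the base change $h'^* \delta_X^* \cong \delta_Y^* h^*$ expressing the upper-left square as Cartesian, passed to the right adjoints $\delta_{X*}$, $\delta_{Y*}$ via Proposition \ref{prop:coherentup*low*adj}. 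The coherence hypotheses on $X \times Z$, $Y \times Z$ (which hold since $X$, $Y$, $Z$ are admissible-type coherent and products of such are coherent by \cite[\iglemaffcgen]{CWig}) and on $X' \times X'$, $Y' \times Y'$ guarantee that all the functors $e_{\cF,(-)}^R$, $\delta_{(-)*}$, $h^*$ in sight are continuous and that their right adjoints behave well, so it suffices by compact generation of $\IndCoh(Y')$ to check the composite is an isomorphism on $\cG \in \Coh(Y')$; in fact, writing $Y' \cong \colim Y'_\al$ as in the hypothesis and using that $\IndCoh(Y') \cong \colim \IndCoh(Y'_\al)$ in $\PrL$, it suffices to treat $\cG \cong j_{\al*}(\cG_\al)$ for $j_\al : Y'_\al \to Y'$ a term of this presentation and $\cG_\al \in \Coh(Y'_\al)$.

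The heart of the argument is then a diagram-chase replacing $Y'$ by $Y'_\al$. Using the analogue of (\ref{eq:eFXRupper!com}) for the ind-proper, almost ind-finitely presented composition $Y'_\al \to Y \times Z \to$ (well, rather the relevant closed immersion $\delta$ and projection after intersecting with $Y'_\al$), one rewrites $e_{\cF,Y}^R \delta_{Y*} j_{\al*}(\cG_\al)$ in terms of an $e_{\cF,(-)}^R \delta_{(-)*}$ living over a term $Y_\al$ of a reasonable presentation of $Y$ through which the composite $Y'_\al \to Z$ (equivalently, the image of $Y'_\al$ in $Y$) factors; crucially, by the hypothesis that $Y'_\al \to Z$ has \stable coherent pullback, $Y'_\al$ is a reasonable geometric substack of $Y'$ (\cite[\igpropcohbasechangeunderafpclimm]{CWig}) — i.e. truncated geometric — so we land in the geometric case. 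Passing the isomorphism data for $h_*$ and $h_{\al*}$ to right adjoints as in Proposition \ref{prop:eFXRupper*cohpullindgeomcase} produces a square in $\IndCoh(X'_\al)$ whose top edge is (the $\al$-localization of) the composite in the statement, whose top-left and bottom-right edges are isomorphisms by the structural isomorphisms (\ref{eq:eFXup*compat}), (\ref{eq:eFXRupper!com}) and Proposition \ref{prop:coherentup*low*adj}, whose bottom-left edge is an isomorphism by Proposition \ref{prop:eFXRupper*cohpullgeomcase} (applied to the base change $h_\al : X'_\al \to Y'_\al$, using that $X'_\al$ is likewise coherent and tamely presented by \cite[\igpropcohbasechangeunderafpclimm]{CWig}, as are $X'_\al \times X'_\al$, $Y'_\al \times Y'_\al$), and whose remaining edge involves an ind-proper $!$-pullback base change handled by Proposition \ref{prop:up!up*genICcaseIG}. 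Chasing the square gives that the top edge is an isomorphism; assembling over $\al$ (via $\IndCoh(Y') \cong \lim \IndCoh(Y'_\al)$ in $\Cathatinfty$, \cite[\igpropindcohonindgstks]{CWig}, applied through the $j'^!_\al$ on the target side) yields the claim.

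The main obstacle I anticipate is bookkeeping the compatibility of all the Beck-Chevalley maps so that they genuinely fit into a single $\PrL$-valued diagram of the required shape $K \times \Delta^1$ (with $K$ recording the cube of Cartesian squares built from $\delta_X$, $\delta_Y$, the projections, and the closed immersions $i_\al$, $j_\al$), exactly as in the proof of Proposition \ref{prop:eFXRupper*cohpullindgeomcase}; only then can one legitimately pass to right adjoints edgewise and read off the desired isomorphism. A secondary subtlety is the continuity and boundedness needed to apply Proposition \ref{prop:up!up*genICcaseIG} — one must know the relevant $!$-pullbacks send the coherent generators to left-bounded (indeed coherent-truncation) objects — but this is exactly what the coherence hypotheses on the fourfold products buy us, together with the finite-Tor-dimension approximation of Proposition \ref{prop:finitetorapproxgstk} that underlies Proposition \ref{prop:eFXRupper*cohpullgeomcase}. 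The reduction to $\cG \cong j_{\al*}(\cG_\al)$ and the choice of $Y_\al$ through which $Y'_\al$ maps should be routine given the existence of reasonable presentations and the hypothesis on the colimit $Y' \cong \colim Y'_\al$.
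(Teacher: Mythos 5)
Your setup is fine and matches the paper's general-case reduction: identifying the two Beck--Chevalley maps, using continuity and compact generation to reduce to $\cG \cong j_{\al*}(\cG_\al)$ with $\cG_\al \in \Coh(Y'_\al)$, and passing from $Y'_\al$ back to $Y'$ via the base change isomorphism of Proposition \ref{prop:coherentup*low*adj}. But the core of your argument rests on a false premise: you assert that because $Y'_\al \to Z$ has \stable coherent pullback, $Y'_\al$ is a reasonable geometric substack of $Y'$, hence truncated geometric, ``so we land in the geometric case.'' The hypothesis only says the $Y'_\al$ are reasonable \emph{ind}-geometric stacks, and \stable coherent pullback of $Y'_\al \to Z$ (Definition \ref{def:indgeomcohpull}) places no such constraint on the source --- for instance, if $Y' \to Z$ itself has \stable coherent pullback then the constant diagram $Y'_\al = Y'$ satisfies the hypothesis while $Y'$ is generally not truncated geometric. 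So the planned reduction to the geometric statement of Proposition \ref{prop:eFXRupper*cohpullgeomcase} does not go through; one must remain in the ind-geometric setting and use Proposition \ref{prop:eFXRupper*cohpullindgeomcase}.

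More seriously, the proposal never explains how the composite $h^* e_{\cF,Y}^R \delta_{Y*} \to e_{\cF,X}^R \delta_{X*} h'^*$ is brought into a form to which the earlier propositions apply. The paper's key move is to factor $\delta_Y$ as $(f \times \id_Z) \circ (\id_{Y'} \times g) \circ \Delta_{Y'}$, where $f: Y' \to Y$ and $g: Y' \to Z$ are the components of $\delta_Y$, and to use \cite[\igpropeFXRlowerstar]{CWig} together with (\ref{eq:eFXup*compat}) to identify $e_{\cF,Y}^R \delta_{Y*} \cong f_* \cHom(g^*(\cF), -)$. This is where the hypothesis on $Y'_\al \to Z$ is actually consumed: it guarantees $g_\al^*(\cF) \in \Coh(Y'_\al)$, so that sheaf Hom from this pulled-back kernel is defined and Proposition \ref{prop:eFXRupper*cohpullindgeomcase} applies to the base change $X'_\al \to Y'_\al$ of $h$; the claim then follows by combining this with $h^* f_* \cong f'_* h'^*$ from Proposition \ref{prop:coherentup*low*adj}. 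Your outline instead invokes (\ref{eq:eFXRupper!com}) (which concerns pushing a kernel forward along an ind-proper map, not pulling it back) and a reasonable presentation of $Y$, and uses the coherent-pullback hypothesis on $Y'_\al \to Z$ only for the incorrect geometricity claim. Without the factorization of $\delta_Y$ and the resulting identification with a sheaf-Hom functor, the diagram chase you describe has no content to chase.
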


\begin{proof}
We can factor $\delta_Y$ as the top row of the following diagram 
	\begin{equation}\label{eq:eFXRupper*cohpullindcase1}
		\begin{tikzpicture}
			[baseline=(current  bounding  box.center),thick,>=\arrtip]
			\newcommand*{\ha}{2.8}; \newcommand*{\hb}{3.7}; \newcommand*{\hc}{3.7};
			\newcommand*{\va}{-1.5};
			\newcommand*{\vb}{-1.5};
			\node (aa) at (0,0) {$Y'$};
			\node (ab) at (\ha,0) {$Y' \times Y'$};
			\node (ac) at (\ha+\hb,0) {$Y' \times Z$};
			\node (ad) at (\ha+\hb+\hc,0) {$Y \times Z$};
			\node (bc) at (\ha+\hb,\va) {$Y'$};
			\node (bd) at (\ha+\hb+\hc,\va) {$Y$.};
			\draw[->] (aa) to node[above] {$\Delta_{Y'} $} (ab);
			\draw[->] (ab) to node[above] {$\id_{Y'} \times g $} (ac);
			\draw[->] (ac) to node[above] {$f \times \id_Z $} (ad);
			\draw[->] (ab) to node[below left] {$ $} (bc);
			\draw[->] (ac) to node[right] {$ $} (bc);
			\draw[->] (ad) to node[right] {$ $} (bd);
			\draw[->] (bc) to node[above] {$f$} (bd);
		\end{tikzpicture}
	\end{equation} 
Here $f: Y' \to Y$ and $g: Y' \to Z$ are the compositions of $\delta_Y$ and the projections, and we define $f': X' \to X$ and $g': X' \to Z$ similarly. By hypothesis $f$ is ind-proper and almost ind-finitely presented, hence $h'$ has \stable coherent pullback. 

Suppose first that $g$ itself has \stable coherent pullback. By hypothesis $f$ is ind-proper and almost ind-finitely presented, hence $h'$ has \stable coherent pullback, hence so does $g'$ since $g' \cong g \circ h'$. We consider the composition
\begin{equation*}
	f_* \cHom(g^*(\cF), -) \cong f_* e_{\cF,Y'}^R (\id_{Y'} \times g)_* \Delta_{Y' *} \to e_{\cF,Y}^R \delta_{Y*},
\end{equation*}
where the first factor is associated to the triangle in (\ref{eq:eFXRupper*cohpullindcase1}) by (\ref{eq:eFXup*compat}). The composition is an isomorphism since the second factor is by \cite[\igpropeFXRlowerstar]{CWig}. We have a similar composition associated to the analogue of (\ref{eq:eFXRupper*cohpullindcase1}) whose top row factors $\delta_X$. These are compatible in the sense that they are part of the coherence data for a larger diagram involving $h_*$ and $h'_*$. Taking adjoints of $h_*$ and $h'_*$, we obtain a diagram
\begin{equation*}
	\begin{tikzpicture}
		[baseline=(current  bounding  box.center),thick,>=\arrtip]
		\newcommand*{\ha}{4.5}; \newcommand*{\hb}{5.0};
		\newcommand*{\va}{-1.5};
		\node (aa) at (0,0) {$h^* e_{\cF,Y}^R \delta_{Y*}$};
		\node (ab) at (\ha,0) {$e_{\cF,X}^R (h \times \id_Z)^* \delta_{Y*}$};
		\node (ac) at (\ha+\hb,0) {$ e_{\cF,X}^R \delta_{X*} h^*$};
		\node (ba) at (0,\va) {$ h^* f_* \cHom(g^*(\cF),-)$};
		\node (bb) at (\ha,\va) {$f'_* h'^* \cHom(g^*(\cF),-)$};
		\node (bc) at (\ha+\hb,\va) {$f'_*  \cHom(g'^*(\cF),h'^*(-))$};
		\draw[->] (aa) to node[above] {$ $} (ab);
		\draw[->] (ab) to node[above] {$  $} (ac);
		\draw[->] (ba) to node[above] {$ $} (bb);
		\draw[->] (bb) to node[above] {$ $} (bc);
		\draw[<-] (aa) to node[below,rotate=90] {$\sim $} (ba);
		\draw[<-] (ac) to node[below,rotate=90] {$\sim $} (bc);
	\end{tikzpicture} 
\end{equation*}
whose top row gives the composition in the statement.  
The bottom left arrow is an isomorphism by Proposition \ref{prop:coherentup*low*adj} and the bottom right is by Proposition \ref{prop:eFXRupper*cohpullgeomcase}, hence the top composition is an isomorphism as well. 

For the general case, first note that our hypotheses imply that all functors appearing in the statement are continuous. By compact generation of $\IndCoh(Y')$ it then suffices to assume $\cG \in \Coh(Y')$. Since $\Coh(Y') \cong \colim \Coh(Y'_\al)$ (Proposition \ref{prop:cohonindgstks}), we may write $\cG \cong i_{\al*}(\cG_\al)$ for some $\al$ and some $\cG_\al \in \Coh(Y'_\al)$. Write $i'_\al: X'_\al \to X'$ and $h_\al: X'_\al \to Y'_\al$ for the base changes of $i_\al$ and $h$, noting that $X'_\al$ and $Y'_\al$ are coherent by \cite[\iglemaffcgen]{CWig}. By the previous paragraph the composition
	\begin{equation*}
	h^* e_{\cF,Y}^R \delta_{Y*}i_{\al*} (\cG_\al)  \to e_{\cF,X}^R  \delta_{X*} h'^* i_{\al*} (\cG_\al) \to e_{\cF,X}^R  \delta_{X*} i'_{\al*} h^*_{\al}(\cG_\al)
\end{equation*}
is an isomorphism, and the claim follows since the last factor is an isomorphism by Proposition~\ref{prop:coherentup*low*adj}. 
\end{proof}

\section{Kernels and integral transforms}\label{sec:kernels}

Given proper maps $Y_1 \to Z$ and $Y_2 \to Z$ of smooth varieties, the integral transform associated to a kernel $\cK \in \Coh(Y_1 \times_Z Y_2)$ is the functor 
\begin{align}\label{eq:inttransformula}
	\Phi_{\cK}: \Coh(Y_1) & \to \Coh(Y_2) \\
	\cF & \mapsto \pi_{2*}(\pi_1^*(\cF) \ot \cK).
\end{align}
This construction provides a dictionary between categorical and geometric aspects of coherent sheaf theory. In particular, composition of functors can be described by convolution of kernels, and the adjoint functors $\Phi_{\cK}^R$, $\Phi_{\cK}^L$ can be described by adjoint kernels $\cK^R, \cK^L \in \Coh(Y_2 \times_Z Y_1)$. In this section we describe the (ind-)tamely presented extension of these constructions and results. 

Let $Y_1 \to Z$ and $Y_2 \to Z$ now be morphisms of stacks satisfying the following hypotheses, where $X_{12}:= Y_1 \times_Z Y_2$ and where $\pi_1: X_{12} \to Y_1$, $\pi_2: X_{12} \to Y_2$ are the projections. 
\begin{equation}\label{eq:convhypos}
	\begin{aligned}
		(1) \quad & \text{$X_{12}$ is an admissible, ind-tamely presented ind-geometric stack, } \\
		(2) \quad & \text{$Y_1$ and $Y_2$ are weakly smooth geometric stacks, and} \\
		(3) \quad & \text{$\pi_1$ and $\pi_2$ are ind-proper and almost ind-finitely presented.}
	\end{aligned}
\end{equation}
Let $\delta_{12}: X_{12} \to Y_1 \times X_{12}$ denote the base change of $\Delta_{Y_1}$ along $\pi_1 \times \id_{Y_1}$. Since $Y_1$ is weakly smooth and $\pi_1$ is ind-tamely presened $\delta_{12}$ has coherent pullback. Since $\pi_2$ is ind-proper and almost ind-finitely presented, we obtain a functor
\begin{align*}
	\Phi_{\cK}: \Coh(Y_1) & \to \Coh(Y_2) \\
	\cF & \mapsto \pi_{2*}\delta_{12}^*(\cF \boxtimes \cK).
\end{align*}
Since $\delta_{12} \cong (\pi_1 \times \id_{X_{12}})$, this specializes to (\ref{eq:inttransformula}) when $X_{12}$ is a variety, hence the internal tensor product $\pi_1^*(\cF) \otimes \cK$ is defined. 

Now suppose $Y_3 \to Z$ is another map such that $X_{23} := Y_2 \times_Z Y_3$ satisfies the above hypotheses. The convolution of $\cK$ with  $\cK' \in \Coh(X_{23})$ is defined by
\begin{equation}\label{eq:kernel2}
	\cK' \conv \cK := \pi_{13*}(\delta_{123}^*(\cK \boxtimes \cK')),
\end{equation}
where $\delta_{123}$ and $\pi_{13}$ are defined by the Cartesian squares 
\begin{equation}\label{eq:conv}
	\begin{tikzpicture}[baseline=(current  bounding  box.center),thick,>=\arrtip]
		\newcommand*{\hp}{3.5}; \newcommand*{\hpp}{7}; 
		\newcommand*{\vb}{-1.5}; 
		\node (a) at (0,0) {$X_{12} \times X_{23}$};
		\node (a') at (\hp,0) {$X_{12} \times_{Y_2} X_{23}$};
		\node (a'') at (\hpp,0) {$X_{13}$};
		\node (b) at (0,\vb) {$Y_2 \times X_{23}$};
		\node (b') at (\hp,\vb) {$X_{23}$};
		\node (b'') at (\hpp,\vb) {$Y_3.$};
		
		\draw[<-] (a) to node[above] {$\delta_{123} $} (a');
		\draw[->] (a') to node[above] {$\pi_{13} $} (a'');
		\draw[<-] (b) to node[above] {$\delta_{23} $} (b');
		\draw[->] (b') to node[above] {$\pi_3 $} (b'');
		
		\draw[->] (a) to node[left] {$\pi_2 \times \id_{X_{23}} $} (b);
		\draw[->] (a') to node[right] {$ $} (b');
		\draw[->] (a'') to node[right] {$\pi_3 $} (b'');
	\end{tikzpicture}
\end{equation}
The vertical and rightward arrows are ind-proper and almost ind-finitely presented by (3), while the leftward arrows are ind-tamely presented base changes of $\Delta_{Y_2}$, hence have coherent pullback by~(2). In particular $\cK' \conv \cK$ is again coherent. Moreover, the left square defines an isomorphism $\pi_{23*} \delta_{123}^* \cong \delta_{23}^* (\pi_2 \times \id_{X_{23}})_*$ of functors $\Coh(X_{12} \times X_{23}) \to \Coh(X_{23})$, which then induces the following isomorphism. 

\begin{Proposition}
Given $\cK \in \Coh(X_{12})$ and $\cK' \in \Coh(X_{13})$ as above, there is an isomorphism $\Phi_{\cK' \conv \cK} \cong \Phi_{\cK'} \circ \Phi_\cK$ of functors $\Coh(Y_1) \to \Coh(Y_3)$. 
\end{Proposition}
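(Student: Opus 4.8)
The strategy is to unwind both functors into the correspondence-category formalism and reduce the claim to the base-change isomorphism $\pi_{23*}\delta_{123}^* \cong \delta_{23}^*(\pi_2 \times \id_{X_{23}})_*$ already highlighted before the statement, together with the projection-type isomorphisms satisfied by the external product. First I would expand $\Phi_{\cK'}\circ\Phi_{\cK}$ directly: for $\cF \in \Coh(Y_1)$ we have $\Phi_{\cK'}(\Phi_{\cK}(\cF)) = \pi_{3*}\,\delta_{23}^*\big(\pi_{2*}\delta_{12}^*(\cF\boxtimes\cK)\boxtimes\cK'\big)$, where $\delta_{23}\colon X_{23}\to Y_2\times X_{23}$ is the base change of $\Delta_{Y_2}$ along $\pi_2\times\id_{Y_2}$. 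The first move is to push the inner $\pi_{2*}$ past the external product: by the external-product compatibility \eqref{eq:eFXRupper!com} (applied with the identity on the $X_{23}$ factor, so $g = \id$, noting $\pi_2$ is ind-proper and almost ind-finitely presented), we have $(\pi_{2*}(-))\boxtimes\cK' \cong (\pi_2\times\id_{X_{23}})_*\big((-)\boxtimes\cK'\big)$ as functors $\Coh(X_{12})\to\Coh(Y_2\times X_{23})$. Applying this with the argument $\delta_{12}^*(\cF\boxtimes\cK)$ gives
\begin{equation*}
	\Phi_{\cK'}(\Phi_{\cK}(\cF)) \cong \pi_{3*}\,\delta_{23}^*\,(\pi_2\times\id_{X_{23}})_*\big(\delta_{12}^*(\cF\boxtimes\cK)\boxtimes\cK'\big).
\end{equation*}

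Next I would invoke the base-change isomorphism coming from the left Cartesian square of \eqref{eq:conv}, namely $\delta_{23}^*(\pi_2\times\id_{X_{23}})_* \cong \pi_{23*}\,\delta_{123}^*$ (where $\pi_{23}\colon X_{12}\times_{Y_2}X_{23}\to X_{23}$ is the middle vertical map of \eqref{eq:conv}); this is legitimate because $\delta_{23}$ has coherent pullback and $\pi_2\times\id_{X_{23}}$ is ind-proper and almost ind-finitely presented, so the Beck--Chevalley transformation is an isomorphism by Proposition \ref{prop:up!up*genICcaseIG} together with the $*$-pushforward base change built into the functor \eqref{eq:cohcorrindgstkfunctorcoh} — or, more directly, since this is exactly the isomorphism asserted in the paragraph preceding the proposition. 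After this substitution, $\Phi_{\cK'}(\Phi_{\cK}(\cF)) \cong \pi_{3*}\,\pi_{23*}\,\delta_{123}^*\big(\delta_{12}^*(\cF\boxtimes\cK)\boxtimes\cK'\big)$, and $\pi_{3*}\pi_{23*} \cong \pi_{13*}$ by functoriality of pushforward along the outer composite in \eqref{eq:conv}. It then remains to identify $\delta_{123}^*\big(\delta_{12}^*(\cF\boxtimes\cK)\boxtimes\cK'\big)$ with $\delta_{12,3}^*\big(\cF\boxtimes(\cK\boxtimes\cK')\big)$, where $\delta_{12,3}$ is the composite realizing the pullback of $\cF\boxtimes\cK\boxtimes\cK'$ appearing after expanding $\Phi_{\cK'\conv\cK}(\cF) = \pi_{2'*}\delta_{12,3}^*\big(\cF\boxtimes(\cK'\conv\cK)\big)$ and substituting \eqref{eq:kernel2}; this is a diagram chase comparing the two ways of building the relevant fibre product of $Y_1$, $X_{12}$, and $X_{23}$ over the various $Y_i$, using associativity and compatibility of external products with $*$-pullback (the isomorphism $(h\times g)^* e_{\cF,Y} \cong e_{g^*\cF,X}h^*$ of \eqref{eq:eFXup*compat}) and the definition of $\delta_{12}$, $\delta_{123}$ as base changes of the diagonals of $Y_1$ and $Y_2$.

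The main obstacle I anticipate is bookkeeping: making the chain of isomorphisms above genuinely natural in $\cF$, $\cK$, $\cK'$ and verifying that the various Cartesian-square identifications compose coherently — i.e. that no compatibility constraint is violated when one stacks the base-change squares for $\Delta_{Y_1}$ and $\Delta_{Y_2}$ against the projection squares \eqref{eq:conv}. All the nontrivial analytic input (that the relevant pullbacks preserve coherence, that the relevant pushforwards commute with these pullbacks) is already packaged into the functors \eqref{eq:cohcorrindgstkfunctorcoh}, \eqref{eq:cohcorrindgstkfunctorcoh2} and Propositions \ref{prop:cohpullprops} and \ref{prop:up!up*genICcaseIG}; so the cleanest writeup is to phrase the whole computation inside $\Coh\colon\Corr(\indGStkkadtm)_{prop,coh}\to\Catinfty$, where convolution of kernels is literally composition of correspondences and the proposition becomes the statement that this functor is (lax) monoidal on the relevant correspondences, reducing everything to the identity $\Corr$-level composition law plus the lax-monoidal structure on external products recorded in Section \ref{sec:indextcohcase}. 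I would present the argument this way to avoid an unwieldy explicit fibre-product chase.
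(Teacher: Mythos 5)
Your proposal is correct and follows essentially the same route as the paper, which derives the proposition directly from the base-change isomorphism $\pi_{23*}\delta_{123}^*\cong\delta_{23}^*(\pi_2\times\id_{X_{23}})_*$ of the left square of the convolution diagram together with the compatibilities of external products with pushforward and coherent pullback; your correspondence-category packaging at the end is also how the paper organizes these compatibilities. One small correction: the $*$-pullback/$*$-pushforward Beck--Chevalley isomorphism you need is the final claim of Proposition \ref{prop:coherentup*low*adj} (or the composition law in the functor \eqref{eq:cohcorrindgstkfunctorcoh}), not Proposition \ref{prop:up!up*genICcaseIG}, which concerns $!$-pullback.
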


Note that the diagonal $e_1: Y_1 \to X_{11}$ is ind-proper and almost ind-finitely presented since $\pi_1$ is and since $\pi_1 \circ e_1$ is the identity \cite[\igpropindPtwoofthreeprops]{CWig}. One checks that the kernel $e_{1*}(\cO_{Y_1})$ represents the identity functor. A kernel $\cK$ is right adjointable if there exists $\cK^R \in \Coh(X_{21})$ and maps $u: e_{1*}(\cO_{Y_1}) \to \cK^R \conv \cK$ and $c: \cK \conv \cK^R \to e_{2*}(\cO_{Y_2})$ such that the compositions
\begin{equation}\label{eq:kerneladj} \cK \xrightarrow{\id \conv u} \cK \conv \cK^R \conv \cK \xrightarrow{\id \conv c} \cK \quad\quad \cK^R \xrightarrow{\id \conv u} \cK^R \conv \cK \conv \cK^R \xrightarrow{\id \conv c} \cK^R 
\end{equation}
are homotopic to the identity maps. Likewise, $\cK$ is left adjointable if it is the right adjoint of another kernel $\cK^L$. It follows formally that adjointability of kernels implies adjointability of the induced functors, i.e. $\Phi_{\cK^R}$ and $\Phi_{\cK^L}$ are adjoints of $\Phi_{\cK}$. A priori, however, adjointability of kernels is stronger, as it asks not only that the adjoint functors are represented by kernels, but that the unit/counit natural transformations are represented by morphisms of kernels as in (\ref{eq:kerneladj}). 

Under the hypotheses (\ref{eq:convhypos}) we then have the following result. The core of the proof is deferred to \cite{CW2}, but we emphasize that the relevant proofs there depend crucially on Propositions~\ref{prop:up!up*genICcaseIG} and \ref{prop:eFXRupper*cohpullindcase}. 

\begin{Proposition}\label{prop:kernel3}
	Any kernel $\cK \in \Coh(X_{12})$ is left and right adjointable, with adjoints given by $\cK^L := \cHom(\cK, \pi_1^! (\O_{Y_1}))$ and $\cK^R := \cHom(\cK, \pi_2^! (\O_{Y_2}))$. 
\end{Proposition}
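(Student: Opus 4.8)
The plan is to reduce the statement to the finite-type picture by working on a reasonable presentation and on a tamely presented flat cover, and then to invoke the two main compatibility results of Sections \ref{sec:upper!} and \ref{sec:sheafHom}. First I would recall the classical yoga: for proper maps of smooth varieties, the kernel $\cK^R := \cHom(\cK, \pi_2^!(\O_{Y_2}))$ is the right adjoint of $\cK$ because $\pi_2^!$ is the right adjoint of $\pi_{2*}$, because sheaf Hom is the internal right adjoint of tensor product, and because $\delta_{12}^*$ has a right adjoint compatible with these via the projection formula and base change. So the real content is purely formal once the relevant functors are known to have the expected adjoints and the expected base-change compatibilities in the tamely presented setting. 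The strategy is therefore to exhibit, for each of the structural isomorphisms used in the classical argument, its counterpart established earlier in the paper or in \cite{CW2}.

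Concretely, I would first check that all the functors in sight are well-defined and continuous under the hypotheses \eqref{eq:convhypos}: $X_{12}$, $X_{21}$, $X_{11}$, $X_{22}$ and their self-products are admissible and ind-tamely presented hence coherent (using Proposition \ref{prop:haffimpliesX'coh} and that $Y_1, Y_2$ are weakly smooth geometric stacks, so the relevant fiber products over $Z$ stay admissible); $\delta_{12}$ and its variants have \stable coherent pullback by weak smoothness of $Y_1$, ind-tameness of $\pi_1$, and Proposition \ref{prop:wprosmoothcohdiaga}; and $\pi_1, \pi_2$ are ind-proper and almost ind-finitely presented, so $\pi_{i*}$ preserves $\Coh$ and $\pi_i^!$ is continuous. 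Then I would write down the unit $u: e_{1*}(\O_{Y_1}) \to \cK^R \conv \cK$ and counit $c: \cK \conv \cK^R \to e_{2*}(\O_{Y_2})$ by translating the $(\delta_{12}^*, \delta_{12}^{*R})$, $(- \boxtimes \cK, e_{\cK,-}^R)$, and $(\pi_{2*}, \pi_2^!)$ adjunctions into convolution language via \eqref{eq:conv}, using the isomorphism $\pi_{23*}\delta_{123}^* \cong \delta_{23}^*(\pi_2 \times \id)_*$ recorded just before the proposition and its analogues. Verifying the two triangle identities is where one must be careful, but the point is that each manipulation needed — moving $h^*$ past $\cHom$, moving $h'^*$ past $f^!$, base change for $*$-pushforward past $*$-pullback along tamely presented maps — is exactly licensed by Propositions \ref{prop:up!up*genICcaseIG}, \ref{prop:eFXRupper*cohpullindgeomcase}, \ref{prop:eFXRupper*cohpullindcase}, and the base-change statements of \cite{CWig} cited there. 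The symmetric argument with the roles of $Y_1, Y_2$ (and $\pi_1, \pi_2$) exchanged gives left adjointability, with $\cK^L := \cHom(\cK, \pi_1^!(\O_{Y_1}))$; equivalently, one observes $(\cK^L)^R \cong \cK$.

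The main obstacle I anticipate is bookkeeping the coherence hypotheses through all the fiber products appearing in the triangle-identity diagrams: the convolution \eqref{eq:conv} introduces spaces like $X_{12} \times X_{23}$, $X_{12} \times_{Y_2} X_{23}$, and their self-products, and one needs each of them, together with its square, to be coherent (so that $\IndCoh$ is compactly generated by $\Coh$ and so that the relevant $(-)_*$ functors are defined on $\IndCoh$). This is where admissibility and ind-tame presentedness are essential: $\indGStkkadtm$ is closed under products (Section \ref{sec:indtamemorphisms}) and base change by Propositions \ref{prop:indPbaseprops} and \ref{prop:haffimpliesX'coh}, so all the spaces in question stay in $\indGStkkadtm$ and hence are coherent with coherent self-products. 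The other delicate point, as the paper itself flags, is that $h^* := \delta_{12}^*$ on $\IndCoh$ is unbounded in general, so one cannot argue by restricting to a bounded heart; instead one works with the continuous extensions from $\Coh$ (Definitions \ref{def:cohcohpul}, \ref{def:indgeomcohpull}) throughout and uses that $\Phi_{\cK}$ and the composites in the triangle identities are all continuous, so it suffices to check the identities after restriction to $\Coh(Y_i)$, where they reduce to a diagram chase among functors already known to be compatible. Since the core of this chase is deferred to \cite{CW2}, here I would simply set up the statement, verify the hypotheses needed to invoke the cited propositions, and record the formulas for $\cK^L$ and $\cK^R$, noting explicitly that the construction depends on Propositions \ref{prop:up!up*genICcaseIG} and \ref{prop:eFXRupper*cohpullindcase} as the paper emphasizes.
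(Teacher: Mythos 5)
Your proposal heads down a genuinely different road from the paper, and the difference is worth spelling out. The paper's proof does not construct unit and counit maps for $\cK$ and $\cK^R$ directly. Instead it uses a disjoint-union trick: setting $Y := Y_1 \sqcup Y_2$ and $X := Y \times_Z Y$, it regards $\cK$ as an object of the \emph{monoidal} category $(\Coh(X), \conv)$, observes that adjointability of kernels is the same as dualizability there (up to the decomposition of $e_*(\cO_Y)$ into $e_{1*}(\cO_{Y_1}) \oplus e_{2*}(\cO_{Y_2})$), and then quotes two results of \cite{CW2}: that $\Coh(X)$ is rigid, and that the duals are computed by $\cHom(\cK, \pi_i^!(\cO_{Y_i}))$. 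The entire proof is this reformulation plus two citations. What this buys is that one never has to write down $u$ and $c$ or chase the triangle identities for a specific kernel: rigidity of the monoidal category does all of that at once, and Propositions \ref{prop:up!up*genICcaseIG} and \ref{prop:eFXRupper*cohpullindcase} enter only inside \cite{CW2}, as inputs to the rigidity and dual-formula proofs.

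Your route --- directly exhibiting the unit and counit from the $(\delta_{12}^*, \delta_{12}^{*R})$, $(e_{\cK,-}, e_{\cK,-}^R)$, and $(\pi_{2*}, \pi_2^!)$ adjunctions and verifying the triangle identities --- is in principle viable, and your bookkeeping of coherence and admissibility hypotheses for the fiber products in (\ref{eq:conv}) is the right kind of care. But as written there is a gap at exactly the point you flag: the construction of $u$ and $c$ and the verification of the two triangle identities is the mathematical content, and it is not supplied. Moreover, the deferral to \cite{CW2} does not patch this in the form you describe: what \cite{CW2} proves is rigidity of the convolution category and a formula for monoidal duals, not a triangle-identity chase for individual correspondences, so those results only become usable after the reformulation of adjointability as dualizability on the doubled space --- the one step your proposal omits. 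If you want to keep the direct approach, you would need to carry out the unit/counit construction yourself rather than cite it; if you want the short proof, the missing idea is $Y_1 \sqcup Y_2$.
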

\begin{proof}
Let $Y:= Y_1 \sqcup Y_2$ and $X: = Y \times_Z Y$. As $X_{12}$ is a component of $X$ (or a union of components), we can regard $\cK$ as an object of $\Coh(X)$. As a monoidal category under convolution, $\Coh(X)$ is rigid by \cite[\crpropcohrigidity]{CW2}. The left and right duals of $\cK$ in $\Coh(X)$ are given by the above formulas by \cite[\crpropcohdualformula]{CW2}. But the notion of adjointability of kernels is the same as the monoidal notion of dualizability in this case (up to composing with the natural maps between $e_*(\cO_Y)$ and its summands $e_{1*}(\cO_{Y_1})$ and $e_{2*}(\cO_{Y_2})$), hence the claim follows. 
\end{proof}

\bibliographystyle{amsalpha}
\bibliography{bibKoszul}

\providecommand{\bysame}{\leavevmode\hbox to3em{\hrulefill}\thinspace}
\providecommand{\MR}{\relax\ifhmode\unskip\space\fi MR }
\providecommand{\MRhref}[2]{%
  \href{http://www.ams.org/mathscinet-getitem?mr=#1}{#2}
}
\providecommand{\href}[2]{#2}
\begin{thebibliography}{BZNP17}

\bibitem[Bar13]{Bar13}
C.~Barwick, \emph{On the $q$-construction for exact $\infty$-categories},
  arXiv:1301.4725 (2013).

\bibitem[Bez16]{Bez16}
R.~Bezrukavnikov, \emph{On two geometric realizations of an affine {H}ecke
  algebra}, Publ. Math. Inst. Hautes \'{E}tudes Sci. \textbf{123} (2016),
  1--67.

\bibitem[BFN18]{BFN}
A.~Braverman, M.~Finkelberg, and H.~Nakajima, \emph{Towards a mathematical
  definition of 3-dimensional $\mathcal{N} = 4$ gauge theories, {II}}, Adv.
  Theor. Math. Phys. \textbf{22} (2018), no.~5, 1071--1147.

\bibitem[BKV22]{BKV22}
A.~Bouthier, D.~Kazhdan, and Y.~Varshavsky, \emph{Perverse sheaves on
  infinite-dimensional stacks, and affine {S}pringer theory}, Adv. Math.
  \textbf{408} (2022), Paper No. 108572, 132.

\bibitem[Bou72]{Bou72}
N.~Bourbaki, \emph{Commutative algebra. {C}hapters 1--7}, Elements of
  Mathematics (Berlin), Springer-Verlag, Berlin, 1972, Translated from the
  French.

\bibitem[BZNP17]{BZNP17}
D.~Ben-Zvi, D.~Nadler, and A.~Preygel, \emph{Integral transforms for coherent
  sheaves}, J. Eur. Math. Soc. (JEMS) \textbf{19} (2017), no.~12, 3763--3812.

\bibitem[CW23a]{CW2}
S.~Cautis and H.~Williams, \emph{Canonical bases for {C}oulomb branches of 4d
  $\mathcal{N}=2$ gauge theories}, arXiv:2306.03023 (2023).

\bibitem[CW23b]{CWig}
\bysame, \emph{Ind-geometric stacks}, arXiv:2306.03043 (2023).

\bibitem[Gai13]{Gai13a}
D.~Gaitsgory, \emph{Ind-coherent sheaves}, Mosc. Math. J. \textbf{13} (2013),
  no.~3, 399--528, 553.

\bibitem[Gai14]{Gai14}
\bysame, \emph{Lecture 4.3: {K}ac-{M}oody representations},
  https://sites.google.com/site/geometriclanglands2014/ (2014).

\bibitem[Gla89]{Gla89}
S.~Glaz, \emph{Commutative coherent rings}, Lecture Notes in Mathematics, vol.
  1371, Springer-Verlag, Berlin, 1989.

\bibitem[GR14]{GR14}
D.~Gaitsgory and N.~Rozenblyum, \emph{D{G} indschemes}, Perspectives in
  representation theory, Contemp. Math., vol. 610, Amer. Math. Soc.,
  Providence, RI, 2014, pp.~139--251.

\bibitem[GR17]{GR17}
\bysame, \emph{A study in derived algebraic geometry. {V}ol. {I}.
  {C}orrespondences and duality}, Mathematical Surveys and Monographs, vol.
  221, American Mathematical Society, Providence, RI, 2017.

\bibitem[Huy06]{Huy06}
D.~Huybrechts, \emph{Fourier-{M}ukai transforms in algebraic geometry}, Oxford
  Mathematical Monographs, The Clarendon Press, Oxford University Press,
  Oxford, 2006.

\bibitem[Ill71]{Ill71}
L.~Illusie, \emph{G\'en\'eralit\'es sur les conditions de finitude dans les
  cat\'egories d\'eriv\'ees}, Th\'eorie des intersections et th\'eor\'eme de
  Riemann-Roch (SGA 6), Springer-Verlag, New York, 1971, pp.~78--273.

\bibitem[KL87]{KL87}
D.~Kazhdan and G.~Lusztig, \emph{Proof of the {D}eligne-{L}anglands conjecture
  for {H}ecke algebras}, Invent. Math. \textbf{87} (1987), no.~1, 153--215.

\bibitem[KV04]{KV04}
M.~Kapranov and E.~Vasserot, \emph{Vertex algebras and the formal loop space},
  Publ. Math. Inst. Hautes \'{E}tudes Sci. (2004), no.~100, 209--269.

\bibitem[Lur09]{LurHTT}
J.~Lurie, \emph{Higher topos theory}, Annals of Mathematics Studies, vol. 170,
  Princeton University Press, Princeton, NJ, 2009.

\bibitem[Lur17]{LurHA}
\bysame, \emph{Higher algebra}, https://www.math.ias.edu/~lurie/papers/HA.pdf
  (2017).

\bibitem[Lur18]{LurSAG}
\bysame, \emph{Spectral algebraic geometry},
  https://www.math.ias.edu/~lurie/papers/SAG-rootfile.pdf (2018).

\bibitem[Mil12]{Mil12}
J.~S. Milne, \emph{Basic theory of affine group schemes}.

\bibitem[Ras14a]{Ras14}
S.~Raskin, \emph{Chiral principal series categories}, ProQuest LLC, Ann Arbor,
  MI, 2014.

\bibitem[Ras14b]{Ras14b}
\bysame, \emph{{D}-modules on infinite-dimensional varieties}.

\bibitem[Ras20]{Ras20}
\bysame, \emph{Homological methods in semi-infinite contexts}, arXiv:2002.01395
  (2020).

\bibitem[Ryd16]{Ryd16}
D.~Rydh, \emph{Approximation of sheaves on algebraic stacks}, Int. Math. Res.
  Not. IMRN (2016), no.~3, 717--737.

\bibitem[Sta]{Sta}
\emph{The stacks project}, http://stacks.math.columbia.edu.

\bibitem[Swa19]{Swa19}
R.~Swan, \emph{K-theory of coherent rings}, J. Algebra Appl. \textbf{18}
  (2019), no.~9, 1950161, 16.

\bibitem[TT90]{TT90}
R.~Thomason and T.~Trobaugh, \emph{Higher algebraic {$K$}-theory of schemes and
  of derived categories}, The {G}rothendieck {F}estschrift, {V}ol. {III},
  Progr. Math., vol.~88, Birkh\"{a}user Boston, Boston, MA, 1990, pp.~247--435.

\bibitem[TV08]{TV08}
B.~To\"{e}n and G.~Vezzosi, \emph{Homotopical algebraic geometry. {II}.
  {G}eometric stacks and applications}, Mem. Amer. Math. Soc. \textbf{193}
  (2008), no.~902, x+224.

\bibitem[VV10]{VV10}
M.~Varagnolo and E.~Vasserot, \emph{Double affine {H}ecke algebras and affine
  flag manifolds, {I}}, Affine flag manifolds and principal bundles, Trends
  Math., Birkh\"{a}user/Springer Basel AG, Basel, 2010, pp.~233--289.

\end{thebibliography}

\end{document}